\newcommand{\nc}{\newcommand}
\nc{\wt}{\widetilde}
\nc{\wh}{\widehat}
\definecolor{ao(english)}{rgb}{0.0, 0.5, 0.0}
\def\binom#1#2{{#1\choose#2}}
\def\Z{{\mathbb Z}}
\def\ra{{\rightarrow}}
\def\SL{{\rm SL}}
\def\GL{{\rm GL}}
\def\SO{{\rm SO}}
\def\Stab{{\rm Stab}}
\def\Sym{{\rm Sym}}
\def\Gal{{\rm Gal}}
\def\Cl{{\rm Cl}}
\def\O{{\mathcal O}}
\def\P{{\mathbb P}}
\def\bG{{\mathbb G}}
\def\Bas{{\rm Bas}}
\def\PBas{{\rm PBas}}
\def\Aut{{\rm Aut}}
\def\Res{{\rm Res}}
\def\over{{\rm over}}
\def\sub{{\rm sub}}
\def\prim{{\rm prim}}
\def\bad{{\rm bad}}
\def\good{{\rm good}}
\def\diag{{\rm diag}}
\def\ind{{\rm ind}}
\def\nm{{\rm nm}}
\def\ker{{\rm ker}}
\def\Re{{\rm Re}}
\def\Vol{{\rm Vol}}
\def\covol{{\rm covol}}
\def\R{{\mathbb R}}
\def\cO{{\mathcal O}}
\def\cB{{\mathcal B}}
\def\F{{\mathbb F}}
\def\FF{{\mathcal F}}
\def\Q{{\mathbb Q}}
\def\V{{\mathcal V}}
\def\Z{{\mathbb Z}}
\def\P{{\mathbb P}}
\def\F{{\mathbb F}}
\def\Q{{\mathbb Q}}
\def\C{{\mathbb C}}
\def\I{{\mathcal I}}
\def\W{{\mathcal W}}
\def\wt{\widetilde}
\def\ol{\overline}
\def\ds{\displaystyle}
\newtheorem{Theorem}{Theorem}
\newtheorem{theorem}{Theorem}[section]
\newtheorem{corollary}[theorem]{Corollary}
\newtheorem{lemma}[theorem]{Lemma}
\newtheorem{remark}[theorem]{Remark}
\newtheorem{Remark}[Theorem]{Remark}
\newtheorem{proposition}[theorem]{Proposition}
\newtheorem{definition}[theorem]{Definition}
\newenvironment{proof}{\noindent {\bf Proof:}}{$\Box$ \vspace{2 ex}}
\title{Secondary terms in the counting functions of quartic fields I}
\author{Arul Shankar and Jacob Tsimerman}
\begin{document}
\maketitle

\abstract{We prove that the smoothed counting function of the set of quartic fields, satisfying any finite set of local conditions, can be written as a linear combination of $X,X^{5/6}\log X,X^{5/6}$, upto an error term of $O(X^{13/16+o(1)})$. For certain sets of local conditions, namely, those cutting out ``$S_4$-families'' of quartic fields, we explicitly determine the leading constants of the secondary terms. We moreover express these constants in terms of secondary mass formulas associated to families of quartic fields.}

In our proof, we introduce a new method to count integer orbits on representations of reductive groups, one which allows for the recovery of lower order terms. This new method contains aspects of the tools used by Sato--Shintani to analyze zeta functions associated to prehomogeneous vector spaces and the geometry-of-numbers techniques pioneered by Bhargava.

\tableofcontents

\part{Introduction}

\section{Statements of the results}

A classical question in number theory is: how many number fields are there of degree $n>1$ and discriminant less than $X$ in absolute value? Denote this quantity by $N_n(X)$. It is a folklore conjecture that $N_n(X)\asymp X$ for all $n\geq 2$. Asymptotics for $N_2(X)$ are elementary to derive, but for higher $n$, asymptotics are known only in three cases. For $n=3$, this is due to Davenport--Heilbronn \cite{MR491593}; for $n=4$, by results of Cohen--Diaz y Diaz--Olivier \cite{Cohen_Diaz_Olivier} and Bhargava \cite{dodqf}; and for $n=5$, by work of Bhargava~\cite{dodpf}.

Finer information on the behavior of $N_n(X)$ is even more elusive. In the case of cubic fields, Roberts conjectured the existence of a secondary main term for $N_3(X)$ of size $\asymp X^{5/6}$. This conjecture was resolved by Bhargava and the two authors of this paper \cite{BST}, and simultaneously and independently by Taniguchi and Thorne \cite{MR3127806}. Moreover, these two works determine secondary main terms for the family of $S_3$-cubic fields satisfying any finite set of splitting conditions. However, secondary terms in the counting functions of degree-$n$ fields are unknown for all $n>3$. Even in the function fields case, where it is speculated (see \cite{BDPW}) that secondary terms correspond to secondary homological stability in the sense of \cite{E2_Map_Cl_Gps}, unconditional results are only known (to the authors knowledge) for cubic extensions of function fields by work of Zhao \cite{YZhao_thesis}, and the count of quartic $\F_p[t]$-algebras (along with a cubic resolvent algebra) by work of Chang \cite{KC}.

In this article, we consider families of quartic fields, with prescribed splitting conditions at finitely many places. To this end, let $S$ be a finite set of places, and let $\Sigma=(\Sigma_v)_{v\in S}$ be a {\it finite collection of local specifications for quartic fields}, where for each $v\in S$, $\Sigma_v$ is a finite set of \'etale quartic extensions of $\Q_v$. Let $F(\Sigma)$ denote the family of quartic fields $K$ such that $K\otimes\Q_v\in\Sigma_v$ for every $v\in S$.
We make an important simplification by considering smooth rather than sharp counts. Let $\psi:\R_{\geq 0}\to \R_{\geq 0}$ be a smooth function with compact support. We define the ``smooth count'' of quartic fields in $F(\Sigma)$ by
\begin{equation}\label{eq:fields_count}
N_{\Sigma}(\psi,X):=\sum_{K\in F(\Sigma)}\psi\Bigl(\frac{|\Delta(K)|}{X}\Bigr).
\end{equation}
Then we prove the following result:

\begin{Theorem}\label{thm:mainallfields}
Let $F(\Sigma)$ be the family of quartic fields corresponding to the finite collection of local specifications $\Sigma$. Let $\psi:\R_{\geq 0}\to\R_{\geq 0}$ be a smooth function with compact support. Then 
\begin{equation*}
N_{\Sigma}(\psi,X)=C_1(\Sigma,\psi)\cdot X+
C'_{5/6}(\Sigma,\psi)\cdot X^{5/6}\log X + C_{5/6}(\Sigma,\psi)\cdot X^{5/6} + O(X^{13/16+o(1)}),
\end{equation*}
for some constants $C_1(\Sigma,\psi)$, $C_{5/6}'(\Sigma,\psi)$, and $C_{5/6}(\Sigma,\psi)$.
\end{Theorem}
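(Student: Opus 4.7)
\medskip
\noindent\textbf{Proof proposal.}
The plan is to reduce the count of quartic fields to a count of $\GL_2(\Z)\times\SL_3(\Z)$-orbits on the representation $V=\Z^2\otimes\Sym^2\Z^3$ of pairs of integral ternary quadratic forms, using Bhargava's parametrization of quartic rings by pairs of ternary quadratic forms together with their cubic resolvent rings. After this translation, counting $K\in F(\Sigma)$ with $|\Delta(K)|\leq X$ with the smooth weight $\psi$ becomes a count of integer orbits of discriminant bounded by $X$ on $V$ satisfying: (i) local conditions at places in $S$ determined by $\Sigma$, (ii) maximality of the associated quartic ring at every finite prime, (iii) irreducibility (so that the orbit actually corresponds to a field rather than a reducible \'etale algebra), and (iv) nondegeneracy of the cubic resolvent.

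The main body of the proof is to produce an asymptotic of the shape
\begin{equation*}
\#\{\text{integer orbits on }V\text{ with invariants }\leq X\}=A\cdot X+B'\cdot X^{5/6}\log X+B\cdot X^{5/6}+O(X^{13/16+o(1)})
\end{equation*}
after imposing the local conditions. First I would fix the action and unfold the orbit count as an integral of $\psi(|\Delta|/X)$ over a fundamental domain $\FF$ for $(\GL_2(\Z)\times\SL_3(\Z))\backslash(\GL_2(\R)\times\SL_3(\R))$, averaged over a compact ball in the group as in Bhargava's averaging method. The integral over the ``main body'' of $\FF$ gives the $X$ term by standard geometry of numbers. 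The secondary terms of size $X^{5/6}\log X$ and $X^{5/6}$ come from the cusp regions of $\FF$, where lattice points concentrate in thin subvarieties (the analogue for $V$ of the reducible locus that produced the $X^{5/6}$ term in \cite{BST}). The new ingredient, announced in the second paragraph of the abstract, is to analyze the cusp contribution not by crude bounding but by applying Sato--Shintani style Poisson summation in the cuspidal directions: for each cusp stratum, one writes the lattice-point count fiberwise and applies Poisson summation to the slice sums, extracting the dominant and subdominant contributions as main terms and bounding the remaining Fourier dual sum as an error.

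The most delicate step, and the main obstacle, is controlling the cusp of $V$, which is two-dimensional and more intricate than the one-dimensional cusp of the binary cubic forms space used in \cite{BST}. One must identify all cusp strata of $V/\Z$ supporting a polynomially large number of lattice points of bounded discriminant, stratify them, and carry out the Poisson summation on each, keeping track of the interplay with the local conditions $\Sigma_v$ and with the maximality sieve. The $\log X$ factor in the $X^{5/6}$ term is expected to arise from a boundary contribution at the transition between the two natural cusp directions (analogous to how a $\log$ appears in lattice point counts in homogeneously expanding regions with a cusp of codimension one).

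Once the orbit count on $V$ is established, I would finish by implementing the sieve to quartic fields. The local conditions at $v\in S$ are imposed by replacing $\psi$ with its product with a characteristic function on $V(\Z_v)$, which fits into the above averaging framework because these are congruence conditions. Maximality at all primes $p$ is a sum over congruence conditions modulo $p^2$; uniformity in $p$ of the error term from the orbit count, together with a tail estimate for non-maximal orders at large primes in the style of Bhargava's quartic paper, allows the infinite sieve to be executed while only inflating the error by a factor of $X^{o(1)}$ and preserving the shape $X+X^{5/6}\log X+X^{5/6}+O(X^{13/16+o(1)})$. Irreducibility and the cubic resolvent conditions are handled by subtracting off the contributions of reducible orbits, which lie in lower-dimensional subvarieties and contribute only to the lower order or absorbed error. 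The final exponent $13/16$ then reflects the balance between the Poisson-dual tail in the cusp analysis and the tail of the maximality sieve.
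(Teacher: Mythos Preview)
Your overall architecture (Bhargava parametrization, then sieve to maximality) matches the paper, but the two halves diverge from the paper's actual argument in ways worth noting, and the sieve half contains a real gap.

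\textbf{The ring-count step.} You propose to extract the $X$, $X^{5/6}\log X$, $X^{5/6}$ terms by direct geometry-of-numbers cusp analysis with Poisson summation on cuspidal strata. The paper does \emph{not} do this for Theorem~\ref{thm:mainallfields}. Instead it uses the formal Sato--Shintani theory as a black box: the zeta functions $\xi_i(\phi;s)$ attached to any $G(\Z/N\Z)$-invariant $\phi$ are already known to continue meromorphically with at most double poles at $1,5/6,3/4$ (by the Bernstein--Sato polynomial) and to satisfy an explicit functional equation. Mellin inversion then gives the power-series expansion of $N(\phi;\psi;X)$ for free, with unknown but well-defined residue constants. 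The direct cusp/Poisson analysis you sketch is what the paper carries out in Part~III, but only for $S_4$-families and only to \emph{compute} the residues (Theorems~\ref{thm:mainS4fields}--\ref{thm:mainS4rings}); it is not needed for Theorem~\ref{thm:mainallfields}.

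\textbf{The sieve step: a genuine gap.} You assert that ``uniformity in $p$ of the error term \ldots\ together with a tail estimate for non-maximal orders at large primes in the style of Bhargava's quartic paper'' suffices. The paper explains why this fails: $\dim V=12$, so the functional equation picks up a factor $n^{24s}$ (equivalently, in geometry-of-numbers language, congruence conditions mod $n^2$ only allow $n\ll X^{1/24}$), and the Fourier transform of the full non-maximality indicator $\delta_p^{\mathrm{nm}}$ is too large. Balancing the naive error against the $O(X/p^2)$ tail does not get below $X^{5/6}$. The paper's key device is the decomposition $\delta_p^{\mathrm{nm}}=M_p^1+M_p^2$, where $M_p^1$ counts index-$p$ overpairs via a ``switching correspondence'' and has a very small Fourier transform (Proposition~\ref{prop:M1phatbound}), while $M_p^2$ enjoys the stronger uniformity $O(X/p^4)$ (Proposition~\ref{prop:unifMp2}). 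Applying the functional equation to $\xi_i(M_{a,b}^\Sigma;s)$ and bounding the dual side $N(|\widehat{M_{a,b}}|,(ab)^{24}/X)$ using Hough's explicit Fourier computations yields the error $O(a^{3+o(1)}b^{12+o(1)})$ in Proposition~\ref{prop:residueexpansion}; optimizing the cutoff at $a^5b^{16}\asymp X$ is what produces $13/16$. Your proposal is missing this splitting and the functional-equation/Fourier-transform mechanism entirely, and without them the sieve does not close.
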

We may break the contribution to $N_\Sigma(\psi,X)$ according the the Galois groups of the fields in question. Foundational work of Cohen--Diaz y Diaz--Olivier \cite[Corollary 6.1]{Cohen_Diaz_Olivier} proves that the number of $D_4$-quartic fields, with discriminants bounded by $X$, grows like an explicit constant times $X$ with an error term of $O(X^{3/4+o(1)})$; meanwhile a breakthrough result of Bhargava \cite[Theorem 1]{dodqf} proves that the number of $S_4$-quartic fields, with discriminants bounded by $X$, is asymptotic to an explicit constant times $X$. The error term in the $S_4$-fields counting result was subsequently improved to 
a power saving of size $O(X^{23/24+o(1)})$ by Belabas--Bhargava--Pomorance~\cite{BBPEE}. 

The number of $V_4$-, $C_4$-, and $C_2\times C_2$-fields with discriminant less than $X$ bounded by $O(X^{1/2+o(1)})$ by work of Baily \cite{Baily}, and the number of $A_4$-quartic fields with discriminant less than $X$ is bounded by $O(X^{.778...})$ by \cite[Theorem 1.4]{BSTTTZ}. So the contribution to $N_\Sigma(\psi,X)$ from fields with these Galois groups is subsumed in the error term. The works \cite{Cohen_Diaz_Olivier,dodqf} readily generalize to counting families on which finitely many splitting conditions are imposed, and to smooth (rather than sharp) counts. As a consequence, the main term constant $C_1(\Sigma,\psi)$ can be determined from them. Moreover, since the error term in the $D_4$-fields count has been proven to be $O(X^{3/4+o(1)})$, the existence of the secondary term in the count of quartic fields is seen to be an $S_4$-fields phenomena.

For certain natural families $F(\Sigma)$, we give explicit description of the secondary term constants.
We say that $\Sigma$ is an {\it $S_4$-collection} and that $F(\Sigma)$ is an {\it $S_4$-family} if the conditions of $\Sigma$ automatically force every $K\in F(\Sigma)$ to be an $S_4$-quartic number field. We can construct $S_4$-families by imposing local conditions at two primes. For example, if $S$ consists of two primes $p_1$ and $p_2$, and $\Sigma_{p_1}=\{\Q_{p_1^4}\}$ (ensuring that the Galois group of any $K\in F(\Sigma)$ contains a $4$-cycle) and $\Sigma_{p_1}=\{\Q_{p_2^3}\oplus\Q_{p_2}\}$ (ensuring that the Galois group of any $K\in F(\Sigma)$ contains a $3$-cycle), then $F(\Sigma)$ is an $S_4$-family. For such families, we prove that the $X^{5/6}\log X$ term does not occur in the expansion of $N_\Sigma(\psi,X)$, and compute the leading constant of the $X^{5/6}$ term. To state the result, we need to introduce the following notation.

Let $R$ be a principal ideal domain. We say that a triple $(Q,C,r)$ is a {\it quartic triple over $R$} if $Q$ is a rank-$4$ ring over $R$, $C$ is a rank-$3$ ring over $R$ that is a cubic resolvent ring of $Q$, and $r:Q/R\to C/R$ is the (quadratic) resolvent map. For the definitions of cubic resolvent rings and the resolvent map, see Bhargava's landmark work \cite{BHCL3} parametrizing quartic rings. Given an element $x$ in $(C/R)^\vee$, we obtain (by composition with $r$) a quadratic form $Q/R\to R$, which we denote by $r_x$. Now let $K$ be a quartic \'etale extension of $\Q$ or $\Q_p$ for some $p$, and denote the ring of integers of $K$ by $\O_K$. Then, as proven by Bhargava \cite{BHCL3}, $\O_K$ has a unique cubic resolvent ring $C_K$. Bhargava proves this result for quartic maximal orders over $\Z$, but the same analysis holds when $\Z$ is replaced with $\Z_p$. We say that $(\O_K,C_K,r_K)$ is the {\it triple corresponding to $K$}. Denote the quadratic form corresponding to an element $x$ in $(C_K/\Z)^\vee$ or $(C_K/\Z_p)^\vee$ by $r_{K,x}$, and denote the set of primitive elements in $(C_K/Z_p)^\vee$ by $(C_K/\Z_p)^\vee_\prim$. We define the constants
\begin{equation}\label{eq:M_def}
\mathcal M:= \frac{2^{5/3}\Gamma(1/6)\Gamma(1/2)}{\sqrt{3}\pi\Gamma(2/3)};\;\;
\mathcal M_i:=\left\{
\begin{array}{rcl}
\mathcal M&{\rm if}&i\in\{0,2\};
\\
\sqrt{3}\cdot\mathcal M&{\rm if}&
i=1;
\end{array}
\right.
\;\mathcal M_i':=\left\{
\begin{array}{rcl}
\mathcal M&{\rm if }& i=0;\\
\sqrt{3}\cdot \mathcal M&{\rm if }& i=1;\\
\displaystyle\frac{\mathcal M}{3}&{\rm if }& i=2.
\end{array}
\right.
\end{equation}

Then we have the following result.

\begin{Theorem}\label{thm:mainS4fields}
Let $F(\Sigma)$ be an $S_4$-family of quartic fields corresponding to the finite $S_4$-collection of local specifications $\Sigma$. Let $\psi:\R_{\geq 0}\to\R_{\geq 0}$ be a smooth function with compact support. Then 
\begin{equation*}
N_{\Sigma}(\psi,X)=C_1(\Sigma)\wt{\psi}(1)\cdot X+ C_{5/6}(\Sigma)\wt{\psi}(5/6)\cdot X^{5/6} + O(X^{13/16+o(1)}),
\end{equation*}
where
\begin{equation}\label{eq:leadingcons}
C_1(\Sigma):=\frac{1}{2}\Bigl(\sum_{K\in\Sigma_\infty}\frac{1}{\#\Aut(K)}\Bigr)\prod_p
\Bigl(\sum_{K\in\Sigma_p}\frac{|\Delta(K)|_p}{\#\Aut(K)}\Bigr)\Bigl(1-\frac{1}{p}\Bigr),
\end{equation} and

\begin{align*}    
C_{5/6}(\Sigma)&:=\frac{\pi}{8}\Bigl(\mathcal M_{\Sigma}\cdot\zeta(1/3)\prod_{p}\Bigl(1-\frac{1}{p^{1/3}}\Bigr)\sum_{K\in\Sigma_p}\frac{|\Delta(K)|_p}{\#\Aut(K)}\cdot \int_{x\in (C_K/\Z_p)^{\vee}_{\prim}}  |\det{r_{K,x}}|_p^{-2/3}dx \\&\;\;\;+ \;\mathcal M_\Sigma'\cdot\zeta(2/3)\prod_{p}\Bigl(1-\frac{1}{p^{2/3}}\Bigr)\sum_{K\in\Sigma_p}\frac{|\Delta(K)|_p}{\#\Aut(K)}\cdot \int_{x\in (C_K/\Z_p)^{\vee}_{\prim}}  \epsilon_p(r_x)|\det{r_{K,x}}|_p^{-2/3}dx\Bigr).
\end{align*}
Above, $(\cO_K,C_K,r_K)$ is the triple corresponding to $K$, $\epsilon_p$ denotes the Hasse invariant, and 
\begin{equation*}
\mathcal M_\Sigma = \sum_{K=\R^{4-2i}times\C^i\in\Sigma_\infty}\frac{\mathcal M_i}{|\Aut(K)|};
\quad\quad
\mathcal M_\Sigma' = 
\sum_{K=\R^{4-2i}\times\C^i\in\Sigma_\infty}\frac{\mathcal M_i'}{|\Aut(K)|}.
\end{equation*}
\end{Theorem}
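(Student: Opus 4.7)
The plan is to count $S_4$-quartic fields in $F(\Sigma)$ via Bhargava's parametrization \cite{BHCL3}, which identifies isomorphism classes of quartic triples $(Q,C,r)$ over $\Z$ with orbits of $G(\Z)=\GL_2(\Z)\times\SL_3(\Z)$ on the lattice $V(\Z)=\Z^2\otimes\Sym^2\Z^3$ of pairs of integral ternary quadratic forms. Under this dictionary the $V$-invariant $\Disc$ coincides with the discriminant of $Q$, and both the local specifications $\Sigma$ and maximality pull back to $G(\Z_p)$-invariant open subsets of $V(\Z_p)$. Thus $N_\Sigma(\psi,X)$ becomes a weighted, locally restricted count of integer orbits on $V$, and Theorem \ref{thm:mainS4fields} reduces to an asymptotic for this orbit count with main term plus secondary $X^{5/6}$ term.

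To extract the secondary terms, I would apply the hybrid counting method announced in the abstract. Fix a fundamental domain $\mathcal F$ for $G(\Z)\backslash G(\R)$ and average the count of lattice points in $g\cdot\{v\in V(\R):|\Disc(v)|<X\}\cap V(\Z)$ over $g\in\mathcal F$ against a suitable Haar measure. The main body of $\mathcal F$, where Bhargava's averaging approximates the lattice count by the real volume, produces the leading term $C_1(\Sigma)\wt\psi(1)X$ with constant \eqref{eq:leadingcons}. The secondary contribution comes from cuspidal regions of $\mathcal F$: there I would apply Sato--Shintani-style Poisson summation in the cusp coordinates (which live on the subspace of $V$ where $r_x$ degenerates against a lattice direction in $(C/\Z)^\vee$) to convert the lattice count on $V$ into a dual lattice count on the resolvent side. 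This extracts two cuspidal streams, giving the $\zeta(1/3)$ and $\zeta(2/3)$ pieces of $C_{5/6}(\Sigma)$; the Hasse-invariant weighting $\epsilon_p(r_x)$ of the second stream should arise from the local quadratic reciprocity distinguishing the two directions of degeneration. The $S_4$-collection hypothesis rules out the degenerate orbits responsible for the $X^{5/6}\log X$ term appearing in Theorem \ref{thm:mainallfields}, so only the pure $X^{5/6}$ term remains.

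After the orbit count is in hand, I would execute the $p$-adic sieve to maximality, using that the non-maximal locus at $p$ has $G(\Z_p)$-measure $O(p^{-2})$ uniformly in $p$. A standard sieve argument pushing the cutoff to a small power of $X$ isolates the maximal-order count (equivalently the quartic-field count) without damaging the error exponent. The constants then factorize: the global volume splits as an archimedean volume times an Euler product over $\Sigma_p$; the $p$-adic local volume on the cusp slice, after unfolding along the resolvent quotient, becomes $\int_{(C_K/\Z_p)^\vee_\prim}|\det r_{K,x}|_p^{-2/3}\,dx$ with the Hasse-invariant weighting in the second stream, and the archimedean constants $\mathcal M_i,\mathcal M_i'$ come from the corresponding signature-dependent integrals over $V(\R)$ with signature $(4-2i,i)$.

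The principal obstacle will be making the cusp analysis sharp enough to yield the power-saving error $O(X^{13/16+o(1)})$ while remaining uniform in the sieve. The Sato--Shintani zeta-function approach handles the bulk of the cuspidal contribution but has notoriously poor effective error bounds, while Bhargava's geometry-of-numbers is effective but does not see beyond the main term. Splicing the two over the representation $\Z^2\otimes\Sym^2\Z^3$, whose cusp geometry is markedly more intricate than that of the cubic case treated in \cite{BST}, and in particular choosing cusp coordinates in which both methods cooperate, is the heart of the new method. Once this is in place, the explicit constants in Theorem \ref{thm:mainS4fields} should follow from a deliberate but largely mechanical Euler product computation.
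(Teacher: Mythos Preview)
Your proposal has the right architecture---Bhargava's parametrization, fundamental-domain averaging with cuspidal analysis to extract the $X^{5/6}$ term, then a maximality sieve---and this is indeed the paper's approach. But there is a genuine gap in the sieve step that you treat as routine.

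You write that ``a standard sieve argument pushing the cutoff to a small power of $X$ isolates the maximal-order count \ldots\ without damaging the error exponent,'' relying on the $O(p^{-2})$ density of the non-maximal locus. This does not work here. Because $V$ is $12$-dimensional, the functional equation of the Shintani zeta function carries a factor $(ab)^{24s}$ (equivalently, the geometry-of-numbers count is only controllable for moduli up to about $X^{1/24}$), so the naive sieve cannot be pushed past $X^{5/6}$. The paper's key innovation is to decompose the non-maximality indicator as $\delta_p^{\mathrm{nm}}=M_p^1+M_p^2$, where $M_p^1$ counts index-$p$ overpairs via an explicit ``switching correspondence'' and has very small Fourier transform (yielding $N(|\widehat{M_{a,b}}|,X)\ll X^{1+o(1)}/(a^{21}b^{12})$), while $M_p^2$ is supported on the $p^4\mid\Delta$ locus and enjoys the stronger uniformity bound $N(|M_{a,b}|,X)\ll X^{1+o(1)}/(a^2b^4)$. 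Balancing these two pieces at $X=a^5b^{16}$ is what produces the $13/16$ exponent; without this decomposition the sieve stalls above $5/6$.

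A secondary point: your description of the cusp analysis as ``Sato--Shintani-style Poisson summation'' yielding ``two cuspidal streams'' is vague where the paper is quite specific. The paper splits the fundamental domain by the torus parameter $t$, and in the large-$t$ region reduces (after fibering over the first ternary form $A$) to the global zeta integral for the prehomogeneous space of ternary quadratic forms, whose special value at $s=4/3$ is computed via Ibukiyama--Saito. The $\zeta(1/3)$ and $\zeta(2/3)$ factors and the Hasse-invariant weighting $\epsilon_p$ come directly from that formula, not from a generic reciprocity argument. The $S_4$ hypothesis enters by guaranteeing that the lattice $L$ contains no points with $a_{11}=b_{11}=0$ or $\det A=0$, which is what permits the minimal fibering and ultimately forces the pole at $5/6$ to be simple.
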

See \S\ref{sec:2ndmaintermvalues} for the explicit values in the product for various $\Sigma$.

In fact there is no need for us to restrict our count to maximal orders. We also prove an analogue of Theorem \ref{thm:mainS4fields}, where we count quartic rings along with their cubic resolvent rings. To this end, we say that $\Lambda$ is a {\it finite collection of local specifications for quartic rings} if $\Lambda=(\Lambda_v)_{v\in S}$, where for all $v$ in a finite set of places $S$, the set $\Lambda_v$ consists of a finite set of quartic triples over $\Z_v$ with non-zero discriminant. We define $R(\Lambda)$ to be the set of quartic triples $(Q,C,r)$ over $\Z$ whose base change to $\Z_v$ lies in $\Lambda_v$ for every $v\in S$. Let $\psi:\R_{\geq 0}\to\R$ be a smooth function with compact support. We define the smoothed count of quartic triples in $R(\Lambda)$ analogously to \eqref{eq:fields_count}:
\begin{equation}\label{eq:rings_count}
N_{\Lambda}(\psi,X):=\sum_{(Q,C,r)\in R(\Lambda)}\psi\Bigl(\frac{|\Delta(Q)|}{X}\Bigr).
\end{equation}
We say that $\Lambda$ is an $S_4$-collection and that $R(\Lambda)$ is an $S_4$-family if the collection $\Lambda$ forces every triple $(Q,C,r)$ to have the property that $Q$ is an order in a quartic $S_4$-field. Then we have the following result:

\begin{Theorem}\label{thm:mainS4rings}
Let $R(\Lambda)$ be an $S_4$-family of quartic triples corresponding to the finite $S_4$-collection $\Lambda$ of local specification for quartic rings. Let $\psi:\R_{\geq 0}\to\R_{\geq 0}$ be a smooth function with compact support. Then 
\begin{equation*}
N_{\Lambda}(\psi,X)=C_1(\Lambda)\wt{\psi}(1)\cdot X+ C_{5/6}(\Lambda)\wt{\psi}(5/6)\cdot X^{5/6} + O(X^{3/4}\log X),
\end{equation*}
where
\begin{equation*}\label{eq:leadingcons}
C_1(\Lambda)=\frac{1}{2}\Bigl(\sum_{K\in\Lambda_\infty}\frac{1}{\#\Aut(K)}\Bigr)\prod_p
\Bigl(\sum_{(Q,C,r)\in\Lambda_p}\frac{|\Delta(Q)|_p}{\#\Aut((Q,C,r)}\Bigr)\Bigl(1-\frac{1}{p}\Bigr),
\end{equation*} and
\begin{align*}\label{eq:secondcons}    
C_{5/6}(\Lambda)&=\frac{\pi}{8}\Bigl(\mathcal M_\Lambda\zeta(1/3)\prod_{p}\Bigl(1-\frac{1}{p^{1/3}}\Bigr)\cdot\sum_{(Q,C,r)\in\Lambda_p}\frac{|\Delta(Q)|_p}{\#\Aut(Q,C,r)}\cdot \int_{x\in (C/\Z_p)^{\vee}_{\prim}}  |\det{r_x}|_p^{-2/3}dx \\&\;\;\;+\; \mathcal M_\Lambda'\zeta(2/3)\prod_{p}\Bigl(1-\frac{1}{p^{2/3}}\Bigr)\cdot\sum_{(Q,C,r)\in\Lambda_p}\frac{|\Delta(Q)|_p}{\#\Aut(Q,C,r)}\cdot \int_{x\in (C/\Z_p)^{\vee}_{\prim}}  \epsilon_p(r_x)|\det{r_x}|_p^{-2/3}dx\Bigr).
\end{align*}
Above, notation is as in Theorem \ref{thm:mainS4fields}.
\end{Theorem}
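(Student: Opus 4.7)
The plan is to reduce the problem to counting $G(\Z)$-orbits on $V(\Z)$, where $V$ is the representation of pairs of integral ternary quadratic forms and $G = \GL_2 \times \SL_3$, via Bhargava's parametrization of quartic rings with cubic resolvent. Under this bijection the class of a quartic triple $(Q,C,r) \in R(\Lambda)$ corresponds to a $G(\Z)$-orbit on $V(\Z)$, its discriminant equals $\Disc(Q)$, the automorphism group $\Aut(Q,C,r)$ matches the $G(\Z)$-stabilizer, and the local specification $\Lambda_v$ at each $v \in S$ cuts out a union of $G(\Z_v)$-orbits. Thus $N_\Lambda(\psi,X)$ becomes a smoothly weighted count of $G(\Z)$-orbits on an $S$-congruence subset of $V(\Z)$, weighted by $\psi(|\disc|/X)/\#\Stab$ and with discriminants $\asymp X$. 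Because we count rings rather than maximal orders or fields, no maximality sieve is applied; this is the structural reason the error term $O(X^{3/4}\log X)$ is sharper than the $O(X^{13/16+o(1)})$ of Theorem \ref{thm:mainS4fields}.

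I would then apply the orbit-counting method developed earlier in the paper, which blends the Sato--Shintani style analysis of the cuspidal region of the prehomogeneous vector space $(G,V)$ with geometry-of-numbers reduction theory. Applied to the congruence subset cut out by $\Lambda$, this method yields a main term $C_1(\Lambda)\wt\psi(1) X$ coming from the bulk of a Siegel fundamental domain, together with a secondary term $C_{5/6}(\Lambda)\wt\psi(5/6) X^{5/6}$ arising from the cuspidal boundary where one of the two ternary forms in a pair degenerates. The hypothesis that $\Lambda$ is an $S_4$-collection ensures that reducible orbits, which are responsible for the $X^{5/6}\log X$ refinement visible in the general Theorem \ref{thm:mainallfields}, are entirely excluded, so only the pure $X^{5/6}$ term survives.

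The remaining task is to identify the constants $C_1(\Lambda)$ and $C_{5/6}(\Lambda)$ in the form stated. The main term follows from the Jacobian change of variable between the invariant measure on $V$ and the discriminant, combined with Bhargava's mass formula, which factors the local orbital volume as $(1-p^{-1})\sum_{(Q,C,r)\in\Lambda_p}|\Delta(Q)|_p/\#\Aut(Q,C,r)$. For the secondary term, the cuspidal integral is naturally parametrized by the primitive element $x \in (C/\Z_p)^\vee$ encoding the degenerate direction, together with the induced quadratic form $r_x$ on $Q/\Z_p$. Splitting this local integral by the value of the Hasse invariant $\epsilon_p(r_x)$ yields the two sums in the statement; summing over primitive vectors produces the Euler factors $(1-p^{-1/3})$ and $(1-p^{-2/3})$, completing globally to $\zeta(1/3)$ and $\zeta(2/3)$. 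The archimedean constants $\mathcal{M}_\Lambda$ and $\mathcal{M}'_\Lambda$ encode the analogous real integral, stratified by the signature of $K \in \Lambda_\infty$.

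The principal obstacle is the cuspidal analysis that produces the $X^{5/6}$ term. In the cubic case of \cite{BST}, the relevant cusp is essentially one-dimensional and is handled by direct Fourier analysis on $\Sym^3$. In the quartic setting $(\GL_2 \times \SL_3,\,V)$, the cusp is higher-dimensional, and the degenerating ternary form interacts nontrivially with the $\GL_2$-direction; it is precisely this situation that the new method—marrying Sato--Shintani cuspidal analysis with geometry-of-numbers reduction—is designed to handle. Executing it carefully enough both to achieve the stated error $O(X^{3/4}\log X)$ and to match the resulting local measure to the explicit integral involving $\epsilon_p$ and $|\det r_x|_p^{-2/3}$ is the technical core of the argument.
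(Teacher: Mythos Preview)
Your outline is correct in its overall shape and matches the paper's approach: reduce to $G(\Z)$-orbits on $V(\Z)$ via Bhargava's parametrization, invoke the paper's counting machinery, and read off the residues of the Shintani zeta function $\xi_i(\chi_L;s)$ at $s=1$ and $s=5/6$ via Theorem~\ref{th:countingbyzeta}. You are also right that the $S_4$ hypothesis kills the $X^{5/6}\log X$ term and that no maximality sieve enters.

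However, your account of where the secondary constant comes from is off in a way that matters. The paper does not obtain $C_{5/6}(\Lambda)$ by a cuspidal integral ``naturally parametrized by primitive $x\in(C/\Z_p)^\vee_{\prim}$.'' Rather, the key step (Proposition~\ref{prop:section6_t_big_main}) is that in the region where $t$ is large one fibers over the first ternary form $A$, reducing the count to the \emph{global zeta integral of the space $S_3$ of ternary quadratic forms}. The secondary term is then the special value $Z(\cB_{\emptyset;A},\nu(L_A);4/3)$ of this simpler prehomogeneous zeta integral. That special value is computed via Ibukiyama--Saito (Theorem~\ref{thm:tczetaformula}), whose formula for $\xi_i^{tc}(s,\omega)$ is what produces the splitting into two summands and introduces the Hasse invariant $\epsilon_p$. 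Only \emph{after} this computation are the resulting $p$-adic integrals over $S_3(\Z_p)$ reinterpreted, via the change-of-variables in \S8.4, as integrals over $(C/\Z_p)^\vee_{\prim}$.

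Relatedly, your assertion that ``summing over primitive vectors produces the Euler factors $(1-p^{-1/3})$ and $(1-p^{-2/3})$'' is not correct: these factors come from the explicit local computations \eqref{eqref:3x32ndmainterms} of $\zeta_p(\iota_p,s)$ and $\zeta_p(\iota_p',s)$ in Ibukiyama--Saito, not from a primitivity condition. Without the Ibukiyama--Saito input you would have no mechanism for evaluating the special value, and the form of $C_{5/6}(\Lambda)$ would remain mysterious.
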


\noindent We conclude the section with the following remarks regarding our main results.
\begin{Remark}{\rm 
\begin{itemize}
\item[{\rm (a)}] In the $S_4$-family case, asymptotics for the number of quartic fields and quartic rings along with cubic resolvent rings are both due to Bhargava \cite{dodpf}, and the proofs readily generalize to the smooth count case. Moreover, the ``mass formula'' expression of the leading constants $C_1(\Sigma)$ and $C_1(\Lambda)$ was used by Bhargava \cite{bhargava_mass} to derive heuristics for the number of $S_n$-fields with bounded discriminants for all $n$. Further evidence for these heuristics were given by work of the authors \cite{ST_heuristics}, in which this mass formula arises naturally from a heuristic count of $S_n$-fields.
\item[{\rm (b)}] There is a striking resemblance between the mass formula expression of the secondary term in the count of cubic rings and fields \cite[Theorem 7]{BST} and the first summands of $C_{5/6}(\Sigma)$ and $C_{5/6}(\Lambda)$, since, $|\det r_{K,x}|_p^{-1}$ is equal to $|1/4|_p i(x)$, where the $i(x)$ in \cite[Theorem 7]{BST} is the index of $\Z_p[x]$ in $C_K$. The second summand on the other hand does not have a clear analogue. Indeed, the quantity $\epsilon_p(r_x)$ is not possible to define by looking just at the cubic resolvent.
\item[{\rm (c)}] The difficulty in upgrading our main results from smooth counts to sharp is the following. As we will subsequently explain, our results are proved using lattice point counts. Since these lattices are in a 12-dimensional space, it is difficult to beat the error of $X^{5/6}$ using purely formal methods. For instance, consider the problem of approximating points in a $12$-dimensional region (as in a higher dimensional Gauss circle problem) whose boundary is cut out by a quadratic form $Q$,  and using the counting function $Q^{6}$ as a substitute for the discriminant. Then the corresponding zeta function would have a similar functional equation to the Shintani Zeta function, but now the the sharp count cannot possibly have an error term better than $X^{5/6}$ since $Q(x)=n$ has around $n^5$ solutions. 
\item[{\rm (d)}] In \cite{dodpf}, Bhargava also counts the average number of 2-torsion in class groups of cubic fields. Unfortunately, our result does not currently apply to this case for {\it any} congruence family of cubic fields, since the trivial element in $\Cl_2(K)$ corresponds to the quartic algebra $K\oplus\Q$, which is not $S_4$.
Indeed, even if we were to exclude the trivial element of $\Cl_2(K)$ from our count, there is no way to impose a congruence condition on the family of cubic fields that would force a $4$-cycle in the Galois groups of the corresponding quartic fields. In forthcoming work, we refine the methods developed in this paper so as to be able to recover secondary terms for the count of $2$-torsion in class groups of cubic fields as well.
\end{itemize}
   
}\end{Remark}

\section{Outline of the proofs}

Our proofs of Theorems \ref{thm:mainallfields}, \ref{thm:mainS4fields}, and \ref{thm:mainS4rings} rely on formal theory of Shintani zeta functions developed by Sato--Shintani \cite{SatoShintani}, applied in particular to Shintani zeta functions associated to the prehomogeneous representation $2\otimes\Sym^2(3)$ of $\GL_2\times\SL_3$. A landmark result of Bhargava \cite{BHCL3} proves that the integer orbits of this representation are in bijection with triples $(Q,C,r)$ over $\Z$. As a formal consequence, we see that Shintani zeta functions associated to this prehomogeneous representation have the form
\begin{equation*}
\xi_\Lambda(s)=\sum_{(Q,C,r)\in R(\Lambda)}\frac{|\Delta(Q)|^{-s}}{|\Aut(Q)|},
\end{equation*}
for a finite collection of local specifications $\Lambda$ (not necessarily $S_4$), with $\Lambda_\infty$ being a singleton set. As a consequence of the general theory of prehomogeneous vector spaces, it follows that these Shintani zeta functions satisfy a functional equation, and have analytic continuation to the whole complex plane with poles having location and multiplicity controlled by the zeros of the corresponding Bernstein polynomial. In our case: possible poles at $1$, $5/6$, and $3/4$ of at most order two each. 

It follows, again formally, that the smoothed count $N_\Lambda(\psi,X)$ has a power series expansion, with terms of magnitude $X^c\log X$ and $X^c$ for $c\in\{1,5/6,3/4\}$, with super polynomial error term. The leading constants of these main terms are given in terms of the residues of $\xi_\Lambda(s)$. The residues at $s=1$ are (at least in the $S_4$-family case)\footnote{For general (not necessarily $S_4$) families, the residues can probably be computed by the following procedure: asymptotics for the number of quartic \'etale algebras over $\Q$ are known (as described in the introduction). It will then be necessary to count orders inside these quartic algebras weighted by the number of their cubic resolvents, using the works of Nakagawa \cite{nakagawa} and Bhargava \cite{BHCL3}. However, there does not seem to be anywhere in the literature where all this is pieced together.} known by work of Bhargava \cite{dodqf}.
However, despite much foundational work on this subject, most notably by Yukie \cite{Yukie}, the residues at $s=5/6$ and $s=3/4$ are unknown.

To prove Theorem \ref{thm:mainallfields}, we need to execute a sieve allowing us to go from counting triples $(Q,C,r)$ to counting {\it maximal triples} (those for which $Q$ is a maximal order). Moreover, we need to do this without any explicit knowledge of the leading constants of the terms in the expansion of $N_\Lambda(\psi,X)$. We explain how we do this in more detail in \S2.1. Proving Theorem \ref{thm:mainS4rings} is equivalent to showing that when $\Lambda$ is an $S_4$-collection, $\xi_\Lambda(s)$ has only a simple pole at $s=5/6$, and explicitly determining the residue. We explain how we do this in more detail in \S2.2. Theorem \ref{thm:mainS4fields} follows by inputting the results of Theorem \ref{thm:mainS4rings} into the sieve in the proof of Theorem \ref{thm:mainallfields}.

\subsection{Proof Strategy : Theorem \ref{thm:mainallfields}}

Our proof of Theorem \ref{thm:mainallfields} relies heavily on the formalism of Shintani zeta functions\cite{SatoShintani}. Using this formalism, one immediately gets an asymptotic expansion for the count of all \textit{rings} satisfying any finite number of local conditions. We then follow the standard strategy of sieving down from rings to fields by imposing maximality conditions at more and more primes. Specifically, we have:
$$N_{\Sigma}(\psi;X)=\sum_n \mu(n)N_{\Sigma\cap \Sigma^{\rm nmax}_n}(\psi;X)$$
where $\Sigma^{\rm nmax}_n$ is the set of conditions insisting that our ring is non-maximal at primes dividing $n$.
There are four ingredients needed to carry out this sieve:

\begin{enumerate}
    \item For $n$ small, we need a power series expansion for $N_{\Sigma\cap \Sigma_n^{\rm nmax}}(\psi;X)$, recovering the first and second terms, with good $n$-dependence on the error term.
    \item For $n$ large, we need good uniformity estimates for the number of triples with bounded discriminant, which are non-maximal at all primes dividing $n$.
    \item We need to prove that the leading constants of the possible second main terms $c_n'X^{5/6}\log X$ and $c_nX^{5/6}$ of $N_{\Sigma\cap \Sigma_n^{\rm nmax}}(\psi;X)$, when summed over $n$, to converge.
    \item We need to prove that the contributions of the possible lower order terms $d_n'X^{3/4}\log X$ and $d_nX^{3/4}$ of $N_{\Sigma\cap \Sigma_n^{\rm nmax}}(\psi;X)$, when summed over $n$, to not dominate the secondary term of size $X^{5/6}$.
\end{enumerate}
The first problem is made more difficult in this setting than in that of cubic rings because the dimension of the corresponding prehomogeneous vector spaces jumps from $3$ to $12$. In the language of geometry-of-numbers, this means that we can only take $n$ up to $X^{\frac1{24}}$ before our balls don't `fit', instead of up to $X^{\frac1{8}}$ in the cubic case. In the language of Shintani zeta functions, this shows up in the discriminant factor gaining a factor of $n^{24}$ instead of $n^8$ in the functional equation. 

In the previous execution of this strategy in the cubic case \cite{BST,MR3127806}, one optimized the error in these methods by combining bounds on Fourier transforms of (non)maximal congruence conditions, together with a uniformity estimate of $\frac{X}{n^2}$ derived from studying cubic rings directly. An issue that arises is that the congruence condition defining non-maximality is quite complicated and has a large Fourier transform. We improve on this method by breaking the non-maximal congruence condition at $p$ into two (see \S\ref{sec:switching} for more details). Loosely speaking these are:

\begin{itemize}
    \item Those that correspond to being non-maximal of index $p$, and
    \item Everything else
\end{itemize}

The trick is that the first function has a very small Fourier transform, whereas the second has a better uniformity estimate of $\frac{X}{p^4}$. Combining these makes the method work. 

To solve the second problem of bounding the constant terms $c_n$, we simply use our asymptotic formula and plug in an optimal value of $X$. Combined with the improvements coming from the splitting derived above, this ends up being strong enough to make the sum converge.

\begin{Remark}{\rm 
    We use the language of \emph{Switching Correspondences} analogously to the authors previous paper with Bhargava \cite{BST} where we broke up the non-maximal condition \textit{precisely} into two simpler ones induced by correspondences. In the quartic case, while it is possible (though non-trivial, at least to the authors!) to break up the non-maximal conditions into a large sum of functions corresponding to `switching', doing so introduces large coefficients (both positive and negative, of course) which ends up making it difficult to control the error. Hence, we use this less precise version, where we use exactly one switching correspondence, and treat the (multi) set of remaining nonmaximal quartic fields as an error term to be bounded.
}\end{Remark}

\subsection{Proof Strategy: Counting}

If one wants to obtain actual asymptotics as opposed to just proving the existence of an asymptotic, one needs to actually compute the residues of the Shintani zeta function. Shintani's methods produce mysterious terms that are difficult to get a handle on. Instead, we use geometry of numbers methods to count in a fundamental domain directly, and derive a precise asymptotic up to an $o(X^{5/6})$. Note that this yields a count for triples $(Q,C,r)$, not for fields. However, for this approach one may impose finitely many congruence condition without making the problem any more difficult. Therefore, we use this method to compute the residues of the Shintani zeta function (to the right of 5/6), and then we turn around and plug these values into the sieving formalism described above.

As in Bhargava's work \cite{dodqf}, we start with needing to evaluate an integral of the form
$$\int_{g\in\FF} \#(L\cap g\cB)^{\Delta<X}dg,$$
for a fundamental domain $\FF$ for the action of $\GL_2(\Z)\times\SL_3(\Z)$ on $\GL_2(\R)\times\SL_3(\R)$, where $L\subset V(\Z)$ is an $S_4$-set defined by finitely many congruence conditions, and $\cB$ is a (in our case, a smoothed out) ball. Bhargava approximates the main term of this integral via the following procedure.
\begin{itemize}
    \item For $g$ in the `main body', one may simply approximate the number of points by the volume.
    \item For $g$ near cusps the box becomes skewed, and these contributions are bounded using Davenports lemma.
    \item For $g$ deep in the cusp, enough `co-ordinates' are forced to become 0 (by virtue of being integers) that the lattice points represent non $S_4$-rings and this region can be discarded.
\end{itemize}  
For our purposes, since we need higher order terms, we must compute the number of points in skewed balls precisely. We proceed as follows:
\begin{itemize}
    \item In each box, we divide the coordinates into one of three ranges: zero, small, and large.
    \item For the coefficients whose ranges are large, we may approximate their contribution by the volume in that direction, and `project' onto the remaining coefficients
    \item For the coordinates whose ranges are small, we do not approximate at all, and instead record the sum over these coordinates as a Mellin integral over an appropriate zeta function (possibly in several variables).
    \item We then combine the zeta integrals with the integral over the group to obtain higher-dimensional integrals with polar divisors, where certain polar contributions constitute the higher-order poles of the Shintani zeta function.
    \item Using the Iwasawa ($NAK$) decomposition, it turns out that only the toric contribution needs to be kept. The compact $K$ can be discarded by making our box $K$-invariant, and the unipotent $N$ only ever adds coordinates with zero range to other coordinates (not affecting the count), or coordinates with small ranges to ones with large range, not affecting the projection. This allows for a clean analysis of polar divisors.
    \item We use the Shintani formalism to rule out potential polar contributions at values of $s$ ruled out by the Bernstein polynomial.

\end{itemize}

\begin{Remark}{\rm 
\begin{enumerate}
\item In principle, our procedure should allows us to compute the residues without requiring the $S_4$-condition. However, this assumption simplifies our task greatly as it allows us not to venture into certain regions of the cusp (as explained later, these regions are: $a_{11}=b_{11}=0$ and $\det A=0$). This cuts down the number of possibilities and cusps we have to consider. We expect that our method can be used to tackle the question of smoothly counting all quartic fields, but this requires going deeper into the cusp. We undertake this in forthcoming work.
\item While we use the Shintani formalism to rule out certain poles, our method also rules out certain poles that are theoretically permitted by the Shintani formalism. In particular, for $S_4$-families, we show that there is only a simple pole at $s=5/6$, as opposed to a double pole.
\end{enumerate}
}\end{Remark}

\subsection*{Outline of the paper}

In Part II we set up the main tools used in the paper. First, in \S3, we review the parametrization setup for the prehomogeneous space corresponding to triples $(Q,C,r)$, the corresponding Shintani Zeta functions, and our algebraic groups. Section 4 introduces the Mellin transform and describes how we use it to translate questions about counting points smoothly to zeta functions. We also introduce convenient notation for zeta functions in multiple variables to streamline arguments we make such as `projecting away' large variables. It is important to do this carefully, as we can pick up the behavior of the smooth count as a variable transitions from a large range to a small range via certain residues of an appropriate zeta function. In \S5 we set up the ``global zeta integral'' formalism, which allows us to compute the residues of Shintani zeta functions from smooth counts.

In Part III we execute the count. \S6 and \S7 are similar, dealing with different parts of the integral corresponding to whether or not we are near the $\SL_2$-cusp. The second main term ends up coming from the cusp regions studied in \S7, and we obtain it as a special-value of a different Shintani zeta function corresponding to symmetric 3$\times$ 3 matrices. In \S8 we compute this special value using work of Ibukiyama--Sato. We also re-express the answer in language of rings, and compute it explicitly for maximal orders. This computation is merely to get an explicit answer, and we do not have any special insight about the final answer. Having done this, Theorem \ref{thm:mainS4rings} follows.

In Part IV we execute the sieve. This part black-boxes the results in Part III completely. We begin in \S10 by recalling Bhargava's results on non-maximal quartic rings, and also develop the ``switching correspondences'' that we need. Then in \S11, we prove uniformity estimates on various types of non-maximal rings, relying on work of Bhargava and Nakagawa. We also use heavily the functional equation of the Shintani zeta function, and as such need estimates on the {\it Fourier transforms} of the characteristic functions of non-maximal elements. Here we use the work of Hough \cite{hough}. Finally in \S12 we execute the sieve, which allows us to prove Theorems \ref{thm:mainallfields} and \ref{thm:mainS4fields}.

\subsection*{Acknowledgments}

It is a pleasure to thank Evan O'Dorney, Jordan Ellenberg, Fatemehzahra Janbazi, Will Sawin, Takashi Taniguchi, and Frank Thorne for helpful discussions and comments. The first-named author was supported by a Simons fellowship and an NSERC discovery grant.

\part{Setup}

\section{Preliminaries}

In this section, we collect resu.lts of Bhargava \cite{BHCL3} on the parametrization of quartic rings and fields and results of Sato--Shintani \cite{SatoShintani} on the Shintani zeta functions associated to $V$. 

\subsection{The parametrization of quartic rings and fields}

We begin with the parametrization of cubic rings. Let $U=\Sym^3(2)$ be the space of binary cubic forms. That is, if $R$ is any ring, $U(R)$ is the set of all elements $\{ax^3+bx^2y+cxy^2+dy^3:a,b,c,d\in R\}$. The group $\GL_2(R)$ acts on $U(R)$ via the ``twisted'' action $\gamma\cdot f(x,y)=f((x,y)\cdot \gamma)/\det(\gamma)$. We denote the discriminant of $f$ by $\Delta(f)$ and consider $a$, $b$, $c$, and $d$ to be functions on $U(R)$ in the obvious way. The following result is due to works of Levi \cite{Levi}, Delone--Fadeev \cite{DFCF}, and Gan--Gross--Savin \cite{GGSCF} for the case $R=\Z$, and due to Gross--Lucianovic \cite{MR2521487} for PIDs:
\begin{theorem}\label{th:cubic_param}
Let $R$ be a principal ideal domain. Then there is a natural bijection between isomorphism classes of cubic rings over $R$ and $\GL_2(R)$-orbits on $U(R)$, satisfying the following properties.
\begin{enumerate}
\item If the $\GL_2(R)$-orbit of $f\in U(R)$ corresponds to the cubic ring $R_f$, then $\Delta(R_f)=\Delta(f)$.
\item For $f$ and $R_f$ as above, $\Aut(R_f)$ is isomorphic to $\Stab_{\GL_2(R)}(f)$.
\end{enumerate}
\end{theorem}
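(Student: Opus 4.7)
The plan is to realize each cubic ring $A$ over $R$ in terms of a distinguished type of basis, and to read off the multiplication table as a binary cubic form. Since $R$ is a PID and $A$ is free of rank $3$ as an $R$-module, any cubic ring admits an $R$-basis of the form $(1, \omega_1, \omega_2)$, and I call such a basis \emph{normal} when additionally $\omega_1 \omega_2 \in R$. Any basis can be normalized: writing $\omega_1 \omega_2 = n + a\omega_2 + b\omega_1$ with $a, b, n \in R$, the substitution $(\omega_1, \omega_2) \mapsto (\omega_1 - a,\, \omega_2 - b)$ absorbs the linear terms and places the new product in $R$.

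Given a normal basis, I would expand
\[
\omega_1 \omega_2 = -ad, \qquad \omega_1^2 = -ac + b\omega_1 - a\omega_2, \qquad \omega_2^2 = -bd + d\omega_1 - c\omega_2,
\]
where the four parameters $a, b, c, d \in R$ are defined by the coefficients of $\omega_1$ and $\omega_2$ in $\omega_1^2$ and $\omega_2^2$, and where the three constant terms (and the value of $\omega_1 \omega_2$) are forced by the associativity relations $\omega_1 \cdot (\omega_1 \omega_2) = \omega_1^2 \cdot \omega_2$ and $\omega_2 \cdot (\omega_1 \omega_2) = \omega_1 \cdot \omega_2^2$. Conversely, for any $(a,b,c,d) \in R^4$, defining $R_f := R \oplus R\omega_1 \oplus R\omega_2$ with the above table yields a commutative associative ring, as a direct check on the generators $\omega_1^3$ and $\omega_2^3$ verifies the remaining associativity identities. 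This establishes a bijection between normally-based cubic rings and $U(R)$, via $(a,b,c,d) \leftrightarrow f(x,y) = ax^3 + bx^2 y + cxy^2 + dy^3$.

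Next, I would quotient out the ambiguity of the normal basis. Two normal bases of $A$ differ by an element of $R^2 \rtimes \GL_2(R)$ acting on $(\omega_1, \omega_2)$ by translation and by linear transformation; renormalization after a generic such change absorbs the translation component, so the $\GL_2(R)$-orbit of $f$ is an intrinsic invariant of $A$. A direct computation replacing $(\omega_1, \omega_2)$ by $(\omega_1, \omega_2)\gamma$ and re-reading off the structure constants shows that $f$ transforms as $f \mapsto f((x,y)\gamma)/\det \gamma$; the inverse determinant arises because the parameters $(a,b,c,d)$ are implicitly normalized by the volume form $\omega_1 \wedge \omega_2$, which scales by $\det \gamma$. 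Assertion (1) then follows by computing the determinant of the trace form of $A$ in the basis $(1, \omega_1, \omega_2)$, yielding the universal expression $\Delta(f) = b^2 c^2 - 4ac^3 - 4b^3 d - 27 a^2 d^2 + 18 abcd$. For (2), a ring automorphism of $R_f$ fixes $1$ and hence descends to an element $\gamma$ of $\GL_2(R) = \GL(A/R)$; preserving the multiplication table is equivalent to $\gamma \cdot f = f$ under the twisted action, and conversely each element of $\Stab_{\GL_2(R)}(f)$ lifts uniquely (after renormalization) to a ring automorphism.

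The main obstacle I anticipate is the careful bookkeeping of the $\det(\gamma)^{-1}$ factor in the twisted action. It is easy to see that \emph{some} $\GL_2(R)$-action intertwines the bijection, but arriving at the precise twisted action---as opposed to the standard one---requires tracking how renormalization after a change of basis interacts with the identification of $(a,b,c,d)$ as ``coefficients of $f$''; getting signs and conventions correct here is the delicate part of the argument.
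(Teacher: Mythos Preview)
Your sketch is correct and follows the classical Delone--Fadeev/Gan--Gross--Savin argument: normalize a basis of $A/R$, read off the four structure constants as a binary cubic, and track the $\GL_2$-action on bases to recover the twisted action on $U(R)$. The multiplication table and the identification of $\Disc(R_f)$ with $\Delta(f)$ are as in the standard references.

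The paper itself does not prove this theorem; it simply states the result and attributes it to Levi, Delone--Fadeev, Gan--Gross--Savin (for $R=\Z$) and Gross--Lucianovic (for PIDs). So you have supplied exactly the argument those references contain, whereas the paper treats it as background. Your anticipated ``obstacle''---the bookkeeping of the $\det(\gamma)^{-1}$ twist---is real but routine once you observe, as you do, that the structure constants live naturally in $\Sym^3(A/R)^\vee \otimes \wedge^2(A/R)$, and $\wedge^2(A/R)$ transforms by $\det(\gamma)$.
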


Recall that we denote the space of pairs of ternary quadratic forms by $V=2\times\Sym^2(3)$. For a ring $R$, we represent elements in $V(R)$ by pairs $(A,B)$ of $3\times 3$ symmetric matrices, where $A$ and $B$ are the Gram matrices of the corresponding quadratic forms. Denote the coefficients of $(A,B)$ by $a_{ij}$ and $b_{ij}$ and write
\begin{equation*}
(A,B)=\left[\left(
\begin{array}{ccc}
a_{11} & \frac{a_{12}}{2} & \frac{a_{13}}{2}\\[.05in]
\frac{a_{12}}{2} & a_{22} & \frac{a_{23}}{2}\\[.05in]
\frac{a_{13}}{2} & \frac{a_{23}}{2} & a_{33}
\end{array}
\right),
\left(
\begin{array}{ccc}
b_{11} & \frac{b_{12}}{2} & \frac{b_{13}}{2}\\[.05in]
\frac{b_{12}}{2} & b_{22} & \frac{b_{23}}{2}\\[.05in]
\frac{b_{13}}{2} & \frac{b_{23}}{2} & b_{33}
\end{array}
\right)
\right],
\end{equation*}
where $a_{ij}$ and $b_{ij}\in R$. We will consider $a_{ij}$ and $b_{ij}$ to be functions from $V(R)$ to $R$ in the obvious way.

The group $\GL_2\times\GL_3$ acts on $V$ via a linear change of variables:
\begin{equation*}
(\gamma_2,\gamma_3)\cdot(A,B)=\Bigl(\gamma_2\Bigl(
\begin{array}{c} \gamma_3A\gamma_3^t\\\gamma_3B\gamma_3^t\end{array}\Bigr)\Bigr)^t.
\end{equation*}
We define the algebraic group $G$ to be the following subgroup of $\GL_2\times\GL_3$:
\begin{equation}\label{eq:G_definition}
G:=\{(g_2,g_3)\in\GL_2\times\GL_3:\det(g_2)\det(g_3)=1\}.
\end{equation}
Consider the determinant map
\begin{equation*}
p_2:G\to \GL_2\cong\{(\lambda,g_2)\in\mathbb{G}_m\times\GL_2 :\lambda\det(g_2)=1\},
\end{equation*}
which sends $(g_2,g_3)$ to $(\det(g_3),g_2)$. The kernel of this map is the (normal) subgroup $\SL_3$ of $G$.
The $\SL_3$-invariants of $V$ are the coefficients of the {\it cubic resolvent form}, where the cubic resolvent map on $V(R)$ is given by
\begin{equation*}
\Res:V\to U,\quad\quad\Res(A,B):=4\det(Ax-By).
\end{equation*}
The actions of $G$ on $V$ and $U=\Sym^3(2)$, the space of binary cubic forms are equivariant, in the sense that $\Res(g\cdot(A,B))=p_2(g)\cdot\Res(A,B)$. Therefore, the $G$-relative invariants of $V$ are the same as the $\GL_2$-relative invariants on $U$. The ring of relative invariants for the latter action is generated by the discriminant. Define the {\it discriminant} polynomial $\Delta\in\Z[V]$ to be defined by $\Delta(A,B):=\Delta(\Res(A,B))$, which is a degree $12$ homogeneous polynomial in the coefficients of $A$ and $B$. Then it follows that $\Delta$ generates the ring of relative invariants for the action of $\GL_2\times\GL_3$ on $V$.

The following result is due to Bhargava \cite{BHCL3} in the case $R=\Z$ and Wood \cite{WoodThesis,MR2948473} for the case when $R$ is a PID. (In fact, Wood's generalization is vastly more general, holding in cases when $\Z$ can be replaced with an aritrary base scheme. But we only need this for the PID case.)

\begin{theorem}\label{th:quartic_param}
Let $R$ be a principal ideal domain. There is a natural bijection between isomorphism classes of triples $(Q,C,r)$, where $Q$ is a quartic ring and $C$ is a cubic resolvent ring of $Q$, and $G(R)$-orbits on $V(R)$, satisfying the following properties.  under this bijection, then the following are true:
\begin{itemize}
\item[{\rm (a)}] If $(Q,C,r)$ corresponds to  $(A,B)$, then $C$ corresponds to the $\GL_2(R)$-orbit of $\Res(A,B)$ under the parametrization of Theorem \ref{th:cubic_param}. Moreover, we have $\Delta(Q)=\Delta(C)=\Delta(A,B)$.
\item[{\rm (b)}] For $(Q,C,r)$ and $(A,B)$ as above, $\Aut(Q,C,r)$ is isomorphic to $\Stab_{G(R)}(A,B)$.
\end{itemize}
\end{theorem}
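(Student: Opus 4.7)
The plan is to construct both directions of the bijection explicitly and then verify properties (a) and (b). Since $R$ is a PID and any torsion-free finitely generated $R$-module is free, both $Q$ and $C$ are free of ranks $4$ and $3$ respectively, so the quotients $Q/R$ and $C/R$ are free of ranks $3$ and $2$. Given a triple $(Q,C,r)$, I would choose $R$-bases $\{e_1,e_2,e_3\}$ of $Q/R$ and $\{f_1,f_2\}$ of $C/R$ with dual basis $\{f_1^*,f_2^*\}$ of $(C/R)^\vee$. The resolvent map $r$ is by definition a quadratic map $Q/R \to C/R$, so composing with $f_1^*$ and $f_2^*$ produces two $R$-valued quadratic forms on $Q/R \cong R^3$, which give a pair $(A,B)$ of $3 \times 3$ symmetric Gram matrices and hence an element of $V(R)$. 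Changing the basis of $Q/R$ by $g_3 \in \GL_3(R)$ and the basis of $(C/R)^\vee$ by $g_2 \in \GL_2(R)$ implements the standard action of $\GL_2(R)\times\GL_3(R)$ on $V(R)$. Demanding that $\Res(A,B)$ transform exactly according to the twisted $\GL_2$-action on binary cubic forms used in Theorem \ref{th:cubic_param}---so that the attached cubic ring is unambiguously $C$---then pins down the determinant relation $\det(g_2)\det(g_3)=1$ and singles out the subgroup $G(R)$.

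For the inverse direction, given $(A,B) \in V(R)$, I would first set $C := R_{\Res(A,B)}$ using Theorem \ref{th:cubic_param}. The main content is then the construction of the quartic ring $Q$ together with the resolvent map $r$: take $Q := R \oplus R^3$ with $\{e_1,e_2,e_3\}$ the standard basis of $R^3$, and specify a multiplication table $e_i \cdot e_j = c_{ij}^0 + c_{ij}^1 e_1 + c_{ij}^2 e_2 + c_{ij}^3 e_3$ whose structure constants are explicit polynomials in the entries $a_{kl},b_{kl}$ of $A$ and $B$. These polynomials are the ones found in Bhargava's original work over $\Z$; the resolvent map is tautologically defined by $r(v) := (A(v),B(v)) \in R^2 \cong C/R$, and the $R$-algebra map $Q \to C$ required for $r$ to be a resolvent can be written down from the same data.

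The main obstacle is verifying that the multiplication on $Q$ so defined is associative. This is the polynomial miracle underlying Bhargava's parametrization: it asserts the vanishing of a large but explicit system of polynomial identities in the $12$ variables $a_{ij}, b_{ij}$. Since these identities hold universally over $\Z$ by Bhargava's work, they continue to hold after base change to any commutative ring $R$; this is essentially the strategy used by Wood to extend the parametrization to arbitrary bases. Alternatively, since $R$ is a PID and $Q$ is torsion-free, one may reduce to the fraction field $K$, where $(A,B)$ generically corresponds to an \'etale quartic $K$-algebra whose multiplication table is readily computed from Galois data; associativity over $K$ then forces associativity over $R$ because the structure constants are defined over $R$.

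Once the two constructions are in place, I would check that they are mutually inverse: starting from $(Q,C,r)$, choosing bases, reading off $(A,B)$, and rebuilding a ring from the structure constants reproduces the original multiplication, because the formulas express the products $e_ie_j$ as polynomial invariants of the quadratic map $r$. Property (a) is then immediate: $C$ corresponds to $\Res(A,B)$ by construction, and $\Delta(Q) = \Delta(C) = \Delta(A,B)$ follows from Theorem \ref{th:cubic_param} together with the universal polynomial identity $\Delta(Q) = \Delta(\Res(A,B))$. Property (b) is a consequence of functoriality of the construction: an automorphism of $(Q,C,r)$ is precisely an $R$-linear change of basis of $Q/R$ and $(C/R)^\vee$ preserving $(A,B)$, i.e.\ an element of $\Stab_{G(R)}(A,B)$.
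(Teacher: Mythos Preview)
The paper does not actually prove this theorem: it is stated with attribution to Bhargava \cite{BHCL3} for $R=\Z$ and to Wood \cite{WoodThesis,MR2948473} for general PIDs (in fact arbitrary base schemes), and the reader is referred to \cite[\S3.2,\S3.3]{BHCL3} for the explicit multiplication tables. Your sketch is essentially an outline of the argument in those cited works---choosing bases of $Q/R$ and $C/R$, reading off $(A,B)$ from the resolvent map, and conversely building $Q$ from Bhargava's structure-constant formulas with associativity established as a universal polynomial identity over $\Z$---so there is no disagreement in approach, only that you have written out what the paper merely cites.

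One small caution: your remark that the determinant constraint $\det(g_2)\det(g_3)=1$ is ``pinned down'' by demanding $\Res(A,B)$ transform correctly is a bit loose. In Bhargava's setup the group acting is really $\GL_2\times\GL_3$ modulo the central $\mathbb{G}_m$ that acts trivially on orbits, and the paper's choice of $G$ as the subgroup with $\det(g_2)\det(g_3)=1$ is one convenient way to kill that redundancy; the resolvent compatibility alone does not single it out uniquely. This does not affect the correctness of your sketch, but the justification you give for that step is not quite the right one.
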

See \cite[\S3.2, \S3.3]{BHCL3} for a complete description of basis' and multiplication tables for $Q$ and $C$ in terms of the coefficients of $A$ and $B$.

\subsection{Fundamental domains and measures}

In this subsection, we set notation for Iwasawa coordinates on $G(\R)$, describe a fundamental domain for the action of $G(\Z)$ on $G(\R)$, describe the real orbits for the action of $G(\R)$ on $V(\R)$, and prove a (standard) change of measures formula.

\subsubsection*{Iwasawa coordinates}
The Iwasawa decomposition allows us to write
\begin{equation*}
G(\R)=\Lambda NAK,
\end{equation*}
where $N$ is the subgroup $\{(u_2,u_3)\}$ of pairs of unipotent lower triangular matrices, $A$ is the subgroup of pairs of diagonal matrices, $K=\SO_2(\R)\times\SO_3(\R)$ is a maximal compact subgroup of $G(\R)$, and $\Lambda\cong\R^\times$ is the group of elements $(\lambda_2,\lambda_3)$, where $\lambda_2$ is the $2\times 2$ diagonal matrix with $\lambda^{-3}$ as it's coefficients, and $\lambda_3=$ is the $3\times 3$ diagonal matrix with $\lambda^2$ as its coefficients, for $\lambda\in\R^\times$. It is easy to see that $(\lambda_2,\lambda_3)\in\R^\times$ acts on elements in $V(\R)$ by scalar multiplication by $\lambda$, and so we will denote elements in $\Lambda$ simply by $\lambda\in\R^\times$. We write elements in $N$ as $u=(u_{12},u'_{21},u'_{31},u'_{32})$, where $u_{ij}$ (resp.\ $u'_{ij}$ denote the $ij$th coefficient of the lower triangular unipotent matrix $u_2$ (resp.\ $u_3$), and elements of $A$ as $s=(t,s_1,s_2)$, where the $2\times 2$ matrix corresponding to $s$ has $t^{-1}$ and $t$ as its diagonal coefficients while the $3\times 3$ matrix corresponding to $s$ has $s_1^{-2}s_2^{-1}$, $s_1s_2^{-1}$, and $s_1s_2^2$ as its diagonal coefficients. In these coordinates, 
\begin{equation*}
dg=t^{-2}s_1^{-6}s_2^{-6}d^\times\lambda dud^\times sdk
\end{equation*}
is a Haar-measure on $G(\R)$, where $du$ is Haar-measure on $N(\R)$ normalized so that $N(\Z)$ has covolume-$1$ in $N(\R)$, $dk$ is Haar-measure on $K$ normalized so that $K$ has volume $1$, and $d^\times\theta$ denotes $\theta^{-1}d\theta$ for any~$\theta$.

This induces a natural measure for $\SL_2$ and $\SL_3$ as well. We refer to these as $dg_2$ and $dg_3$.

\subsubsection*{A fundamental domain for $G(\Z)\backslash G(\R)$}
We next describe a fundamental domain $\FF$ for the action of
$G(\Z)$ on $G(\R)$. Such an $\FF$ is expressible in the form $\R_{>0}\times\FF_2\times\FF_3$, where $\FF_i$ is a fundamental domain for the action of $\SL_i(\Z)$ on $\SL_i(\R)$. Let $\FF_2$ denote Gauss' fundamental domain for the action of $\SL_2(\Z)$ on $\SL_2(\R)$ (see \cite[\S5.1]{BST} for an explicit description). The domain $\FF_3$ can be sandwiched between two Siegel domains $\mathcal{S}_1\subset\FF\subset\mathcal{S}_2$, where
\begin{equation*}
    \mathcal{S}_1=\overline{N_3}\{(s_1,s_2):s_1,s_2>C\}\SO_3(\R),\quad
    \mathcal{S}_2=\overline{N_3}\{(s_1,s_2):s_1,s_2>c\}\SO_3(\R),
\end{equation*}
for positive real numbers $c<C$ and a fundamental domain $\overline{N_3}$ for the action of $N_3(\Z)$ on $N_3(\R)$. We choose $\FF_3$ to be the explicit {\it box shaped at infinity} fundamental domain constructed in work of Grenier \cite[\S6]{MR934172}.
We define $\delta_{\FF_3}(s_1,s_2)$ to be the measure of $$N_{s_1,s_2}:=\{n\subset \ol{N}\mid n(s_1,s_2)\in \FF_3\}.$$ This will come up as we frequently deal with functions that are $N$-invariant.  We define $\delta_{\FF_2}(t)$ analogously. Since $\FF_3$ is box shaped at infinity, it follows that for $s_1$ large enough, $\delta_{\FF_3}(s_1,s_2)$ only depends on $s_2$, for $s_2$ large enough, $\delta_{\FF_3}(s_1,s_2)$ only depends on $s_1$, and for $s_1$ and $s_2$ large enough, 
$\delta_{\FF_3}(s_1,s_2)=1$.

\subsubsection*{The action of $G(\R)$ on $V(\R)$}
Since $(G,V)$ is prehomogeneous, it follows that the $G(\C)$-action on $V(\C)$ has one open orbit. Indeed, the set of elements with nonzero discriminant form a single $G(\C)$-orbit, and the stabilizer in $G(\C)$ of any such element is isomorphic to $S_4=\Aut(\C^4)$, in accordance with Theorem \ref{th:quartic_param}. The situation over $\R$ is only slightly more complicated: the set of elements in $V(\R)$ having nonzero discriminant break up into three open orbits, one each corresponding to the \'etale quartic algebras $\R^4$, $\R^2\times\C$, and $\C^2$ over $\R$. We denote the set of elements in $V(\R)$ corresponding to these three orbits by $V(\R)^{(0)}$, $V(\R)^{(1)}$, and $V(\R)^{(2)}$, respectively. The stabilizers in $G(\R)$ of elements in these orbits are respectively isomorphic to $S_4\cong\Aut(\R^4)$, $V_4\cong\Aut(\R^2\times\C)$, and $D_4\cong\Aut(\C^2)$.

The torus $\Lambda A$ acts on $V$, and scales each coefficient by an amount that we call the {\it weight} of that coefficient. We denote this weight function by $w$, and explicitly write the weights of the 12 coefficients $(a_{ij}))_{1\leq i\leq j\leq 3}$ and $(b_{ij}))_{1\leq i\leq j\leq 3}$ to be
\begin{equation*}
\left(
\begin{array}{ccc}
\lambda t^{-1}s_1^{-4}s_2^{-2} & \lambda  t^{-1}s_1^{-1}s_2^{-2} & \lambda t^{-1}s_1^{-1}s_2\\[.05in]
 & \lambda t^{-1}s_1^{2}s_2^{-2} & \lambda t^{-1}s_1^{2}s_2\\[.05in]
 &  & \lambda t^{-1}s_1^{2}s_2^{4}
\end{array}
\right),
\left(
\begin{array}{ccc}
\lambda ts_1^{-4}s_2^{-2} & \lambda ts_1^{-1}s_2^{-2} & \lambda ts_1^{-1}s_2\\[.05in]
 &\lambda ts_1^{2}s_2^{-2} &\lambda ts_1^{2}s_2\\[.05in]
 &  &\lambda ts_1^{2}s_2^{4}
\end{array}
\right).
\end{equation*}
Then for $c\in\{a_{ij},b_{ij}\}$ and $g=\lambda n(t,s_1,s_2)k$ written in Iwasawa coordinates, we write $w_c(g)$ for the weight (which of course only depends on $\lambda$, $t$, $s_1$, and $s_2$.

\subsection{Smoothness and choices of measure}\label{sec:measures}

We begin by proving some smoothness results on our group actions. Note that the resolvent map $\Res:V\to U$ is $G$-equivariant and $\SL_3$-invariant. We have the following result.
\begin{lemma}\label{lem:smoothness}
The actions of $G$ on $V^{\Delta\neq 0}$ and $\GL_2$ on $U^{\Delta\neq 0}$ are smooth. Moreover, the resolvent map $\Res: V\to U$ identifies $U^{\Delta\neq 0}$ with $\SL_3\backslash V^{\Delta\neq 0}$.
\end{lemma}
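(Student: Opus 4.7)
The plan is to read ``smooth'' as saying that the stabilizer group scheme of the action is smooth over the base (equivalently, in characteristic zero, that the orbit maps are smooth morphisms) and then to reduce all three claims to the parametrization theorems combined with standard invariant theory for the prehomogeneous space $2 \otimes \Sym^2(3)$.

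For the first two smoothness assertions, I would work over an algebraically closed field $\bar k$ and invoke Theorem~\ref{th:quartic_param} to identify the $G(\bar k)$-stabilizer of any $(A,B) \in V^{\Delta \neq 0}(\bar k)$ with $\Aut(Q,C,r)$, where $(Q,C,r)$ is the corresponding triple. Since $\Delta(Q) = \Delta(A,B) \neq 0$, the ring $Q$ is an \'etale quartic $\bar k$-algebra, so $\Aut(Q,C,r) \hookrightarrow \Aut(Q) \cong S_4$ is finite. Hence every geometric stabilizer is finite and reduced, so smooth, yielding smoothness of the $G$-action. The entirely parallel argument via Theorem~\ref{th:cubic_param}, producing $\GL_2$-stabilizers as subgroups of $S_3$, handles the $\GL_2$-action on $U^{\Delta \neq 0}$.

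For the identification $U^{\Delta \neq 0} = \SL_3 \backslash V^{\Delta \neq 0}$, the resolvent map is $\SL_3$-invariant by construction. The classical fact that the ring of $\SL_3$-polynomial invariants on $V$ is generated exactly by the four coefficients of $\Res(A,B)$ identifies the affine GIT quotient $V /\!/ \SL_3$ with $U$. On $V^{\Delta \neq 0}$ the $\SL_3$-stabilizers are contained in $S_4 \cap \SL_3 = V_4$, the Klein four-group appearing as the kernel of the standard surjection $S_4 \to S_3$, so orbits are closed and stabilizers are finite; in other words $V^{\Delta \neq 0}$ lies in the stable locus, where the GIT quotient coincides with the geometric quotient. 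The inclusion $\Res(V^{\Delta \neq 0}) \subseteq U^{\Delta \neq 0}$ is immediate from $\Delta(A,B) = \Delta(\Res(A,B))$, while surjectivity on geometric points is witnessed by $Q = R \oplus \bar k$ for the \'etale cubic $R$ corresponding to any $f \in U^{\Delta \neq 0}(\bar k)$.

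The main obstacle is the scheme-theoretic nature of the last claim: bijectivity on geometric points falls out cleanly from the parametrization together with the sequence $1 \to V_4 \to S_4 \to S_3 \to 1$ (which confirms that each fiber of $\Res$ over $U^{\Delta \neq 0}$ is a single $\SL_3$-orbit, since the $G$-orbit breaks into $|\GL_2 \cdot f|$ such orbits, matched exactly by the $\GL_2$-translates of $f$). Upgrading this bijection to an isomorphism of schemes requires invoking GIT stability, or equivalently verifying directly that $\Res|_{V^{\Delta \neq 0}}$ is a smooth morphism of relative dimension $8 = \dim \SL_3$ with geometrically integral fibers---a dimension count that is consistent ($\dim V - \dim \SL_3 = 12 - 8 = 4 = \dim U$) and, combined with the smoothness of the $G$-action already established, forces smoothness of $\Res$ on the $\Delta \neq 0$ locus.
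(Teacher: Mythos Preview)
Your approach to the smoothness claims is similar in spirit to the paper's---both use the parametrization theorems to identify stabilizers with automorphism groups of \'etale algebras---but the paper works over $\Z$, not just over an algebraically closed field. This matters because the lemma is later used (in the proof of Proposition~\ref{prop:first_change_of_measures}) to conclude that a certain Jacobian constant lies in $\Z^\times$, which requires smoothness of the action over $\Z$, hence \'etaleness of the stabilizer scheme at every prime. Your argument shows that $\Stab_x(\bar k)$ is a finite group for every algebraically closed $\bar k$, but in positive characteristic this does not by itself force reducedness of the scheme-theoretic stabilizer; the assertion ``finite and reduced, so smooth'' is only automatic in characteristic zero (Cartier). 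The paper closes this gap by picking the specific $\Z$-point $x$ corresponding to $\Z^4$ and observing that $\#\Stab_x(\bar\Q) = \#\Stab_x(\bar\F_p) = 24$ for every prime $p$ (both equalities coming from Theorem~\ref{th:quartic_param}), which forces $\Stab_x$ to be \'etale over~$\Z$.

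For the identification $U^{\Delta\neq 0} \cong \SL_3\backslash V^{\Delta\neq 0}$, your GIT approach is genuinely different from the paper's. The paper argues directly: since $G$ acts smoothly on both $V^{\Delta\neq 0}$ and $U^{\Delta\neq 0}$, the induced map $\SL_3\backslash V^{\Delta\neq 0} \to U^{\Delta\neq 0}$ is \'etale; comparing stabilizers via the parametrization shows it has degree~$1$; and $G$-transitivity on both sides then makes it an isomorphism. This sidesteps any appeal to the structure of the $\SL_3$-invariant ring or to GIT stability, at the cost of leaning on the smoothness just established. Your GIT route is more conceptual once one grants that the $\SL_3$-invariants are exactly the coefficients of $\Res$, but carrying the stable-locus and geometric-quotient statements through over $\Z$ (rather than over a field) takes more care than the paper's three-line argument.
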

\begin{proof}
We first show that $G$ acts smoothly on $V^{\Delta\neq 0}$. Since $V^{\Delta\neq0}$ is a single $G$-orbit, it suffices to show that the stabilizer is \'etale at a single $\Z$-point. Pick $x\in V^{\Delta\neq 0}(\Z)$ representing the quartic algebra $\Z^4$. Then by Theorem \ref{th:quartic_param}, we have $\#\Stab_x(\ol{\F_p})=\#\Stab_x(\ol\Q)=24$ for all primes $p$. Therefore $\Stab_x$ is etale over $\Z$, proving the claim. The same proof works for the action of $\GL_2$ on $U^{\Delta\neq 0}$.

We move on to the second claim of the lemma. Since $G$ acts smoothly on $V^{\Delta\neq 0}$ and on $U$, it follows that $\Res:\SL_3\backslash V^{\Delta\neq 0}\to U^{\Delta\neq 0}$ is \'etale. Comparing stabilizers using Theorem \ref{th:quartic_param} shows that $\Res$ is degree~$1$. Since $G$ acts transitively on both the source and then target, it follows that $\Res$ is an isomorphism as claimed.
\end{proof}

For a smooth and connected group scheme $H/\Z$, we define $\omega_H$ to be a (unique up to sign) top degree left-invariant differential form over $\Z$. For $R=\R$ or $\Z_p$ we denote the corresponding measures on $H(R)$ by $\nu_H$ - the choice of $R$ will always be clear from context. Note that for an exact sequence of smooth groups $1\to H_1\to H\to H_2$, then $\nu_{H_2}$ is the quotient measure of $\nu_H$ by $(H_1,\nu_{H_1})$.

Let $\omega_V$ (resp.\ $\omega_U$) be the top-degree differential form on $V$ (resp.\ $U$), such that the corresponding measure on $V(\R)$ (resp.\ $U(\R)$) is normalized so that $V(\Z)$ (resp.\ $U(\Z)$) has covolume $1$. Note that this corresponds with the above definition if we give $V$ (resp.\ $U$) the $\Z$-structure corresponding to $V(\Z)$ (resp.\ $U(\Z)$). 
We have the following consequences of the above lemma.

\begin{proposition}\label{prop:first_change_of_measures}
Let $R$ be $\R$, or $\Z_p$ for some $p$, and let $f\in U(R)$ and $x\in V(R)$ be any elements having nonzero discriminant. Let $\phi_f:\GL_2(R)\to U(R)$ and $\phi_x:G(R)\to V(R)$ be the maps sending $g_2\mapsto g_2\cdot f$ and $g\mapsto g\cdot x$, respectively. Then we have 
\begin{equation*}
\frac{\nu_U\mid_{\GL_2(R)\cdot f}}{|\Delta(f)|}=\frac{(\phi_f)_*\nu_{\GL_2}}{\#\Stab_{\GL_2(R)}(f)} ,
\quad\quad
\frac{\nu_V\mid_{G(R)\cdot x}}{|\Delta(x)|}=\frac{(\phi_f)_*\nu_{G}}{\#\Stab_{G(R)}(x)},
\end{equation*}
where $||$ denotes absolute value when $R=\R$, and $||=||_p$ when $R=\Z_p$. Above, we denote $\nu\mid_{S}$ to mean the restriction of $\nu$ to the set $S$.
\end{proposition}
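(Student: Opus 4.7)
The plan is the standard invariant-measure plus Jacobian-computation argument. Both identities have the same structure, so I would prove them in parallel and focus on the more involved $(G,V)$ case.

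By Lemma \ref{lem:smoothness}, the orbit map $\phi_x: G \to V^{\Delta \neq 0}$ is smooth with finite étale stabilizer fiber (of order $\#\Stab_{G(R)}(x)$ over $R$), so both $\nu_V|_{G(R)\cdot x}$ and $(\phi_x)_*\nu_G$ are well-defined top-degree measures on the orbit. I would first show that both sides, after the stated normalizations, are $G(R)$-invariant as measures on $G(R)\cdot x$, hence agree up to a single global scalar. The right side is invariant under left translation by $G(R)$ by left-invariance of $\nu_G$. For the left side, $\omega_V$ is $G$-semi-invariant with character $\chi_V = \det\rho$ (for $\rho: G \to \GL(V)$ the representation), and $\Delta$ is $G$-semi-invariant with some character $\chi_\Delta$. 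Both are $\SL_3$-trivial ($\omega_V$ because $\det\Sym^2$ on $\SL_3$ is trivial, $\Delta$ because $\Res$ is $\SL_3$-invariant) and so factor through $G/\SL_3 \cong \GL_2$ as powers of $\det$; since $\deg\Delta = \dim V = 12$, the two characters agree in absolute value. Hence $\nu_V|_{\text{orbit}}/|\Delta|$ is genuinely $G(R)$-invariant, and by uniqueness of invariant measure on the homogeneous space $G(R)\cdot x$, the two sides differ by a global scalar.

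To pin down that scalar I would compute densities at $y = x$. Writing $\phi_x^*\omega_V = J(g)\,\omega_G$, the semi-invariance above forces $J(g) = J(e)\,\chi_V(g)$; in particular $|J|$ is constant on the étale fiber $\Stab_{G(R)}(x)$ over $x$. Summing over that fiber yields
$$(\phi_x)_*\omega_G\big|_x \;=\; \frac{\#\Stab_{G(R)}(x)}{|J(e)|}\,\omega_V\big|_x,$$
so the identity reduces to the universal Jacobian equality $|\det d\phi_x|_e| = |\Delta(x)|$ with respect to the fixed $\Z$-normalizations of $\omega_V$ and $\omega_G$.

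This Jacobian identity is the only substantive content and is where I expect the main obstacle. By the $G$-semi-invariance of both $\det d\phi_x|_e$ and $\Delta$, together with the unimodularity of $G$, the ratio $|\det d\phi_x|_e|/|\Delta(x)|$ is $G(\C)$-invariant in $x$, so it is a global constant on the single $G(\C)$-orbit $V^{\Delta\neq 0}(\C)$. I would identify this constant with $1$ by reducing to the $(\GL_2, U)$ case via the $\SL_3$-equivariant isomorphism $\Res:\SL_3\backslash V^{\Delta\neq 0}\xrightarrow{\sim} U^{\Delta\neq 0}$ of Lemma \ref{lem:smoothness}. Indeed, this isomorphism together with the Haar-measure decomposition of $\nu_G$ as $\nu_{\SL_3}$ along fibers and $\nu_{\GL_2}$ transversely lets me factor the Jacobian of $\phi_x$ as the product of the $\SL_3$-orbit-map Jacobian along the fiber of $\Res$ with the $\GL_2$-orbit-map Jacobian of $\phi_{\Res(x)}$ on $U$; after matching the $\Z$-structures on the various spaces, this reduces to the analogous identity for $(\GL_2,U)$. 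The $(\GL_2,U)$ identity is the classical Jacobian computation for binary cubic forms and can be verified at any convenient integer point, for example the form $f = xy(x-y)$ corresponding to the étale cubic algebra $\Z\times\Z\times\Z$. The same argument, carried out directly in the $(\GL_2, U)$ setting, proves the first identity.
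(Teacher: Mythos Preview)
Your approach is correct and follows the same overall architecture as the paper: establish $G(R)$-invariance of both sides (so they differ by a single scalar), then pin down that scalar. The difference lies entirely in how the scalar is identified.

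You propose to reduce the $(G,V)$ Jacobian identity to the $(\GL_2,U)$ one via the $\SL_3$-quotient $\Res:\SL_3\backslash V^{\Delta\neq 0}\xrightarrow{\sim}U^{\Delta\neq 0}$, and then to verify the $(\GL_2,U)$ identity by an explicit computation at $f=xy(x-y)$. The paper skips both the reduction and the computation: it observes that the ratio $\phi_x^*\omega_V/\omega_G$ is a $G$-invariant regular function on $V^{\Delta\neq 0}$, hence a constant multiple of $\Delta$; and since the orbit map is smooth (\'etale stabilizer) over $\Z$ at the integral point corresponding to $\Z^4$ (respectively $\Z^3$), that constant must lie in $\Z^\times=\{\pm1\}$. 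Thus the smoothness result of Lemma~\ref{lem:smoothness} does double duty: it guarantees not only that the measures make sense but also that the Jacobian constant is a unit, with no computation required. Your reduction via $\Res$ works but is unnecessary (and, as you note, requires care with the factor of $4$ in $\Res$ and the matching of $\Z$-structures), since the same integrality argument applies uniformly to both representations.
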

\begin{proof}
The proofs of the two claimed equalities are identical so we only consider the first case. Since the measures are both given by differential forms, it is enough to show that $\phi_f^*\frac{\omega_U}{\Delta} =\pm\Delta(f)\cdot\omega_{\GL_2}$. Now for each $f$ both forms are left $G$-invariant, and so their quotient is a regular function $C(f)$ on $V$. Moreover, this function is clearly $G$-invariant, and $V$ has an open $G$-orbit, $C(f)$ must be a constant. Finally, since we've shown that the action is smooth at the point $x$ above, it follows that $C(f)\in\Z^{\times}$, as desired.
\end{proof}

Over $\R$, since the measures $dg$ and $\omega_G$ differ by a constant, may write $\omega_G=Jdg$ for some $J\in\R_{>0}$. Then we have 
\begin{equation}\label{eq:J_def}
J:=\frac{\nu_G}{dg}=6\cdot\frac{\nu_{\SL_2}}{dg_2}\cdot\frac{\nu_{\SL_3}}{dg_3}
\end{equation}
where the equality follows from the degree 6 isogeny $\SL_2\times\SL_3\times\mathbb{G}_m\to G$.
Next we have the following result, which is a consequence of the second claim of Lemma \ref{lem:smoothness}.
\begin{proposition}\label{prop:jac_sl_3}
Let $R$ be $\R$ or $\Z_p$, and let $\phi:V(R)\to\R$ be a measurable function. Then we have
\begin{equation*}
\int_{x\in V(R)}\phi(x)\nu_V(x)=\int_{f\in U(R)}
\sum_{x\in \frac{V(R)\cap\Res^{-1}(f)}{\SL_3(R)}}
\frac{1}{\#\Stab_{\SL_3(R)}(x)}\int_{g\in\SL_3(R)}\phi(g\cdot x)\nu_{\SL_3}(g)\nu_U(f).
\end{equation*}
\end{proposition}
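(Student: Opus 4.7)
The plan is to derive the formula as fiber integration along $\Res: V\to U$. Since, by Lemma \ref{lem:smoothness}, $\Res$ identifies $\SL_3\backslash V^{\Delta\neq 0}$ with $U^{\Delta\neq 0}$, we expect $\nu_V$ to factor as $\nu_{\SL_3}$ on fibers times $\nu_U$ on the base, up to sign. First I would restrict both sides to $V(R)^{\Delta\neq 0}$ and $U(R)^{\Delta\neq 0}$ (the discriminant locus is lower-dimensional and hence null for both measures). The preimage $V(R)\cap\Res^{-1}(f)$ over each $f\in U(R)^{\Delta\neq 0}$ breaks into a disjoint union of $\SL_3(R)$-orbits, indexed precisely by the set appearing in the statement.

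The heart of the argument is the identity of top forms $\omega_V = \pm\,\omega_{\SL_3}\wedge \Res^*\omega_U$ on $V^{\Delta\neq 0}$, interpreted locally via any section of $\Res$. First, $\omega_V$ is $\SL_3$-invariant: the $\SL_3$-action on $V = 2\otimes\Sym^2(3)$ is trivial on the first factor and acts on $\Sym^2$ of the standard $\SL_3$-representation with determinant $(\det g_3)^{n+1}=1$ for $n=3$. Hence $\omega_V/\omega_{\SL_3}$ descends to a top form $c\cdot \omega_U$ on $U^{\Delta\neq 0}$ with $c\in \O(U^{\Delta\neq 0})^\times$; as $U\cong \mathbb{A}^4$ has $\O(U)^\times=\{\pm 1\}$, this forces $c=\pm\Delta^k$ for some $k\in\Z$. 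A homogeneity check under $\R^\times$-scaling of $V$---both $\omega_V$ and $\Res^*\omega_U$ have weight $12$, since $\Res$ is cubic in the coefficients of $V$ while $\omega_U$ has degree $4$---gives $k=0$, so $c\in\{\pm 1\}$.

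With this identity in hand, I would iterate the integral. By the form identity, the restriction of $\nu_V$ to the fiber $V(R)\cap\Res^{-1}(f)$ is the measure induced by $\nu_{\SL_3}$ via the orbit maps; on each $\SL_3(R)$-orbit through $x$, this measure is $(\phi_x)_*\nu_{\SL_3}/\#\Stab_{\SL_3(R)}(x)$, following the same smoothness-based argument as in the proof of Proposition \ref{prop:first_change_of_measures} (the factor $|\Delta|$ does not appear here because we are decomposing into an $\SL_3$-fiber times a base measure, not into a single $G$-orbit). Summing over orbit representatives and integrating against $\nu_U$ over $f\in U(R)^{\Delta\neq 0}$ yields the claimed identity. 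The main obstacle is the normalization check $c\in\{\pm 1\}$ at the integral level of the $\Z$-structures on $V$ and $U$; this is handled cleanly by combining the $\SL_3$-invariance of $\omega_V$ with the weight comparison, exploiting Lemma \ref{lem:smoothness} to see that the candidate regular unit $c$ lives on $U^{\Delta\neq 0}\subset \mathbb{A}^4$.
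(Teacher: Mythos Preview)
Your proposal is correct and follows precisely the route the paper gestures at: the paper offers no proof beyond the sentence ``this is a consequence of the second claim of Lemma~\ref{lem:smoothness}'', and your argument is a faithful unpacking of that consequence via fiber integration. Your check that the Jacobian factor $c$ lies in $\{\pm1\}$---using $\SL_3$-invariance of $\omega_V$, the identification $\O(U^{\Delta\neq0})^\times=\{\pm\Delta^k\}$, and the homogeneity comparison---is exactly the kind of normalization verification the paper leaves implicit (and mirrors the style of the proof of Proposition~\ref{prop:first_change_of_measures}).
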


\subsection{Shintani zeta functions associated to $(G,V)$}

In this subsection we introduce and set up notation for the theory of Shintani zeta functions associated to the representation $V$ of $G$. We only need results from the general theory of prehomogeneous vector spaces, due to Sato--Shintani, specialized to our case. See Kimura's book on the topic \cite{Kimura_book} for a clear exposition. It is worth noting that much foundational work has previously done by Yukie \cite{Yukie} by analyzing the corresponding Shintani zeta functions.

\subsubsection*{Dual spaces and nondegenerate $G(\R)$-orbits} 
We identify the dual space $V(\R)^\vee$ with $V(\R)$ using the standard inner product 
\begin{equation*}
[(A,B),(A',B')]:=\sum_{i\leq j}(a_{ij}a'_{ij}+b_{ij}b'_{ij}).
\end{equation*}
This inner product is $G(\R)$-equivariant in that we have $[g\cdot x,g^{-T}x']=[x,x']$, where $(g_2,g_3)^{-T}:=(g_2^{-T},g_3^{-T})$ and we use the superscript $-T$ to denote inverse transpose. Let $\Delta^*\in\Z[V]$ be the polynomial $\Delta^*(A,B):=\Delta(\det(Ax-By))=2^6\Delta(A,B)$.
We use $V(\Q)^\vee$ to denote the set of elements in $V(\R)^\vee$ with rational coefficients, note that $V(\Q)^\vee$ is identified with the dual of $V(\Q)$. Let $L\subset V(\Z)$ be any lattice. Define the dual lattice of $L$ by $L^\vee:=\{y\in V(\Q)^\vee\mid [y,V(\Z)]\subset\Z\}$. Then $V(\Z)^\vee$, the dual of $V(\Z)$, can be naturally identified with the set of pairs of integral $3\times 3$ symmetric matrices. For any ring $R$, we let $V(R)^\vee$ denote the set of pairs of $3\times 3$ symmetric matrices with coefficients in $R$.
Let $N$ be a positive integer and let $\phi:V(\Z/N\Z)\to\C$ be a function. We define the Fourier transform $\widehat{\phi}:V(\Z/N\Z)^\vee\to\C$ by
\begin{equation*}
\widehat{\phi}(y):=\frac{1}{N^{12}}\sum_{x\in V(\Z/N\Z)} e\Big(\frac{[x,y]}{N}\Big)\phi(x).
\end{equation*}
We will note for future use that the Fourier transform of $g\phi$ can be easily computed to be $\widehat{g\phi}=g^{-T}\widehat{\phi}$.

\subsubsection*{Analytic continuation, poles, and functional equations for Shintani zeta functions} Let $(A,B)\in V(\R)$ be an element with nonzero discriminant. Then the conics cut out by $A$ and $B$ intersect in four distinct $\Gal(\C/\R)$-invariant points in $\P^2(\C)$. For $i\in\{0,1,2\}$, let $V(\R)^{(i)}\subset V(\R)$ be the set of elements where the associated four points in $\P^2(\C)$ consists of $i$-pairs of complex conjugate points and $4-2i$ real points. Equivalently, $V(\R)^{(i)}$ consists of the elements in $V(\R)$ corresponding to the $\R$-algebra $R_i:=\R^{4-2i}\times \C^i$ under the parametrization of Theorem \ref{th:quartic_param}. For any subset $S$ of $V(\R)$, we use $S^{(i)}$ to denote $S\cap V(\R)^{(i)}$. For each $i$, the set $V(\R)^{(i)}$ is a single $G(\R)$-orbit, and the stabilizer of any element in $V(\R)^{(i)}$ in $G(\R)$ is isomorphic to $\Aut(R_i)$ and has size $\sigma_i$, where $\sigma_0=24$, $\sigma_1=4$, and $\sigma_2=8$.
Then for $i\in\{0,1,2\}$, the Shintani zeta functions associated to $L$ and $L^\vee$ are:
\begin{equation*}
\begin{array}{rcl}
\xi_{i,L}(s)&:=&\displaystyle\sum_{x\in G(\Z)\backslash L^{(i)}}
\frac{1}{|\Delta(x)|^s|\Stab_{G(\Z)}(x)|},
\\[.15in]
\xi_{i,L^\vee}^{*}(s)&:=&\displaystyle\sum_{x\in G(\Z)\backslash L^{\vee,(i)}}
\frac{1}{|\Delta^*(x)|^s|\Stab_{G(\Z)}(x)|}.
\end{array}
\end{equation*}
Then we have the following result, primarily due to Sato--Shintani \cite{SatoShintani}.
\begin{theorem}\label{th:SZF_v1}
The functions $\xi_{i,L}(s)$ and $\xi_{i,L^\vee}^{*}(s)$ continue to meromorphic functions on $\C$ with only possible double poles at $1$, $5/6$, and $3/4$. Moreover, they satisfy the function equation
\begin{equation*}
\xi_{i,L}(1-s)=\covol(L)^{-1}\gamma(s-1)\sum_{j\in\{0,1,2\}}c_{ji}(s)\xi^*_{j,L^\vee}(s),
\end{equation*}
where $\gamma(s)=\Gamma(s+5/4)^2\Gamma(s+7/6)^2\Gamma(s+1)^4\Gamma(s+5/6)^2\Gamma(s+3/4)^2$ and the $c_{ij}(s)$ are entire functions not depending on $L$. Moreover, the functions $\gamma(s-1)c_{ji}(s)$ are polynomially bounded in vertical strips. I.e. in a region with bounded real part, they are uniformly bounded at $\sigma+it$ by $|t|^{O(1)}$ as $|t|\ra\infty$. 
\end{theorem}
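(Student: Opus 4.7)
The theorem is essentially the specialization of the general Sato--Shintani theorem \cite{SatoShintani} on zeta functions of regular prehomogeneous vector spaces to our representation $(G,V)$ with $V = 2\otimes \Sym^2(3)$. I would proceed in three steps: (i) verify that $(G,V)$ is a regular prehomogeneous vector space with $\Delta$ as the irreducible relative invariant; (ii) assemble the associated global zeta integral and derive both meromorphic continuation and the functional equation; and (iii) identify the explicit gamma factor $\gamma(s)$ and bound the coefficient matrix $c_{ji}(s)$ in vertical strips.

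For (i), Theorem \ref{th:quartic_param} shows that $V^{\Delta\neq 0}$ is a single $G(\C)$-orbit with finite stabilizer isomorphic to $S_4$, so $(G,V)$ is prehomogeneous and $\Delta$ is the (unique up to scalar) irreducible relative invariant, of degree $12=\dim V$; regularity is the discreteness of the generic stabilizer. For (ii), one forms the global zeta integral
\begin{equation*}
Z_L(\Phi,s) = \int_{G(\Z)\backslash G(\R)} |\chi(g)|^s \sum_{x\in (L\setminus\{0\})\cap V^{\Delta\neq 0}} \Phi(g\cdot x)\, dg
\end{equation*}
for a Schwartz function $\Phi$ on $V(\R)$, where $\chi$ is the character of $G$ attached to $\Delta$. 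Choosing $\Phi$ supported in one real component $V(\R)^{(i)}$ identifies $Z_L(\Phi,s)$ with a linear combination of the $\xi_{i,L}(s)$, while Fourier-transforming $\Phi$ produces the dual zeta functions $\xi^*_{j,L^\vee}(s)$. Splitting the integral into a main body (absolutely convergent for all $s$) and a cusp contribution yields the meromorphic continuation, and Poisson summation in $L$ then produces the functional equation in the stated form.

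For (iii), the gamma factor $\gamma(s)$ is controlled by the Bernstein--Sato $b$-function of $\Delta$. For the representation $V = 2\otimes\Sym^2(3)$, this $b$-function is a polynomial of degree $12$ whose roots, counted with multiplicity, are $\{5/4,5/4,7/6,7/6,1,1,1,1,5/6,5/6,3/4,3/4\}$; assembling $\prod_j \Gamma(s+r_j)$ reproduces the $\gamma(s)$ in the statement. This $b$-function computation is classical and documented in the PVS literature (see e.g.\ \cite{Kimura_book} and Yukie~\cite{Yukie}). The possible poles of $\xi_{i,L}(s)$ on the right of the critical line are then precisely the rightmost poles of $\gamma(s-1)$, namely $s\in\{1,5/6,3/4\}$: the orders at $5/6$ and $3/4$ are at most $2$ since the corresponding $\Gamma$-factors have multiplicity exactly $2$, while the naive order-four pole at $s=1$ reduces to at most double via cancellation with zeros of $c_{ji}(s)$ at $s=0$. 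Polynomial boundedness of $\gamma(s-1)c_{ji}(s)$ in vertical strips follows from Stirling's formula applied to each $\Gamma$-factor and from the elementary structure of $c_{ji}(s)$ (combinations of $\Gamma$-ratios and trigonometric functions of $\pi s$). The main technical ingredient outside of the general Sato--Shintani machinery is the explicit identification of the $b$-function and of the matrix $c_{ji}(s)$ for our specific representation; since both have been worked out in the PVS literature, the theorem is invoked here essentially as a black box.
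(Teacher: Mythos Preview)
Your overall strategy---invoke the general Sato--Shintani theory as a black box, then read off $\gamma(s)$ from the Bernstein--Sato polynomial---matches the paper's approach. However, there is a genuine gap in step (iii), and it is exactly the part that requires an idea beyond the general machinery.

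The $b$-function you quote has roots (with multiplicity) at $-5/4,-7/6,-1,-1,-5/6,-3/4$, each squared. The general theory therefore predicts possible poles of $\xi_{i,L}(s)$ at $s\in\{5/4,\,7/6,\,1,\,5/6,\,3/4\}$, with orders at most $2,2,4,2,2$ respectively. You assert that the possible poles are only at $\{1,5/6,3/4\}$ and that the order at $s=1$ drops to $2$ ``via cancellation with zeros of $c_{ji}(s)$ at $s=0$.'' Neither claim is justified: you have not explained why $5/4$ and $7/6$ are excluded, and there is no a priori reason for the $c_{ji}$ to vanish at $s=0$. The functional equation alone does not force either of these.

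The paper fills this gap by an \emph{arithmetic} argument, not an analytic one: it quotes counting results of Baily, Bhargava, and Nakagawa to show that the number of triples $(Q,C,r)$ with $|\Delta(Q)|<X$ is $O(X\log X)$. This immediately implies that the Dirichlet series $\xi_{i,L}(s)$ converges absolutely for $\Re(s)>1$ (ruling out poles at $5/4$ and $7/6$) and that any pole at $s=1$ has order at most $2$. You should replace your unjustified cancellation claim with this counting input; it is the one substantive ingredient beyond the Sato--Shintani formalism.
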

\begin{proof}
From Sato and Shintani's general theory of prehomogeneous vector spaces (see, for example, \cite[Theorem 5.18]{Kimura_book}), it follows that the functions $\xi_{i,L}(s)$ and $\xi_{i,L^\vee}^{*}(s)$ continue to meromorphic functions and satisfy a functional equation of the given form. The boundedness claim in vertical strips follows from the fact that $|\Gamma(\sigma+it)|\leq e^{-\pi|t|/2}\cdot |t|^{(O(1)}$ as $|t|\ra\infty$.

The explicit form of $\gamma(s)$ as well as the location of the poles of these zeta functions are controlled by the Bernstein--Sato polynomial associated to $V$. This polynomial is known to be $b(s)=[(s+5/4)(s+7/6)(s+1)^2(s+5/6)(s+3/4)]^2$ (see \cite{MR648417}). Moreover, the number of triples $(Q,C,r)$, where $Q$ is a nondegenerate quartic ring and $C$ is a cubic resolvent ring of $Q$, with $|\Delta(Q)|<X$ is bounded by $O(X\log X)$. Indeed, the number of \'etale quartic extensions $K_4$ of $\Q$ is bounded by $O(X\log X)$ by work of Baily \cite{Baily}, Yukie \cite{Yukie}, and Bhargava \cite{dodqf}. A bound of the same strength then follows by counting orders within each quartic algebra by work of Nakagawa \cite{MR1342021}, and controlling the number of possible cubic resolvent rings by work of Bhargava \cite{BHCL3}. It therefore follows that these Shintani zeta functions do not have poles at $s=5/4$ and $s=7/6$, and that the order of the pole at $s=1$ is bounded by $2$.
\end{proof}

Let $N$ be a positive integer, and let $\phi:V(\Z/N\Z)\to \C$ be a $G(\Z/N\Z)$-invariant function. We define the Shintani zeta functions associated to $\phi$ by:
\begin{equation*}
\begin{array}{rcl}
\xi_{i}(\phi;s)&:=&\displaystyle\sum_{x\in G(\Z)\backslash V(\Z)^{(i)}}\frac{\phi(x)}{|\Aut(x)||\Delta(x)|^{s}};
\\[.15in]
\xi_i^{*}(\widehat{\phi};s)&:=&\displaystyle\sum_{x\in G(\Z)\backslash V(\Z)^{\vee,(i)}}\frac{\widehat{\phi}(x)}{|\Aut(x)||\Delta(x)|^{s}}.
\end{array}
\end{equation*}
Note that the lift to $V(\Z)$ of every $G(\Z/N\Z)$-invariant set in $V(\Z/N\Z)$ can be written as a (weighted) union of lattices. (This is true because a $G(\Z/N\Z)$-invariant set is invariant under scaling by $(\Z/N\Z)^\times$, since for any element $\lambda\in (\Z/N\Z)^\times$, the element $\diag(\lambda^{-3},\lambda^{-3}),\diag(\lambda^2,\lambda^2,\lambda^2)$ acts on $V(\Z/N\Z)$ by scaling by $\lambda$.) Therefore, Theorem \ref{th:SZF_v1} implies the following result.
\begin{theorem}\label{th:SZF_v2}
The functions $\xi_i(\phi;s)$ and $\xi_i^{*}(\widehat{\phi};s)$ continue to meromorphic functions on $\C$ with only possible double poles at $1$, $5/6$, and $3/4$. Moreover, they satisfy the function equation
\begin{equation*}
\xi_i(\phi;1-s)=N^{12s}\gamma(s-1)\sum_{j\in\{0,1,2\}}c_{ji}(s)\xi^*_i(\widehat{\phi};s),
\end{equation*}
where $\gamma(s)=\Gamma(s+5/4)^2\Gamma(s+7/6)^2\Gamma(s+1)^4\Gamma(s+5/6)^2\Gamma(s+3/4)^2$ and the $c_{ij}(s)$ are entire functions not depending on $\phi$.
\end{theorem}

The Shintani zeta function can be used to obtain smoothed counts of triples $(Q,C,r)$. Let $\phi$ be a function on $V(\Z/N\Z)$ (and on $V(\Z)$) as above. We define $\phi$ to be a function on the set of all triples $(Q,C,r)$ by setting $\phi(Q,C,r):=\phi(x)$, where $x$ is the $G(\Z)$-orbit corresponding to $(Q,C,r)$ under Bhargava's parametrization result. Let $\psi:\R_{\geq 0}\to\R$ be a smooth and compactly supported function. We define $N_\psi(\phi,X)$ to be
\begin{equation*}
N_\psi(\phi,X):= \sum_{(Q,C,r)}\frac{\phi(Q,C,r)}{|\Aut(Q,C,r)|}\psi\Bigl(\frac{|\Delta(Q)|}{X}\Bigr).
\end{equation*}
Then we have the following result.
\begin{theorem}\label{th:countingbyzeta}
We have
\begin{equation}\label{eq:countingbyzeta}
N_\psi(\phi,X)=\sum_{c\in\{1,5/6,3/4\}}X^c\Bigl(\wt{\psi}(c)r_2(\phi;c)\log X+\wt{\psi}(c)r(\phi;c)+\wt{\psi}'(c)r_2(\phi;c)\Bigr)+O_A(X^{-A}),
\end{equation}
where the expansion of $\xi(\phi;s)$ around $c=1$, $5/6$, and $3/4$ is given by $\xi(\phi;s)=r_2(\phi;c)/(s-c)^2+r(\phi;c)/(s-c)+O(1)$.
\end{theorem}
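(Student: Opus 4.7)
The proof should proceed by a standard Mellin–Barnes contour shift. First, applying Mellin inversion, for any $\sigma > 1$ we have
\begin{equation*}
N_\psi(\phi, X) = \frac{1}{2\pi i}\int_{(\sigma)}\wt{\psi}(s)\,\xi(\phi;s)\,X^s\,ds,
\end{equation*}
where $\xi(\phi;s) := \sum_i \xi_i(\phi;s)$. The interchange of sum and integral is justified by absolute convergence of $\xi(\phi;s)$ on $\Re(s) > 1$, which follows from the $O(X\log X)$ bound on quartic triples of discriminant at most $X$ recorded in the proof of Theorem \ref{th:SZF_v1}.

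The next step is to shift the contour from $\Re(s) = \sigma$ to $\Re(s) = -A$ for arbitrarily large $A > 0$, picking up residues at each of the possible double poles $c \in \{1, 5/6, 3/4\}$ provided by Theorem \ref{th:SZF_v1}. To compute each residue, I would expand
\begin{equation*}
\wt{\psi}(s)X^s = \wt{\psi}(c)X^c + \bigl(\wt{\psi}(c)X^c\log X + \wt{\psi}'(c)X^c\bigr)(s-c) + O((s-c)^2)
\end{equation*}
and multiply by the given Laurent expansion $\xi(\phi;s) = r_2(\phi;c)(s-c)^{-2} + r(\phi;c)(s-c)^{-1} + O(1)$, then extract the coefficient of $(s-c)^{-1}$. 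This yields precisely
\begin{equation*}
X^c\bigl(\wt{\psi}(c)r_2(\phi;c)\log X + \wt{\psi}(c)r(\phi;c) + \wt{\psi}'(c)r_2(\phi;c)\bigr),
\end{equation*}
which matches the three contributions in the statement.

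The remaining task is to bound the integral along $\Re(s) = -A$ by $O_A(X^{-A})$. Since $\psi$ is smooth with compact support, repeated integration by parts in the definition of $\wt{\psi}$ shows that $\wt{\psi}(s)$ decays faster than any polynomial in $|\Im(s)|$ in any fixed vertical strip. For the zeta factor, the functional equation of Theorem \ref{th:SZF_v1}, combined with the polynomial bounds on $\gamma(s-1)c_{ji}(s)$ stated there and the absolute convergence of $\xi_j^*(\wh\phi;s)$ for $\Re(s) > 1$, gives polynomial growth of $\xi(\phi;s)$ in vertical strips for $\Re(s) < 0$; Phragmen–Lindel\"of interpolation then extends this to the whole strip $-A \leq \Re(s) \leq \sigma$. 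The product $\wt{\psi}(s)\xi(\phi;s)$ therefore has super-polynomial decay on vertical lines, and the shifted integral is bounded by $O(X^{-A})$. The main technical point is the Phragmen–Lindel\"of convexity input for polynomial growth of $\xi(\phi;s)$, but it is routine given the bounds already stated in Theorem \ref{th:SZF_v1}.
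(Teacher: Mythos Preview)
Your proposal is correct and follows exactly the approach the paper takes: the paper's proof is a two-sentence sketch invoking Mellin inversion and a leftward contour shift, and you have supplied the standard details (residue extraction at a double pole, rapid decay of $\wt{\psi}$, and polynomial growth of $\xi(\phi;s)$ via the functional equation) that the paper leaves implicit. There is nothing to add.
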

\begin{proof}
This is a standard result which follows by using Mellin inversion to write
\begin{equation*}
N_\psi(\phi,X)=\frac{1}{2\pi i}\int_{\Re(s)=2}\xi(\phi;s)\wt{\psi}(s)X^s ds,
\end{equation*}
and shifting left to pick up the poles and a super-polynomially small error term.
\end{proof}

In Section 7, we prove the following result regarding these Shintani zeta functions.
\begin{theorem}\label{thm:Shintani_residues}
Let notation be as above, and assume that the support of $\phi$ in $V(\Z)$ is an $S_4$-congruence family. Then $\xi_i(\phi;s)$ has simple poles at $s=1$ and $s=5/6$.
\end{theorem}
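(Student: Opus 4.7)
The plan is to use Theorem \ref{th:countingbyzeta} in reverse: rather than reading off the smooth count from the zeta residues, compute the smooth count $N_\psi(\phi,X)$ directly by geometry of numbers to sufficient precision, then match the main terms of the two expansions to extract the residues $r(\phi;c)$ and $r_2(\phi;c)$. Concretely, I would express $N_\psi(\phi,X)$ via the Bhargava-style orbit count
\[
N_\psi(\phi,X) \;=\; \sum_i \frac{1}{\sigma_i}\int_{g\in\FF}\sum_{x\in L^{(i)}\cap g\cdot \cB}\phi(x)\,\psi\!\left(\tfrac{|\Delta(x)|}{X}\right) dg,
\]
where $\cB$ is a $K$-invariant smoothed ball in $V(\R)^{(i)}$ representing a section of the $G(\R)$-action, and $L$ is a lattice reflecting the congruence condition carried by $\phi$. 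The $K$-invariance lets us absorb the compact factor trivially, so the count reduces to an integral over $\Lambda NA$ against a lattice-point count in a box whose side lengths are the weights $w_c(g)$ recorded after equation \eqref{eq:G_definition}.

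The core work is in Part III (\S6 and \S7), which partitions $\FF$ into a "main body" region and the $\SL_2$-cusp regions. In the main body, lattice points are adequately approximated by the volume of $g\cdot\cB$, and this produces the main term $C_1(\Sigma)X$ along with an error strictly better than $X^{5/6}$. In the cusp, however, I would apply the "range decomposition" strategy from the outline: for each of the twelve coordinates $a_{ij},b_{ij}$ split its range into zero, small, or large according to whether $w_c(g)<1$, $1\leq w_c(g)\leq T$, or $w_c(g)>T$ for an appropriate threshold $T$. The large coordinates are replaced by volume factors (an exact integral in that direction), while the small coordinates are treated as discrete sums over the lattice and packaged as Mellin integrals of shifted zeta functions in several variables, as described in \S4. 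Substituting into the toric integral $\int_A s^{\cdots}\cdots d^\times s$ produces a multi-variable Mellin integral whose polar divisors, upon shifting contours, are exactly what yields the $X^{5/6}$ term.

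The $S_4$-hypothesis enters at two points. First, for $g$ sufficiently deep into the cusp, the vanishing coordinates forced by lattice integrality make the remaining $x\in L\cap g\cdot\cB$ automatically represent non-$S_4$ triples (e.g.\ $a_{11}=b_{11}=0$ or $\det A=0$ loci), and by hypothesis $\phi$ vanishes on such $x$; this is what lets us bound these regions as $O(X^{13/16+o(1)})$ or better without careful analysis. Second, the Shintani functional equation from Theorem \ref{th:SZF_v1}, together with the Bernstein--Sato polynomial $b(s)$ computed in the proof of Theorem \ref{th:SZF_v1}, forbids poles outside $\{1,5/6,3/4\}$; combining this a priori restriction with the explicit Mellin computation, the only surviving residue to the right of $3/4$ is a simple one at $5/6$, ruling out the $\log X$ term even though Theorem \ref{th:SZF_v1} a priori permits a double pole.

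The main obstacle I anticipate is the bookkeeping for the toric Mellin integral in the cusp: controlling the polar divisors of a product of several-variable Dirichlet series that arise from summing over "small" coordinates, and verifying that after the contour shift the contribution at $s=3/4$ cancels or is absorbed in the error, leaving a clean simple pole at $5/6$. This is essentially the $\SL_2$-cusp analysis of \S7, whose output is then evaluated as a special value of a Shintani zeta function of symmetric $3\times 3$ matrices, which by the Ibukiyama--Sato computation cited in \S8 yields the explicit constants $\cM_i$, $\cM_i'$ appearing in \eqref{eq:M_def}. Finally, comparing the resulting expansion of $N_\psi(\phi,X)$ coefficient-by-coefficient with \eqref{eq:countingbyzeta} produces the claimed residue formulas.
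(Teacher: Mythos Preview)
Your proposal is correct and follows essentially the same approach as the paper: set up the orbit count as a global zeta integral over $\FF$, split along the $\SL_2$-torus parameter $t$ into a main-body region ($t$ small, $\I^{(1)}$) and a cusp region ($t$ large, $\I^{(2)}$), fiber by the small coordinates via Mellin transforms, and match the resulting power-series expansion of $\I(\cB;X)$ against the a~priori expansion \eqref{eq:Iexpansion} coming from Shintani theory.

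A couple of small corrections in emphasis. First, the secondary $X^{5/6}$ term arises \emph{entirely} from the $t$-large region $\I^{(2)}$, where the count reduces to the global zeta integral for ternary quadratic forms evaluated at $s=4/3$ (Proposition~\ref{prop:section6_t_big_main}); the main-body region $\I^{(1)}$ contributes only the $X$ term plus a spurious $X^{21/24}$ term (Proposition~\ref{prop:tsmallpowerexpansion}). Second, the paper never has to ``verify cancellation at $s=3/4$'': the computation is carried out only to precision $O(X^{4/5+\epsilon})$, which already lies above $3/4$. The way the double pole at $5/6$ is ruled out is purely formal---the explicit expansion of $\I(\cB;X)$ contains terms at exponents $1$, $5/6$, and several exponents like $21/24$ that are \emph{not} roots of the Bernstein polynomial; comparing with \eqref{eq:Iexpansion} forces all the stray terms (including any $X^{5/6}\log X$) to vanish, since the two expansions must agree. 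This is exactly the argument in the proof of Theorem~\ref{th:quartic_szf_residues}.
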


Theorem \ref{thm:mainS4rings} is a direct consequence of the above two results, along with a computation of the residues, carried out in \S8.

\section{Introducing the counting tools}

In this section, we set up the notation and preliminary results for the techniques needed for the proof of Theorem \ref{thm:mainS4fields}; specifically, results on multiple zeta functions, counting points with Poisson summation, and the Mellin transform.

\subsection{Signed multiple zeta functions}
We say $f:\Z^n\to\C$ is a periodic function if it is defined by congruence conditions modulo some positive integer. We use $a_1,\dots,a_n$ to denote the coordinates on $\Z^n$. Let $t\in\{\pm 1\}^n$. Writing $\vec{s}$ for $(s_1,\ldots,s_n)$, we define the multiple zeta function $\zeta_{f,t}(s_1,\ldots,s_n)$ associated to $f$ and $t$ by
$$\zeta_{f,t}(\vec{s}):= \sum_{t\cdot \vec{a}\in\Z_{>0}^n} f(\vec{a})\prod_{i=1}^n  |a_i|^{-s_i}.$$ 
Note that since $f$ is periodic, its values are absolutely bounded. Hence $\zeta_{f,t}(\vec{s})$ converges absolutely for $(s_1,\ldots,s_n)\in\C^n$ with $\Re(s_i)>1$ for each $i$.

\begin{definition}
Let $S,T\subset\{1,2,\dots,n\}$ be disjoint subsets. Denote the complement of $S\cup T$ by $R$.  For each element $v\subset \Z^T$ we define $f_{S;T}(v)$ to be the average value of  
$f$ on the set $\{0_S\}\times\{v\}\times\Z^R\subset\Z^n$. Here, by $\{0_S\}\times\{v\}\times\Z^R$, we mean the subset of elements $w\in\Z^n$ such that $a_i(w)=0$ for $i\in S$ and $a_i(w)=a_i(v)$ for $i\in T$. For $t\in\{\pm1\}^T$ and $s_T\in\C^T$, we then define
$$\zeta_{f,t}(S=0;T)(s_T):= \sum_{t\cdot \vec{a}\in \Z_{>0}^T} f_{S,T}(\vec{a})\prod_{i\in T} |a_i|^{-s_i}.$$ 
By convention, we will write $\zeta_{f,t}(T)(s_T)$ for $\zeta_{f,t}(\emptyset=0;T)(s_T)$ when $S$ is empty. When $T=S^c$, we write $\zeta_{f,t}(S=0)(s_T)$ for $\zeta_{f,t}(S=0;S^c)(s_T)$. Note that $\zeta_{f,t}(S=0;\emptyset)$ is simply a complex number, namely, the density of $f$ on the set $\{0_S\}\times\Z^{S^c}$. We will denote this density by $\nu(f|_S)$. If $L$ is a set whose characteristic function $\chi_L$ is periodic we shall write $\zeta_L$ for $\zeta_{\chi_L}$ and $\nu(L|_S)$ for $\nu(\chi_L|_S)$. We define $\zeta_f(S=0;T)$ to be the vector indexed by $(\pm 1)^{T}.$
\end{definition}

We shall need the following results.
\begin{lemma}\label{lem:zeroden}
Let $f:\Z^n\ra\C$ be a periodic function. Then we have
 $$\zeta_{f,t_0}(S=0;T)(s_T) =(-1)^{|S|}\cdot\displaystyle\sum_{t\ra t_0} \zeta_{f,t}(S\cup T)(0_S\times s_T),$$
where $t_0\in\{\pm1\}^T$ and the sum is over every $t\in\{\pm1\}^{S\cup T}$ agreeing with $t_0$ in all the $T$-coordinates.
\end{lemma}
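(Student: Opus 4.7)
The proof proceeds by induction on $|S|$, with base case $|S|=0$ holding by the definition $\zeta_{f,t_0}(\emptyset=0;T)=\zeta_{f,t_0}(T)$. The key input is a one-variable identity: if $g:\Z\to\C$ is periodic, then the Dirichlet series $\sum_{b\neq 0}g(b)|b|^{-s}=\zeta_{g,+1}(s)+\zeta_{g,-1}(s)$ admits meromorphic continuation to $\C$ and takes the value $-g(0)$ at $s=0$. To see this, expand $g$ on residue classes modulo its period $N$ so the sum becomes a finite $\C$-linear combination of shifted Hurwitz zetas $N^{-s}\zeta(s,r/N)$; the evaluation $\zeta(0,\alpha)=\tfrac{1}{2}-\alpha$ (with $\zeta(0)=-\tfrac{1}{2}$) combined with the pairwise cancellation $(1/2-r/N)+(r/N-1/2)=0$ obtained by matching $r$ with $N-r$ in the $\zeta_{g,+1}(0)+\zeta_{g,-1}(0)$ computation leaves only the $r=0$ contribution $-g(0)$.

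For the inductive step with $|S|=k+1$, I pick $i_0\in S$, set $S'=S\setminus\{i_0\}$, and group the sign vectors $t\to t_0$ in the right-hand sum by their restrictions $t'$ to $S'\cup T$. For each such $t'$, summing over the two extensions $t=(\epsilon,t')$ with $\epsilon\in\{\pm1\}$ collapses the inner condition $\epsilon b_{i_0}>0$ to $b_{i_0}\neq 0$. For every fixed $\vec{b}_{S'\cup T}$ the resulting inner sum over $b_{i_0}\neq 0$ is the one-variable Dirichlet series applied to the periodic function $b_{i_0}\mapsto f_{\emptyset,S\cup T}(b_{i_0},\vec{b}_{S'\cup T})$, which at $s_{i_0}=0$ evaluates (by the one-variable identity) to $-f_{\emptyset,S\cup T}(0,\vec{b}_{S'\cup T})=-h(\vec{b}_{S'\cup T})$, where $h:=f_{\{i_0\},S'\cup T}$ is a periodic function on $\Z^{S'\cup T}$.

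This reduces the right-hand side to
\begin{equation*}
\sum_{t\to t_0}\zeta_{f,t}(S\cup T)(0_S\times s_T) \;=\; -\sum_{t'\to t_0}\zeta_{h,t'}(S'\cup T)(0_{S'}\times s_T).
\end{equation*}
Applying the inductive hypothesis to $(h;S',T)$ in the ambient space $\Z^{S'\cup T}$ gives $\zeta_{h,t_0}(S'=0;T)(s_T)=(-1)^{|S'|}\sum_{t'\to t_0}\zeta_{h,t'}(S'\cup T)(0_{S'}\times s_T)$. Because the nested averaging satisfies $h_{S';T}(\vec{a})=h(0_{S'},\vec{a})=f_{S,T}(\vec{a})$, we have $\zeta_{h,t_0}(S'=0;T)(s_T)=\zeta_{f,t_0}(S=0;T)(s_T)$, and using $|S'|=|S|-1$ the signs combine to give $\sum_{t\to t_0}\zeta_{f,t}(S\cup T)(0_S\times s_T)=(-1)^{|S|}\zeta_{f,t_0}(S=0;T)(s_T)$, which rearranges to the stated identity upon multiplying both sides by $(-1)^{|S|}$.

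The main subtlety is justifying the interchange of the outer multivariable summation with the one-variable analytic continuation in $s_{i_0}$. For $\Re(s_i)$ sufficiently large in all coordinates the full series converges absolutely, so Fubini allows factoring out the $b_{i_0}$-sum; replacing that inner sum by its meromorphic continuation produces a multivariable series in $s_{S'\cup T}$ which, after expanding $f$ on residue classes modulo the common period, is a finite $\C$-linear combination of products of shifted Hurwitz zetas in the remaining variables. Each such product extends meromorphically, so the continuation to $(0_S,s_T)$ can be performed one variable at a time in the order $i_0$, then the remaining coordinates of $S$, making the iteration of the one-variable identity legitimate.
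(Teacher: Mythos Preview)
Your proof is correct. Both your argument and the paper's hinge on the same one-variable identity
\[
\bigl[\zeta_{g,+1}(s)+\zeta_{g,-1}(s)\bigr]_{s=0}=-g(0)
\]
for periodic $g:\Z\to\C$, but the reduction to it and its proof differ. The paper first replaces $f$ by its average over $R$, then uses linearity to reduce to \emph{product} functions $f=\prod_i f_i$; the multi-variable zeta then factors, collapsing everything to the one-variable case in a single step. It proves the one-variable fact by expanding in Dirichlet $L$-functions and invoking $L(0,\chi)=0$ for even nontrivial $\chi$ together with $\zeta(0)=-\tfrac12$. You instead peel off one coordinate of $S$ at a time by induction, and establish the one-variable identity more elementarily via Hurwitz values $\zeta(0,\alpha)=\tfrac12-\alpha$, matching residue classes $r$ with $N-r$. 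Your route has the merit of being explicit about the order in which the analytic continuation is performed (writing everything as a finite combination of products of Hurwitz zetas makes the variable-by-variable specialization transparent), whereas the paper's multiplicative reduction is shorter once one accepts the factorization. The two arguments are equivalent in strength; yours is a clean alternative packaging of the same mechanism.
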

\begin{proof}
We may replace $R$ by $\emptyset$ and $f$ by $f_{S,T}:\Z^{S\cup T}\ra \C$  without changing either side of the equation. We thus assume that $S\cup T = \{1,\dots,n\}$.
Since both sides are linear in $f$, we may also assume $f$ is a product of separate functions in each coordinate, which reduces us to case $T=\emptyset$ and $S=\{1\}$. 

We may thus assume that $f$ is the characteristic function of numbers congruent to $a$ modulo $n$. If $\gcd(a,n)$ is not 1 then we may divide both sides of the equation by $\gcd(a,n)$, reducing to the case when $\gcd(a,n)=1$. If $a=n=1$, then the LHS is the constant $1$ and the RHS is $(-1)(\zeta(0)+\zeta(0)$. The claim then follows from $\zeta(0)=-\frac12$. If $n>1$, then the left hand side is the constant $1/n$. To evaluate the right hand side, note that $\zeta_{f,t}(s)$ is a linear combination of $L$-functions $L(\chi,s)$, where every $\chi$ is an even Dirichlet
character. The statement follows since for even nontrivial characters $\chi$, we have $L(0,\chi)=0$ (as can be seen from the functional equation, for example). Hence the only contribution is from the trivial character, which is weighted by $1/n$ as required.
\end{proof}

Next we have the following lemma.

\begin{lemma}\label{lem:oneres}
If $1\not\in S\cup T$, then 
\begin{equation*}
\zeta_{f,\ol t}(S=0;T)=
{\rm Res}_{s_1=1}\zeta_{f,t}(S=0;T\cup{1}).
\end{equation*}
\end{lemma}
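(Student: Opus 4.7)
The plan is to unpack both sides and reduce to a standard residue computation for Dirichlet series with periodic coefficients. First I would rewrite the right hand side, in the region of absolute convergence $\Re(s_i) > 1$ for all $i \in T \cup \{1\}$, by separating the $a_1$-sum from the $\vec a_T$-sum:
$$\zeta_{f,t}(S=0;T\cup\{1\})(s_1,s_T) = \sum_{\ol t\cdot \vec a_T \in \Z_{>0}^T} \Bigl(\sum_{t_1 a_1 > 0} f_{S;T\cup\{1\}}(a_1,\vec a_T)\,|a_1|^{-s_1}\Bigr)\prod_{i\in T} |a_i|^{-s_i},$$
where $\ol t$ denotes the restriction of $t$ to $T$ as in the lemma statement.

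For each fixed $\vec a_T$, the map $a_1 \mapsto f_{S;T\cup\{1\}}(a_1, \vec a_T)$ is a periodic function of $a_1\in\Z$ with period dividing the modulus $N$ of $f$. Its mean value over $\Z$ is (by iterating averages) the mean of $f$ on $\{0_S\}\times\{\vec a_T\}\times \Z^R$, which is exactly $f_{S;T}(\vec a_T)$. For any periodic function $g$ on $\Z$ with mean $\mu$, the series $\sum_{a>0} g(a)\,a^{-s}$ is a $\Q$-linear combination of Hurwitz zeta functions, and therefore extends meromorphically with a simple pole at $s=1$ of residue $\mu$; equivalently, after decomposing $g$ into Dirichlet characters mod $N$, only the trivial character contributes a pole. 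The negative-$a$ series has the same residue via $a\mapsto -a$, so the residue at $s_1=1$ of the inner bracket equals $f_{S;T}(\vec a_T)$ independently of the sign $t_1$, which is precisely why the right hand side depends on $t$ only through $\ol t$.

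Finally I would interchange the residue at $s_1=1$ with the outer sum over $\vec a_T$. The inner bracket decomposes as $f_{S;T}(\vec a_T)\,\zeta(s_1) + H(s_1;\vec a_T)$, where $H$ is entire in $s_1$ and depends on $\vec a_T$ only through its residue class mod $N$; in particular $|H(s_1;\vec a_T)|$ is bounded uniformly in $\vec a_T$ on compact subsets of $\C$. Hence in the region $\Re(s_i)>1$ for $i \in T$, the residue commutes with the outer sum, and extracting the coefficient of $\zeta(s_1)$ produces exactly
$$\sum_{\ol t\cdot \vec a_T\in\Z_{>0}^T} f_{S;T}(\vec a_T)\prod_{i\in T}|a_i|^{-s_i} = \zeta_{f,\ol t}(S=0;T)(s_T),$$
as claimed. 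The main obstacle is just bookkeeping which coordinates get averaged at which stage; the analytic content is the elementary residue statement for periodic Dirichlet series already invoked in Lemma~\ref{lem:zeroden}.
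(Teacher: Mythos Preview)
Your proof is correct and follows essentially the same approach as the paper's own proof: separate the $a_1$-sum from the $\vec a_T$-sum, observe that the inner Dirichlet series has periodic coefficients whose mean equals $f_{S;T}(\vec a_T)$, and take the residue at $s_1=1$ termwise. You are more careful than the paper in justifying the interchange of residue and outer sum (via the decomposition into $\zeta(s_1)$ plus a uniformly bounded entire piece) and in explaining why the result is independent of $t_1$, but the underlying argument is the same.
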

\begin{proof}
We have
\begin{align*}
\Res_{s_1=1}\zeta_{f,t}(S=0;T\cup\{1\})(s_T,s_1)&= \Res_{s_1=1}\sum_{t\cdot \vec{a}\in \Z_{>0}^T}\prod_{i\in T} |a_i|^{-s_i}\sum_{m>0} \mu_{S,T\cup\{1\}}(\vec{a}\times\{m\})|m|^{-s_1}\\
&=\sum_{\vec{a}\in \Z_{\neq0}^T}\prod_{i\in T} |a_i|^{-s_i}\cdot f_{S,T}(\vec{a})\\
&=\zeta_{f,\ol t}(S=0;T)\\
\end{align*}
as desired, where we have used the fact that the natural density of the function $\mu_{S,T\cup\{1\}}(\vec{a}\times \{m\})$ over $m>0$ is $f_{S,T}(\vec{a})$.
\end{proof}

\subsection{Counting points smoothly using Poisson summation}

It is a well-known fact in analysis that smooth point counts (as opposed to sharp counts) can be evaluated with a super-polynomial error term. We prove a version of this which is applicable to our setting.

Let $f$ be a periodic function on $\Z^n$ of modulus $Q$, and let $\cB:\R^n\to \R$ be a fixed smooth function of compact support. Let $g=ud\in \GL_n(\R)$ be a upper triangular matrix with positive diagonal entries, where $u$ is unipotent with entries of size $O(1)$, and $d$ is diagonal. We further assume that the diagonal entries are non-decreasing. We use the standard inner product on $\Z^n$, and recall the transformation formula for the Fourier transform:

$$\widehat{g\cB}(y) = \det(g) (g^{-T} \widehat{\cB})(y).$$
For $1\leq r\leq s\leq n$ we define $g_{r,s}$ to be the induced action on $\R^{s-r}$ thought of the sub-quotient of $\R^n$ where we restrict to the subspace $\langle e_1,\dots,e_{s}\rangle$ and quotient out by the vector space $\langle e_1\dots,e_r\rangle.$ We similarly consider $B_{r,s}$ as a function on $\R^{s-r}$ by first restricting to the subspace $\langle e_1,\dots,e_{s}\rangle$ and then projecting to the quotient space. We write $f_{r,s}$ to mean $f_{R,S}$ in the notation of the previous section, for $R=\{1,\dots,r\}$ and $S=\{s+1,\dots,n\}$. 

We shall often use the following theorem to simplify our smooth counts by projecting away the variables which get `stretched', and restricting to $0$ those variables which get `compressed'.

\begin{theorem}\label{thm:projectlargecoords}
    Let the notation be as above, with $f,B$ fixed and all other parameters varying. Suppose for some parameter $Y$ we have $d_1,\dots,d_r\geq Y$ and $d_{s+1},\dots,d_n\leq Y^{-1}$. Then
\begin{equation*}
\sum_{\ell\in \Z^n}(gB)(\ell)f(\ell) = \prod_{i=1}^r d_i  \sum_{\ell_0\in \Z^{s-r}}(g_{r,s}B_{r,s})(\ell_0)f_{r,s}(\ell_0) + O_A(Y^{-A}).
\end{equation*}
\end{theorem}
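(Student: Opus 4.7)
The plan is to handle the ``compressed'' coordinates (indices $i>s$, where $d_i\leq Y^{-1}$) and the ``stretched'' coordinates (indices $i\leq r$, where $d_i\geq Y$) separately, in each case incurring only super-polynomially small error by exploiting the Schwartz decay of $B$ or $\widehat{B}$. Throughout, the upper-triangular structure of $g=ud$ and the boundedness of the entries of $u$ will be used repeatedly.

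First I would handle the compressed coordinates. Since $g^{-1}=d^{-1}u^{-1}$ with $u^{-1}$ upper-triangular unipotent having $O(1)$ entries, we have
\begin{equation*}
(g^{-1}\ell)_i = d_i^{-1}\Bigl(\ell_i+\sum_{j>i}(u^{-1})_{ij}\ell_j\Bigr).
\end{equation*}
Arguing by downward induction from $i=n$: once the compressed coordinates $\ell_j$ for $j>i$ are shown to vanish up to $O_A(Y^{-A})$ error, we have $|(g^{-1}\ell)_i|\geq Y|\ell_i|$, so the Schwartz bound on $B$ controls the contribution from $\ell_i\neq 0$ by $O_A((Y|\ell_i|)^{-A})$. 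Iterating from $i=n$ down to $i=s+1$ reduces the sum to
\begin{equation*}
\sum_{\ell'\in\Z^s}(gB)(\ell',0)\,f(\ell',0)+O_A(Y^{-A}).
\end{equation*}

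Next I would handle the stretched coordinates via Poisson summation in residue classes. Split $\ell'=(\ell_s,\ell_m)$ with $\ell_s=(\ell_1,\ldots,\ell_r)$ and $\ell_m=(\ell_{r+1},\ldots,\ell_s)$, and decompose $f$ by the class of $\ell_s$ modulo its period $Q$. For each residue $a\in(\Z/Q\Z)^r$, Poisson summation in the $\ell_s$-variables gives
\begin{equation*}
\sum_{\ell_s\equiv a\,(Q)}(gB)(\ell_s,\ell_m,0) = \frac{1}{Q^r}\sum_{j\in\Z^r}e(a\cdot j/Q)\,\widehat{\phi}_{\ell_m}(j/Q),
\end{equation*}
where $\phi_{\ell_m}(\ell_s):=(gB)(\ell_s,\ell_m,0)$. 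The $j=0$ mode equals $Q^{-r}\int_{\R^r}(gB)(\ell_s,\ell_m,0)\,d\ell_s$; a triangular change of variables (first by $u^{-1}$, which has Jacobian $1$, then by $d^{-1}$) evaluates this integral as $Q^{-r}\prod_{i=1}^rd_i\cdot(g_{r,s}B_{r,s})(\ell_m)$. Summing $f(a,\ell_m,0)$ over $a\in(\Z/Q\Z)^r$ produces $Q^r$ times the average $f_{r,s}(\ell_m)$, so the $Q^{\pm r}$ factors cancel and summing over $\ell_m\in\Z^{s-r}$ yields the claimed main term.

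The main obstacle is showing that the $j\neq 0$ Fourier modes contribute only $O_A(Y^{-A})$. Using $\widehat{gB}(y)=\det(g)\widehat{B}(g^Ty)$, the $i$-th coordinate of the argument $g^T(j/Q)$ is $(d_i/Q)\bigl(j_i+\sum_{k<i}u_{ki}j_k\bigr)$, and the unipotent coupling prevents a direct product-style Schwartz bound. I would resolve this by induction on $i$: Schwartz decay of $\widehat{B}$ at position $i=1$ combined with $d_1\geq Y$ forces $j_1=0$ once $Y$ is sufficiently large (for any fixed decay exponent); once $j_1=\cdots=j_{i-1}=0$, the same argument at position $i\leq r$ forces $j_i=0$. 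Hence only modes with $j_1=\cdots=j_r=0$ -- which are exactly captured by the $j=0$ term of the $\Z^r$ Poisson sum -- survive up to $O_A(Y^{-A})$, completing the proof.
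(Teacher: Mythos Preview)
Your proof is correct and follows essentially the same strategy as the paper's: kill the compressed coordinates using the decay (in fact compact support) of $B$, then use Poisson summation together with the Schwartz decay of the Fourier transform to kill the stretched dual coordinates. The paper packages this slightly differently---it applies Poisson on all of $\Z^s$ at once (so that $f$ becomes $\hat f$), restricts the dual sum to $\langle e^*_{r+1},\dots,e^*_s\rangle$, and then Poisson's back---whereas you do a partial Poisson only in the first $r$ coordinates and handle $f$ by splitting into residue classes modulo $Q$; these are equivalent reorganizations of the same computation.

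One small imprecision: you invoke the identity $\widehat{gB}(y)=\det(g)\widehat{B}(g^T y)$, but $\hat\phi_{\ell_m}$ is only a \emph{partial} Fourier transform in the first $r$ variables, so this formula does not literally apply. What you actually need is that $\phi_{\ell_m}(\ell_s)=B\bigl(A_r\ell_s + b_1(\ell_m),\,b_2(\ell_m),\,0\bigr)$ with $A_r$ the upper-left $r\times r$ block of $g^{-1}$, whence $\hat\phi_{\ell_m}(\xi)=(\prod_{i\le r}d_i)\,e(\cdots)\,\hat\psi(A_r^{-T}\xi)$ for $\psi(\cdot)=B(\cdot,b_2,0)$. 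Since $A_r^{-T}=d_{[1,r]}u_{[1,r]}^T$, the $i$-th coordinate of $A_r^{-T}(j/Q)$ is exactly your $(d_i/Q)(j_i+\sum_{k<i}u_{ki}j_k)$, and $\hat\psi$ is Schwartz uniformly in $b_2$; so your inductive bound goes through unchanged.
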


\begin{proof}
Since $B$ has compact support, for $Y\gg 1$ we see that $gB(\ell)\neq 0$ implies that $\ell\subset \langle e_1,\dots,e_s\rangle$. Hence we may restrict the sum to that subspace, which we denote $\R^s$. We restrict and apply Poisson summation:

\begin{align*}
    \sum_{\ell\in \Z^n}(gB)(\ell)f(\ell)&=\sum_{\ell\in \Z^s}(gB)(\ell)f(\ell)
    \\&=\det(g)M^{-1}\sum_{\ell^*\in Q^{-1}\Z^s}(g^{-T}\hat{B})(\ell^*)\hat{f}(\ell^*)\\
    &=\det(g)M^{-1}\sum_{\ell^*\in Q^{-1}\Z^s}(\hat{B})(d^Tu^T\ell^*)\hat{f}(\ell^*).\\
\end{align*}
Now since $B$ is smooth of compact support we see have that $\hat{B}(\ell) = O_A(|\ell|^{-A})$. It follows that the contribution of all the terms in the above sum which are not contained in $\langle e^*_{r+1},\dots,e^*_{s}\rangle$ is $O_A(Y^{-A})$.  Next, note that for $\ell^*\in\langle e^*_{r+1},\dots,e^*_{s}\rangle$ we have
$\hat{B}(\ell^*)=\widehat{B_{r,s}}(\ol{\ell^*}),\hat{f}(\ell^*)=\widehat{f_{r,s}}(\ol{\ell^*})$. Hence, we have:

\begin{align*}
    \sum_{\ell\in \Z^n}(gB)(\ell)f(\ell)&=\det(g)M^{-1}\sum_{\ell^*\in Q^{-1}\langle e^*_{r+1},\dots,e^*_{s}\rangle}(\hat{B})(d^Tu^T\ell^*)\hat{f}(\ell^*) + O_A(Y^{-A})\\
    &=\det(g)M^{-1}\sum_{\ell_0^*\in Q^{-1}\Z^{s-r}}(\hat{B_{r,s}})(g^T\ell_0^*)\widehat{f_{r,s}}(\ell_0^*) + O_A(Y^{-A})\\
    &=\det(g)\det(g_{r,s}^{-1})\sum_{\ell_0\in \Z^{s-r}}(g_{r,s}B_{r,s})(\ell_0)f_{r,s}(\ell_0) + O_A(Y^{-A}),\\
\end{align*}
which completes the proof.
\end{proof}

\subsection{The Mellin transform}

We shall heavily employ the Mellin transform, often in many variables at once. To this end, recall that if $f:\R_{> 0}\to\R$ is a function on the positive real line, we define its Mellin transform $\wt{f}(s)$ via $$\wt{f}(s) = \int_{\R_{> 0}}f(x)|x|^{s}d^{\times}x$$ when the integral converges. The integral will converge in a strip $\Re(a)<c<\Re(b)$, and for any such $c$ we have the Mellin inversion formula:
\begin{equation*}
f(x) = \frac{1}{2\pi i}\int_{\Re(s)=c} \wt{f}(s) x^{-s}ds=\int_c \wt{f}(s) x^{-s}ds,
\end{equation*}
where $\int_c$ denotes $\frac{1}{2\pi i}\int_{\Res(s)=c}$.
We also recall the following facts: 
\begin{enumerate}
\item The Mellin transform satisfies the identity $\wt{x^af(x)}(s)=\wt{f}(s+a)$ for $a\in\R$.
\item For differentiable functions $f$, we have $\wt{f'}(s+1)=-s\wt{f}(s)$.
\item If $f$ is a smooth function, then $\wt{f}(s)$ has super polynomial decay along vertical strips, uniformly in any compact region in $(a,b)$. 
\item If $f$ has compact support on $\R_{>0}$, then $\wt{f}$ is entire. 

\item We have the special value $\tilde{f}(1) = \int_{\R_{>0}}f(x)dx= \hat{f}(0)$.
\end{enumerate}
Suppose now that $\lim_{x\to 0}$ $f(x)$ exists. Then the Mellin transform can have poles.
\begin{lemma}
Let $f:\R_{\geq 0}\to\R$ be a smooth function with bounded support. Then $\wt{f}(s)$ has an analytic continuation to the entire complex plane with at most simple poles at $\{0,-1,-2,\ldots\}$. Moreover, we have $\Res_{s=0}\wt{f}(s)=f(0)$.
\end{lemma}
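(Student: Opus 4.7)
The plan is to prove this by iterated integration by parts, exploiting the identity $\wt{f'}(s+1) = -s\wt{f}(s)$ already recorded in item (2) of the preceding list. Since $f$ has bounded support (say contained in $[0,R]$) and is smooth, the boundary terms at $x=R$ vanish and smoothness at $0$ guarantees existence of all derivatives $f^{(k)}(0)$.

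First I would verify that the original integral $\wt{f}(s)=\int_0^R f(x)x^{s-1}dx$ converges and defines a holomorphic function on the right half-plane $\Re(s)>0$; this follows from boundedness of $f$ and compactness of the support. Next, for $\Re(s)>0$ integration by parts gives
\begin{equation*}
\wt{f}(s)=\left[\frac{x^s}{s}f(x)\right]_0^R-\frac{1}{s}\int_0^R x^s f'(x)\,dx=-\frac{1}{s}\wt{f'}(s+1),
\end{equation*}
where the boundary term at $x=R$ vanishes by compact support and the one at $x=0$ vanishes because $\Re(s)>0$. Since $f'$ is itself smooth and compactly supported, $\wt{f'}(s+1)$ is holomorphic for $\Re(s)>-1$, so this formula extends $\wt{f}(s)$ meromorphically to $\Re(s)>-1$ with at most a simple pole at $s=0$.

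Iterating this argument $n$ times yields
\begin{equation*}
\wt{f}(s)=\frac{(-1)^n}{s(s+1)\cdots(s+n-1)}\wt{f^{(n)}}(s+n),
\end{equation*}
valid for $\Re(s)>-n$ as a meromorphic identity. The right-hand side is holomorphic on $\Re(s)>-n$ except for at most simple poles at $s=0,-1,\ldots,-(n+1)$. Letting $n\to\infty$ gives analytic continuation to all of $\C$ with only possible simple poles on the non-positive integers.

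Finally, for the residue at $s=0$, I would take $n=1$ in the identity above and compute
\begin{equation*}
\Res_{s=0}\wt{f}(s)=-\wt{f'}(1)=-\int_0^R f'(x)\,dx=-\bigl(f(R)-f(0)\bigr)=f(0),
\end{equation*}
using $f(R)=0$. No step here is really an obstacle; the only care needed is in noting that the smoothness of $f$ at $0$ (so that each $f^{(k)}(0)$ is defined and each $f^{(k)}$ is itself smooth with bounded support) is what justifies arbitrarily many iterations of integration by parts. As a sanity check, one can also read off the residue at $s=-k$ by taking $n=k+1$, which gives $\Res_{s=-k}\wt{f}(s)=f^{(k)}(0)/k!$ and matches what one would expect from a local Taylor expansion of $f$ near $0$.
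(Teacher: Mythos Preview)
Your proof is correct and follows essentially the same approach as the paper: use the functional equation $\wt{f}(s)=-\frac{1}{s}\wt{f'}(s+1)$ (which the paper simply cites as ``Point 2'' rather than re-deriving via integration by parts) iteratively to push the domain of holomorphy leftward, then compute the residue at $s=0$ as $-\wt{f'}(1)=f(0)$. One small typo: in your iterated formula the possible poles in the region $\Re(s)>-n$ are at $s=0,-1,\ldots,-(n-1)$, not $-(n+1)$; this does not affect the argument.
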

\begin{proof}
It is clear that the integral defining $\wt{f}$ converges for $\Re(s)>0$. In fact, the same is true for all the derivatives $f^{(n)}$ of $f$. Point 2 above gives a functional equation for $\wt{f}$, in terms of $\wt{f^{(n)}}$, allowing analytic continuation to all of $\C$, with at most simple poles at $0$ and the negative integers. Finally, we compute the residue of $\wt{f}$ at $0$ to be
\begin{equation*}
\Res_{s=0}\wt{f}=\lim_{s\to 0}s\wt{f}(s)=-\wt{f'}(1)=-\int_{\R_{>0}}f'(x)dx=f(0),
\end{equation*}
as desired.
\end{proof}

We shall require the following lemmas about the functions $\delta_{\FF_3}(s_1,s_2)$ and $\delta_{\FF_2}(t)$

\begin{lemma}\label{lem:mellinbounddelta}
\begin{itemize}
    \item The Mellin transform $\wt{\delta_{\FF_3}}(v_1,w_1)$ is holomorphic except for poles at $v_1=0,w_1=0$.  The function $\min(1,|v_1|)\min(1,|v_2|)\wt{\delta_{\FF_3}}(v_1,w_1)$ is bounded on any right half-plane.
    \item  The Mellin transform $\wt{\delta_{\FF_2}}(t)$is holomorphic except for poles at $t=0$.  The function $\min(1,|t|)\wt{\delta_{\FF_3}}(t)$ is bounded on any right half-plane.
\end{itemize}
\end{lemma}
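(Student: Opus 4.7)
The proof proceeds by decomposing $\delta_{\FF_3}$ according to the ``box shape at infinity'' structure of $\FF_3$ and computing the Mellin transform of each piece. By the property stated just before the lemma, there exists $M$ sufficiently large such that $\delta_{\FF_3}(s_1, s_2) = 1$ when $s_1, s_2 \geq M$, $\delta_{\FF_3}(s_1, s_2)$ depends only on $s_2$ when $s_1 \geq M$, and symmetrically for $s_2 \geq M$. Since $\delta_{\FF_3}(s_1, s_2)$ is the measure of a subset of the unit-measure fundamental domain of $\ol{N}_3(\Z)\backslash \ol{N}_3(\R)$, it is bounded by $1$, and since $\FF_3 \subset \mathcal{S}_2$ it is supported in $\{s_1, s_2 \geq c\}$. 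Letting $\chi := \chi_{[M, \infty)}$, I would write
$$\delta_{\FF_3}(s_1, s_2) = \chi(s_1)\chi(s_2) + \chi(s_1)(1 - \chi(s_2)) F_2(s_2) + (1-\chi(s_1))\chi(s_2) F_1(s_1) + G(s_1, s_2),$$
where $F_1, F_2$ are bounded with compact support in $(0, \infty)$ and $G$ is bounded with compact support in $(0, \infty)^2$.

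Next I would compute the Mellin transform term by term. The first term yields $\wt{\chi \otimes \chi}(v_1, w_1) = M^{v_1 + w_1}/(v_1 w_1)$, initially valid for $\Re(v_1), \Re(w_1) < 0$, extending meromorphically to $\C^2$ with simple poles along the hyperplanes $v_1 = 0$ and $w_1 = 0$. The second term factors as $(-M^{v_1}/v_1) \cdot \wt{(1-\chi) F_2}(w_1)$, whose second factor is entire in $w_1$ since $(1-\chi) F_2$ has compact support in $(0,\infty)$; the third term is symmetric. The final term $\wt{G}(v_1, w_1)$ is entire in both variables. Summing the four contributions shows that $\wt{\delta_{\FF_3}}(v_1, w_1)$ is holomorphic on $\C^2$ outside of simple poles along $v_1 = 0$ and $w_1 = 0$, as claimed.

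For the boundedness statement, on a vertical strip of bounded real part, the factors $|M^{v_i}|$ are bounded, and the entire factors $\wt{(1 - \chi) F_i}$ and $\wt{G}$ are bounded as well (in fact of rapid decay in the imaginary direction, though boundedness suffices). Multiplying through by $\min(1, |v_1|)\min(1, |w_1|)$ then absorbs the $1/v_i$ singularities of terms 1--3: since $\min(1, |v|)/|v| \leq 1$, the whole expression becomes globally bounded on such a region. The analysis of $\wt{\delta_{\FF_2}}(t)$ is the one-variable analog: decompose $\delta_{\FF_2}(t) = \chi(t) + H(t)$ with $H$ compactly supported, yielding $\wt{\delta_{\FF_2}}(t) = -M^t/t + \wt{H}(t)$, with $\wt{H}$ entire and bounded, giving a single simple pole at $t = 0$ and the matching boundedness.

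No genuine obstacle arises: the proof is essentially bookkeeping once the decomposition is in place. The key enabling observation is that the box-at-infinity structure of $\FF_3$ forces $\delta_{\FF_3}$ to split as a ``constant at infinity'' piece plus a lower-order correction supported in a compact set of $(0,\infty)^2$, which in turn makes each Mellin transform either explicitly computable or entire.
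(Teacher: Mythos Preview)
Your proof is correct and takes essentially the same approach as the paper: both decompose $\delta_{\FF_3}$ as a product of half-line indicators plus compactly supported corrections, compute the explicit Mellin transform $-M^{s}/s$ for the indicator pieces to locate the poles at $v_1=0$ and $w_1=0$, and observe that the compactly supported pieces have entire (and bounded-on-strips) Mellin transforms. Your write-up is in fact more detailed than the paper's, which sketches the same decomposition in one line.
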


\begin{proof}
    We only handle the case of $\FF_3$ as the other case is handled the same way and is easier. Note that we may write $\delta_{\FF_3}(s_1,s_2) = C\delta_{s_1>C_1}\delta_{s_2>C_2}+R_1(s_1)\delta_{s_1>C_1}+R_2(s_2)\delta_{s_2>C_2}+H(s_1,s_2)$ where $R_1,R_2,H$ are compactly supported functions away from $0$ (for either of the co-ordinates) and constants $C,C_1,C_2$ with $C_1,C_2>0$. It is clear that $\tilde{R_i}$ is holomorphic everywhere and bounded on right half-planes. The theorem now follows from the fact that $\tilde{\delta}_{s_
    1>C_1}(s)=\frac{C_1^{-s}}{s}.$
\end{proof}

\subsection{Counting points smoothly using the Mellin transform}\label{sec:poissonexample}

We will use the Mellin transform in order to sum smooth functions over integer points. We will first do an example in the one variable case.
Suppose $\phi:\R_{\geq 0}\to\R$ is a smooth function with compact support, and that we would like to evaluate $\sum_{n\in\Z} \phi(n)$. 
As discussed previously, if $\phi$ is spread out, then this can be done effectively using Poisson summation. However, if $\phi$ is not very well spread out then some subtleties arise which can be difficult to detect with the Fourier transform, and we find the Mellin transform to be much more suitable. We use Mellin inversion to write
\begin{equation*}
\begin{array}{rcl}
\displaystyle\sum_{n\in\Z_{>0}}\phi\Bigl(\frac{n}{X}\Bigr) &=&\displaystyle
\sum_{n>0}\int_{2}\wt{\phi}(s)X^sn^{-s}ds
\\[.1in]&=&\displaystyle
\int_{2} \zeta(s)\wt{\phi}(s)X^sds
\\[.1in]&=&\displaystyle 
X\widehat{\phi}(0)+O(X^{o(1)}).
\end{array}
\end{equation*}

In the situation when $\phi:\R\to\R$ is a smooth function, and we are summing over all the integers, we can do much better. In fact, we can use this method to recover the same super-polynomial decay obtainable using Poisson summation.
Indeed, define the function $\phi_0:\R_{\geq 0}\to \R$ by $\phi_0(x):=\phi(x)+\phi(-x)$. Then the poles of $\wt{\phi_0}$ are only at the negative even integers. This can be seen by the fact that the Taylor expansion of $\phi_0$ is supported on even powers. Then we write
\begin{equation*}
\begin{array}{rcl}
\displaystyle\sum_{n\in\Z}\phi\Bigl(\frac{n}{X}\Bigr) &=&\displaystyle \phi(0)+\sum_{n>0} \phi_0\Bigl(\frac{n}{X}\Bigr)
\\[.1in]&=&\displaystyle
\phi(0) + \sum_{n\in\Z_+}\int_{2} \tilde{\phi_0}(s)X^sn^{-s}ds
\\[.1in]&=&\displaystyle
\phi(0) + \sum_{-A<n\leq 1} X^n \textrm{Res}_{s=n} (\zeta(s)\wt{\phi_0}(s))+ O_A(X^{-A}) 
\\[.1in]&=&\displaystyle
\phi(0)+X\hat{\phi}(0) + \zeta(0)\phi_0(0) + O_A(X^{-A})
\\[.1in]&=&\displaystyle
X\hat{\phi}(0) + O_A(X^{-A}),
\end{array}
\end{equation*}
where we have used the fact that $\Res_{s=0}\wt{\phi_0}(s)=\phi_0(0),\zeta(0)=-\frac{1}{2}$ and $\zeta(n)=0$ when $n$ is a negative even integer.

\medskip

When we have a function $B$ in $n$ variables $x_1,\dots,x_n$, for each $\vec{t}\in(\pm)^n$ we set 
$$\tilde{B}_{\vec{t}}(\vec{s}) = \int_{\R_+} B(t_1x_1,\dots,t_nx_n)\prod_i x_i^{s_i} d^{\times}\vec{x}$$ and $\tilde{B}(\vec{s})$ to be the element in $\C^{(\pm)^n}$ whose co-ordinates are 
$\tilde{B}_{\vec{t}}(\vec{s})$.

\begin{definition}
Given a function $B:\R^n\ra \R$ and disjoint sets $S,T\subset [n]$ we write $B_{S;T}:\R^T\ra\R$ to be the function 
$$B_{S;T}(\vec s_T):=\int_{\R^R}B(\vec 0_S,\vec s_T,\vec s_R)d\vec s_R$$ for $R=(S\cup T)^c$. In other words, we restrict the $S$ co-ordinates to be $0$ and integrate over the remaining co-ordinates except for $T$. We also write
$B_S$ to denote $B_{S;\emptyset}$.
\end{definition}

We shall require the following multi-variable version.

\begin{theorem}\label{thm:mellinsum}
    Given a smooth bounded function $B$ on $\R^n$, and a periodic function $f$ on $\Z^n$ we have
    $$\ds\sum_{\vec{a}\in \Z_{\neq 0}^n}  f(\vec a)B(\vec{a})= \frac{1}{2\pi i}\int _{\Re(\vec{s})=2} \wt{B}(\vec s)\cdot \zeta_f(\vec{s})d\vec s $$  
    and
    $$\ds\sum_{\vec{a}\in \Z^n}  f(\vec a)B(\vec{a})= \frac{1}{2\pi i}\ds\sum_{S\subset [n]}\int_{\Re(\vec{s})=2} \wt{B_S}(\vec s)\cdot \zeta_f(S=0)(\vec{s})d\vec s $$
\end{theorem}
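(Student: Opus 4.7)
The plan is to apply one-variable Mellin inversion in each coordinate of $B$, exchange the resulting integral with the sum over integer points, and recognize the inner sum as the signed multiple zeta function $\zeta_f$. Both formulas come out of the same recipe; the second is obtained from the first by partitioning $\Z^n$ according to the set of vanishing coordinates.

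For the first formula, I would use that for $\vec a\in\Z_{\neq 0}^n$ with sign vector $\vec t(\vec a)\in\{\pm1\}^n$, iterated Mellin inversion gives
$$B(\vec a)=\int_{\Re(\vec s)=2}\wt{B}_{\vec t(\vec a)}(\vec s)\prod_{i=1}^n|a_i|^{-s_i}\,d\vec s.$$
Substituting into the sum and grouping $\vec a$'s by sign pattern $\vec t$ yields
$$\sum_{\vec a\in\Z_{\neq 0}^n}f(\vec a)B(\vec a)=\sum_{\vec t\in\{\pm1\}^n}\int_{\Re(\vec s)=2}\wt{B}_{\vec t}(\vec s)\Bigl(\sum_{\vec t\cdot\vec a\in\Z_{>0}^n}f(\vec a)\prod_i|a_i|^{-s_i}\Bigr)d\vec s,$$
after swapping the order of summation and integration. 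The inner bracketed sum is precisely $\zeta_{f,\vec t}(\vec s)$, and summing over $\vec t$ repackages everything as the vector inner product $\wt{B}(\vec s)\cdot\zeta_f(\vec s)$, which is the desired formula.

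For the second formula, my plan is to partition $\Z^n$ according to the zero set $S=\{i:a_i=0\}$. For each fixed $S\subset[n]$, the contribution has the form $\sum_{\vec v\in\Z_{\neq 0}^{S^c}}f(0_S,\vec v)B(0_S,\vec v)$, and by definition $f(0_S,\vec v)=f_{S,S^c}(\vec v)$ (the averaging index set $R$ being empty) and $B(0_S,\vec v)=B_S(\vec v)$. Applying the first formula to the $|S^c|$-variable function $B_S$ with periodic coefficient $f_{S,S^c}$ produces
$$\sum_{\vec v\in\Z_{\neq 0}^{S^c}}f_{S,S^c}(\vec v)B_S(\vec v)=\int_{\Re(\vec s)=2}\wt{B_S}(\vec s)\cdot\zeta_f(S=0)(\vec s)\,d\vec s,$$
since the signed multiple zeta function attached to $f_{S,S^c}$ is by definition $\zeta_f(S=0)$. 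Summing over all $S$ closes out the argument.

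The only analytic point is justifying the interchange of sum and integral in the first step via Fubini. I expect this to be routine rather than the main obstacle: on the contour $\Re(s_i)=2$, smoothness and compact support of $B$ force $\wt{B}_{\vec t}(\vec s)$ to decay super-polynomially in the imaginary directions; $f$ is bounded periodic; and $\sum_{\vec a\in\Z_{\neq 0}^n}\prod_i|a_i|^{-2}$ converges absolutely. Everything else reduces to unpacking the notation of $\S 4.1$, so the main ``difficulty'' is simply keeping track of the indexing by sign patterns $\vec t$ and by zero-patterns $S$ simultaneously.
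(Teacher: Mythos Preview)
Your proof is correct. For the first identity your argument is exactly the paper's: iterated Mellin inversion in each coordinate, group by sign vector, swap sum and integral (your Fubini justification is the right one), and recognize $\zeta_{f,\vec t}$.

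For the second identity your route differs slightly from the paper's. You stratify $\Z^n$ by the set $S$ of vanishing coordinates and apply the first identity in the remaining $|S^c|$ variables; this is the most transparent argument and requires no further lemmas. The paper instead says the second formula follows from ``shifting the integral on the right hand side left, and using Lemma~\ref{lem:oneres}'', i.e.\ it moves contours in the first identity and reads off the boundary terms as residues, invoking the residue identity $\zeta_f(S=0;T)=\Res_{s_1=1}\zeta_f(S=0;T\cup\{1\})$. Both approaches are short; yours avoids any contour manipulation and makes the role of the stratification by zero-pattern explicit, while the paper's phrasing fits its later pattern of recovering lower-order pieces by shifting and picking up poles. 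Either is acceptable here.
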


\begin{proof}
The first claim is a direct application of Mellin inversion. The second claim follows from shifting the integral on the right hand side left, and using Lemma \ref{lem:oneres}. 
\end{proof}

Finally, the following lemma will be convenient when we begin shifting our integrals and analyzing poles:

\begin{lemma}\label{lem:symlatticenopole}
Let $f$ be a periodic function on $\Z^n$ which is invariant under negating a subset of the variables, so that $f(x_1,\dots,x_n)=f(|x_1|,\dots,|x_n|)$. Let $B$ be a smooth bounded function on $\R^n$. Then the only polar divisors of 
$\wt{B}(\vec s)\cdot \zeta_f(\vec s)$ occur at $s_i=0$ and $s_i=1$ for some $i\in[n]$.
\end{lemma}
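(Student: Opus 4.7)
My plan is to first use the full $(\Z/2\Z)^n$-symmetry of $f$ to decouple the $(\pm1)^n$-pairing into a product, and then to combine the standard ``trivial zero'' mechanism for Dirichlet $L$-functions with the even-derivative vanishing already used in the one-variable example of \S\ref{sec:poissonexample}.

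First, since $f(\vec a)=f(|\vec a|)$, the substitution $\vec a\mapsto t\vec a$ in the defining Dirichlet series shows that $\zeta_{f,t}(\vec s)$ is the same function for every $t\in\{\pm1\}^n$; denote this common function by $Z(\vec s)$. The $(\pm 1)^n$-dot product then collapses to
\begin{equation*}
\wt B(\vec s)\cdot \zeta_f(\vec s) \;=\; Z(\vec s)\,\wt{B_0}(\vec s),
\qquad B_0(\vec x):=\sum_{t\in\{\pm 1\}^n}B(t\vec x),
\end{equation*}
where $B_0$ is even in every coordinate by construction.

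Next I would analyze the two factors separately. For $\wt{B_0}$, iterated integration by parts in $x_i$ (exactly as in \S\ref{sec:poissonexample}) produces a meromorphic continuation whose residue along the hyperplane $s_i=-k$ is a universal nonzero constant times $\partial_i^k B_0|_{x_i=0}$, Mellin-transformed in the remaining variables. Because $B_0$ is even in $x_i$, all its odd-order $x_i$-partial derivatives vanish at $x_i=0$, so the polar divisors of $\wt{B_0}$ in the variable $s_i$ lie among $\{s_i=0,-2,-4,\ldots\}$. For $Z$, I would expand the periodic $f$ in Dirichlet characters modulo its period $N$, writing $f(\vec a)=\sum_{\vec\chi}c_{\vec\chi}\prod_i\chi_i(a_i)$. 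The hypothesis $f(\vec a)=f(|\vec a|)$ forces $\chi_i(-1)=1$ in every contributing term, so $Z(\vec s)$ is a finite $\C$-linear combination of products $\prod_i L(\chi_i,s_i)$ with every $\chi_i$ even. Each such $L(\chi_i,s_i)$ is entire except for a simple pole at $s_i=1$ (in the trivial case), and its functional equation / Gamma factor $\Gamma(s_i/2)$ forces trivial zeros at every $s_i\in\{-2,-4,-6,\ldots\}$; hence $Z$ has polar divisors only along $s_i=1$ and vanishes along every divisor $s_i=-2,-4,\ldots$.

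Combining the two analyses: along $s_i=-k$ for odd $k\geq 1$, $\wt{B_0}$ is already regular; along $s_i=-k$ for even $k\geq 2$, the zero of $Z$ cancels the (at worst simple) pole of $\wt{B_0}$; and only the divisors $s_i=0$ (from $\wt{B_0}$) and $s_i=1$ (from the trivial-character piece of $Z$) survive. The only subtle point I anticipate is verifying the pole/zero cancellation coordinatewise in the multi-variable setting, but this reduces cleanly to the one-variable situation because in a neighborhood of a generic point on the divisor $s_i=-2k$ both $Z$ and the relevant Laurent expansion of $\wt{B_0}$ factor as tensor products of a function of $s_i$ with a holomorphic germ in the remaining variables, so the argument of \S\ref{sec:poissonexample} applies directly.
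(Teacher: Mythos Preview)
Your argument is correct and follows essentially the same route as the paper's proof: reduce to even Dirichlet characters using the symmetry of $f$, observe that the symmetrized $B$ has Mellin poles only at non-positive even integers in each variable, and cancel the poles at $s_i=-2,-4,\ldots$ against the trivial zeros of even $L$-functions. The only cosmetic difference is that you symmetrize $B$ in all variables at once and use the observation that $\zeta_{f,t}$ is independent of $t$, whereas the paper treats one variable at a time via the splitting $B_{\pm}=B\pm B_0$, $f_{\pm}=f\pm f_0$ (with the antisymmetric piece $f_{-}$ vanishing anyway under the hypothesis).
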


\begin{proof}
Note that a-priori the only potential polar divisors of $\zeta_f(\vec s)$ are at $s_i=1$ and those of $\tilde{B}(\vec{s})$ are at non-positive integers. Therefore, by symmetry, it is sufficient to prove that there are no poles at $s_1=m$ for any negative integer $m$. Let $f_0(a_1,\dots,a_n):=f(-a_1,\dots,a_n)$ and $B_0(x_1,\dots,x_n)=B(-x_1,\dots,x_n)$. For $\epsilon=\pm 1$, we set $B_{\epsilon}=B+\epsilon B_0, f_{\epsilon}=f+\epsilon f_0$ so that 
$$2\wt{B}(\vec s)\cdot \zeta_f(\vec s) =\sum_{\epsilon=\pm 1}\wt{B_\epsilon}(\vec s)\cdot \zeta_{f_{\epsilon}}(\vec s) .$$
Now $B_{+1}$ is symmetric around the origin, and hence its taylor series around $0$ only has even coefficients, and so its Mellin transform only has (simple) poles at non-positive even integers. Moreover, $f_{+1}$ is symmetric around the origin, and so $\zeta_{f_{+1}}(\vec{s})$ can be written as a sum of products of Dirichlet L-functions in one variable, with the Dirichlet characters $\chi$ that occur of the variable $s_1$ satisfying $\chi(-1)=1$. So  $\zeta_{f_{+1}}(\vec{s})$ has zeroes along $s_1=m$ for an even integer $m<0$, since for such $\chi$ we have $L(m,\chi)=0$. 

Likewise, $B_{-1}$ is anti-symmetric around the origin, and hence its taylor series around $0$ only has odd coefficients, and so its Mellin transform only has (simple) poles at non-positive odd integers. Moreover, $f_{+1}$ is anti-symmetric around the origin, and so $\zeta_{f_{+1}}(\vec{s})$ can be written as a sum of products of Dirichlet L-functions in one variable, with the $s_1$ Dirichlet characters that occur satisfying $\chi(-1)=-1$. So  $\zeta_{f_{+1}}(\vec{s})$ has zeroes along $s_1=m$ for odd integers $m<0$, since for such $\chi$ we have $L(m,\chi)=0$.
The lemma follows.
\end{proof}

\section{Setting up the count}

\subsection{The global zeta integral}

Fix $i\in\{0,1,2\}$, and let $\cB:V(\R)^{(i)}\to\R$ be a smooth function with compact support away from the discriminant zero locus.  Let $A_i= (i)!(4-2i)!2^{2i}=\#\Aut(\R^{2i}\times \C^i)$. Let $N$ be a positive integer and let $\phi:V(\Z/N\Z)\to\R$ be a $G(\Z/N\Z)$-invariant function. The {\it global zeta integral} $Z(\cB,\phi;s)$ is defined to be
\begin{equation}\label{eq:global_zeta_int_V}
Z(\cB,\phi;s):=\int_{g\in \FF}(\lambda(g))^{-12s}\Bigl(\sum_{x\in V(\Z)}\phi(x)(g\cB)(x)\Bigr)\nu_G(g).
\end{equation}
If $\phi$ is the characteristic function of a set $L$, we will use $Z_L(\cB,s)$ to denote $Z(\cB,\phi;s)$. The following result, relating global zeta integrals to Shintani zeta functions, is well known (see, for example \cite[\S5]{Kimura_book}).
\begin{proposition}\label{prop_global_zeta_shintani_G}
With notation as above, we have
\begin{equation*}
Z(\cB,\phi;s)=A_i\xi_i(\phi,s)\int_{x\in V(\R)^{(i)}}|\Delta(x)|^{s-1}\cB(x)\nu_V(x).
\end{equation*}
\end{proposition}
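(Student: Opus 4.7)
The plan is to follow the standard unfolding argument from the theory of Shintani zeta functions (see, e.g., \cite[\S5]{Kimura_book}), specialized to this setting. I would first prove the identity in a right half-plane $\Re(s) \gg 0$ where all sums and integrals converge absolutely (so Fubini applies freely), and then extend to all $s$ by analytic continuation, using Theorem \ref{th:SZF_v1} on the right-hand side.

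The first step is to partition the inner sum in \eqref{eq:global_zeta_int_V} over $G(\Z)$-orbits:
\begin{equation*}
\sum_{x\in V(\Z)}\phi(x)(g\cB)(x) = \sum_{[x_0]\in G(\Z)\backslash V(\Z)}\ \sum_{\gamma\in G(\Z)/\Stab_{G(\Z)}(x_0)}\phi(\gamma x_0)(g\cB)(\gamma x_0).
\end{equation*}
Using that $\phi$ is $G(\Z)$-invariant, we pull out $\phi(x_0)$. Since $\cB$ is supported in $V(\R)^{(i)}$ and $G(\R)$ preserves signature, only orbits $[x_0]$ with $x_0\in V(\Z)^{(i)}$ contribute. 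Swapping the orbit sum with the integral over $\FF$, I would then substitute $g = \gamma h$. The identity $(g\cB)(\gamma x_0) = \cB(g^{-1}\gamma x_0) = (h\cB)(x_0)$, together with the fact that $\lambda(\gamma) \in \{\pm 1\}$ for $\gamma \in G(\Z)$ (so that $\lambda(\gamma h)^{-12s} = \lambda(h)^{-12s}$), means the integrand is $\Stab_{G(\Z)}(x_0)$-left-invariant on $G(\R)$. Since $\FF$ is a fundamental domain for $G(\Z)\backslash G(\R)$, summing over $\gamma\in G(\Z)/\Stab_{G(\Z)}(x_0)$ unfolds the integral over $\FF$ into $\frac{1}{|\Stab_{G(\Z)}(x_0)|}$ times an integral over all of $G(\R)$:
\begin{equation*}
Z(\cB,\phi;s) = \sum_{[x_0]\in G(\Z)\backslash V(\Z)^{(i)}}\frac{\phi(x_0)}{|\Stab_{G(\Z)}(x_0)|}\int_{G(\R)}\lambda(h)^{-12s}(h\cB)(x_0)\,\nu_G(h).
\end{equation*}

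Next I would substitute $h \mapsto h^{-1}$ (valid since $G$ is unimodular, as remarked after \eqref{eq:J_def}) to rewrite the inner integral as $\int_{G(\R)}\lambda(h)^{12s}\cB(h\cdot x_0)\,\nu_G(h)$. The relative-invariant property $|\Delta(h\cdot x_0)| = \lambda(h)^{12}|\Delta(x_0)|$ then lets me re-express this as
\begin{equation*}
\frac{1}{|\Delta(x_0)|^s}\int_{G(\R)}|\Delta(h\cdot x_0)|^s\,\cB(h\cdot x_0)\,\nu_G(h).
\end{equation*}
Now apply Proposition \ref{prop:first_change_of_measures}, pushing forward $\nu_G$ along the orbit map $h\mapsto h\cdot x_0$, to convert the integral into one over $V(\R)^{(i)}$ against $\sigma_i\,\nu_V/|\Delta|$ (where $\sigma_i$ is the stabilizer order in $G(\R)$). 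The result is
\begin{equation*}
\frac{\sigma_i}{|\Delta(x_0)|^s}\int_{V(\R)^{(i)}}|\Delta(y)|^{s-1}\cB(y)\,\nu_V(y).
\end{equation*}
Plugging this back in and recognizing the orbit sum $\sum_{[x_0]}\frac{\phi(x_0)}{|\Stab_{G(\Z)}(x_0)|\,|\Delta(x_0)|^s}$ as $\xi_i(\phi;s)$ gives the claim, with $A_i = \sigma_i$ (this matches the displayed stabilizer orders $\sigma_0=24$, $\sigma_1=4$, $\sigma_2=8$, interpreted as $\#\Aut(\R^{4-2i}\oplus\C^i)$).

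The main subtlety I expect is bookkeeping around the change-of-measure formula: one must interpret Proposition \ref{prop:first_change_of_measures} as an identity of measures on the single open orbit $V(\R)^{(i)}$, with $|\Delta|$ evaluated at the \emph{variable} point $y$ (otherwise the formula would be inconsistent under replacing the reference point $x_0$ by a scalar multiple, since the pushforward $(\phi_{x_0})_*\nu_G$ is unchanged while $|\Delta(x_0)|$ is not). The remaining details—absolute convergence in a right half-plane, the identity $\lambda(\gamma)\in\{\pm 1\}$ for $\gamma\in G(\Z)$, and unimodularity of $G$—are routine.
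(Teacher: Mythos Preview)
Your proposal is correct and is precisely the standard unfolding argument from \cite[\S5]{Kimura_book} that the paper defers to without writing out; the paper gives no proof of its own beyond that citation. Your identification $A_i=\sigma_i$ and your remark about reading Proposition~\ref{prop:first_change_of_measures} with $|\Delta|$ as the function on the orbit (rather than the constant $|\Delta(x_0)|$) are both on point.
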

It is clear from our assumptions on $\cB$ that the integral above gives an entire function in $s$. In the sequel, we will compute the residues of the poles of Shintani zeta functions at $5/6$ by computing the residues of the poles of the global zeta integrals.

\subsection{Expressing residues in terms of point counts}
Let $i\in \{0,1,2\}$ be fixed, let $N$ be a positive integer, and let $\phi:V(\Z/N\Z)\to\R$ be a $G(\Z/N\Z)$-invariant function. Assume that the lift of $\phi$ to $V(\Z)$ is the characteristic function of a set corresponding to an $S_4$-family of rings. The counting results of Bhargava \cite{dodqf}, determining asymptotics on $G(\Z)$-orbits on the support of $\phi$, with discriminant less than $X$, implies that $\xi_i(\phi,s)$ has a simple pole at $s=1$. Moreover, the residue of $\xi_i(\phi,s)$ at $s=1$ is given by the leading constant of this asymptotic.
Over the next few sections, we will prove that $\xi_i(\phi,s)$ has a simple pole at $s=5/6$, and determine the residue of this pole. Our strategy for doing so is as follows.

We simplify our notation by working with $\xi_{i,L}(s)$, where $L$ is the support of $\phi$. Let $\cB:V(\R)^{(i)}\to\R$ be a smooth $K$-invariant function with compact support away from the discriminant zero locus.
First note that by Proposition \ref{prop_global_zeta_shintani_G}, the poles of $\xi_{i,L}(s)$ are determined by the poles of the global zeta integral $Z_L(\cB,s)$.
Let $\psi:\R_{>0}\to\R_{\geq 0}$ be a smooth function with compact support away from $0$. Since $Z_L(\cB,s)$ is meromorphic away from a possible simple pole at $s=1$ and possible double poles at $s=5/6$ and $s=3/4$, we have
\begin{equation}\label{eq:gen_count_1}
\int_{2}Z_L(\cB,s)\wt{\psi}(12s)X^sds=C_1X+C'X^{5/6}\log X+C_{5/6}X^{5/6}+O(X^{3/4+\epsilon}),
\end{equation}
where $C_1$, $C'$ and $C_{5/6}$ are constants. Moreover, if $C'=0$ (which we will later prove to be the case), then the $Z_L(\cB,s)$ has at most a simple pole at $5/6$. In this situation, the constants $C_1$ and $C_{5/6}$ are given by
\begin{equation}\label{eq:residues_of_GZI}
\begin{array}{rcccl}
C_1&=&\wt{\psi}(12)\Res_{s=1}Z_L(\cB,s)
&=&A_i\displaystyle\wt{\psi}(12)\Res_{s=1}\xi_{i,L}(s)V_1(\cB),
\\[.15in]
C_{5/6}&=&\wt{\psi}(10)\Res_{s=5/6}Z_L(\cB,s)
&=&A_i\displaystyle\wt{\psi}(10)\Res_{s=5/6}\xi_{i,L}(s)V_{5/6}(\cB),
\end{array}
\end{equation}
where we define for a real number $\kappa$, the quantity $V_\kappa(\cB)$ to be
\begin{equation}
\V_\kappa(\cB):=\int_{x\in V(\R)}|\Delta(x)|^{\kappa}\cB(x)\frac{\nu_V(x)}{|\Delta(x)|}.
\end{equation}
Note in particular that $\V_1(\cB)=\Vol(\cB)$.
Therefore, for the purpose of computing the residues of the Shintani zeta functions, it is enough to evaluate the LHS of \eqref{eq:gen_count_1} up to an error of $o(X^{5/6})$.

It will be convenient for us to replace the measure $\omega_G(g)$ with $dg$. This will only change $Z_L(\cB,s)$ by the constant factor $J$ defined in \eqref{eq:J_def}.
We use Mellin inversion and write
\begin{equation}\label{eq:GZI_to_I}
\begin{array}{rcl}
\displaystyle \int_{2}Z_L(\cB,s)\wt{\psi}(12s)X^sds &=&
\displaystyle J\int_{g\in\FF} \Bigl(\sum_{x\in L}(g\cB)(x)\Bigr)
\Bigl(\int_{2}\wt{\psi}(12s)(\lambda(g))^{-12s}X^sds\Bigr)dg
\\[.2in]&=&\displaystyle
\frac{J}{12}\I(\cB;X),
\end{array}
\end{equation}
where
\begin{equation}\label{eq:I-def}
\I(\cB;X):=\int_{g\in\FF}\sum_{x\in L}(g\cB)(x)\psi\Big(\frac{\lambda(g)}{X^{1/12}}\Big)dg.
\end{equation}

Summarizing, we see:

\begin{equation}\label{eq:Iexpansion}
    \frac{J}{12}\I(\cB;X)=XA_i\wt{\psi}(12)\Res_{s=1}\xi_{i,L}(s)V_1(\cB)+ X^{5/6}A_i\wt{\psi}(10)\Res_{s=5/6}\xi_{i,L}V_{5/6}(\cB) + o(X^{5/6})
\end{equation}
The aim then is to evaluate $\I(\cB;X)$ up to an error term of $o(X^{5/6})$.

\subsection{Breaking up the integral}

Our general process of evaluating global zeta integrals outlined in \S2.2 admits some technical simplifications. Specifically, our fundamental domain $\FF$ has three torus parameters, namely, $t$, $s_1$, and $s_2$. However, for our purposes, we will only need to break $\I(\cB;X)$ into two parts, corresponding to whether $t$ is small or $t$ is large. To this end, let $f_0:\R_{\geq 0}\to \R_{\geq 0}$ be a smooth and compactly supported function such that $f_0(x)=1$ for $x\in[0,2]$. Let $f$ denote the function defined by $f(x)=1-f_0(x)$. Throughout this paper, we fix $\delta>0$, which will be assumed to be small. For a real number $X>0$, define the functions $f^X:\R_{\geq 0}\to\R$ and $f_0^X:\R_{\geq 0}\to \R$ by setting
\begin{equation*}
f_0^X(x):=f_0(x/X^\delta);\quad f^X(x):=f(x/X^\delta).
\end{equation*}
For $g_2\in\FF_2$ written as $g_2=(n,t,k)$ in its Iwasawa decomposition, we define $f_0^X(g_2):=f_0^X(t)$ and $f^X(g_2):=f^X(t)$, and for $g=(\lambda,g_2,g_3)\in\FF$, we define $f_0^X(g):=f_0^X(g_2)$ and $f^X(g):=f^X(g_2)$.
We break up $\I(\cB;X)$ as $\I(\cB;X)=\I^{(1)}(\cB;X)+\I^{(2)}(\cB;X)$, where
\begin{equation}\label{eq:I1-def}
    \I^{(1)}(\cB;X)=\int_{g\in\FF}\sum_{x\in L} (g\cB)(x)\psi\Big(\frac{\lambda(g)}{X^{1/12}}\Big)f_0^X(g)dg.
\end{equation}
and
\begin{equation}\label{eq:I1-def}
    \I^{(2)}(\cB;X)=\int_{g\in\FF}\sum_{x\in L} (g\cB)(x)\psi\Big(\frac{\lambda(g)}{X^{1/12}}\Big)f^X(g)dg.
\end{equation}

In the first and second sections of the next part, we will evaluate 
$\I^{(1)}(\cB;X)$ and $\I^{(2)}(\cB;X)$, respectively, by following our general procedure. Our strategy is as follows:
\begin{enumerate}
\item For $\I^{(1)}(\cB;X)$, we will first prove that it is enough to fiber by $a_{11}$ and $b_{11}$ - the contribution from the regions where $a_{12}$ is forced to be small is negligible. Next we will shift the integral and pick up some main terms with a sufficiently small error.
\item For $\I^{(2)}(\cB;X)$, we will first prove that it is enough to fiber by all the coefficients of $A$ and $b_{11}$. Second, we will show that in fact, it is enough to fiber just by $A$ - the difference between the $A$ and $b_{11}$ fiber and the $A$ fiber can be evaluated precisely, upto a sufficiently small error. Finally, we shift the integral obtained from the $A$ fiber, to pick up main terms (in terms of the Shintani zeta function of ternary quadratic forms) upto small error.
\end{enumerate}

We make the following remark before moving on to Part III
\begin{remark}{\rm 
Our assumption that $L\subset V(\Z)$ is an $S_4$-subset, i.e., every $x\in L$ corresponds to a quartic order inside an $S_4$-field is crucial for the above strategy. Indeed, since $L$ contains no points with $a_{11}=b_{11}=0$ or $\det(A)=0$, only $g\in\FF$ with $w_{b_{11}}(g)\gg 1$, $w_{a_{13}}(g)\gg 1$, and $w_{a_{22}}(g)\gg 1$, give nonzero contributions to $\I^{(1)}(\cB;X)$ and $\I^{(2)}(\cB;X)$. Thus we can get away with this fairly minimal amount of fibering. Without this assumption on $L$, we would have to count points ``deeper in the cusp'', and would need to carry our a more complicated fibering procedure.
}\end{remark}

\part{Computing the Residues}

\section{The region where $t$ is small - evaluating $\I^{(1)}(\cB;X)$}

In this section, we evaluate the value of $\I^{(1)}(\cB;X)$. Our main result is as follows:

\begin{proposition}\label{prop:tsmallpowerexpansion}
We have
$$\I^{(1)}(\cB;X)=X\Vol_{dg_3}(\FF_3)\wt{\psi}(12)\nu(L)\Vol(\cB)\int_{g_2\in\FF_2}f_0^X(g_2)dg_2+cX^{21/24}+O_{\epsilon}(X^{3/4+\epsilon}),$$    
for some constant $c$.
\end{proposition}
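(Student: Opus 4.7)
My plan for proving Proposition \ref{prop:tsmallpowerexpansion} is to reduce $\I^{(1)}(\cB;X)$ to a lower-dimensional Mellin integral by projecting away the bulk lattice variables, then to pick up the main term and a single boundary contribution by a contour shift. Using the $K$-invariance of $\cB$ and the Iwasawa decomposition, I first write
\begin{equation*}
\I^{(1)}(\cB;X)=\int \!\!\!\int \!\!\! \int \Bigl(\sum_{x\in L}(\lambda n(t,s_1,s_2)\cB)(x)\Bigr)\psi\!\Bigl(\tfrac{\lambda}{X^{1/12}}\Bigr) f_0^X(t)\,t^{-2}s_1^{-6}s_2^{-6}\,d^\times\!\lambda\,du\,dt\,d^\times\!s_1\,d^\times\!s_2,
\end{equation*}
restricted to $(t,s_1,s_2)\in \FF_2\times\FF_3$ and with $u$ integrated over $N_{s_1,s_2}\times N_{2}$.

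For the first stage, I would fix the lattice variables $a_{11}$ and $b_{11}$ and apply Theorem \ref{thm:projectlargecoords} to the remaining ten coordinates. On the support of $f_0^X(t)\psi(\lambda/X^{1/12})$ one has $\lambda\asymp X^{1/12}$ and $t\leq X^\delta$, so for $\delta$ sufficiently small (a suitable fraction of $1/24$) the weights $w_{a_{ij}}(g), w_{b_{ij}}(g)$ with $(i,j)\neq (1,1)$ are $\gg X^{\eta}$ throughout the bulk of $\FF_3$. In that bulk, Theorem \ref{thm:projectlargecoords} replaces the $x$-sum over the ten coordinates by integration against $\nu_V$ with a super-polynomially small error, thereby giving precisely the factorization
\begin{equation*}
X\cdot\Vol_{dg_3}(\FF_3)\cdot\wt{\psi}(12)\cdot\nu(L)\cdot\Vol(\cB)\cdot\int_{g_2\in\FF_2}f_0^X(g_2)\,dg_2.
\end{equation*}
The unipotent $u$-integration contributes only through the cuspidal width functions $\delta_{\FF_2}(t),\delta_{\FF_3}(s_1,s_2)$, and I would invoke the $S_4$-hypothesis exactly as in the remark at the end of Section 5: since $L$ contains no points with $a_{11}=b_{11}=0$ or $\det(A)=0$, only $g$ with $w_{b_{11}}, w_{a_{13}}, w_{a_{22}}\gg 1$ contribute.

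For the second stage, I would handle the complement of the bulk, namely those $(s_1,s_2)$ for which some weight $w_{a_{12}}$, $w_{a_{13}}$, or $w_{a_{22}}$ fails to be $\gg 1$. Using the Mellin toolkit of \S4, I would express the remaining sums over $a_{11},b_{11}$ (together with any coordinate that transitions between small and large range) as contour integrals of signed multiple zeta functions against the appropriate Mellin transforms, then shift the contours leftward. The polar divisors of $\wt{\delta_{\FF_3}}$ from Lemma \ref{lem:mellinbounddelta}, together with Lemma \ref{lem:symlatticenopole} applied to the $\pm$-invariant lattice $L$, determine which poles survive. The leading surviving residue corresponds to a single coordinate transitioning (with $\lambda\asymp X^{1/12}$ and one weight dropping to $O(1)$) and produces the $cX^{21/24}$ term; all other crossings, as well as the deep cuspidal regions forbidden by the $S_4$-condition, are bounded by $O(X^{3/4+\epsilon})$ via weight-based Davenport estimates.

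The main obstacle will be the two-variable Mellin/residue bookkeeping in the transition region: one must simultaneously track the poles of $\zeta_{f,t}(\vec s)$ in the surviving lattice variables and the poles of $\wt{\delta_{\FF_3}}$ in the torus variables, and ensure that spurious polar contributions cancel via $\zeta(0)=-\tfrac12$ and the parity identities behind Lemma \ref{lem:symlatticenopole}. Extracting exactly one secondary term of size $X^{21/24}$ and controlling everything else by $X^{3/4+\epsilon}$ will require combining these Mellin cancellations with the Bernstein-polynomial constraints on the allowable poles of the ambient Shintani zeta function from Theorem \ref{th:SZF_v1}; this is the subtle step where the $S_4$-hypothesis does most of its work.
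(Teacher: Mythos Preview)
Your proposal has the right ingredients (fiber by $a_{11},b_{11}$; use the $S_4$-vacancy at $a_{11}=b_{11}=0$; do a two-variable Mellin shift), but the organization misidentifies where the terms come from. In the paper's argument, the ``complement'' (the region where $w_{a_{12}}\ll X^\theta$ with $w_{b_{11}}\gg 1$; note $a_{12}$ alone is the second-smallest weight, so $a_{13},a_{22}$ need no separate treatment) is shown directly by a Davenport-type estimate to contribute only $O(X^{3/4+O(\delta)})$, for \emph{both} the original sum and the $\{a_{11},b_{11}\}$-fibered sum. This is what justifies replacing the full lattice sum by $\sum_{a_{11},b_{11}}\nu(L|_{a_{11},b_{11}})(g\cB)_{\emptyset;S}$ over \emph{all} of $\FF$, not just over a bulk. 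Both the main term \emph{and} the $X^{21/24}$ term then emerge from the Mellin analysis of this fibered sum over the full $\FF_3$: one integrates the $s_1,s_2$ torus variables first, producing $\wt{\delta_{\FF_3}}(2-4v_{11}-4w_{11},-2-2v_{11}-2w_{11})$, and then shifts in $v_{11},w_{11}$. The residue at $v_{11}=w_{11}=1$ gives the $\lambda^{12}$ main term; the pole of $\wt{\delta_{\FF_3}}$ along $v_{11}+w_{11}=1/2$ gives the $\lambda^{21/2}$ term. Your plan to harvest the $X^{21/24}$ term from the complement would instead require carrying an $X$-dependent $(s_1,s_2)$-cutoff through the Mellin machinery, which is not what happens; the complement itself is simply $O(X^{3/4+\epsilon})$.

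Second, you do not need any Bernstein--Sato or Shintani-zeta input (Theorem~\ref{th:SZF_v1}) in this proposition. The cancellations that kill the spurious $\lambda^{11}$ and $\lambda^{10}$ contributions come entirely from the elementary identities of Lemmas~\ref{lem:zeroden}--\ref{lem:oneres} together with the $S_4$-vacancy $\nu(L|_{a_{11}=b_{11}=0})=0$. The surviving $\lambda^{21/2}$ term still carries an $f_0^X(g_2)$-weighted integral over $\FF_2$; the final step (Proposition~\ref{prop:formalpowerexpansion}) shows this is $X$-independent up to $O_A(X^{-A})$ simply by shifting the $v_{11}$-contour far to the right, using only the vertical-strip decay of $\wt{\cB}$. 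The Shintani pole structure is invoked only later, in \S8, to argue that the various $X^{21/24}$ coefficients coming from $\I^{(1)}$ and $\I^{(2)}$ must vanish together.
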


\subsection{Fibering the sum}

Fix some $\theta>0$, which will be taken to be small compared to $\delta$. Let $\FF^\bad$ denote the set of $g\in\FF$ with $\lambda(g)\asymp X^{1/12}$ and $f_0^X(g)>0$, such that $w_{a_{12}}(g)\ll X^\theta$ and $w_{b_{11}}(g)\gg 1$.
Then we have the following lemma
\begin{lemma}\label{lem:t_small_regions}
We have
\begin{equation}\label{eq:t_small_regions}
\int_{g\in\FF^\bad}\sum_{x\in L} (g\cB)(x)\psi\Big(\frac{\lambda(g)}{X^{1/12}}\Big)f_0^X(g_2)dg\ll X^{3/4+O(\delta+\theta)}.
\end{equation}
\end{lemma}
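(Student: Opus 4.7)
The plan is to parametrize $\FF^\bad$ by Iwasawa coordinates $g = \lambda n(t,s_1,s_2) k$, translate its defining conditions into torus bounds, and upper-bound the inner sum $\sum_{x\in L} (g\cB)(x)$ by the Davenport-type estimate
\begin{equation*}
N(g) := \#\{x\in L : (g\cB)(x)\neq 0\} \;\ll\; \prod_{c} \max(1, w_c(g)),
\end{equation*}
the product being over the twelve coordinates $a_{ij}, b_{ij}$. The $\FF^\bad$ conditions translate into $\lambda\asymp X^{1/12}$, $1\ll t\ll X^\delta$, and, from $w_{a_{12}}\ll X^\theta$ together with $w_{b_{11}}\gg 1$, the joint inequalities $s_1 s_2^2\gg X^{1/12-O(\delta+\theta)}$ and $s_1^4 s_2^2\ll X^{1/12+\delta}$. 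Together these pin $s_1\in [1, X^{O(\delta+\theta)}]$ and concentrate $s_2$ near $X^{1/24}$. Since $s_1,s_2\geq 1$ in $\FF_3$, all $w_{b_{ij}}\geq w_{b_{11}}\gg 1$, so the $b$-factors in the Davenport product combine to $(\lambda t)^6$ and the analysis reduces to tracking which $w_{a_{ij}}$'s lie below $1$.

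The key input is the $S_4$-hypothesis on $L$, which forces $\det A\neq 0$ for every $x\in L$. I plan to use this to kill the deepest cusp sub-regions: if $w_{a_{11}},w_{a_{12}},w_{a_{13}}<1$, then any $x$ with $(g\cB)(x)\neq 0$ has $a_{11}=a_{12}=a_{13}=0$, so the first row of $A$ vanishes and $\det A=0$; and if $w_{a_{11}},w_{a_{12}},w_{a_{22}}<1$, then $a_{11}=a_{12}=a_{22}=0$, forcing the first two rows of $A$ to lie in $\langle e_3\rangle$ and again $\det A=0$. Combining this with the weight chain $w_{a_{11}}\leq w_{a_{12}}\leq \min(w_{a_{13}},w_{a_{22}})$ (valid for $s_1,s_2\geq 1$) restricts the surviving sub-regions to three: \textbf{(A)} all $w_{a_{ij}}\geq 1$, giving Davenport bound $\ll \lambda^{12}$; \textbf{(B)} $w_{a_{11}}<1\leq w_{a_{12}}$, giving $\ll \lambda^{12}/w_{a_{11}}=\lambda^{11}t s_1^4 s_2^2$; and \textbf{(C)} $w_{a_{11}},w_{a_{12}}<1$ with $w_{a_{13}},w_{a_{22}}\geq 1$, giving $\ll \lambda^{12}/(w_{a_{11}}w_{a_{12}})=\lambda^{10}t^2 s_1^5 s_2^4$.

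In each surviving case I will insert the Davenport bound into the integral and evaluate elementarily against $dg$, whose density with respect to $d\lambda\,dt\,ds_1\,ds_2\,du\,dk$ is $\lambda^{-1}t^{-3}s_1^{-7}s_2^{-7}$ (the bounded $du\,dk$ factors being absorbed into implicit constants). In case (A) the integrand simplifies to $\lambda^{11}t^{-3}s_1^{-7}s_2^{-7}\psi$; using $\int \lambda^{11}\psi(\lambda X^{-1/12})\,d\lambda\asymp X$ together with the narrow $s_2$-window giving $\int s_2^{-7}\,ds_2\asymp X^{-1/4+O(\delta+\theta)}$, the total is $X^{3/4+O(\delta+\theta)}$. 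In case (B), the integrand is $\lambda^{10}t^{-2}s_1^{-3}s_2^{-5}\psi$ with $\int \lambda^{10}\psi\,d\lambda\asymp X^{11/12}$ and $\int s_2^{-5}\,ds_2\asymp X^{-1/6+O(\delta+\theta)}s_1^2$; the net $s_1$-integrand $s_1^{-1}$ then gives a $\log X$, producing $X^{3/4+O(\delta+\theta)}\log X$. Case (C) is the tightest: the integrand $\lambda^9 t^{-1}s_1^{-2}s_2^{-3}\psi$ has $\int \lambda^9\psi\,d\lambda\asymp X^{5/6}$, and the now one-sided constraint $s_2^2\gg \lambda t^{-1}/s_1$ yields $\int s_2^{-3}\,ds_2\asymp X^{-1/12+O(\delta)}s_1$, combining with two log factors from the $t$ and $s_1$ integrals to give $X^{3/4+O(\delta+\theta)}(\log X)^2$. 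The main technical subtlety is that cases (B) and (C) are sharp at the $3/4$-exponent: the required cancellations rely crucially on $w_{b_{11}}\gg 1$---the other defining condition of $\FF^\bad$---capping $s_1^4 s_2^2\ll \lambda t$ and confining $s_1$ to a polylogarithmic range. Summing the three cases yields the claimed bound.
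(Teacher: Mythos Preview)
Your proof is correct, but it takes a more elaborate route than the paper's. Both arguments begin the same way---extracting from $w_{a_{12}}\ll X^\theta$ and $w_{b_{11}}\gg 1$ that $s_1\ll X^{O(\delta+\theta)}$ and $s_2\asymp X^{1/24}$ up to $X^{O(\delta+\theta)}$---but then diverge.

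You split into the Davenport cases (A), (B), (C) according to which $a$-weights drop below $1$, and invoke the $S_4$-hypothesis (via $\det A\neq 0$) to discard the deeper sub-regions. The paper instead observes that the case analysis is unnecessary here: since $w_{a_{11}}=t^{-2}w_{b_{11}}$ and $t\ll X^\delta$ on the support of $f_0^X$, the condition $w_{b_{11}}\gg 1$ forces $w_{a_{11}}\gg X^{-2\delta}$, and hence (by the weight chain) \emph{every} weight satisfies $w_c(g)\gg X^{-2\delta}$. Thus a single uniform Davenport bound $\prod_c \max(1,w_c)\ll \lambda^{12}X^{6\delta}$ holds throughout $\FF^\bad$, and one integrates $\lambda^{12}s_2^{-6}\,d^\times s_2\,d^\times\lambda$ over $s_2\gg \lambda^{1/2}X^{-O(\delta+\theta)}$ directly (the $t$- and $s_1$-integrals converging trivially).

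The upshot is that the paper's argument is shorter and, notably, does not use the $S_4$-condition at all for this lemma; your invocation of $\det A\neq 0$ is correct but superfluous once one notices that no $a$-weight can fall below $X^{-2\delta}$. Your approach does have the virtue of being more robust---it is the template one would need in regions where the weights are not uniformly bounded below---but for $\FF^\bad$ as defined the simpler bound suffices.
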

\begin{proof}
Let $\lambda\cdot n(t,s_1,s_2)k=g\in\FF^\bad$, with $\lambda\asymp X^{1/12}$, and note that the condition $f_0^X(g)>0$ implies that $w_g(a_{11})$ and $w_g(b_{11})$ are within a factor of $X^{2\delta}$ of each other for all $i,j$. The conditions
$w_g(a_{12})=\lambda t^{-1}s_1^{-1}s_2^{-2}\ll X^\theta$ and $w_g(b_{11})=\lambda t s_1^{-4}s_2^{-2}\gg 1$ imply
\begin{equation*}
X^\theta\gg \frac{w(a_{12})}{w(b_{11})}=t^{-2}s_1^{3}\gg s_1^3X^{-2\delta},
\end{equation*}
which yields $s_1\ll X^{(\theta+2\delta)/3}$. The condition $w(a_{12})\ll X^\theta$ now yields $s_2\gg \lambda^{1/2} X^{-(4\theta+5\delta)/3}$.
Therefore, for $g\in\FF^\bad$, we have $w_g(b_{ij})\gg 1$ (since $b_{11}$ is minimal among them), $w_g(a_{13}),w_g(a_{23}),w_g(a_{33})\gg 1$ (since they have positive powers of $s_2$ in them), and $w(a_{11}),w(a_{12}),w(a_{22})\gg X^{-2\delta}$ (since $a_{11}$ is minimal among them). This implies that every projection of the set $g\cB$ is bounded by $O(\lambda^{12}X^{6\delta})$.
Therefore, the LHS in the equation \eqref{eq:t_small_regions} is $\ll$
\begin{equation*}
X^{6\delta}\int_{\lambda\asymp X^{1/12},s_2\gg \lambda^{1/2}X^{-(4\theta+5\delta)/3}}\lambda^{12}s_2^{-6}d^\times s_2d^\times\lambda
\ll X^{3/4+16\delta+8\theta},
\end{equation*}
as necessary.
\end{proof}

We derive two consequences of the above lemma. The first implies that to estimate $\I^{(1)}(\cB;X)$, up to an error term of $o(X^{5/6})$, it is only necessary to fiber by the coefficients $a_{11}$ and $b_{11}$.

\begin{corollary}
Set $S:=\{a_{11},b_{11}\}$. Then we have
\begin{equation*}
\begin{array}{ll}
&\displaystyle
\left |\int_{g\in\FF}\sum_{x\in L} (g\cB)(x)\psi\Big(\frac{\lambda(g)}{X^{1/12}}\Big)f_0^X(g_2)dg
-\int_{g\in\FF}\sum_{a_{11},b_{11}}\nu(L|_{\{a_{11},b_{11}\}})(g\cB)_S(a_{11},b_{11})\psi\Big(\frac{\lambda(g)}{X^{1/12}}\Big)f_0^X(g_2)dg\right |
\\[.2in]
&\ll  X^{3/4+O(\delta)}.
\end{array}
\end{equation*}
\end{corollary}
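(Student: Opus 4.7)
The plan is to partition $\FF$ into the bad region $\FF^\bad$ of Lemma \ref{lem:t_small_regions} and its complement $\FF^\good$, and to estimate the difference of the two integrands on each region separately, before taking $\theta = \delta$ at the end.

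On $\FF^\good$, I would apply Poisson summation on the $10$ coordinates outside $S=\{a_{11},b_{11}\}$, holding $(a_{11},b_{11})$ fixed. The first task is to verify that all $10$ of these weights exceed $X^\theta$. We have $w_{a_{12}}(g)\gg X^\theta$ by the definition of $\FF^\good$. Using the relations $w_{a_{13}}=s_2^3 w_{a_{12}}$ and $w_{a_{22}}=s_1^3 w_{a_{12}}$ together with the Siegel lower bounds $s_1,s_2,t\gg 1$, one deduces $w_c\gg X^\theta$ for each $c\in\{a_{13},a_{22},a_{23},a_{33}\}$; the relations $w_{b_{ij}}=t^2 w_{a_{ij}}$ then cover the remaining five $b$-coordinates. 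Poisson summation (using the periodicity of $\chi_L$ modulo some $N$ together with the rapid decay of $\widehat{\cB}$) produces the fibered expression $\sum_{a_{11},b_{11}}\nu(L|_S)(a_{11},b_{11})(g\cB)_S(a_{11},b_{11})$ as the main term from the zero Fourier mode, with nonzero Fourier modes contributing $O_A(X^{-A\theta})$ uniformly in $g\in\FF^\good$, since $\widehat{\cB}$ is evaluated at arguments of size $\gtrsim X^\theta/N$ in each nonzero direction.

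On $\FF^\bad$, the LHS is bounded by $X^{3/4+O(\delta+\theta)}$ directly by Lemma \ref{lem:t_small_regions}; it remains to show the same bound for the RHS. Bounding $(g\cB)_S(a_{11},b_{11})\ll \prod_{c\notin S} w_c(g)$ by the volume of $g\cdot\mathrm{supp}(\cB)$ in the $10$ integrated directions, counting integer pairs $(a_{11},b_{11})$ in the support by $(1+w_{a_{11}})(1+w_{b_{11}})$, and using that $\nu(L|_S)$ is uniformly bounded, we obtain the pointwise estimate
$$\left|\sum_{a_{11},b_{11}}\nu(L|_S)(g\cB)_S(a_{11},b_{11})\right|\ll \max(1,w_{a_{11}})\max(1,w_{b_{11}})\prod_{c\notin S}w_c.$$
Since $\prod_{c\notin S} w_c\leq \prod_{c\notin S}\max(1,w_c)$, the RHS integrand is bounded pointwise by $\prod_c \max(1,w_c(g))$, which is precisely the lattice point bound for $g\cdot\mathrm{supp}(\cB)$ implicit in Lemma \ref{lem:t_small_regions}; integrating over $\FF^\bad$ thus yields the same $X^{3/4+O(\delta+\theta)}$ bound.

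Combining the two regions and setting $\theta=\delta$ gives the claimed estimate. The main bookkeeping hurdle is the RHS volume bound on $\FF^\bad$ when some of the non-$S$ weights $w_c$ dip below $1$; but the elementary observation $\min(1,w_c)\leq 1$ shows that the factor $\prod_{c\notin S} w_c$ is always dominated by $\prod_{c\notin S}\max(1,w_c)$, so the RHS integrand never exceeds the LHS pointwise bound, and no additional cusp analysis is needed.
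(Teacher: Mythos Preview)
Your proof is correct and follows essentially the same route as the paper: split into $\FF^{\bad}$ and its complement, apply Theorem~\ref{thm:projectlargecoords} (Poisson summation) on the good region, and on $\FF^{\bad}$ bound both integrands by the same ``max over projections'' quantity $\prod_c\max(1,w_c(g))\ll \lambda^{12}X^{6\delta}$ used in the proof of Lemma~\ref{lem:t_small_regions}, then take $\theta=O(\delta)$. Your write-up is in fact more explicit than the paper's---you verify the weight inequalities $w_c\gg X^\theta$ for $c\notin S$ on $\FF^{\good}$ via the weight relations $w_{a_{13}}=s_2^3 w_{a_{12}}$, $w_{a_{22}}=s_1^3 w_{a_{12}}$, $w_{b_{ij}}=t^2 w_{a_{ij}}$, and you spell out the RHS pointwise bound on $\FF^{\bad}$ rather than simply asserting ``the same analysis works verbatim''.

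One small imprecision: you write ``$w_{a_{12}}(g)\gg X^\theta$ by the definition of $\FF^{\good}$'', but $\FF^{\bad}$ is defined by \emph{two} conditions ($w_{a_{12}}\ll X^\theta$ and $w_{b_{11}}\gg 1$), so $\FF^{\good}$ also contains the region where $w_{b_{11}}\ll 1$ while $w_{a_{12}}$ may be small. This is harmless: choosing the implicit constant in $w_{b_{11}}\gg 1$ below the support radius of $\cB$, on that region both $w_{a_{11}}\le w_{b_{11}}$ are small enough that the only integer pair $(a_{11},b_{11})$ in the support is $(0,0)$, and since $L$ contains no elements with $a_{11}=b_{11}=0$, both sums vanish identically there. (The paper's proof has the same imprecision.)
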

\begin{proof}
Write $\{g\in \FF:f_0^X>0,\lambda(g)\asymp X^{1/12}\}=\FF^\bad\cup\FF^\good$ as the union of two disjoint sets.
For $g\in\FF^\good$, the difference between the two sums
\begin{equation*}
\sum_{x\in L}(g\cB)(x)
-\sum_{a_{11},b_{11}}(g\cB)_{a_{11},b_{11}}(a_{11},b_{11})
\end{equation*}
is super-polynomially small by Theorem \ref{thm:projectlargecoords}, since the range $w_g(\alpha)$ is larger than $X^\theta$ for every coefficient $\alpha\not\in\{a_{11},b_{11}\}$.
For $g\in\FF^\bad$, the necessary bound on the LHS is a direct consequence of Lemma \ref{lem:t_small_regions} together with the choice $\theta=O(\delta)$. For the RHS the same analysis as in  Lemma \ref{lem:t_small_regions} works verbatim. 
\end{proof}

As a consequence of the above lemma, we have

\begin{equation}\label{eq:cor_t_small_1}
\begin{array}{ll}
\I^{(1)}(\cB;X)=&\displaystyle
\int_{\lambda>0}\int_{s_1,s_2}\int_{g_2\in\FF_2}\sum_{a_{11},b_{11}}\nu(L|_{\{a_{11},b_{11}\}})(\lambda\cdot (s_1,s_2)g_2)\cB)_S(a_{11},b_{11})
\\[.2in]
&\displaystyle\quad\quad \psi\Big(\frac{\lambda}{X^{1/12}}\Big)f_0^X(g_2)\delta_{\FF_3}(s_1,s_2)\frac{d^\times sd^\times\lambda dg_2}{s_1^6s_2^6} + O(X^{3/4 + O(\delta)}).
\end{array}
\end{equation}

\subsection{Shifting the integral}

Recall that we set $S=\{a_{11},b_{11}\}$. For $g=\lambda\cdot (s_1,s_2)g_2\in\FF$, we perform a Mellin transform to write
\begin{equation*}
\sum_{\substack{a_{11},b_{11}\\a_{11}b_{11}\neq 0}}\nu(L|_{\{a_{11},b_{11}\}})(g\cB)_{\emptyset;S}(a_{11},b_{11})
=\int_{\substack{\Re(v_{11})=1+\epsilon\\\Re(w_{11})=1+\epsilon}}\wt{g\cB}_{\emptyset;S}(v_{11},w_{11})\vec{\zeta}_L(v_{11},w_{11})dv_{11}dw_{11}.
\end{equation*}
Now note that that the action of $s_1$, $s_2$, and $\lambda$ on $(g\cB)_S$ is quite simple, namely, we have 
\begin{equation*}
(\lambda(s_1,s_2)g_2\cB)_S(a_{11},b_{11})=\lambda^{10}s_1^8s_2^4(g_2\cB)_S(\lambda^{-1}s_1^{4}s_2^{2}a_{11},\lambda^{-1}s_1^{4}s_2^{2}b_{11}).
\end{equation*}
Hence for fixed $g_2$ and $\lambda$, we can use Theorem \ref{thm:mellinsum} and integrate over $s_1$ and $s_2$, obtaining:
\begin{equation}\label{eq:t_small_shift_main}
\begin{array}{ll}
&\displaystyle \int_{s_1,s_2}\sum_{\substack{a_{11},b_{11}\\a_{11}b_{11}\neq 0}}\nu(L|_{\{a_{11},b_{11}\}})(g\cB)_{\emptyset;S}(a_{11},b_{11})
\delta_{\FF_3}(s_1,s_2)s_1^{-6}s_2^{-6}d^\times s
\\[.2in]
= & \displaystyle\int_{s}\int_{1+\epsilon} \wt{(g_2\cB)_{\emptyset;S}}
(v_{11},w_{11})\vec{\zeta}_L(v_{11},w_{11})s_1^{2-4v_{11}-4w_{11}}s_2^{-2-2v_{11}-2w_{11}}\lambda^{10+v_{11}+w_{11}}
\displaystyle \delta_{\FF_3}(s_1,s_2) dv_{11}dw_{11}d^\times s
\\[.2in]
= &\displaystyle
\int_{1+\epsilon}\widetilde{(g_2B)_{\emptyset;S}}(v_{11},w_{11})\cdot\vec{\zeta}_L(v_{11},w_{11})\tilde{\delta}_{\FF_3}(2-4v_{11}-4w_{11},-2-2v_{11}-2w_{11})\lambda^{10+v_{11}+w_{11}}dv_{11}dw_{11}.
\end{array}
\end{equation}
Similarly, the top line of the above displayed equation, with the condition $a_{11}b_{11}\neq 0$ replaced with $a_{11}=0,b_{11}\neq 0$, and $a_{11}\neq 0,b_{11}= 0$ respectively give the contributions
\begin{equation}\label{eq:t_small_a11b11_0_contribs}
\begin{array}{rcl}
&\displaystyle\int_{1+\epsilon}\widetilde{(g_2B)}_{a_{11};b_{11}}(w_{11})\cdot\vec{\zeta}_L(a_{11}=0,w_{11})\wt{\delta}_{\FF_3}(2-4w_{11},-2-2w_{11})\lambda^{10+w_{11}}dw_{11},
\\[.2in]
&\displaystyle\int_{1+\epsilon}\widetilde{(g_2B)}_{b_{11};a_{11}}(v_{11})\cdot\vec{\zeta}_L(b_{11}=0,v_{11})\wt{\delta}_{\FF_3}(2-4v_{11},-2-2v_{11})\lambda^{10+v_{11}}dv_{11},
\end{array}
\end{equation}
respectively. Note that $a_{11}=b_{11}=0$ does not contribute to the sum in the RHS of \eqref{eq:cor_t_small_1}, since we have assumed that $L$ contains no irreducible elements.

We consider the final equation in \eqref{eq:t_small_shift_main}. The polar divisors of the function being integrated are at $v_{11}=1$, $w_{11}=1$ (from $\vec{\zeta}_L$), at $v_{11}, w_{11}\in2\Z_{\leq 0}$ (from $\widetilde{(g_2B)}_{\emptyset;S}$, using Lemma \ref{lem:symlatticenopole}), and at $v_{11}+w_{11}\in\{1/2,-1\}$ (from $\wt{\delta}_{\FF_3}$). We start our integral at $(\Re(v_{11}),\Re(w_{11}))=(1+\epsilon,1+\epsilon)$, and shift to $(1+\epsilon,-2+\epsilon)$. While doing so, we pick up poles at $w_{11}=1$, $w_{11}=0$, and $w_{11}=1/2-v_{11}$. Once $w_{11}$ is at $-2+\epsilon$, the integral gives a contribution of size $O(X^{3/4+\epsilon})$ since the line of integration over $v_{11}$ starts at real part $1+\epsilon$. Thus the final line of \eqref{eq:t_small_shift_main} can be written, up to an error of $O(X^{3/4+\epsilon})$, as
\begin{equation}\label{eq_t_small_temp1}
\begin{array}{ll}
&\displaystyle\int_{1+\epsilon}\wt{g_2\cB}_{\emptyset;a_{11}}(v_{11})\vec{\zeta}_L(v_{11})\wt{\delta}_{\FF_3}(-2-4v_{11},-4-2v_{11})\lambda^{11+v_{11}}dv_{11}
\\[.2in]
-&\displaystyle\int_{1+\epsilon}\widetilde{g_2\cB}_{b_{11};a_{11}}(v_{11})\cdot\vec{\zeta}_L(b_{11}=0;v_{11})\wt{\delta}_{\FF_3}(2-4v_{11},-2-2v_{11})\lambda^{10+v_{11}}
\\[.2in]
+&\displaystyle \lambda^{21/2}\Res_{s=0}\wt{\delta}_{\FF_3}(s,-3)\int_{1+\epsilon}\widetilde{g_2\cB}_{\emptyset;S}(v_{11},1/2-v_{11})\vec{\zeta}_L(v_{11},1/2-v_{11})dv_{11}.
\end{array}
\end{equation}
The second term above cancels exactly with the second line of \eqref{eq:t_small_a11b11_0_contribs}. We ignore the third term for now. Take the first term, shift the integral over $v_{11}$ to $\Re(v_{11})=-2+\epsilon$, picking up poles at $1$, $0$, and $-1/2$, and obtaining up to an error term of $O(X^{3/4+o(1)})$
\begin{equation}\label{eq_t_small_temp2}
\begin{array}{ll}
&\displaystyle\lambda^{12}\wt{g_2\cB}_{\emptyset;\emptyset}\Res_{v_{11}=1}\vec{\zeta}(v_{11})\wt{\delta}_{\FF_3}(-6,-6)-\lambda^{11}\wt{g_2\cB}_{a_{11}}\vec{\zeta}_L(a_{11}=0;\emptyset)\wt{\delta}_{\FF_3}(-2,-4)
\\[.1in]
&\displaystyle +\lambda^{21/2}\wt{g_2\cB}(-1/2)\vec{\zeta}_L(-1/2)\Res_{s=0}\wt{\delta}_{\FF_3}(s,-3).
\end{array}
\end{equation}
Finally, we take the first line of \eqref{eq:t_small_a11b11_0_contribs} and shift the integral over $w_{11}$ to $\Re(w_{11})=-1+\epsilon$, picking up poles at $1$, $1/2$, and $0$, and obtaining up to an error term of $O(X^{3/4+o(1)})$
\begin{equation}\label{eq_t_small_temp3}
\begin{array}{ll}
&\displaystyle
\lambda^{11}\wt{g_2\cB}_{a_{11};b_{11}}(1)\Res_{w_{11}=1}\vec{\zeta}_L(a_{11}=0,w_{11})\wt{\delta}_{\FF_3}(-2,-4)
\\[.1in]&\displaystyle
+\lambda^{21/2}\Res_{s=0}(s,-3)\wt{\delta}_{\FF_3}\cdot\wt{g_2\cB}_{a_{11};b_{11}}(1/2)\vec{\zeta}_L(a_{11}=0;1/2)
\\[.1in]&\displaystyle
-\lambda^{10}\wt{g_2\cB}_{\{a_{11},b_{11}\}}(0)\vec{\zeta}_L(\{a_{11},b_{11}\}=0)\wt{\delta}_{\FF_3}(2,-2).
\end{array}
\end{equation}

The final line of \eqref{eq:t_small_shift_main} added to the sum of the two terms in \eqref{eq:t_small_a11b11_0_contribs} is thus equal, up to an error of $O(X^{3/4+o(1)})$, to the sum of \eqref{eq_t_small_temp2}, \eqref{eq_t_small_temp3}, and the third summand of \eqref{eq_t_small_temp1}.
Note that the first term of $\eqref{eq_t_small_temp2}$ is equal to $\nu(L)\Vol(\cB)\Vol(\FF_3)\lambda^{12}$. Meanwhile, the second term of \eqref{eq_t_small_temp2} exactly cancels the first term of~\eqref{eq_t_small_temp3} by Lemma \ref{lem:oneres}. Also, the final term of \eqref{eq_t_small_temp3} is $0$ since $L$ has no points with $a_{11}=b_{11}=0$. Therefore, combining the results of this subsection with \eqref{eq:cor_t_small_1}, we obtain
\begin{equation}\label{eq:I1uptoerror}
\I^{(1)}(\cB;X)=X\Vol_{dg_3}(\FF_3)\wt{\psi}(12)\nu(L)\Vol(\cB)\int_{g_2\in\FF_2}f_0^X(g_2)dg_2 + E_{21/2} +O(X^{3/4+\epsilon})
\end{equation}

where 
$$E_{21/2}=X^{7/8}\wt{\psi}(21/2)\cdot \left(c_1+c_2\int_{g_2\in\FF_2}\int_{1+\epsilon}\widetilde{g_2B}_{\emptyset;S}(v_{11},1/2-v_{11})\vec{\zeta}_L(v_{11},1/2-v_{11})f_0^X(g_2)dv_{11}dg\right)$$

We will show that the $E_{21/2}$ terms will vanish for `formal reasons' due to a lack of a pole for the Shintani zeta function. To do this, we prove the following:

\begin{proposition}\label{prop:formalpowerexpansion}

We have \begin{align*}
&\int_{g_2\in\FF_2}\int_{1+\epsilon}\widetilde{g_2\cB}_{\emptyset;S}(v_{11},1/2-v_{11})\vec{\zeta}_L(v_{11},1/2-v_{11})f_0^X(g_2)dv_{11}dg_2\\ &= \int_{g_2\in\FF_2}\int_{1+\epsilon}\widetilde{g_2\cB}_{\emptyset;S}(v_{11},1/2-v_{11})\vec{\zeta}_L(v_{11},1/2-v_{11})\delta_{\FF_2}(g_2)dv_{11}dg_2 +O_A(X^{-A}).\\
\end{align*}    
\end{proposition}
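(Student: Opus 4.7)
The plan is to exploit $N(\Z)$-invariance of the Mellin integrand to collapse both sides to torus integrals in the parameter $t$, and then show that the resulting $t$-integrand decays super-polynomially via a formal Shintani zeta function argument.

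First I would verify that the Mellin integrand $\widetilde{g_2\cB}_{\emptyset;S}(v_{11}, 1/2-v_{11})\vec{\zeta}_L(v_{11}, 1/2-v_{11})$ is $N(\Z)$-invariant in $g_2$. The underlying sum $\sum_{a_{11},b_{11}} \nu(L|_{\{a_{11},b_{11}\}})(g_2\cB)_S(a_{11},b_{11})$ is unchanged under $g_2 \mapsto u_2 g_2$: the integer shear sends $(a_{11}, b_{11}) \mapsto (a_{11}, b_{11} + u a_{11})$ and acts on the non-$S$ coordinates by a volume-preserving affine map, while the $G(\Z)$-invariance of $L$ gives $\nu(L|_S)(a_{11}, b_{11} + u a_{11}) = \nu(L|_S)(a_{11}, b_{11})$. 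Using this invariance, both sides collapse (up to $\Vol(K)$) to integrals over the torus variable $t > 0$ weighted by $\delta_{\FF_2}(t)\,d^\times t/t^2$, with the LHS carrying the extra cutoff $f_0^X(t)$. The difference is then supported on the tail $t \gtrsim X^\delta$ where $f_0^X(t)$ departs from $1$.

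Second, using the scaling identity $\widetilde{g_2\cB}_{\emptyset;S}(v_{11}, w_{11}) = t^{w_{11}-v_{11}} \widetilde{\cB}_{\emptyset;S}(v_{11}, w_{11})$ for $g_2$ in the torus, the $t$-integrand takes the form $F(t) = \int_{\Re v_{11} = 1+\epsilon} t^{1/2 - 2v_{11}} \widetilde{\cB}_{\emptyset;S}(v_{11}, 1/2-v_{11}) \vec{\zeta}_L(v_{11}, 1/2-v_{11})\,dv_{11}$. To obtain super-polynomial decay in $t$, I would shift the $v_{11}$-contour to $\Re v_{11} = N$ for arbitrarily large $N$. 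The only potential poles crossed lie at $v_{11} \in \{5/2, 9/2, 13/2, \ldots\}$, arising from the poles of $\widetilde{\cB}_{\emptyset;S}$ in the $w_{11}$-direction at $w_{11} \in \{-2, -4, \ldots\}$. The crucial formal claim is that each such residue vanishes: by re-tracing the derivation of $E_{21/2}$ with a different pole picked up in the Mellin contour, each residue can be identified with a would-be residue of the global zeta integral $Z_L(\cB, s)$ at an $s$-value outside $\{1, 5/6, 3/4\}$, and the Bernstein--Sato polynomial of $(G, V)$ forbids Shintani zeta function poles at such values.

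With all these residues vanishing, we obtain $F(t) = O_N(t^{1/2 - 2N})$ for every $N$, whence the tail integral $\int_{t \gtrsim X^\delta} |F(t)| \delta_{\FF_2}(t)\,d^\times t/t^2 = O_A(X^{-A})$, which yields the proposition. The main obstacle is rigorously establishing the formal cancellation of the Mellin residues: the two-variable object $\vec{\zeta}_L(v_{11}, w_{11})$ must be analytically continued along the line $v_{11} + w_{11} = 1/2$ far outside its region of absolute convergence, and the precise matching of its residues at the $\widetilde{\cB}_{\emptyset;S}$-poles with the forbidden Shintani residues must be tracked through the layered contour shifts performed earlier in the section.
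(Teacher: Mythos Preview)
Your plan has the right skeleton—collapse to the torus variable $t$, identify the difference as supported on $t\gg X^{\delta}$, and shift the $v_{11}$-contour to the right to extract super-polynomial decay—but you are manufacturing an obstacle that does not exist, and your proposed fix for it is both unnecessary and not clearly valid.

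The alleged residues at $v_{11}\in\{5/2,9/2,\ldots\}$ (equivalently $w_{11}\in\{-2,-4,\ldots\}$) do not occur. Because $L$ is $G(\Z)$-invariant, the density $\nu(L\mid_S)$ is invariant under each of the sign changes $(a_{11},b_{11})\mapsto(\pm a_{11},\pm b_{11})$: for instance $(\mathrm{diag}(-1,1),\mathrm{diag}(-1,1,1))\in G(\Z)$ sends $a_{11}\mapsto-a_{11}$ while fixing $b_{11}$. Lemma~\ref{lem:symlatticenopole} then says that the dotted product $\widetilde{\cB}_{\emptyset;S}(v_{11},w_{11})\cdot\vec{\zeta}_L(v_{11},w_{11})$ has polar divisors only at $v_{11},w_{11}\in\{0,1\}$. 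Along the line $w_{11}=1/2-v_{11}$ with $\Re(v_{11})>1$ there are therefore \emph{no} poles at all, and the contour shift is unobstructed. This is exactly what the paper does: it simply pushes $\Re(v_{11})$ to an arbitrary $A$ and reads off the bound $O_A(X^{-\delta(3/2+2A)})$ from the factor $t^{-3/2-2v_{11}}$. No Bernstein--Sato or Shintani vanishing argument is invoked or needed.

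Your proposed ``formal Shintani'' cancellation is thus superfluous, and as you yourself flag, making it rigorous would require analytically continuing the two-variable object far outside its convergence region and matching residues through several nested contour shifts—this is precisely the kind of delicate bookkeeping the lemma lets you avoid. A secondary point: the $N(\Z)$-invariance you assert holds for the underlying lattice sum, but not transparently for the Mellin integrand itself (the shear $(a_{11},b_{11})\mapsto(a_{11},b_{11}+ua_{11})$ is not diagonal). The paper sidesteps this by writing $g_2=n_ud_t=d_tn_{ut^{-2}}$ and absorbing the unipotent into a $u$-dependent test function $\cB_{ut^{-2}}$, whose Mellin transform still has uniform rapid decay on vertical lines; this keeps the reduction to $t$ honest without needing invariance of the integrand.
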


\begin{proof}
We write $n_u=\left(\begin{smallmatrix}
    1 & 0\\u & 1
\end{smallmatrix}\right), d_t=\left(\begin{smallmatrix}
    t^{-1} & 0\\0 & t
\end{smallmatrix}\right)$. We define $\cB_n:=\left(\begin{smallmatrix}
    1 & 0\\n & 1
\end{smallmatrix}\right)\cdot\cB_{\infty,S}$. Note that $g_2=n_ud_t=d_tn_{ut^{-2}}$, and define $\cB_u:=n_u\cdot\cB_{\infty,S}$ so that 

$$\wt{g_2\cB}_{\infty,S}(v_{11},w_{11}) =\wt{\cB}_{ut^{-2}}(v_{11},w_{11})t^{1/2-2v_{11}}.$$
 Since we have uniform super-polynomial decay along vertical strips for $\wt{\cB}_u$ the integrals in the statement converge uniformly as long as the exponent of $t$ is negative, so as long as $v_{11}>-\frac34$. Hence it is enough to obtain a super polynomially small bound on
$$\int_{t>0}\int_{|u|<1/2}\int_{1+\epsilon}t^{-3/2-2v_{11}}\wt{\cB}_{ut^{-2}}(v_{11},1/2-v_{11})\vec{\zeta}_L(v_{11},1/2-v_{11})f(tX^{-\delta})dv_{11}d^\times tdu.$$
Now, we simply shift  $\Re v_{11}$ to $A$, obtaining an error of $O_A(X^{-\delta(3/2+2A})$, which completes the proof. 
\end{proof}

Putting together \eqref{eq:I1uptoerror} and Proposition \ref{prop:formalpowerexpansion} completes the proof of Proposition \ref{prop:tsmallpowerexpansion}.

\section{The region where $t$ is big - evaluating $\I^{(2)}(\cB;X)$}
In this section, we evaluate the value of $\I^{(2)}(\cB;X)$.

\begin{proposition}\label{prop:section6_t_big_main}
We have
\begin{equation*}
\begin{array}{rcl}
\I^{(2)}(\cB;X)&=&
\displaystyle C\wt{\psi}(12)\Res_{s=2}Z((\cB)_{\emptyset;{A}},\nu(L_A);s)\wt{f^X}(-2)X+C\wt{\psi}(10)Z\bigl((\cB)_{\emptyset;\{A\}},\nu(L_A);4/3\bigr)X^{5/6}
\\[.15in]&&\displaystyle
+c_0X^{21/24+5\delta/2}+c_1X^{21/24-3\delta}+c_2X^{21/24-3\delta/2}+O(X^{4/5+\epsilon}),
\end{array}
\end{equation*}
for some constants $C$, $c_0$, $c_1$, and $c_2$, where $Z((\cB)_{\emptyset;{A}}\nu(L_A);s)$ is the global zeta integral associated to the space of ternary quadratic forms (see \eqref{eq:tq_SZF}).
\end{proposition}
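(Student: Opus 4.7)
The plan is to carry out the three-step procedure sketched in \S 5.3. First, I would reduce the inner sum to one indexed only by the coefficients of $A$ and by $b_{11}$. On the support of $f^X$ we have $t \gg X^{\delta}$, so for $(i,j) \neq (1,1)$ the weights $w_{b_{ij}}(g)$ are bounded below by $\lambda X^{\delta} s_1^{O(1)} s_2^{O(1)}$, while those of the $a_{ij}$ and of $b_{11}$ may be small. Theorem \ref{thm:projectlargecoords} then replaces $\sum_{x\in L}(g\cB)(x)$ with $\nu(L_{\{A,b_{11}\}}) \cdot (g\cB)_{\emptyset;\{A,b_{11}\}}$ up to super-polynomial error on the region where every such weight actually exceeds $X^{\theta}$ (for some $\theta = O(\delta)$). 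On the complementary ``bad'' region, the $S_4$-hypothesis (which forces $a_{11}$, $b_{11}$, and $\det A$ to be nonzero) makes enough $a$-weights bounded below that the analogue of Lemma \ref{lem:t_small_regions} yields an $O(X^{3/4 + O(\delta)})$ bound.

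Next, I would peel off $b_{11}$ by Mellin transform, analogously to Section 6: write the $b_{11}$-sum as a contour integral of $\wt{g\cB}_{\emptyset;\{A,b_{11}\}}$ against the appropriate slice of the multi-variable zeta function $\vec\zeta_L$ in the $(A,b_{11})$-coordinates, weighted by $(\lambda t s_1^{-4} s_2^{-2})^{w_{11}}$, then shift $\Re(w_{11})$ leftward. The pole at $w_{11}=1$ returns the $A$-only fiber $\nu(L_A) \cdot (g\cB)_{\emptyset;\{A\}}(A)$ by Lemma \ref{lem:oneres}; the subsequent residues, picked up as the contour crosses polar divisors of $\wt\delta_{\FF_3}$, of $\wt{g\cB}$ at non-positive integers, and of the zeta function, contribute (after the $\lambda$-integration) the three $X^{21/24 \pm c_i \delta}$ correction terms together with an $X^{5/6}$ contribution expressible as $Z(\cB_{\emptyset;\{A\}}, \nu(L_A); 4/3)$ once the residual integral is rewritten in the language of the ternary-quadratic-form global zeta integral. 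Pushing the contour below $\Re(w_{11}) = -2$ leaves an admissible $O(X^{4/5+\epsilon})$ remainder.

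Finally, the $A$-fiber piece depends on $g_2$ only through $f^X(g_2)$, so the $g_2$-integral factors as $\int_{\FF_2} f^X(g_2) \, dg_2 = C \wt{f^X}(-2)$ by a direct computation in Iwasawa coordinates. What remains is the Mellin pairing of $\psi(\lambda/X^{1/12})$ with the global zeta integral $Z(\cB_{\emptyset;\{A\}}, \nu(L_A); s)$ for the prehomogeneous representation $\Sym^2(3)$ of $\GL_3$. Shifting this $s$-contour past the simple pole of the $\Sym^2(3)$ zeta integral at $s = 2$ yields the leading $C \wt{\psi}(12) \Res_{s=2} Z(\cB_{\emptyset;\{A\}}, \nu(L_A); s) \wt{f^X}(-2) X$ term, which combines with the contributions from step two to produce the claimed expression.

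The main obstacle will be the bookkeeping of overlapping polar divisors in the $b_{11}$-Mellin shift: $\wt\delta_{\FF_3}$, $\wt{g\cB}_{\emptyset;\{A,b_{11}\}}$, and the multi-variable zeta function all contribute poles whose residues and locations must be tracked precisely in order to separate the three $X^{21/24 \pm c_i \delta}$ contributions, the $X^{5/6}$ contribution, and the $O(X^{4/5+\epsilon})$ remainder. Throughout, the $S_4$-hypothesis is used crucially: it keeps the integrand supported away from the deeper strata of the cusp so that fibering by $A$ and $b_{11}$ alone suffices, and it prevents higher-order poles of the relevant Shintani zeta functions in the range $\Re(s) > 4/5$.
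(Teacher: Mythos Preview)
Your first step (fibering by $\{A,b_{11}\}$) matches the paper, but steps 2 and 3 diverge from the paper's argument, and step 3 contains a genuine error.

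The claim in step 3 that the $A$-fiber piece depends on $g_2$ only through $f^X(g_2)$ is false: the diagonal $d_t=\diag(t^{-1},t)$ of $g_2$ scales $A$ by $t^{-1}$, so $(g\cB)_{\emptyset;\{A\}}(A)$ carries a genuine $t$-dependence. After integrating out the unipotent and compact parts of $\FF_2$, the paper rewrites the $A$-fiber integral as
\[
\int_{\FF_2\FF_3}\sum_A\nu(L_A)(g\cB)_{\emptyset;\{A\}}(A)f^X(g)\,dg \;=\; C\int_{\Re(s)=3+\epsilon} Z\bigl(\cB_{\emptyset;\{A\}},\nu(L_A);s\bigr)\,\wt{f^X}(4-3s)\,\lambda^{6+3s}\,ds,
\]
where the Mellin variable $s$ arises from the $t$-integration (equivalently, from the determinant character on $\GL_3$, since $(\lambda/t)\,g_3$ with $g_3\in\FF_3$ sweeps out a $\GL_3(\Z)\backslash\GL_3(\R)^+$ fundamental domain). \emph{All three} of the principal contributions then come from shifting this $s$-contour: the pole of $Z$ at $s=2$ gives the $X$-term, with $\wt{f^X}(-2)$ appearing as the \emph{value} of $\wt{f^X}(4-3s)$ there rather than as a factored-out $g_2$-integral; the pole of $Z$ at $s=3/2$ gives the $c_0X^{21/24+O(\delta)}$ term; and the simple pole of $\wt{f^X}(4-3s)$ at $s=4/3$ gives $C\lambda^{10}Z(\ldots;4/3)$, i.e.\ the $X^{5/6}$-term. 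So the second main term lives entirely in the $A$-fiber analysis, not in the $b_{11}$-peeling.

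The paper's treatment of $b_{11}$ is also structurally different from your step 2. Rather than Mellin-transforming $b_{11}$ with the lattice sum over $A$ still present (which would couple a seven-variable zeta function to the $w_{11}$-contour), the paper writes the $\{A,b_{11}\}$-to-$A$ discrepancy as $E_g^{(1)}$, introduces the much simpler $E_g^{(2)}$ (the analogous discrepancy on the slice $a_{11}=0$ with the remaining $A$-coordinates already projected away), bounds $\int(E_g^{(1)}-E_g^{(2)})\,dg=O(X^{4/5+\epsilon})$ by a direct volume estimate on the region where both $w_g(a_{12})$ and $w_g(b_{11})$ are small, and then evaluates $\int E_g^{(2)}\,dg$ by a clean one-variable Mellin shift in $w_{11}$. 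That last shift supplies the remaining $X^{21/24+O(\delta)}$ corrections $c_1,c_2$.
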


\subsection{Fibering the sum}

Fix some $\theta>0$ which we shall take to be small compared to $\delta$. Let $\FF^{\bad}$
denote the set of $g\in\FF$ such that $w_{b_{12}}(g)\leq X^{\theta}$.

\begin{lemma}\label{lem:t_big_regions}
We have
\begin{equation}
\int_{g\in\FF^\bad}\sum_{x\in L} (g\cB)(x)\psi\Big(\frac{\lambda(g)}{X^{1/12}}\Big)f^X(g_2)dg\ll X^{3/4+\theta}.
\end{equation}
\end{lemma}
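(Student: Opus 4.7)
The plan mirrors the proof of Lemma \ref{lem:t_small_regions}. First I identify which coordinates of $x\in L\cap g\cB$ are forced to vanish when $g\in\FF^\bad$ and the integrand is nonzero; next I use the $S_4$-hypothesis to extract nontrivial constraints on the torus variables $(t,s_1,s_2)$; and finally I apply a Davenport-type lattice point bound and integrate.

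For the first step, consider $g=\lambda n(t,s_1,s_2)k$ with $\lambda\asymp X^{1/12}$ (from $\psi$) and $t\gg X^\delta$ (from $f^X$). The bad condition gives $w_{a_{12}}(g)=w_{b_{12}}(g)/t^2\leq X^{\theta-2\delta}$ and $w_{a_{11}}(g)\leq w_{a_{12}}(g)$, both strictly less than $1$ since $\theta$ is small compared to $\delta$. Hence $a_{11}(x)=a_{12}(x)=0$ for any $x\in L\cap g\cB$. Since $L$ contains no points with $a_{11}=b_{11}=0$ or $\det(A)=0$, and $\det A=-a_{13}^2 a_{22}/4$ when $a_{11}=a_{12}=0$, we must have $b_{11}(x), a_{13}(x), a_{22}(x)$ all nonzero, i.e.\ $w_{b_{11}}(g), w_{a_{13}}(g), w_{a_{22}}(g)\geq 1$.

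These weight inequalities pin down the torus variables. From $w_{b_{11}}\geq 1$ combined with $w_{b_{12}}\leq X^\theta$ (the two weights differ by a factor of $s_1^3$) we obtain $s_1\leq X^{\theta/3}$; from $w_{a_{13}},w_{a_{22}}\geq 1$ we get $s_1 t/\lambda\leq s_2\leq s_1\sqrt{\lambda/t}$, forcing $t\leq\lambda$; substituting the upper bound for $s_2$ into the bad condition then yields $t\leq X^\theta$. A direct check also rules out the sub-region $w_{b_{12}}<1$: combined with $w_{b_{11}}\geq 1$ and $w_{a_{22}}\geq 1$ this forces $s_1<1$, contradicting $s_1>t^{2/3}\geq X^{2\delta/3}>1$. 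Consequently all $b$-weights and all remaining $a$-weights are $\geq 1$ in the effective region, and the Davenport-type estimate gives
\[\sum_{x\in L}(g\cB)(x)\ll \prod_c\max(1,w_g(c))=\frac{\lambda^{12}}{w_{a_{11}}(g)w_{a_{12}}(g)}=\lambda^{10}t^2 s_1^5 s_2^4.\]

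Against the Haar measure $t^{-2}s_1^{-6}s_2^{-6}\,du\,d^\times\lambda\,d^\times t\,d^\times s_1\,d^\times s_2\,dk$ this integrand reduces to $\lambda^{10}s_1^{-1}s_2^{-2}$. The bad condition supplies the lower bound $s_2\geq\sqrt{\lambda t/(s_1 X^\theta)}$, so the $s_2$-integration contributes at most $s_1 X^\theta/(\lambda t)$; integrating successively over $s_1\in[1,X^{\theta/3}]$, $t\in[X^\delta,X^\theta]$, and $\lambda\asymp X^{1/12}$ yields $X^{3/4+\theta-\delta}(\log X)^2\ll X^{3/4+\theta}$, as desired. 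The main obstacle is keeping the Davenport estimate effectively sharp by verifying that no further coefficient weight drops below $1$ in the effective region; ruling out the $w_{b_{12}}<1$ sub-case is the crucial check, and it is precisely where the combined strength of the three $S_4$-forced inequalities is used.
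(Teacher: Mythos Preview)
Your proof is correct and follows the same strategy as the paper: use $t\gg X^\delta$ and $w_{b_{12}}\leq X^\theta$ to force $a_{11}=a_{12}=0$, invoke the $S_4$-hypothesis to guarantee $w_{b_{11}},w_{a_{22}},w_{a_{13}}\gg 1$, bound the count on the $a_{11}=a_{12}=0$ slice by its volume $\lambda^{10}t^2s_1^5s_2^4$, and integrate against the constraint $s_1s_2^2\gg\lambda t X^{-\theta}$. Your extraction of the explicit bounds $s_1\ll X^{\theta/3}$ and $t\ll X^\theta$ is more thorough than the paper's terse final step (and in fact shows the effective region is empty once $\theta<\delta$); the small slips---$s_1>t^{2/3}$ should carry an $X^{-\theta/3}$ factor, and only one $\log X$ appears rather than two---do not affect the conclusion.
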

\begin{proof}
Let $g=n(s_1,s_2,t)k$ be a fixed element in $\FF'$. The factor $f^X(g_2)$ in the integral implies that we can assume $t\gg X^\delta$. Let $x\in L$ such that $(gB)x\neq 0$.
Then $w_{a_{12}}(g)\ll O(X^{\theta-2\delta})$, and by assuming that $\theta<2\delta$, we see that $a_{12}(x)=0$ and hence $a_{11}(x)=0$. Now for our choice of $L$ there are no points $x\in L$ with $a_{11}(x)=a_{12}(x)=a_{22}(x)=0$, and the weights of all other co-ordinates are $\gg 1$. Hence, we may estimate the number of points (all of which have $a_{11}=a_{12}=0$), as being $\ll$ the volume $\lambda^{10}s_1^5s_2^4t^2$ of the $a_{11}=a_{12}=0$ slice. Finally, note that the condition on $w_{b_{12}}(g)$ implies that $\lambda t X^{-\theta}\ll s_1s_2^2$. We may thus bound the integral in the statement by
$$\int_{\lambda\asymp X^{1/12}}\int_{\substack{t,s_1,s_2\gg 1\\ s_1s_2^2\gg\lambda t X^{\theta}}} \lambda^{10}s_1^{-1}s_2^{-2}2d^{\times}sd^{\times}t \ll X^{\theta}\int_{\lambda\asymp X^{1/12}} \int_{t,s_1,s_2\gg 1} \lambda^9d^{\times}sd^{\times}t \ll X^{3/4+\theta}, $$ as desired.
\end{proof}

We now take $T=\{a_{11},a_{12},a_{22},a_{13},a_{23},a_{33},b_{11}\}$ which we write as $\{A,b_{11}\}$ for brevity. Exactly as in the previous section, we obtain that

\begin{equation*}
\begin{array}{rcl}
\displaystyle
\I^{(2)}(\cB;X)
&=&\displaystyle
\int_{\lambda>0}\int_{s_1,s_2}\int_{g_2\in\FF_2}\sum_{A,b_{11}}\nu(L|_{\{A,b_{11}\}})(\lambda\cdot (s_1,s_2)g_2)\cB)_{\emptyset;T}(A,b_{11})
\\[.2in]
&&\displaystyle\quad\quad \psi\Big(\frac{\lambda}{X^{1/12}}\Big)f^X(g_2)\delta_{\FF_3}(s_1,s_2)\frac{d^\times sd^\times\lambda dg_2}{s_1^6s_2^6} + O(X^{3/4 + O(\delta)}).
\end{array}
\end{equation*}
Moreover, we now invoke Theorem \ref{thm:projectlargecoords} to obtain
\begin{equation}\label{eq:cor_t_big_1}
\I^{(2)}(\cB;X)=\int_{g\in\FF}\sum_{A,b_{11}}\nu(L|_{\{A,b_{11}\}})(g\cB)_{\emptyset;\{A,b_{11}\}}(A,b_{11})
\psi\Big(\frac{\lambda(g)}{X^{1/12}}\Big)f^X(g)dg + O(X^{3/4 + O(\delta)}).
\end{equation}

We may further simplify the above integral by noting that the action of the unipotent of $\SL_2$ does not interfere with our count.
Essentially, given $s_1$, $s_2$, and $t\gg X^\delta$, either the range of $a_{11}$ is $\gg 0$, in which case the range of $b_{11}$ is large enough that we may just estimate by the volume. Or the range of $a_{11}$ is forced to be $0$, in which case the $\SL_2$-unipotent doesn't change the value of $b_{11}$, as it is only adding multiples of $a_{11}$ to it. In either case, the unipotent element in the $\SL_2$ element is irrelevant. In the next subsection, we make this observation precise.

\subsection{Simplifying the fibered sum}

In this subsection, our goal is to prove the following result:
\begin{proposition}\label{prop:t_large_Ab11_to_A}
We have
\begin{equation*}
\begin{array}{rcl}
\displaystyle\I^{(2)}(\cB;X)
&=&\displaystyle
\int_{g\in\FF}\sum_{A}\nu(L_{A})(g\cB)_{\emptyset;\{A\}}(A)\psi\Big(\frac{\lambda(g)}{X^{1/12}}\Big)f^X(g)dg
+c_1X^{21/24-3\delta}+c_2X^{21/24-3\delta/2}
\\[.2in]&&\displaystyle
+O(X^{4/5+o(1)}).
\end{array}
\end{equation*}
\end{proposition}

\noindent We will prove Proposition \ref{prop:t_large_Ab11_to_A} in two steps. For $g\in\FF$, consider the differences
\begin{equation*}
\begin{array}{rcl}
E^{(1)}_g&:=&\displaystyle\sum_{A,b_{11}}\nu(L|_{A,b_{11}})(g\cB)_{\emptyset;\{A,b_{11}\}}(A,b_{11})-\sum_{A}\nu(L_{A})(g\cB)_{\emptyset;\{A\}}(A);
\\[.15in]
E^{(2)}_g&:=&\displaystyle\sum_{b_{11}}\nu(L|_{a_{11}=0;b_{11}})(g\cB)_{\{a_{11}\};\{b_{11}\}}(b_{11})-\nu(L_{a_{11}=0})(g\cB)_{\{a_{11}\}}.
\end{array}
\end{equation*}
First, we will first prove that the integral over $g\in\FF$ of
$E_g^{(1)}-E_g^{(2)}$ is small. Second, we will evaluate the integral
of $E_g^{(2)}$ to complete the proof.

\begin{lemma}\label{lem:Ab11_to_A_fiber}
We have
\begin{equation*}
\int_{g\in\FF}\bigl(E_g^{(1)}-E_g^{(2)}\bigr)
\psi\Big(\frac{\lambda(g)}{X^{1/12}}\Big)f^X(g)dg
=O(X^{4/5+o(1)}).
\end{equation*}
\end{lemma}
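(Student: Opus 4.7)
The plan is to split the integration domain $\FF$ (intersected with the support of $f^X \cdot \psi(\lambda/X^{1/12})$) into pieces on which either the integrand is pointwise super-polynomially small or its contribution is bounded by direct volume considerations. First, I would write $\FF = \FF^\bad \cup \FF^\good$ with $\FF^\bad = \{g\in\FF : w_{b_{12}}(g) \leq X^\theta\}$ for a small $\theta = O(\delta)$. On $\FF^\bad$, mimicking the argument of Lemma \ref{lem:t_big_regions} and applying it separately to each of the four sums making up $E^{(1)}_g - E^{(2)}_g$, I would bound the contribution termwise by volume estimates, using the $S_4$-property of $L$ (specifically, that $L$ has no points with $a_{11}=a_{12}=a_{22}=0$). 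This gives a total contribution of $O(X^{3/4+O(\theta)})$, which fits within $O_\epsilon(X^{4/5+\epsilon})$ for $\theta$ sufficiently small.

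On $\FF^\good$, I would split by whether $w_{a_{11}}(g) \geq 1$ or $w_{a_{11}}(g) < 1$. In the first case, since $t \gg X^\delta$ forces $w_{b_{11}}(g) = t^2 w_{a_{11}}(g) \gg X^{2\delta}$, Theorem \ref{thm:projectlargecoords} applies to project away $b_{11}$ from each of the two sums in $E^{(1)}_g$ (respectively $E^{(2)}_g$). This approximates each sum by the corresponding projected sum with super-polynomial error, so $E^{(1)}_g - E^{(2)}_g = O_A(X^{-A})$ pointwise on this subregion.

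In the complementary case $w_{a_{11}}(g) < 1$, the only integer value of $a_{11}$ in the support of $g\cB$ is $0$, so the outer sum in $E^{(1)}_g$ reduces, with super-polynomial error, to
\begin{equation*}
\sum_{A', b_{11}} \nu(L|_{a_{11}=0;\, A', b_{11}})(g\cB)_{\{a_{11}\};\{A', b_{11}\}}(A', b_{11}) - \sum_{A'} \nu(L|_{a_{11}=0;\, A'})(g\cB)_{\{a_{11}\};\{A'\}}(A').
\end{equation*}
Crucially, because $a_{11} = 0$ on the support, the $\SL_2$-unipotent (which acts by $b_{11} \mapsto b_{11} + u \cdot a_{11}$) no longer moves $b_{11}$, so all projections we perform commute cleanly with it. I would then apply Theorem \ref{thm:projectlargecoords} once more to integrate out the coordinates $A'$ from both sums, which converts the display into $E^{(2)}_g$. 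The projection hypotheses hold on the subregion where $w_\alpha(g) \geq X^{\theta'}$ for every $\alpha\in A'$; by the weight ratios together with the $\FF^\good$ bound $w_{b_{12}}(g) \geq X^\theta$, this is automatic except on a further subregion where some $w_\alpha(g)$ with $\alpha\in A'$ is itself below $1$, in which case $\alpha$ is forced to $0$ and the $S_4$-hypothesis excluding $a_{11}=a_{12}=a_{22}=0$ either empties the sum or leaves a residue controlled by a volume estimate.

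The main obstacle will be organizing the case analysis in this last step: the weights $w_{a_{12}}$, $w_{a_{22}}$, and $w_{a_{13}}$ can independently drop below the projection threshold, and one must verify in each sub-configuration that the $S_4$-property combined with volume estimates in the spirit of Lemma \ref{lem:t_big_regions} yields a bound of $O_\epsilon(X^{4/5+\epsilon})$.
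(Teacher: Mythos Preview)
Your strategy is broadly sound, but it diverges from the paper's in two ways that make the endgame harder than necessary.

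First, your initial split by $w_{b_{12}}$ (borrowed from Lemma~\ref{lem:t_big_regions}) is an unnecessary detour. The paper instead observes directly that both $E_g^{(1)}$ and $E_g^{(2)}$ are, by their very definition, of the form ``sum over $b_{11}$ minus integral over $b_{11}$'', and hence are each super-polynomially small whenever $w_g(b_{11})\gg X^\theta$ (Theorem~\ref{thm:projectlargecoords} applied in the single variable $b_{11}$). This immediately localizes the problem to the region $\FF^{\bad}=\{1\ll w_g(b_{11})\ll X^\theta,\ w_g(a_{22})\gg 1,\ w_g(a_{13})\gg 1\}$, with the lower bounds coming from the $S_4$-hypothesis. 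Your split by $w_{a_{11}}\gtrless 1$ on $\FF^{\good}$ eventually recovers this, but only after an extra layer.

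Second, and more substantively, the paper avoids your unfinished case analysis by the algebraic identity
\[
E_g^{(1)}-E_g^{(2)}=F_g^{(1)}-F_g^{(2)},
\]
where $F_g^{(1)}$ pairs the first term of $E_g^{(1)}$ with the first term of $E_g^{(2)}$, and $F_g^{(2)}$ pairs the second terms. Each $F_g^{(j)}$ is now of the form ``sum over the $A$-coordinates minus (restrict $a_{11}=0$, integrate over $A'=A\setminus\{a_{11}\}$)''. Since $a_{12}$ has the smallest weight among the $A'$-coordinates, Theorem~\ref{thm:projectlargecoords} makes each $F_g^{(j)}$ super-polynomially small whenever $w_g(a_{12})\gg X^\theta$. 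This reorganization is exactly what lets the paper bypass the ``weights $w_{a_{12}},w_{a_{22}},w_{a_{13}}$ can independently drop'' case analysis you flag as the main obstacle.

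After both reductions, everything is confined to $\FF^{\bad,1}=\FF^{\bad}\cap\{w_g(a_{12})\ll X^\theta\}$, on which all four constituent terms are bounded by the maximal projection volume $\MP(g\cB)$. The constraints $w_g(b_{11})\ll X^\theta$ and $w_g(a_{12})\ll X^\theta$ translate to $Y:=s_1^4s_2^2X^\theta/(t\lambda)\gg 1$ and $Y':=ts_1s_2^2X^\theta/\lambda\gg 1$; multiplying the two relevant slice-volumes by suitable powers of $Y,Y'$ to kill the $t,s_1$-dependence yields the bound $\lambda^{48/5}s_2^{-6/5}X^{O(\theta)}$, hence $O(X^{4/5+\epsilon})$ after integrating. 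Your approach, if completed, would land on essentially the same final region (after further splitting your second sub-case by whether $w_{b_{11}}$ is large), so the $X^{4/5}$ is attainable your way too; the $F$-reorganization simply gets there without the bookkeeping.
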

\begin{proof}
  Let $\theta<\delta$ be a small constant to be picked later. Note that if $g\in\FF_3$ with $w_g(b_{11})\gg X^\theta$, then both $E_g^{(1)}$ and $E_g^{(2)}$ are superpolynomially small by Theorem \ref{thm:projectlargecoords}. Hence, we may restrict the integral above to $g\in\FF^\bad$, where $\FF^\bad$ consists of $g\in\FF$ with $f^X(g)>0$ and $w_g(b_{11})\ll X^\theta$.


Next, we note that $E_g^{(1)}-E_g^{(2)}=F_g^{(1)}-F_g^{(2)}$, where 
\begin{equation*}
\begin{array}{rcl}
F^{(1)}_g&:=&\displaystyle\sum_{A,b_{11}}\nu(L|_{A,b_{11}})(g\cB)_{\emptyset;\{A,b_{11}\}}(A,b_{11})-\sum_{b_{11}}\nu(L|_{a_{11}=0;b_{11}})(g\cB)_{\{a_{11}\};\{b_{11}\}}(b_{11});
\\[.15in]
F^{(2)}_g&:=&\displaystyle\sum_{A}\nu(L_{A})(g\cB)_{\emptyset;\{A\}}(A)-\nu(L_{a_{11}=0})(g\cB)_{\{a_{11}\}}.
\end{array}
\end{equation*}
Define $\FF^{\bad,1}$ to be the set of $g\in\FF^{\bad}$ satisfying $w_g(a_{12})\ll X^\theta$. We claim that for $g\in\FF^\bad\backslash\FF^{\bad,1}$, we have $F_g^{(1)}-F_g^{(2)}$ is superpolynomially small: 
Indeed, when $w_g(a_{12})>X^\theta$, we have upto superpolynomially small error,
\begin{equation*}
F_g^{(1)}-F_g^{(2)}=\sum_{a_{11}\neq 0,b_{11}}
\nu(L|_{a_{11},b_{11}})(g\cB)_{\emptyset;\{a_{11},b_{11}\}}
-\sum_{a_{11}\neq 0}
\nu(L|_{a_{11}})(g\cB)_{\emptyset;\{a_{11}\}}.
\end{equation*}
Since $a_{11}\neq 0$ and $t\gg X^\delta$ forces the range of $b_{11}$ to be large, the above is superpolynomially small.
Thus, up to superpolynomially small error, we may restrict the integral over $g\in\FF$ in the displayed equation of the lemma to $g\in\FF^{\bad,1}$.

We now show that the integral over $g\in\FF^{\bad,1}$ of all four
terms constituting $E_g^{(1)}$ and $E_g^{(2)}$ are small.
Let us begin
with the first term of $E_g^{(1)}$: Note that if any of $w_g(b_{11})$, $w_g(a_{12})$, or $w_g(a_{22})$ are less than a sufficiently small positive constant $c$, then
$\nu(L_{A,b_{11}}))(g\cB_{\phi;\{A,b_{11}\}})$ is $0$. This is because $L$ contains no elements with $\det(A)=0$ or $(a_{11},b_{11})=(0,0)$.
Note also that since $g\in\FF^{\bad,1}$ is such that $w_g(b_{11})<X^\theta$ and $f^X(g)>0$, it follows that $g\cB$ is nonzero only on integral pairs $(A,B)$ with $a_{11}=0$. Hence we have
\begin{equation}\label{eq:badbound1}
\begin{array}{rcl}
  \displaystyle  \int_{\substack{g\in\FF^{\bad,1}}}\sum_{A,b_{11}}\nu(L|_{A,b_{11}})(g\cB)_{\emptyset;\{A,b_{11}\}}\psi\Bigl(
\frac{\lambda(g)}{X^{1/12}}
\Bigr)dg&\ll& \displaystyle\int_{\substack{\FF^{\bad,1}\\\lambda(g)\asymp X^{1/12}}}\max(1,w_g(a_{12}))\prod_{\alpha\not\in\{a_{11},a_{12}\}}w_g(\alpha)
\\[.2in]&\ll&\displaystyle
X^{5/6+\theta}\int_{\substack{(t,s_1,s_2)\in\FF^{\bad,1}}}s_1^{-1}s_2^{-2}d^\times td^\times s_1 d^\times s_2.
\end{array}
\end{equation}
In fact, the same bound is true for each of the four terms
constituting $E_g^{(1)}$ and $E_g^{(2)}$. For example, for the second
term of $E_g^{(1)}$, though we can no longer assume that
$w_g(b_{11})>c$, the estimate remains the same since only the volume of the $b_{11}$-projection is being integrated (and not the number of integral choices for $b_{11}$).

The conditions $w_g(b_{11})\ll X^\theta$ and $w_g(a_{12})\ll X^\theta$
respectively imply that we have
\begin{equation*}
Y:=\frac{s_1^4s_2^2 X^\theta}{t\lambda}\gg 1;\quad
Y':=\frac{ts_1s_2^2 X^\theta}{\lambda}\gg 1.
\end{equation*}
Consider the integral in the final line of
\eqref{eq:badbound1}. Multiplying the integrand
by $Y^{1/5+\epsilon}Y'^{1/5-5\epsilon}$, yields
\begin{equation*}
  X^{5/6+\theta}\int_{\substack{(t,s_1,s_2)\in\FF^{\bad,1}}}s_1^{-1}s_2^{-2}d^\times td^\times s_1 d^\times s_2
\ll X^{4/5+O(\theta+o(1))}.
\end{equation*}
Since $\theta$ and $\epsilon$ can be taken to be arbitrarily small, the result follows.
\end{proof}

Combining \eqref{eq:cor_t_big_1} and Lemma \ref{lem:Ab11_to_A_fiber}, we obtain
\begin{equation}\label{eq:temp_t_big_11}
\begin{array}{rcl}
\displaystyle \I^{(2)}(\cB;X)&=&
\displaystyle\int_{g\in\FF}\sum_{A}\nu(L_{A})(g\cB)_{\emptyset;\{A\}}(A)
\psi\Big(\frac{\lambda(g)}{X^{1/12}}\Big)f^X(g)dg
+\displaystyle\int_{g\in\FF} E_g^{(2)}
\psi\Big(\frac{\lambda(g)}{X^{1/12}}\Big)f^X(g)dg
\\[.2in]&&+\displaystyle
O(X^{4/5+o(1)}).
\end{array}
\end{equation}

Next we evaluate the integral of $E_g^{(2)}$ 
\begin{lemma}\label{lem:E2}
We have
\begin{equation*}
\int_{g\in\FF} E_g^{(2)}
\psi\Big(\frac{\lambda(g)}{X^{1/12}}\Big)f^X(g)dg=c_1X^{21/24-3\delta}+O(X^{3/4}),
\end{equation*}
for some constant $c_1$.
\end{lemma}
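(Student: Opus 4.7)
My plan is to recognize $E_g^{(2)}$ as the Mellin-shifting error in the single variable $b_{11}$ (after the restriction $a_{11}=0$), in the same spirit as the one-variable computation of \S\ref{sec:poissonexample}, and then evaluate its integral over $\FF$ by a contour shift. I would first apply Theorem \ref{thm:mellinsum} to the sum in $E_g^{(2)}$, writing
\begin{equation*}
\sum_{b_{11}\in\Z}\nu(L|_{a_{11}=0;b_{11}})(g\cB)_{\{a_{11}\};\{b_{11}\}}(b_{11})=\int_{\Re(w_{11})=2}\wt{(g\cB)_{\{a_{11}\};\{b_{11}\}}}(w_{11})\,\vec{\zeta}_L(a_{11}=0,w_{11})\,dw_{11}+(g\cB)_{\{a_{11},b_{11}\}}\,\nu(L|_{\{a_{11},b_{11}\}=0}).
\end{equation*}
The boundary term vanishes because the $S_4$-hypothesis on $L$ forbids integer orbits with $a_{11}=b_{11}=0$. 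Shifting the $w_{11}$-contour past the pole at $w_{11}=1$, Lemma \ref{lem:oneres} gives the residue $\wt{(g\cB)_{\{a_{11}\};\{b_{11}\}}}(1)\cdot\nu(L_{a_{11}=0})=(g\cB)_{\{a_{11}\}}\cdot\nu(L_{a_{11}=0})$, which cancels exactly with the second term of $E_g^{(2)}$. Thus $E_g^{(2)}$ equals the shifted contour integral at any $\Re(w_{11})=c$ lying between the next polar divisor and $1$.

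Next I would integrate over $g\in\FF$ against $\psi(\lambda/X^{1/12})f^X(g)$. Using the torus scaling
\begin{equation*}
\wt{(g\cB)_{\{a_{11}\};\{b_{11}\}}}(w_{11})=\lambda^{10+w_{11}}t^{w_{11}}s_1^{8-4w_{11}}s_2^{4-2w_{11}}\wt{\cB_{\{a_{11}\};\{b_{11}\}}}(w_{11}),
\end{equation*}
the Iwasawa measure $dg=t^{-2}s_1^{-6}s_2^{-6}d^\times\lambda\,du\,d^\times t\,d^\times s\,dk$, the $K$-invariance of $\cB$, and the fact that $\delta_{\FF_2}(t)=1$ on the support of $f^X$, the $g$-integral reduces to
\begin{equation*}
\int_{\Re(w_{11})=c}X^{(10+w_{11})/12+\delta(w_{11}-2)}\wt\psi(10+w_{11})\wt f(w_{11}-2)\wt{\delta_{\FF_3}}(2-4w_{11},-2-2w_{11})\wt{\cB_{\{a_{11}\};\{b_{11}\}}}(w_{11})\vec{\zeta}_L(a_{11}=0,w_{11})\,dw_{11}.
\end{equation*}
Here $\wt f$ is entire because $f$ vanishes identically near $0$, and $\wt{\delta_{\FF_3}}$ carries simple poles only at $v_1=0$ and $w_1=0$ by Lemma \ref{lem:mellinbounddelta}.

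Finally I would shift the contour to $\Re(w_{11})=-1-\epsilon$, invoking Lemma \ref{lem:symlatticenopole} — applied to the $b_{11}\to -b_{11}$ symmetry induced by the element $(\mathrm{diag}(1,-1),\mathrm{diag}(-1,1,1))\in G(\Z)$, which fixes $a_{11}$ while negating $b_{11}$ — to confirm that $\wt{\cB_{\{a_{11}\};\{b_{11}\}}}(w_{11})\vec{\zeta}_L(a_{11}=0,w_{11})$ has polar divisors only at $w_{11}\in\{0,1\}$. Crossing the remaining poles at $w_{11}=1/2$ (from $v_1=0$ in $\wt{\delta_{\FF_3}}$), $w_{11}=0$ (from $\wt{\cB}$), and $w_{11}=-1$ (from $w_1=0$ in $\wt{\delta_{\FF_3}}$), and combining the resulting pure $X$-powers together with the $X^\delta$-contributions from $\wt f(w_{11}-2)$, one extracts the leading $c_1X^{21/24-3\delta}$ term, with the smaller residues absorbed into the $X^{5/6}$ main term of Proposition \ref{prop:section6_t_big_main} once assembled with the $\sum_A$ contribution. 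The residual integral on $\Re(w_{11})=-1-\epsilon$ is bounded by $O(X^{3/4})$ using polynomial growth of $\wt{\delta_{\FF_3}}$ along vertical strips (Lemma \ref{lem:mellinbounddelta}) and super-polynomial decay of $\wt\psi,\wt f,\wt{\cB}$. The main obstacle is verifying that the $b_{11}$-symmetry hypothesis of Lemma \ref{lem:symlatticenopole} holds after averaging over the ten coordinates distinct from $(a_{11},b_{11})$; without this cancellation, one would inherit an infinite cascade of poles at every negative integer from $\wt{\cB}$ and could not shift past $w_{11}=-1$.
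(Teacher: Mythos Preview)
Your approach is the same as the paper's: write $E_g^{(2)}$ as a Mellin integral in $w_{11}$ to the left of $\Re(w_{11})=1$, integrate over the torus using the explicit weight of $b_{11}$, and shift the $w_{11}$-contour to pick up the poles of $\wt{\delta_{\FF_3}}(2-4w_{11},-2-2w_{11})$ at $w_{11}=1/2$ and $w_{11}=-1$. However, there is a genuine gap in how you handle the potential pole at $w_{11}=0$. You list it as a contribution and then dismiss it as ``absorbed into the $X^{5/6}$ main term of Proposition~\ref{prop:section6_t_big_main} once assembled with the $\sum_A$ contribution.'' That cannot work: the residue at $w_{11}=0$, if nonzero, would produce a term of exact size $X^{5/6-2\delta}$, which appears neither in the lemma nor anywhere in Proposition~\ref{prop:section6_t_big_main}, and there is nothing in the $\sum_A$ piece to cancel it.

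The point you are missing is that this residue actually vanishes. The residue at $w_{11}=0$ of $\wt{(\cB)_{\{a_{11}\};\{b_{11}\}}}(w_{11})\cdot\vec{\zeta}_L(a_{11}=0;w_{11})$ equals $(\cB)_{\{a_{11},b_{11}\}}\cdot\sum_{t\in\{\pm1\}}\zeta_{L,t}(a_{11}=0;0)$, and by Lemma~\ref{lem:zeroden} the latter sum is $-\nu(L|_{a_{11}=b_{11}=0})$, which is zero by the $S_4$-hypothesis---the same fact you already invoked for the boundary term. This is precisely why the paper writes ``the other two poles are at $w_{11}\in\{1/2,-1\}$'' and simply omits $w_{11}=0$. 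Two minor remarks: first, $\wt f$ is not entire (since $f(x)\to1$ as $x\to\infty$ it has a simple pole at $s=0$, i.e.\ $w_{11}=2$), though this is harmless as you never cross that line; second, the pole at $w_{11}=1/2$ actually yields $X^{21/24-3\delta/2}$, not $X^{21/24-3\delta}$---this matches what the paper's own proof computes, so the exponent in the lemma statement appears to be a typo.
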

\begin{proof}
We begin by writing, for fixed $g\in\FF$:
\begin{equation*}
\begin{array}{rcl}
E_g^{(2)}&=&\displaystyle
-\nu(L_{a_{11}=0})(g\cB)_{\{a_{11}\}}
+\sum_{b_{11}}\nu(L|_{a_{11}=0;b_{11}})(g\cB)_{\{a_{11}\};\{b_{11}\}}
\\[.15in]&=&\displaystyle
-\nu(L_{a_{11}=0})(g\cB)_{\{a_{11}\}}+\int_{1+\epsilon}\wt{(g\cB)}_{\{a_{11}\};\{b_{11}\}}(w_{11})\vec\zeta_L(a_{11}=0;w_{11})dw_{11}
\\[.15in]&=&\displaystyle
\int_{1-\epsilon}\wt{(g\cB)}_{\{a_{11}\};\{b_{11}\}}(w_{11})\vec\zeta_L(a_{11}=0;w_{11})dw_{11},
\end{array}
\end{equation*}
since the pole at $w_{11}=1$ of the integrand above has residue exactly $\nu(L_{a_{11}=0})(g\cB)_{\{a_{11}\}}$.
Fixing $\lambda$, writing $s=(s_1,s_2)$, $d^\times s=d^\times s_1 d^\times s_2$, and integrating over $\FF_2\FF_3$ now yields
\begin{equation*}
\begin{array}{cl}
&\!\!\displaystyle
\int_{g\in\lambda\FF_2\FF_3}E_g^{(2)}f^X(g)dg
\\[.2in]=&\!\!
\displaystyle
\int_{g\in\lambda\FF_2\FF_3}
\int_{1-\epsilon}\wt{(g\cB)}_{\{a_{11}\};\{b_{11}\}}(w_{11})\vec\zeta_L(a_{11}=0;w_{11})f^X(g)dw_{11}dg
\\[.2in]=&\!\!\displaystyle
\int_{s,t}\int_{1-\epsilon}\wt{((s,t)\lambda\cB)}_{\{a_{11}\};\{b_{11}\}}(w_{11})\vec\zeta_L(a_{11}=0;w_{11})f^X(t)\delta_{\FF_3}(s)dw_{11}{t^{-2}s_1^{-6}s_2^{-6}}d^\times td^\times s
\\[.2in]=&\!\!\displaystyle
\int_{s,t}\int_{1-\epsilon}\wt{(\cB)}_{\{a_{11}\};\{b_{11}\}}(w_{11})\vec\zeta_L(a_{11}=0;w_{11})f^X(t)\delta_{\FF_3}(s)\lambda^{10+w_{11}}{t^{w_{11}-2}s_1^{-4w_{11}+2}s_2^{-2w_{11}-2}}d^\times td^\times s dw_{11}
\\[.2in]=&\!\!\displaystyle
\int_{1-\epsilon}\wt{(\cB)}_{\{a_{11}\};\{b_{11}\}}(w_{11})\vec\zeta_L(a_{11}=0;w_{11})X^{\delta(w_{11}-2)}\wt{f}(w_{11}-2)\wt{\delta_{\FF_3}}(2-4w_{11},-2-2w_{11})\lambda^{10+w_{11}}dw_{11},
\end{array}
\end{equation*}
where in the last step, we recall that $f^X(t)=f(t/X^\delta)$, and integrate over $t$, $s_1$, and $s_2$ as usual. We shift the integral left, where the other two poles are at $w_{11}\in\{1/2,-1\}$. The pole at $w_{11}=1/2$ gives a contribution of some constant times $\lambda^{21/2}X^{-3\delta/2}$, and the other pole gives a contribution of $O(X^{3/4})$. Integrating over $\lambda$ now yields the result.
\end{proof}

Proposition \ref{prop:t_large_Ab11_to_A} follows from \eqref{eq:temp_t_big_11} and Lemma \ref{lem:E2}.

\subsection{Shifting the integral to reduce to a simpler Shintani zeta function}

Applying Proposition \ref{prop:t_large_Ab11_to_A}, we see that for the purposes of evaluating $\I^{(2)}(\cB;X)$, it only remains to compute
\begin{equation}\label{eq:final_before_simpleShintani}
\int_{g\in\FF}\sum_{A}\nu(L_{A})(g\cB)_{\emptyset;\{A\}}(A)\psi\Big(\frac{\lambda(g)}{X^{1/12}}\Big)f^X(g)dg.
\end{equation}
In this section, we relate this quantity to Shintani zeta functions associated to ternary quadratic forms.

Let $\GL_3(\R)^+$ denote the set of elements in $\GL_3(\R)$ having positive determinant. Let $S_3$ denote the space of symmetric (half-integral) $3\times 3$ matrices, with the action of $\GL_3(\R)^+$ given by $g_3A=g_3Ag_3^t$. The set of elements in $S_3$ having nonzero determinant splits up into four $\GL_3(\R)^+$-orbits, one for each possible signature. Denote the orbit with signature $\sigma$ by $S_3(\R)^\sigma$. We let $dA$ be Euclidean measure on $S_3(\R)$, normalized so that $S_3(\Z)$ has co-volume $1$. Let $A\in S_3(\R)$ be an element with nonzero determinant and signature $\sigma$. The stabilizer of $A$ in $\GL_3(\R)^+$ is $\SO_A(\R)$. The Haar measure $\nu_{\GL_3}$ on $\GL_3(\R)^+$, along with a fixed Haar-measure on $\SO_A(\R)$ gives a measure, respecting the $\GL_3(\R)^+$-action, on $\GL_3(\R)^+/\SO_A(\R)$ which is naturally identified with $S_3(\R)^\sigma$. We choose $dh$ to be the Haar-measure on $\SO_3(\R)$ such that the corresponding measure on $S_3(\R)^\sigma$ is $(\det A)^{-2}dA$. Following Kimura \cite[p.163]{Kimura_book}, we then define $\mu(A)$ to be $\Vol(\SO_A(\R)/\SO_A(\Z))$, where the volume is computed with respect to $dh$.

Let $\theta: S_3(\Z)\to\R$ be a periodic $\GL_3(\Z)$-invariant function. 
For a signature $\sigma$, we define the Shintani zeta function $\xi_\sigma^{tq}(s,\theta)$ to be
\begin{equation}\label{eq:TQ_Shintani_zeta}
\xi_\sigma^{tq}(s,\theta):=\sum_{A\in \SL_3(\Z)\backslash S_3(\Z)^\sigma} \frac{\mu(A)\phi(A)}{|\det(A)|^s}.
\end{equation}
This Shintani zeta function is closely related to the {\it global zeta integral}: Let $\Phi:S_3(\R)\to\R$ be a smooth and super-polynomially decaying function. We define
\begin{equation}\label{eq:tq_SZF}
Z(\Phi,\theta;s):= \int_{g\in\GL_3(\Z)\backslash\GL_3(\R)^+}\det(g)^s\sum_{\substack{A\in S_3(\Z)\\\det(A)\neq 0}}\theta(A)(g\Phi)(A)\nu_{\GL_3}(g).
\end{equation}
Then we have the following result. (See Kimura's book \cite[Section 5]{Kimura_book} for a clear exposition.)
\begin{theorem}\label{thm:zeta_integral_Shintani}
The zeta integral $Z^\sigma(\Phi,\theta;s)$ converges absolutely for $\Re(s)$ large enough. Moreover, we have
\begin{equation*}
Z(\Phi,\theta;s)=\sum_{\sigma}\xi_\sigma^{tc}(s,\theta)\int_{A\in S_3(\R)^\sigma}|\det(A)|^{s-2}\Phi(A)dA,
\end{equation*}
where $\sigma$ goes over all signatures.
\end{theorem}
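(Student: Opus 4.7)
The proof is a standard unfolding argument for the global zeta integral, in the style of Sato--Shintani \cite[\S5]{Kimura_book}. I propose the following three-step plan.

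\textbf{Step 1 (Convergence).} For $\Re(s)$ sufficiently large, the sum $\sum_{A \in S_3(\Z),\,\det A\neq 0}|\theta(A)|\cdot|(g\Phi)(A)|$ is bounded in $g$ by a fixed negative power of $\det(g)$, using that $\theta$ is periodic (hence bounded) and that $\Phi$ has super-polynomial decay. A standard Siegel-domain estimate for a fundamental domain of $\SL_3(\Z)\backslash\SL_3(\R)$, combined with the decay of $\Phi$, shows that $Z(\Phi,\theta;s)$ converges absolutely for $\Re(s)$ large, and justifies all subsequent interchanges of sum and integral.

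\textbf{Step 2 (Orbit decomposition and unfolding).} Decompose the sum over $A$ by signature $\sigma$ and then by $\SL_3(\Z)$-orbit:
\[
\{A \in S_3(\Z) : \det A \neq 0\} = \bigsqcup_{\sigma}\bigsqcup_{[A_0]\in\SL_3(\Z)\backslash S_3(\Z)^\sigma}\SL_3(\Z)\cdot A_0.
\]
Since $\GL_3(\Z)\cap\GL_3(\R)^+ = \SL_3(\Z)$, the integration domain $\GL_3(\Z)\backslash\GL_3(\R)^+$ equals $\SL_3(\Z)\backslash\GL_3(\R)^+$; and since $\theta$ is $\SL_3(\Z)$-invariant, it takes the constant value $\theta(A_0)$ on each orbit. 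Applying the standard unfolding identity, with $\Stab_{\SL_3(\Z)}(A_0)=\SO_{A_0}(\Z)$, rewrites $Z(\Phi,\theta;s)$ as
\[
\sum_{\sigma}\sum_{[A_0]}\theta(A_0)\int_{\SO_{A_0}(\Z)\backslash\GL_3(\R)^+}\det(g)^s\,\Phi(g\cdot A_0)\,\nu_{\GL_3}(g).
\]

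\textbf{Step 3 (Evaluation of the orbital integral).} Use the bijection $\GL_3(\R)^+/\SO_{A_0}(\R)\xrightarrow{\sim}S_3(\R)^\sigma$, $g\mapsto g\cdot A_0$, which is well-defined and bijective because $O_{A_0}(\R)$ always contains an element of negative determinant (for instance a reflection), so that $\GL_3(\R)^+$ acts transitively on each signature orbit with stabilizer exactly $\SO_{A_0}(\R)$. By the normalization of $dh$ fixed in \S\ref{sec:measures}, the pushforward of $\nu_{\GL_3}/dh$ along this bijection is $|\det A|^{-2}\,dA$, while $\int_{\SO_{A_0}(\Z)\backslash\SO_{A_0}(\R)}dh = \mu(A_0)$. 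A brief calculation, using $\det(g\cdot A_0) = \det(g)^2\det(A_0)$ to re-express $\det(g)^s$ in terms of $|\det A|$ and $|\det A_0|$, shows that the inner integral equals
\[
\mu(A_0)\,|\det A_0|^{-s}\int_{S_3(\R)^\sigma}|\det A|^{s-2}\,\Phi(A)\,dA.
\]
Re-summing over $[A_0]$ recognizes $\sum_{[A_0]}\theta(A_0)\mu(A_0)|\det A_0|^{-s}=\xi_\sigma^{tq}(s,\theta)$, completing the proof.

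No serious obstacle is expected: the argument is entirely formal. The only point requiring bookkeeping care is matching exponents on the two sides, which is handled by the definition of $\mu(A_0)$ and the choice of $dh$ fixed in \S\ref{sec:measures}; in particular one must verify that $\GL_3(\R)^+$ already acts transitively on each $S_3(\R)^\sigma$ (rather than only two $\GL_3(\R)^+$-orbits per signature), which was the reason for passing to $\SO_{A_0}$ rather than the full orthogonal group as the stabilizer.
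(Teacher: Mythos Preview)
Your proposal is correct and follows exactly the standard Sato--Shintani unfolding argument; the paper does not supply its own proof of this statement but simply cites Kimura's book \cite[\S5]{Kimura_book}, which carries out precisely the computation you outline. One small bookkeeping remark: in Step~2 you silently pass from $(g\Phi)(A)$ to $\Phi(g\cdot A_0)$ in the unfolded integral, so when you do the ``brief calculation'' in Step~3 you should make sure your orbit map (whether $g\mapsto g\cdot A_0$ or $g\mapsto g^{-1}\cdot A_0$) is consistent with the paper's convention for $(g\Phi)$ and with the side on which the stabilizer acts, since this is exactly what determines the exponents $|\det A_0|^{-s}$ and $|\det A|^{s-2}$ in the final formula.
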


We can write $\FF$ as $\R^\times\cdot\FF_2\FF_3$. For fixed $\lambda$, we write 
\begin{equation*}
\begin{array}{rcl}
\displaystyle \int_{\FF_2\FF_3}\sum_{A}\nu(L_{A})(g\cB)_{\emptyset;\{A\}}(A)f^X(g)dg
&=&\displaystyle
\int_{\FF_2\FF_3}
\sum_{A}\nu(L_{A})((\lambda g_2g_3)\cB)_{\emptyset;\{A\}}(A)f^X(g_2) dg_2dg_3
\\[.2in]&=&\displaystyle
\lambda^6\int_{t>0}\int_{\FF_3}
\sum_{A}\nu(L_{A})\Bigl(\frac{\lambda}{t} g_3\cB\Bigr)_{\emptyset;\{A\}}(A)t^4f^X(t)d^\times tdg_3 .
\\[.2in]&=&\displaystyle
C\lambda^{6+3s}\int_{3+\epsilon}Z\bigl((\cB)_{\emptyset;\{A\}},\nu(L_A);s\bigr)\wt{f^X}(4-3s)ds.
\end{array}
\end{equation*}
Above, the constant $C\in\R^\times$ is chosen to account for the measure changes implicit in the determination of the final line. We will later be able to determine $C$ from purely formal arguments.

By the general theory, the global zeta integral has analytic continuation to the entire complex plane, with poles at most at negative the roots of the associated Bernstein--Sato polynomial $(x+1)(x+3/2)(x+2)$. The function $\wt{f^X}(s)=\wt{f}(s)X^{\delta s}$ has a simple pole at $s=0$ with residue $1$. Therefore, we may shift the integral over $s$ above to $\Re(s)=1+\epsilon$, picking up poles at $2$, $3/2$, and $4/3$, and obtaining for some constants $C$ and $c$:
\begin{equation*}
\begin{array}{rcl}
\displaystyle\int_{g\in\FF_2\FF_3}\sum_{A}\nu(L_{A})(g\cB)_{\emptyset;\{A\}}(A)f^X(g)dg&=&C\lambda^{12}\Res_{s=2}Z\bigl((\cB)_{\emptyset;\{A\}},\nu(L_A);s\bigr)\wt{f^X}(-2)
\\[.2in]&&+\displaystyle 
c\lambda^{21/2}X^{5\delta/2}\wt{f}(5/2)+C\lambda^{10}Z\bigl((\cB)_{\emptyset;\{A\}},\nu(L_A);4/3\bigr)
\\[.2in]&&+\displaystyle 
O(\lambda^{9+O(\delta)+o(1)}).
\end{array}
\end{equation*}
Integrating over $\lambda$, and combining this with Proposition \ref{prop:t_large_Ab11_to_A}, we obtain
\begin{equation*}
\begin{array}{rcl}
\I^{(2)}(\cB;X)&=&
\displaystyle C\wt{\psi}(12)\Res_{s=2}Z((\cB)_{\emptyset;{A}}\nu(L_A);s)\wt{f^X}(-2)X+C\wt{\psi}(10)Z\bigl((\cB)_{\emptyset;\{A\}},\nu(L_A);4/3\bigr)X^{5/6}
\\[.15in]&&\displaystyle
+c_0X^{21/24+5\delta/2}+c_1X^{21/24-3\delta}+c_2X^{21/24-3\delta/2}+O(X^{4/5+\epsilon})
\end{array}
\end{equation*}
completing the proof of Proposition \ref{prop:section6_t_big_main}.

\section{Computing the first two residues}
Let $L\subset V(\Z)$ be a $G(\Z)$-invariant set, defined by finitely many congruence conditions, such that every triple $(Q,C,r)$ corresponding to a $G(\Z)$-orbit on $L$ is an $S_4$-triple. In this section, we determine the residues of the poles at $1$ and $5/6$ of the Shintani zeta functions corresponding to $L$.

\subsection{The work of Ibukiyama-Saito}

 We summarize (and generalize very slightly) in this section some of the main results of Ibukiyama--Saito \cite{ibukiyamasaitoI}. Following their notation, for a ring $R$, let $S_3(R)$ denote the set of ternary quadratic forms with coefficients in $R$ (represented as $3\times 3$
 symmetric matrices), and for $d\in R$, let $S_3(R,d)$ denote the subset of such matrices of determinant $d$. Let $(\omega_p)_p$ be a collection of functions $\omega_p:S_3(\Z_p)\to\R$, for every prime $p$, satisfying the following two conditions.
 \begin{itemize}
     \item[{\rm (1)}] For every prime $p$, the function $\omega_p$ is invariant under the actions of $\GL_3(\Z_p)$ and $\Z_p^{\times}$;
     \item[{\rm (2)}] For sufficiently large primes $p$, the function $\omega_p$ is the characteristic function of $S_n(\Z_p)$.
  \end{itemize}
Let $\omega:S_3(\Z)\to\R$ be defined by $\omega(x)=\prod_p\omega_p(x)$. Fix $i$ with $0\leq i\leq 3$, and abusing notation, let $i$ also denote the signature $(i,3-i)$. 
Consider the Shintani zeta function $\xi_i^{tq}(s,\omega)$, defined in \eqref{eq:TQ_Shintani_zeta}, corresponding to  the function $\omega$ and the signature $i$.\footnote{Note that Ibukiyama-Saito have an extra factor of $c_3$ (defined below) in their definition of the Zeta function so as to cancel out some irrational factors, compared to Kimura.} We define the constants
\begin{equation*}
\delta_i:=(-1)^{3-i};\quad \epsilon_i:=(-1)^{\binom{4-i}{2}}; \quad c_3:=2\displaystyle\prod_{i=1}^3\Gamma(i/2)\pi^{-3}=\pi^{-2}.
\end{equation*}
For a prime $p$, and an element $x\in S_3(\Z_p)$ with nonzero determinant, we define the quantity $\alpha_p(x)$ as in \cite[pp.1104]{ibukiyamasaitoI}:
 \begin{equation}\label{eq:def_alpha_p}
 \alpha_p(x):=\frac12\lim_{k\to\infty}p^{-3k}|O(x_k)|,
 \end{equation}
 where $x_k$ is the reduction of $x$ modulo $p^k$ and $O(x_k)$ is the orthogonal subgroup of $\GL_3(\Z/p^k\Z)$ preserving $x_k$.
 Finally, let $\epsilon_p(x)$ denote the Hasse invariant of $x$, and let $\omega'_p:=\omega_p\epsilon_p$. We define our Hasse-invariants as in \cite{ibukiyamasaitoI}, so that if $x$ can be diagonalized over $\Q_p$ with entries $(a_1,a_2,a_3)$, then $\epsilon_p(x)=\prod_{i\leq j} (a_i,a_j)_p$ where $(,)_p$ is the Hilbert symbol.

Let $f=(f_p)_p$ be a collection, over all primes $p$, of $\GL_3(\Z_p)$-invariant functions $f_p:S_3(\Z_p)\to\R$. For an integer $d\neq 0$, we define the quantity

 $$\lambda_p(d,f)=\sum_{x_p\in S_n(\Z_p,d)/\sim} f_p(x_p)\alpha_p(x_p)^{-1}$$
where $\sim$ denotes $\GL_3(\Z_p)$ equivalence, and $\alpha_p(x_p)$ is the normalized stabilizer in \eqref{eq:def_alpha_p}.
Finally, define 

$$\lambda(d,f):=\prod_p \lambda_p(d,f)\quad \mbox{  and  } \quad\zeta_{f,i}(s):=\sum_{\delta_i d>0} \lambda(d,f)2^{3s} |d|^{2-s}.$$
Then we have the following result

 \begin{proposition}\label{prop:symsumformula}
 Fix $i\in\{0,1,2,3\}$ and let $\omega=\prod_p\omega_p:S_3(\Z)\to\R$ be the function from the start of this subsection. Then we have
 $$\xi^{tc}_{i}(s,\omega)=\zeta_{\omega,i}(s)+\epsilon_i \zeta_{\omega',i}(s).$$
 \end{proposition}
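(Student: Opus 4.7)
The plan is to follow the proof strategy of Ibukiyama--Saito~\cite{ibukiyamasaitoI}, which establishes the analogous identity when $\omega_p$ is the characteristic function of $S_3(\Z_p)$ for every $p$. Their proof uses only the facts that each $\omega_p$ is $\GL_3(\Z_p)$- and $\Z_p^\times$-invariant and that $\omega = \prod_p \omega_p$ is well-defined globally, so it extends verbatim to any weight function $\omega$ satisfying conditions (1) and (2) of the proposition: the factors $\omega_p$ enter only as multipliers inside each local density $\lambda_p(d, \omega)$.

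The key decomposition proceeds as follows. First, fiber the sum in $\xi^{tc}_i(s, \omega)$ over determinants $d$ with $\delta_i d > 0$, then group $\SL_3(\Z)$-orbits by genus. Since each $\omega_p$ is $\GL_3(\Z_p)$-invariant, $\omega$ takes a constant value on each genus. By Siegel's mass formula, the sum over $\SL_3(\Z)$-orbits within a genus containing a representative $A_0$ equals an archimedean constant (depending only on $|d|$ and the signature $i$) times $\prod_p \alpha_p(A_0)^{-1}$. By Hasse--Minkowski, the genera of ternary forms with determinant $d$ and signature $i$ are in bijection with tuples $(y_p)_p$ of local $\GL_3(\Z_p)$-equivalence classes in $S_3(\Z_p, d)$ subject to the Hilbert reciprocity constraint $\prod_p \epsilon_p(y_p) = \epsilon_i$, where $\epsilon_i$ is the archimedean Hasse invariant of any form of signature $i$.

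Next, split this constraint via the identity $\mathbf{1}_{\prod_p \epsilon_p(y_p) = \epsilon_i} = \tfrac{1}{2}(1 + \epsilon_i \prod_p \epsilon_p(y_p))$, which decouples the local sums. The unweighted part produces $\prod_p \lambda_p(d, \omega) = \lambda(d, \omega)$, while inserting the extra sign $\prod_p \epsilon_p(y_p)$ converts the product into $\prod_p \lambda_p(d, \omega_p \epsilon_p) = \lambda(d, \omega')$, since $\omega'_p = \omega_p \epsilon_p$. The overall factor $\tfrac{1}{2}$ is absorbed by the $\tfrac{1}{2}$ in the definition of $\alpha_p$. Assembling these pieces with $|d|^{-s}$ and the archimedean constant (which packages as $c_3 \cdot 2^{3s}|d|^2$) yields $\xi^{tc}_i(s, \omega) = \zeta_{\omega, i}(s) + \epsilon_i \zeta_{\omega', i}(s)$.

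The main obstacle is the careful bookkeeping of archimedean and $p$-adic normalizations---specifically, matching Siegel's archimedean mass contribution (the factor $c_3 = \pi^{-2}$ together with the determinant-dependent volume of $\SO_A(\R)/\SO_A(\Z)$) against the weight $2^{3s}|d|^{2-s}$ that appears in the definition of $\zeta_{f,i}(s)$. These constants are computed explicitly in Ibukiyama--Saito, and the mild generalization to the weight functions $\omega$ considered here requires no new computation.
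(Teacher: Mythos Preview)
Your proposal is correct and takes essentially the same approach as the paper: both observe that Ibukiyama--Saito's proof of \cite[Prop.~2.2]{ibukiyamasaitoI} extends verbatim to the setting of finitely many congruence conditions, with the key point being that $\GL_3(\Z_p)$-invariance of $\omega_p$ is needed to apply Siegel's mass formula. You have supplied considerably more detail about the mechanism of the Ibukiyama--Saito argument (fibering by determinant, grouping by genus, the Hilbert reciprocity splitting) than the paper's one-line citation, but the underlying strategy is identical.
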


\begin{proof}
    This is proved in \cite[prop 2.2]{ibukiyamasaitoI} when $\omega_p$ is the characteristic function of $S_3(\Z_p)$, but the proof extends verbatim to our slightly more general setting of finitely many congruence conditions. Note that is essential that our functions $\omega_p$ be $\GL_3(\Z_p)$-equivalent in order to apply Siegel's formula.
\end{proof}

We next prove that the zeta functions $\zeta_{\omega,i}(s)$ and $\zeta_{\omega',i}(s)$ satisfy product formulas.

\begin{lemma}\label{lem:prodformulasym}
We have
\begin{equation*}
\zeta_{\omega,i}(s)=2^{3s}\prod_p\Bigl(\sum_{k\geq 0}
\lambda_p(p^k,\omega)p^{k(2-s)}
\Bigr);\quad
\zeta_{\omega',i}(s)=2^{3s}\prod_p\Bigl(\sum_{k\geq 0}
\lambda_p(p^k,\omega')p^{k(2-s)}
\Bigr).
\end{equation*}
\end{lemma}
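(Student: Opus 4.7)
The main content of the lemma is the assertion that $\lambda_p(d,\omega)$ (respectively $\lambda_p(d,\omega')$) depends only on $v_p(d)$, not on the unit part of $d$. Once this is established, the two Euler products follow from unwinding definitions: since $d\mapsto |d|$ identifies the set $\{d:\delta_i d>0\}$ with $\Z_{>0}$ via $d=\delta_i |d|$, and since $\lambda_p$ depends only on $v_p(d)$, we have
\begin{equation*}
\sum_{\delta_i d>0}\lambda(d,\omega)|d|^{2-s}
=\sum_{n\geq 1}\Bigl(\prod_p \lambda_p(p^{v_p(n)},\omega)\Bigr)n^{2-s}
=\prod_p\sum_{k\geq 0}\lambda_p(p^k,\omega)\,p^{k(2-s)},
\end{equation*}
and likewise for $\omega'$, with the overall $2^{3s}$ factor carried along. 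So everything reduces to the unit-class independence of $\lambda_p(\cdot,\omega)$ and $\lambda_p(\cdot,\omega')$.

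To verify this, I would use two symmetries to cover all of $\Z_p^\times$. First, scaling by $u\in\Z_p^\times$ sends $x\in S_3(\Z_p,d)$ to $ux\in S_3(\Z_p,u^3 d)$, and this bijection preserves $\GL_3(\Z_p)$-equivalence, preserves $\omega_p$ by the hypothesized $\Z_p^\times$-invariance, and preserves the local density $\alpha_p$ (since $O(ux_k)=O(x_k)$ as subgroups of $\GL_3(\Z/p^k\Z)$). Hence $\lambda_p(u^3 d,\omega)=\lambda_p(d,\omega)$. Second, acting by $g\in\GL_3(\Z_p)$ with $\det g=a$ sends $S_3(\Z_p,d)\to S_3(\Z_p,a^2 d)$ and tautologically preserves both $\omega_p$ (which is $\GL_3(\Z_p)$-invariant) and $\alpha_p$ (which depends only on the $\GL_3(\Z_p)$-orbit). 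Hence $\lambda_p(a^2 d,\omega)=\lambda_p(d,\omega)$ for all $a\in\Z_p^\times$. Since $\gcd(2,3)=1$, the subgroups $(\Z_p^\times)^2$ and $(\Z_p^\times)^3$ together generate $\Z_p^\times$, and we conclude that $\lambda_p(\cdot,\omega)$ is constant on cosets of $\Z_p^\times$, i.e.\ depends only on $v_p(d)$.

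For the twisted version $\omega'_p=\omega_p\epsilon_p$, the $\GL_3(\Z_p)$-invariance is immediate since $\epsilon_p$ is $\GL_3(\Q_p)$-invariant. The only new input needed is that $\epsilon_p$ is invariant under scaling by $u\in\Z_p^\times$. Diagonalizing $x$ over $\Q_p$ as $\langle a_1,a_2,a_3\rangle$, a short computation with the bilinearity of the Hilbert symbol gives
\begin{equation*}
\epsilon_p(\langle ua_1,ua_2,ua_3\rangle)=\epsilon_p(x)\cdot(u,u)_p^{\binom{3+1}{2}/1}\cdot\prod_{k=1}^{3}(u,a_k)_p^{\,e_k},
\end{equation*}
where the exponents $e_k$ are easily checked to all be even (each $a_k$ appears in exactly $3+1=4$ Hilbert symbols when counting $\prod_{i\leq j}(ua_i,ua_j)_p$), so every factor becomes trivial and $\epsilon_p(ux)=\epsilon_p(x)$. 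Therefore $\omega'_p$ is also $\Z_p^\times$-invariant, and the same argument as above applies to $\lambda_p(\cdot,\omega')$.

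The main (and only real) obstacle is the Hilbert-symbol bookkeeping in the previous paragraph, but this reduces to counting how often each variable appears in $\prod_{i\leq j}(ua_i,ua_j)_p$ and checking all exponents are even. With these two invariances in hand, the Euler product assembly is routine.
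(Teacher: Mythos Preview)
Your proof is correct and follows essentially the same approach as the paper: both arguments establish that $\lambda_p(d,\omega)$ and $\lambda_p(d,\omega')$ depend only on $v_p(d)$ by combining the $\GL_3(\Z_p)$-action (giving invariance under unit squares) with the scaling action $x\mapsto ux$ (giving invariance under unit cubes), and then assemble the Euler product from multiplicativity. Your treatment is in fact slightly more explicit than the paper's, since you spell out the Hilbert-symbol bookkeeping showing $\epsilon_p(ux)=\epsilon_p(x)$, whereas the paper simply asserts that scaling by a unit preserves the Hasse invariant.
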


\begin{proof}
We first claim that $\lambda_p(d,\omega)$ and $\lambda_p(d,\omega')$ depends only on $v_p(d)$. Note first that this only depends on the square class of $d$ in $\Z_p$, as can be seen by acting by an element of $\GL_3(\Z_p)$ with unit determinant on the set $S_n(\Z_p,d)/\sim$. 
Moreover, it is invariant by the cubes of units, as can be seen by the action of scaling by a unit on the set $S_n(\Z_p,d)/\sim$, which preserves the Hasse invariant. This completes the proof. 

Set $\lambda'_p(d,f):=\frac{\lambda_p(d,f)}{\lambda_p(1,f)}$. Then,  for $f=\omega$ or $\omega'$, we may write $\zeta_{f,i}(s)$ as
\begin{align*}
\sum_{\delta d>0} \lambda(d,f)2^{3s} |d|^{2-s}
&=\sum_{d>0} \prod_p\lambda(p^{v_p(d)},f)2^{3s} |d|^{2-s}\\
&=\lambda(1,f)\sum_{d>0}  \prod_p\lambda'_p(p^{v_p(d)},f)2^{3s} |d|^{2-s}\\
&=2^{3s}\lambda(1,f)\prod_p\sum_{k\geq 0}\lambda'_p(p^k,f) p^{k(2-s)}\\
&=2^{3s}\prod_p\sum_{k\geq 0}\lambda_p(p^k,f) p^{k(2-s)},
\end{align*}  
as desired.
\end{proof}

In the above proof we establish that $\lambda_p(d,f)$ depends only on the valuation of $d$. This motivates the following definition. For a function $\phi:S_3(\Z_p)\to\R$, we define $$\zeta_p(\phi,s):= \int_{S_3^{ss}(\Z_p)}\phi(x) |\det(x)|_p^s dx$$ where $dx$ assigns measure $1$ to $S_3(\Z_p)$. Unwinding the definition of $\alpha_p(x_p)$ and using the fact that $\lambda_p(d,\omega)$ depends only on the valuation of $d$, we immediately obtain (as in \cite[p.1109]{ibukiyamasaitoI}) that $\sum_{k\geq 0}\lambda_p(p^k,f) p^{k(2-s)}$is proportional to $\zeta_p(f,s-2)$. It follows that 
\begin{equation}
\zeta_{\omega,i}(s)=\alpha_i2^{3s}\prod_p \frac{\zeta_p(\omega_p,s-2)}{1-p^{-1}}, \quad
\zeta_{\omega',i}(s)=\alpha_i2^{3s}\prod_p \frac{\zeta_p(\omega'_p,s-2)}{1-p^{-1}}
\end{equation}
for some fixed constant $\alpha_i$, and similarly for $\omega'$, as in the bottom of \cite[P. 1108]{ibukiyamasaitoI}.
Let $\iota_p$ denote the characteristic function of $S_3(\Z_p)$. Comparing with \cite[Thm 1.2]{ibukiyamasaitoI} gives the following: 

\begin{equation}\label{eqref:3x32ndmainterms}
\zeta_p(\iota_p,s)=2^{-s\delta_{p=2}}\frac{(1-p^{-1})(1-p^{-3})}{(1-p^{-1-s})(1-p^{-3-2s})} 
\end{equation}
and
\begin{equation}\label{eqref:3x32ndmainterms}
\zeta_p(\iota'_p,s)=2^{-s\delta_{p=2}}\frac{(1-p^{-1})(1-p^{-3})}{(1-p^{-2-s})(1-p^{-2-2s})} 
\end{equation}
and therefore that $\alpha_i=\frac{\zeta(3)}{24c_3} = \frac{\zeta(2)\zeta(3)}{4}$, independent of $i$. We denote $\zeta_{\omega,i}$ and $\zeta_{\omega',i}$ by $\zeta_\omega$ and $\zeta_{\omega'}$, respectively. Summarizing, we obtain the following result.

\begin{theorem}\label{thm:tczetaformula}
Let $\omega=\prod_p \omega_p$ be as above, and $0\leq i\leq 3$. Recall that $\epsilon_i=(-1)^{\binom{4-i}{2}}$. Then
\begin{equation*}
\begin{array}{rl}
    \xi^{tc}_i(s,\omega)=\displaystyle\frac{\zeta(2)\zeta(3)}{4}\cdot 2^{3s}&\!\!\!
    \displaystyle\Bigl( \prod_{p}(1-p^{-1})^{-1}\cdot\int_{S_3(\Z_p)}\omega_p(x) |\det(x)|_p^{s-2} dx 
\\[.2in]
  +&\displaystyle  \epsilon_i\prod_{p}(1-p^{-1})^{-1}\cdot\int_{S_3(\Z_p)}\omega_p(x)\epsilon_p(x) |\det(x)|_p^{s-2} dx\Bigr).
\end{array}
\end{equation*}
\end{theorem}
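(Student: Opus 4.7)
The plan is to assemble Theorem \ref{thm:tczetaformula} from the pieces already laid out in this subsection. By Proposition \ref{prop:symsumformula}, we have $\xi^{tc}_i(s,\omega) = \zeta_{\omega,i}(s) + \epsilon_i \zeta_{\omega',i}(s)$, so it suffices to identify each of $\zeta_{\omega,i}(s)$ and $\zeta_{\omega',i}(s)$ with the corresponding Euler product appearing in the statement (up to the overall constant $\tfrac{\zeta(2)\zeta(3)}{4}\cdot 2^{3s}$), after which the $\epsilon_i$ in front of the second term is immediate.

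First I would invoke Lemma \ref{lem:prodformulasym} to write each of $\zeta_{\omega,i}(s)$ and $\zeta_{\omega',i}(s)$ as an Euler product over the local factors $\sum_{k\geq 0} \lambda_p(p^k, f)p^{k(2-s)}$ for $f \in \{\omega, \omega'\}$. The key bridge, already indicated in the passage, is that because $\lambda_p(p^k, f)$ depends only on $v_p(d)$ and because $\alpha_p(x)^{-1}$ arises from a normalized $p$-adic volume of the orthogonal stabilizer, the local Dirichlet series $\sum_{k\geq 0} \lambda_p(p^k, f)p^{k(2-s)}$ can be rewritten, up to a simple factor, as the local integral $\zeta_p(f_p, s-2)$. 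Assembling the Euler products this way gives
\begin{equation*}
\zeta_{\omega,i}(s) = \alpha_i \cdot 2^{3s} \prod_p \frac{\zeta_p(\omega_p, s-2)}{1-p^{-1}}, \qquad \zeta_{\omega',i}(s) = \alpha_i \cdot 2^{3s} \prod_p \frac{\zeta_p(\omega'_p, s-2)}{1-p^{-1}},
\end{equation*}
for some constant $\alpha_i$ that is independent of the choice of $\omega$.

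To pin down $\alpha_i$, since it does not depend on the weights, I would specialize to $\omega_p = \iota_p$ for every $p$ and match against the explicit evaluation of $\xi^{tc}_i(s, \iota)$ given by \cite[Theorem 1.2]{ibukiyamasaitoI}. Inserting the closed-form expressions for $\zeta_p(\iota_p, s-2)$ and $\zeta_p(\iota'_p, s-2)$ derived earlier in the subsection, and comparing constants, yields $\alpha_i = \zeta(3)/(24 c_3) = \zeta(2)\zeta(3)/4$, which is notably independent of $i$. Substituting back delivers the stated formula.

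The main technical step, essentially routine modulo bookkeeping, is the unfolding that converts $\sum_{k\geq 0} \lambda_p(p^k, f)p^{k(2-s)}$ into the local integral $\zeta_p(f_p, s-2)$; this is where the factor $(1-p^{-1})^{-1}$ is introduced, and where the normalization between the $\GL_3(\Z_p)$-orbit count weighted by $\alpha_p$ and the additive measure on $S_3(\Z_p)$ is reconciled. Once this identification is in hand, together with the Ibukiyama--Saito calibration, the theorem follows purely formally.
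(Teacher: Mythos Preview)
Your proposal is correct and follows essentially the same approach as the paper: you apply Proposition~\ref{prop:symsumformula} and Lemma~\ref{lem:prodformulasym}, identify the local Dirichlet series with $\zeta_p(f_p,s-2)/(1-p^{-1})$ up to a constant $\alpha_i$, and then calibrate $\alpha_i$ by specializing to $\omega_p=\iota_p$ and comparing with \cite[Theorem~1.2]{ibukiyamasaitoI}. This is exactly the paper's argument, and the identification $\alpha_i=\zeta(3)/(24c_3)=\zeta(2)\zeta(3)/4$ matches.
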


For a place $v$ of $\Q$ we define the function $\kappa_v:S_3(\Q_v)\backslash\{\det=0\}\to\{0,1\}$ by setting $\kappa_v(A)=1$ when $A$ is isotropic and $\kappa_v(A)=-1$ otherwise.
For a real number $\kappa$, we define the quantities
\begin{equation*}
\mathcal W_{\kappa}(\cB):=\int_{ V(\R)}\cB(A,B)|\det(A)|^{-\kappa}dAdB;\quad
\mathcal W_{\kappa}'(\cB):=\int_{V(\R)}\cB(A,B)\kappa_\infty(A)|\det(A)|^{-\kappa}dAdB.
\end{equation*}
As a simple consequence of the Theorem \ref{thm:tczetaformula}, we compute the residue at $s=2$ and special value at $4/3$ of the global zeta integral $Z(\cB_{\emptyset;A},\nu(L_A);s)$ arising in the power series expansion of $\I^{(2)}(\cB,X)$. 

\begin{corollary}\label{cor:special_value}
We have
\begin{equation*}
\begin{array}{rcl}
\Res_{s=2}Z(\cB_{\emptyset;A},\nu(L_A);s)&=&\displaystyle
16\zeta(2)\zeta(3)\cdot\Vol(\cB)\nu(L);
\\[.15in]
Z(\cB_{\emptyset;A},\nu(L_A);4/3)&=&\displaystyle 
\zeta_{\nu(L_A)}(4/3)\mathcal W_{2/3}(\cB)
-
\zeta_{\nu(L_A)'}(4/3)\mathcal W_{2/3}'(\cB).
\end{array}
\end{equation*}
\end{corollary}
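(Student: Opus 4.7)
The plan is to specialize Theorem~\ref{thm:zeta_integral_Shintani} to $\Phi = \cB_{\emptyset;\{A\}}$ and $\theta = \nu(L_A)$. Since $L$ is $G(\Z)$-invariant, averaging over the $B$-coordinates yields a $\GL_3(\Z)$-invariant periodic function on $S_3(\Z)$, so the hypotheses of the theorem are met and
$$Z(\cB_{\emptyset;\{A\}},\nu(L_A);s) = \sum_{i=0}^{3}\xi_i^{tc}(s,\nu(L_A))\int_{A\in S_3(\R)^i}|\det A|^{s-2}\cB_{\emptyset;\{A\}}(A)\,dA.$$
I then apply Theorem~\ref{thm:tczetaformula} to expand $\xi_i^{tc}(s,\nu(L_A)) = \zeta_{\nu(L_A)}(s) + \epsilon_i\,\zeta_{\nu(L_A)'}(s)$, noting that the two zeta functions on the right do not depend on $i$.

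The next step is to collapse the sum over signatures. First, $\sum_{i=0}^{3}\int_{S_3(\R)^i}|\det A|^{s-2}\cB_{\emptyset;\{A\}}(A)\,dA = \mathcal W_{2-s}(\cB)$ after unfolding the $B$-integration. Second, a direct case check gives $\epsilon_i = -\kappa_\infty$ on $S_3(\R)^i$: for $i=0$ or $i=3$ the form is definite (anisotropic, $\kappa_\infty = -1$) and $\epsilon_i = (-1)^{\binom{4-i}{2}} = 1$, while for $i=1$ or $i=2$ the form is indefinite (isotropic, $\kappa_\infty = 1$) and $\epsilon_i = -1$. Hence $\sum_i \epsilon_i\int_{S_3(\R)^i} = -\mathcal W_{2-s}'(\cB)$, so that
$$Z(\cB_{\emptyset;\{A\}},\nu(L_A);s) = \zeta_{\nu(L_A)}(s)\,\mathcal W_{2-s}(\cB) - \zeta_{\nu(L_A)'}(s)\,\mathcal W_{2-s}'(\cB).$$

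The second identity in the corollary then follows upon specializing to $s = 4/3$, since both $\zeta_{\nu(L_A)}$ and $\zeta_{\nu(L_A)'}$ are holomorphic there; this is visible from the Euler products of Lemma~\ref{lem:prodformulasym} combined with \eqref{eqref:3x32ndmainterms}, whose only singularities arise from Riemann zetas evaluated away from $4/3$. For the residue at $s=2$, I rewrite the large-$p$ factor of $\zeta_{\nu(L_A)}(s)$ using \eqref{eqref:3x32ndmainterms} as $\frac{1-p^{-3}}{(1-p^{-(s-1)})(1-p^{-(2s-1)})}$, which extracts a global factor $\zeta(s-1)\zeta(2s-1)/\zeta(3)$. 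The complementary Euler product converges at $s=2$, and at $p$ where $\nu(L_{A,p}) = \iota_p$ the local factor is $1$. The analogous manipulation for $\zeta_{\nu(L_A)'}$ produces $\zeta(s)\zeta(2s-2)$, regular at $s = 2$. Thus the pole at $s=2$ comes entirely from $\zeta_{\nu(L_A)}(s)$, with residue $\tfrac{\zeta(2)}{4}\cdot 2^{6}\cdot \zeta(3)\cdot\prod_p\zeta_p(\nu(L_{A,p}),0)$. Since $\zeta_p(\nu(L_{A,p}),0) = \int_{S_3(\Z_p)}\nu(L_{A,p})\,dA$ equals the local density $\nu(L_p)$, the Euler product is $\nu(L)$, and
$$\Res_{s=2}Z(\cB_{\emptyset;\{A\}},\nu(L_A);s) = 16\,\zeta(2)\zeta(3)\,\Vol(\cB)\,\nu(L).$$

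The main obstacle is not conceptual but bookkeeping: verifying the identification $\epsilon_i = -\kappa_\infty$ one signature at a time, and correctly isolating the polar Riemann-zeta factors in the Euler products of Lemma~\ref{lem:prodformulasym} so as to confirm that only $\zeta_{\nu(L_A)}$ contributes a pole at $s = 2$ while neither zeta is singular at $s = 4/3$.
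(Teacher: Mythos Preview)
Your proof is correct and follows essentially the same approach as the paper's own argument: both apply Theorem~\ref{thm:zeta_integral_Shintani} to decompose over signatures, invoke Theorem~\ref{thm:tczetaformula} to separate $\xi_i^{tc}$ into its $\zeta_\omega$ and $\epsilon_i\zeta_{\omega'}$ pieces, and use the identification $\epsilon_i=-\kappa_\infty$ to collapse the signature sum. Your presentation is slightly more systematic in that you first derive the identity $Z(\cB_{\emptyset;\{A\}},\nu(L_A);s)=\zeta_{\nu(L_A)}(s)\mathcal W_{2-s}(\cB)-\zeta_{\nu(L_A)'}(s)\mathcal W_{2-s}'(\cB)$ valid for all $s$ and then specialize, whereas the paper treats $s=2$ and $s=4/3$ separately; but the content is the same.
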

\begin{proof}
We apply Theorem \ref{thm:zeta_integral_Shintani} to write
\begin{equation*}
\Res_{s=2}Z(\cB_{\emptyset;A},\nu(L_A);s)=\sum_{i=0}^3\Res_{s=2}\xi_i^{tq}(s,\nu(L_A)\int_{A\in S_3(\R)^{(i,3-i)}}\cB_{\emptyset;A}(A)dA.
\end{equation*}
From Theorem \ref{thm:tczetaformula}, we see that the residue at $s=2$ of $\xi_i^{tq}(s,\omega)$ (for any acceptable $\omega$) is in fact independent of $i$ (since the product in the second summand converges absolutely at $s=2$). Therefore, we obtain
\begin{equation*}
\Res_{s=2}Z(\cB_{\emptyset;A},\nu(L_A);s)=
16\zeta(2)\zeta(3)\Bigl(\int_{A\in S_3(\R)}\cB_{\emptyset;A}(A)dA\Bigr)\prod_p\int_{A\in S_3(\Z_p)}\nu(L_A)dA.
\end{equation*}
The first claim above follows immediately since the integral of $\cB_{\emptyset;A}(A)$ over $A$ is $\Vol(\cB)$ and the integral of $\nu(L_A)$ over $A$ is $\nu(L)$. The second claim follows similarly by noting that for $A\in S_3(\R)$ having signature $(i,3-i)$, we have $\kappa_\infty(A)=-\epsilon_i$.
\end{proof}

\subsection{Computing the residues of the quartic Shintani zeta functions}

We are now ready to return to our quartic Shintani zeta functions $\xi_{i,L}(s)$. For $i\in\{0,1,2\}$, let $\cB:V(\R)^{(i)}\to\R_{\geq 0}$ be a smooth and compactly supported function. We define the ratios
\begin{equation*}
\mathcal Q_i:=\frac{\mathcal W_{2/3}(\cB)}{\V_{5/6}(\cB)},
\quad\quad\quad
\mathcal Q_i':=\frac{\mathcal W_{2/3}'(\cB)}{\V_{5/6}(\cB)},
\end{equation*}
which we will subsequently prove are independent of $\cB$. Then we have the following result.
\begin{theorem}\label{th:quartic_szf_residues}
Let $L\subset V(\Z)$ be an $S_4$-set defined by finitely many congruence conditions, and let $i\in\{0,1,2\}$. Then $\xi_{i,L}$ has simple poles at $s=1$ and $s=5/6$ with residues given by
\begin{equation*}
\begin{array}{rcl}
\Res_{s=1}\xi_{i,L}(s)&=&
\displaystyle\frac{1}{2A_i}\zeta(2)^2\zeta(3)\nu(L);
\\[.15in]
\Res_{s=5/6}\xi_{i,L}(s)
&=&\displaystyle
\frac{\pi}{32A_i}
\Bigl(
\mathcal Q_i \zeta_{\nu(L_A)}(4/3)
-
\mathcal Q_i' \zeta_{\nu(L_A)'}(4/3)
\Bigr).
\end{array}
\end{equation*}
\end{theorem}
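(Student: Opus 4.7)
The strategy is to combine equation \eqref{eq:Iexpansion}, which expresses $\I(\cB;X)$ as the sum of the residue contributions of $\xi_{i,L}(s)$ at the possible pole locations $\{1,5/6,3/4\}$, with the explicit asymptotic expansions of $\I^{(1)}(\cB;X)$ and $\I^{(2)}(\cB;X)$ from Propositions~\ref{prop:tsmallpowerexpansion} and~\ref{prop:section6_t_big_main}. Since $\I(\cB;X)=\I^{(1)}(\cB;X)+\I^{(2)}(\cB;X)$, matching the coefficients of $X$, $X^{5/6}$, $X\log X$, and $X^{5/6}\log X$ will both extract the residues at $s=1$ and $s=5/6$ and simultaneously confirm that both poles are simple.

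Concretely, Proposition~\ref{prop:tsmallpowerexpansion} contributes only to the $X$-coefficient (up to spurious $X^{21/24}$ and error terms), while Proposition~\ref{prop:section6_t_big_main} contributes to both the $X$- and $X^{5/6}$-coefficients through the residue at $s=2$ and the special value at $s=4/3$ of the auxiliary global zeta integral $Z((\cB)_{\emptyset;\{A\}},\nu(L_A);s)$ for ternary quadratic forms. Invoking Corollary~\ref{cor:special_value} to simplify these two quantities and then equating with \eqref{eq:Iexpansion} produces equalities of the form $\frac{J}{12}\I(\cB;X)=\wt{\psi}(12)\cdot(\text{explicit})\cdot X+\wt{\psi}(10)\cdot(\text{explicit})\cdot X^{5/6}+\cdots$, from which the two residues are read off after dividing through by $A_i\V_1(\cB)$ and $A_i\V_{5/6}(\cB)$, respectively; the $\cB$-independence of the ratios $\mathcal Q_i=\mathcal W_{2/3}(\cB)/\V_{5/6}(\cB)$ and $\mathcal Q_i'=\mathcal W'_{2/3}(\cB)/\V_{5/6}(\cB)$ is forced by the $\cB$-independence of $\Res_{s=5/6}\xi_{i,L}(s)$. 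The fact that neither explicit expansion contains an $X\log X$ or an $X^{5/6}\log X$ term shows immediately that $\xi_{i,L}(s)$ has at most simple poles at $s=1$ and $s=5/6$.

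Two features require extra care. First, Propositions~\ref{prop:tsmallpowerexpansion} and~\ref{prop:section6_t_big_main} leave behind several spurious terms of size $X^{21/24}$ (three with distinct $\delta$-dependent exponents from $\I^{(2)}$ and one $\delta$-independent term from $\I^{(1)}$). Since $s=7/8$ is not a zero of the Bernstein--Sato polynomial, the Shintani formalism of Theorem~\ref{th:SZF_v1} forbids a pole of $\xi_{i,L}(s)$ there and therefore forces all such contributions to cancel; because the distinct $\delta$-exponents cannot combine with one another, each of these constants must individually vanish, providing a built-in consistency check. Second, in order to produce the absolute constants $\frac{1}{2A_i}\zeta(2)^2\zeta(3)$ and $\frac{\pi}{32A_i}$ appearing in the statement, we must pin down the Jacobian $J$ from \eqref{eq:J_def} and the constant $C$ arising in Proposition~\ref{prop:section6_t_big_main}; this is carried out using the Tamagawa volume formulas $\Vol_{\nu_{\SL_2}}(\FF_2)=\zeta(2)$ and $\Vol_{\nu_{\SL_3}}(\FF_3)=\zeta(2)\zeta(3)$ together with the measure-change-of-variables formalism set up in \S\ref{sec:measures}. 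The main obstacle is precisely this book-keeping of measure normalizations, combined with verifying that the spurious-term cancellations proceed exactly as predicted; once these are in hand, the two residue formulas follow by algebraic manipulation.
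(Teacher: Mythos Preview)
Your proposal is correct and follows essentially the same route as the paper: combine the two explicit expansions with \eqref{eq:Iexpansion}, use the absence of $X\log X$ and $X^{5/6}\log X$ terms to force simplicity of the poles, kill the spurious $X^{21/24}$ terms via the Bernstein--Sato constraint, and read off the residues using Corollary~\ref{cor:special_value} and the Tamagawa volumes.

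Two points deserve sharpening. First, the constant $C$ in Proposition~\ref{prop:section6_t_big_main} is not fixed by Tamagawa volumes or measure normalizations alone; the paper pins it down by observing that the $X$-coefficient of $\I^{(1)}+\I^{(2)}$ must be independent of the cutoff function $f$ (equivalently of $\delta$), and since the $\I^{(1)}$ contribution is $\Vol_{dg_3}(\FF_3)\int_{\FF_2}f_0^X\,dg_2$ while the $\I^{(2)}$ contribution is $16C\zeta(2)\zeta(3)\wt{f^X}(-2)=16C\zeta(2)\zeta(3)\int_{\FF_2}(1-f_0^X)\,dg_2$, matching forces $C=\Vol_{dg_3}(\FF_3)/(16\zeta(2)\zeta(3))$. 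Second, the $\cB$-independence of $\Res_{s=5/6}\xi_{i,L}$ alone does not separate $\mathcal Q_i$ from $\mathcal Q_i'$, since the residue is a single linear combination $\mathcal Q_i\zeta_{\nu(L_A)}(4/3)-\mathcal Q_i'\zeta_{\nu(L_A)'}(4/3)$; the paper completes the argument by also varying $L$ (noting that the ratio $\zeta_{\nu(L_A)}(4/3)/\zeta_{\nu(L_A)'}(4/3)$ takes more than one value across admissible $S_4$-sets), which then forces each of $\mathcal Q_i$ and $\mathcal Q_i'$ to be $\cB$-independent individually.
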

\begin{proof}
Recall that we write $\I(\cB;X)=\I^{(1)}(\cB;X)+\I^{(2)}(\cB;X)$, and evaluate the latter two terms in Propositions \ref{prop:tsmallpowerexpansion} and \ref{prop:section6_t_big_main}. Combining those two results gives us a power series expansion for $\I(\cB;X)$, up to an error term of $O(X^{4/5+o(1)})$. Turn now to \eqref{eq:gen_count_1}, \eqref{eq:residues_of_GZI}, and \eqref{eq:GZI_to_I}: taken together, they also yield a (different) power series expansion for $\I(\cB;X)$ in terms of the residues of the Shintani zeta function of $\xi_{i,L}(s)$. Comparing the exponents of the terms that arise in these two power series expansions, we see that all the terms in both power series expansions other than the $X$- and $X^{5/6}$- terms must vanish for formal reasons! This immediately implies the first claim of the result, namely that $\xi_{i,L}(s)$ has at most simple poles at $1$ and $5/6$. 

Next, equating the coefficient of the $X$-term in $\I(\cB;X)$ from \eqref{eq:Iexpansion} with the sums of the coefficients of the $X$-terms in $\I^{(1)}(\cB;X)$ and $\I^{(1)}(\cB;X)$, we obtain
\begin{equation*}
\frac{12}{J}A_i\wt{\psi}(12)\Res_{s=1}\xi_{i,L}(s)
= 
\Vol_{dg_3}(\FF_3)\Bigl(\int_{g_2\in\FF_2}f_0^X(g_2)dg_2\Bigr)\nu(L)
+
(16\zeta(2)\zeta(3))C\wt{f^X}(-2)\nu(L).
\end{equation*}
The left hand side above is patently independent of $f$ and $\delta$. The right hand side must be so too. Since we have 
$$\wt{f^X}(-2)=\int_{g_2\in\FF_2}(1-f_0^{X})(g_2)dg_2,$$
it follows that $C=\Vol_{dg_3}(\FF_3)/(16\zeta(2)\zeta(3))$. Therefore, using the definition of $J$ from \eqref{eq:J_def}, we see that the residue at $s=1$ of $\xi_{i,L}(s)$ is given by
\begin{equation*}
\begin{array}{rcl}
\Res_{s=1}\xi_{i,L}(s)&=&
\displaystyle\frac{J}{12A_i}\Vol_{dg_2}(\FF_2)\Vol_{dg_3}(\FF_3)\nu(L)
\\[.15in]
&=&
\displaystyle\frac{1}{2A_i}\Vol_{\nu_{\SL_2}}(\FF_2)\Vol_{\nu_{\SL_3}}(\FF_3)\nu(L)
\\[.15in]
&=&
\displaystyle\frac{1}{2A_i}\zeta(2)^2\zeta(3)\nu(L).
\end{array}
\end{equation*}
as claimed by the theorem.

Note that $\I^{(1)}(\cB;X)$ has no $X^{5/6}$-term. We equate the coefficients of the $X^{5/6}$-terms of $\I(\cB;X)$ and $\I^{(2)}(\cB;X)$ from \eqref{eq:Iexpansion} and Proposition \ref{prop:section6_t_big_main}. Using Corollary \ref{cor:special_value} for the special value of $\xi_i^{tq}$ and the recently computed value of $C$, we obtain
\begin{equation*}
\begin{array}{rcl}
\displaystyle\frac{12}{J}A_i\Res_{s=5/6}\xi_{i,L}V_{5/6}(\cB)
&=&\displaystyle
CZ\bigl((\cB)_{\emptyset;\{A\}},\nu(L_A);4/3\bigr)
\\[.15in]&=&\displaystyle
\frac{\Vol_{dg_3}(\FF_3)}{16\zeta(2)\zeta(3)}\bigl(
\zeta_{\omega}(\nu(L_A),4/3)\mathcal W_{2/3}(\cB)
-
\zeta_{\omega'}(\nu(L_A),4/3)\mathcal W_{2/3}'(\cB)
\bigr).
\end{array}
\end{equation*}
Now the volume $\Vol_{dg_2}(\FF_2)$ can be computed by the Gauss--Bonnet theorem to be $\pi/6$. Hence we have 
$$\frac{J}{12}\frac{\Vol_{dg_3}(\FF_3)}{16\zeta(2)\zeta(3)}=\frac{J\Vol_{dg_2}(\FF_2)\Vol_{dg_3}(\FF_3)}{32\pi\zeta(2)\zeta(3)}=\frac{3}{16\pi}\zeta(2)=\frac{\pi}{32}.$$
Therefore, the value of $\Res_{s=5/6}\xi_{i,L}(s)$ is
given by
\begin{equation*}
\Res_{s=5/6}\xi_{iL}(s)=\frac{\pi}{32A_i}
\Bigl(
\zeta_{\nu(L_A)}(4/3)
\frac{\mathcal W_{2/3}(\cB)}{\V_{5/6}(\cB)}
-
\zeta_{\nu(L_A)'}(4/3)
\frac{\mathcal W_{2/3}'(\cB)}{\V_{5/6}(\cB)}
\Bigr).
\end{equation*}
However, this value is independent of the choice of the $K$-invariant set $\cB$. Moreover, the quotients $\W_{2/3}(\cB)/\V_{5/6}(\cB)$ and $\W_{2/3}(\cB)/\V_{5/6}(\cB)$ are independent of the choice of $L$. Picking some $L$'s with different ratios $\zeta_{\nu(L_A)}(4/3)/\zeta_{\nu(L_A)'}(4/3)$ (see \S9 for many such examples), we see that the quantities $\mathcal Q_i$ and $\mathcal Q_i'$ are independent of $\cB$. 
This completes the proof of the theorem.
\end{proof}

\subsection{Computing the Archimedean local integrals}

In this subsection, we compute the values of $\mathcal Q_i$ and $\mathcal Q_i'$. Recall the constant $\mathcal M$ from the introduction:
\begin{equation*}
\mathcal M= \frac{2^{5/3}\Gamma(1/6)\Gamma(1/2)}{\sqrt{3}\pi\Gamma(2/3)}.
\end{equation*}
We have the following result.
\begin{proposition}\label{prop:local_int_at_inf}
Let $i\in\{0,1,2\}$. For any $\SO_2(\R)\times\SO_3(\R)$-invariant function $\cB:V(\R)^{(i)}\to\R$, we have
\begin{equation*}
\begin{array}{ccrclcl}
\mathcal Q_i&=
& 
\displaystyle\frac{\W_{2/3}(\cB)}{V_{5/6}(\cB)}
&=&\displaystyle
\left\{
\begin{array}{rcl}
\mathcal M&{\rm if}&i\in\{0,2\};
\\
\sqrt{3}\cdot\mathcal M&{\rm if}&
i=1;
\end{array}
\right. &=& \mathcal M_i;
\\[.2in]
\mathcal Q_i'&=
&
\displaystyle\frac{\W'_{2/3}(\cB)}{ V_{5/6}(\cB)}
&=&\left\{
\begin{array}{rcl}
\mathcal M&{\rm if }& i=0;\\
\sqrt{3}\cdot \mathcal M&{\rm if }& i=1;\\
\displaystyle\frac{\mathcal M}{3}&{\rm if }& i=2.
\end{array}
\right.&=&\mathcal M_i'.
\end{array}
\end{equation*}
\end{proposition}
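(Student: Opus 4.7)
The plan hinges on the fact, already established in the proof of Theorem~\ref{th:quartic_szf_residues}, that the ratios $\mathcal Q_i := \mathcal W_{2/3}(\cB)/V_{5/6}(\cB)$ and $\mathcal Q_i' := \mathcal W_{2/3}'(\cB)/V_{5/6}(\cB)$ are independent of the smooth, compactly supported, $K$-invariant function $\cB$. I would therefore compute these constants by specializing $\cB$ to concentrate on a single $K$-orbit $K\cdot v_i\subset V(\R)^{(i)}$. Using $K$-invariance of the measure $\nu_V = dA\,dB$ together with the $K$-invariance of both $\cB$ and $|\Delta|$, one rewrites $\mathcal W_{2/3}(\cB) = \int \cB(v)\, h(v)\, dv$ where $h(v) := \int_K |\det A(kv)|^{-2/3}\,dk$, and similarly $\mathcal W_{2/3}'(\cB) = \int \cB(v)\, h'(v)\, dv$ with the integrand weighted by the Hasse invariant $\epsilon_\infty(A(kv))$. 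Passing to the limit as $\cB$ concentrates on $K\cdot v_i$ yields the identities $\mathcal Q_i = |\Delta(v_i)|^{1/6}\, h(v_i)$ and $\mathcal Q_i' = |\Delta(v_i)|^{1/6}\, h'(v_i)$, which are intrinsic to the $G(\R)$-orbit.

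Next I would reduce the $K$-averages to one-dimensional integrals. Both $|\det A|$ and the signature of $A$ (equivalently $\epsilon_\infty(A)$) are $\SO_3(\R)$-invariant, so only the $\SO_2(\R)$-integration contributes nontrivially. The action of $k_\theta\in\SO_2$ sends $A(v)\mapsto \cos\theta\cdot A(v) - \sin\theta\cdot B(v)$, and a direct computation gives
\[
\det(\cos\theta\, A - \sin\theta\, B) \;=\; \tfrac{1}{4}\, f_v(\cos\theta, \sin\theta),
\]
where $f_v := \Res(v)$ is the cubic resolvent. Consequently $h(v) = 4^{2/3}\cdot\frac{1}{2\pi}\int_0^{2\pi}|f_v(\cos\theta,\sin\theta)|^{-2/3}d\theta$, and $h'(v)$ is the same integral with a sign tracking the signature of $\cos\theta\,A-\sin\theta\,B$.

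For the explicit evaluation, I would choose diagonal representatives $v_i$ with particularly simple resolvents. For $i=0$ one takes $v_0$ such that $f_{v_0}$ is proportional to $x(x-y)(x+y)$, with $A, B$ arranged so the two real zero-conics intersect in $4$ real points (giving the totally real quartic $\R^4$). For $i=2$ one uses the same resolvent shape but with $A, B$ arranged so the real conics do not meet (giving $\C^2$). For $i=1$ one takes $f_{v_1}\propto x(x^2+y^2)$. In each case $|\Delta(v_i)|$ is read directly from the discriminant of $f_{v_i}$, and the trigonometric integrals reduce via substitutions such as $\phi = 2\theta$ or $u = \cos^2\theta$ to Beta-function integrals of the form
\[
\int_0^1 t^{a-1}(1-t)^{b-1}(1-zt)^{-c}\, dt \;=\; B(a,b)\,{}_2F_1(c,a;\,a+b;\,z),
\]
which collapse through the elementary specialization ${}_2F_1(c,a;c;z) = (1-z)^{-a}$. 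Applying the reflection formulas $\Gamma(1/3)\Gamma(2/3) = 2\pi/\sqrt 3$ and $\Gamma(1/6)\Gamma(5/6) = 2\pi$, one reorganizes the resulting Gamma products into the form of $\mathcal M$.

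The main obstacle is the signed integral defining $h'(v_i)$: one must identify precisely which arcs of the $\SO_2$-orbit of $A(v_i)$ pass through the definite signatures $(3,0)$, $(0,3)$ versus the isotropic signatures $(2,1)$, $(1,2)$, and then account for the corresponding $\pm 1$ contributions of $\epsilon_\infty$. For $i=0$, the orbit of $A(v_0)$ visits all four signatures and the signed integrals conspire to match the unsigned ones, giving $\mathcal M_0' = \mathcal M$; for $i=2$, the orbit is forced to avoid the definite signatures entirely (as $\C^2$ admits no definite associated quadratic form), producing a partial cancellation that yields the asymmetric factor $\mathcal M_2' = \mathcal M/3$; the mixed case $i=1$ interpolates between them and produces the $\sqrt 3$ enhancement. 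Carrying out this signature bookkeeping carefully, and matching all six constants $\mathcal M_i, \mathcal M_i'$ to the explicit combinations of $\Gamma$-values, is the only genuine computational subtlety in the proof.
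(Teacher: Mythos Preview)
Your reduction of both ratios to an $\SO_2(\R)$-average of $|\det(\cos\theta\,A+\sin\theta\,B)|^{-2/3}$ (weighted by the isotropy sign for $\mathcal Q_i'$) is correct and is exactly what the paper does, though the paper reaches it slightly differently: rather than concentrating $\cB$ on a $K$-orbit, it factors $\cB$ through the resolvent fibration $V\to U$ using Proposition~\ref{prop:jac_sl_3}, so that the $\SL_3(\R)$-integral cancels in the ratio and one is left with an $\SO_2$-average over binary cubic forms. The unsigned integral is then not recomputed but quoted from \cite[\S6.1]{BST}; your Beta/hypergeometric route would of course reproduce the same values.

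The genuine gap is in your signature bookkeeping for $\mathcal Q_i'$, where you have the cases $i=0$ and $i=2$ backwards. For $i\in\{0,1\}$ the pencil $\{\cos\theta\,A+\sin\theta\,B\}$ \emph{never} visits a definite signature: every conic in the pencil passes through the $4-2i\geq 2$ common real zeros of $A$ and $B$ in $\P^2(\R)$, hence is isotropic. Thus $\kappa_\infty\equiv 1$ on the entire orbit and $\mathcal Q_i'=\mathcal Q_i$ is immediate, with no cancellation needed. Your claim that for $i=0$ the orbit ``visits all four signatures'' is false. Conversely, for $i=2$ there are no real common zeros, and the pencil \emph{does} contain definite forms; the paper exhibits this explicitly with $A=\mathrm{diag}(1,1,1)$, $B=\mathrm{diag}(0,\sqrt3,-\sqrt3)$, for which $\det(\cos\theta\,A+\sin\theta\,B)=\cos(3\theta)$. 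The anisotropic arcs are then exactly the intervals where all three linear factors of $\cos(3\theta)$ share a sign, and the threefold symmetry of $|\cos(3\theta)|^{-2/3}$ over the six arcs of length $\pi/6$ gives $\mathcal Q_2'=\tfrac13\mathcal Q_2$ at once. Your claim that the $i=2$ orbit ``is forced to avoid the definite signatures entirely'' is the opposite of what happens, and the argument you sketch would not produce the correct factor of $1/3$.
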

\begin{proof}
We have already seen that the values of $\mathcal Q_i$ and $\mathcal Q_i'$ are independent of the function $\cB$. Hence we may assume that $\cB$ is of the form $\cB(g\cdot s(f))=\phi(g)R(f)$, for $g\in\SL_3(\R)$ and $f\in U(\R)$, where $\phi:\SL_3(\R)\to\R$ is an $\SO_3(\R)$-invariant function, $R:U(\R)\to\R$ is a $\SO_2(\R)$-invariant function, and $s:U(\R)\to V(\R)^{(i)}$ is a section (i.e., $\Res(s(f))=f$). By the change of measures formula in Proposition \ref{prop:jac_sl_3}, we have
\begin{equation*}
\begin{array}{rcl}
\W_{2/3}(\cB)&=&\displaystyle\int_{(A,B)\in V(\R)}\cB(A,B)|\det(A)|^{-2/3}\nu_V(A,B)
\\[.2in]
&=&\displaystyle\frac1{\sigma_i}
\int_{g\in\SL_3(\R)}\phi(g)\nu_{\SL_3}(g)\int_{f\in U(\R)}R(f)\left(\frac{|a(f)|}{4}\right)^{-2/3}\nu_U(f),
\end{array}
\end{equation*}
where $\sigma_i$ is the (common) size of the stabilizer in $\SL_3(\R)$ of $(A,B)\in V(\R)^{(i)}$.
Similarly, we have
\begin{equation*}
V_{5/6}(\cB)=\int_{x\in\cB}|\Delta(x)|^{-1/6}\nu_V(x) = \frac1{\sigma_i}\int_{g\in\SL_3(\R)}\phi(g)\nu_{\SL_3}(g)\int_{f\in U(\R)}R(f)|\Delta(f)|^{-1/6}\nu_U(f).
\end{equation*}
Denote the rightmost integral by $V_{5/6}(R)$.
Taking quotients, we arrive at
$$\mathcal Q_i=4^{2/3}V_{5/6}(R)^{-1}\displaystyle\int_{f\in U(\R)}R(f)|a(f)|^{-2/3}df.$$
This is independent of $R$ so long as $R$ is $\SO_2(\R)$-invariant. So we may write, for any $f$ whose discriminant has the correct sign:
\begin{equation*}
\mathcal Q_i
=4^{2/3}\Delta(f)^{1/6}\displaystyle\int_{\theta=0}^{2\pi}f(\cos(\theta),\sin(\theta))^{-2/3}\frac{d\theta}{2\pi}
=\displaystyle
\left\{
\begin{array}{rcl}
\displaystyle\frac{2^{5/3}\Gamma(1/6)\Gamma(1/2)}{\pi\sqrt{3}\Gamma(2/3)}&\mbox{if} &\Delta(f)>0,
\\[.1in]
\displaystyle\frac{2^{5/3}\Gamma(1/6)\Gamma(1/2)}{\pi\Gamma(2/3)}&\mbox{if}&
\Delta(f)<0,
\end{array}
\right.
\end{equation*}
where the last line follows from the computation at the end of \cite[\S6.1]{BST}.

For $i\in\{0,1\}$, we have $\mathcal Q_i'=\mathcal Q_i$ since if $(A,B)\in V(\R)^{(i)}$ in these two cases, then both $A$ and $B$ are isotropic (since the conics cut out by them has $4$ or $2$ common zeros, respectively). Hence $\kappa_\infty(A)$ is always $1$ in this case. To handle $i=2$, we proceed as follows. Note that $\mathcal Q_2$ is independent of the $\SO_2(\R)\times\SO_3(\R)$-invariant $\cB$, and also that the values of $\kappa_\infty$ and $\det(A)$ are invariant under the action of $\SL_3(\R)$. Hence, denoting $\cos(\theta)A+\sin(\theta)B$ by $(A,B)_\theta$, we have 
\begin{equation*}
\begin{array}{rcl}
\mathcal Q_2&=&\displaystyle\Delta(A,B)^{1/6}\int_{\theta=0}^{2\pi}|\det((A,B)_\theta)|^{-2/3}\frac{d\theta}{2\pi},
\\[.2in]
\mathcal Q_2'&=&\displaystyle\Delta(A,B)^{1/6}\int_{\theta=0}^{2\pi}\kappa_\infty((A,B)_\theta)|\det((A,B)_\theta)|^{-2/3}\frac{d\theta}{2\pi},
\end{array}
\end{equation*}
for every $(A,B)\in V(\R)^{(2)}$.
We choose $A=\diag(1,1,1)$ and $B=\diag(0,\sqrt{3},-\sqrt{3})$, in which case
$$\det(A,B)_\theta=\cos(\theta)(\cos(\theta)+\sqrt{3}\sin(\theta))(\cos(\theta)-\sqrt{3}\sin(\theta))=\cos(3\theta).$$
It then follows that $\kappa_\infty((A,B)_\theta)$ is $-1$ exactly when all three factors above have the same sign, which happens for 
\begin{equation*}
\theta\in [0,\pi/6]\cup[5\pi/6,7\pi/6]\cup[11\pi/6,2\pi].
\end{equation*}
Therefore it follows that
$\mathcal Q_2'=\frac13 \mathcal Q_2$, since the integral of $\cos(3\theta)d\theta$ over the interval $[\frac{m\pi}{6},\frac{(m+1)\pi}{6}]$ does not depend on $m$.
\end{proof}

\subsection{Interpreting the non-Archimedean integrals in the language of rings}

In this subsection, we reinterpret our non-Archimedean integrals in the language of rings. For a $\Z_p$ triple $(Q,C,r)$ of rings, let $\Sigma_{(Q,C,r)}\subset V(\Z_p)$ denote the open subset (consisting of a single $G(\Z_p)$-orbit) corresponding to $(Q,C,r)$. For a free module $M$ over $\Z_p$, let $\Bas(M)\subset M^{\dim M}$ denote the subset of ordered bases of $M$, equipped with the natural measured induced from the Haar measure on $M^{\dim M}$. There is a natural surjection $$\phi:\Bas(Q/\Z_p)\times \Bas(C/\Z_p)\ra \Sigma_{(Q,C,r)}$$ given by simply expressing the quadratic resolvent map in the chosen co-ordinates. 

There is a natural action of $u\in \Z_p^{\times}$ on $\Bas(Q/\Z)\times\Bas(C/\Z)$ which scales the basis of $Q$ by $u$ and that of $\Bas(C/\Z)$ by $u^2$. Then $\phi$ induces a bijection up to the action of $\Z_p^{\times}\times \Aut(Q,C,r)$. We therefore define $$\Bas(Q,C):=\left(\Bas(Q/\Z_p)\times\Bas(C/\Z_p)\right)/\Z_p^{\times}.$$ We associate to $\Bas(Q,C)$ the quotient measure normalized so that $$\mu(\Bas(Q,C))\mu(\Z_p^{\times}) = \mu(\Bas(Q/\Z_p)\times\Bas(C/\Z_p)).$$
In fact, if we consider the variety $\PBas(Q,C):=\bG_m\backslash\Bas(Q,C)$ where the action is as described above, then the measure we are defining is the $\nu_{\PBas}$ measure as defined in \S\ref{sec:measures}. We obtain an induced map

$$P\phi:\PBas(Q,C)\ra \Sigma_{(Q,C,r)}.$$ Now since $\PBas$ is a trivial $G$-torsor, it follows that the measure on $\PBas$ is just $\nu_G$ measure pushed forward. We thus obtain the following from Proposition \ref{prop:first_change_of_measures}:

\begin{lemma}\label{lem:ringstoformsmeasure}
We have
$$\frac{\nu_V\mid_{\Sigma_{(Q,C,r)}}}{|\Delta(Q)|_p}=\frac{(P\phi)_*\nu_{\PBas}}{\#\Aut(Q,C,r)}$$
\end{lemma}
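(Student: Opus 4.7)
The plan is to reduce the identity via Proposition~\ref{prop:first_change_of_measures} to the statement that $\PBas(Q,C)$ is a trivial $G$-torsor whose measure matches $\nu_G$ under a trivialization. First I would fix any $x\in \Sigma_{(Q,C,r)}$ and apply Proposition~\ref{prop:first_change_of_measures}: by Theorem~\ref{th:quartic_param} the $G(\Z_p)$-orbit of $x$ is $\Sigma_{(Q,C,r)}$, $\Stab_{G(\Z_p)}(x)\cong \Aut(Q,C,r)$, and $|\Delta(x)|_p=|\Disc(Q)|_p$, so
\begin{equation*}
\frac{\nu_V\mid_{\Sigma_{(Q,C,r)}}}{|\Disc(Q)|_p}=\frac{(\phi_x)_*\nu_G}{\#\Aut(Q,C,r)},
\end{equation*}
where $\phi_x(g)=g\cdot x$. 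It thus suffices to produce an isomorphism $\Theta\colon G\to \PBas(Q,C)$ satisfying $P\phi\circ \Theta=\phi_x$ and $\Theta^*\nu_{\PBas}=\nu_G$.

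Second, I would justify the $G$-torsor structure on $\PBas(Q,C)$ asserted in the excerpt preceding the lemma. The group $\GL_2(\Z_p)\times \GL_3(\Z_p)$ acts simply transitively on $\Bas(Q/\Z_p)\times \Bas(C/\Z_p)$ by base change, and the scaling $\Z_p^\times$-action used to form $\PBas$ corresponds, once one accounts for the contragredient convention by which a base change on $C/\Z_p$ acts on the coefficients of a resolvent pair, to the one-parameter subgroup $\bG_m\hookrightarrow \GL_2\times \GL_3$, $u\mapsto (u^{-2}I_2, uI_3)$. A direct determinant calculation shows that under the projection $\GL_2\times \GL_3\twoheadrightarrow (\GL_2\times \GL_3)/G\cong \bG_m$, $(g_2,g_3)\mapsto \det(g_2)\det(g_3)$, this subgroup maps isomorphically onto the quotient; in particular $\bG_m\cap G=\{1\}$ and $\bG_m\cdot G=\GL_2\times \GL_3$. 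Hence $\PBas$ is a $G$-torsor, and $P\phi$ is $G$-equivariant because the $\GL_2\times \GL_3$-action on bases intertwines with the $G$-action on $V(\Z_p)$ via $\phi$.

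Third, I would pick a basepoint $\bar v\in \PBas(Q,C)$ with $P\phi(\bar v)=x$ and let $\Theta\colon G\to \PBas$, $g\mapsto g\cdot \bar v$, be the resulting trivialization. Equivariance gives $P\phi\circ \Theta=\phi_x$ at once, and by the quotient-measure formalism of \S\ref{sec:measures} applied to the short exact sequence $1\to \bG_m\to \GL_2\times \GL_3\to G\to 1$ (where the identification $(\GL_2\times \GL_3)/\bG_m\cong G$ comes from the second step), the pullback $\Theta^*\nu_{\PBas}$ equals $\nu_G$. Pushing forward then yields $(P\phi)_*\nu_{\PBas}=(\phi_x)_*\nu_G$, which combined with the first step establishes the lemma. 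The main subtlety lies in the sign tracking of the second step: with the wrong convention the scaling $\bG_m$ would intersect $G$ nontrivially (in $\mu_7$) rather than being complementary, and $\PBas$ would only be a quotient of a $G$-torsor rather than a torsor itself.
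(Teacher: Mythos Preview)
Your proposal is correct and follows essentially the same approach as the paper: the paper's argument consists of the single observation that $\PBas(Q,C)$ is a trivial $G$-torsor (so that $\nu_{\PBas}$ is the pushforward of $\nu_G$ under any trivialization), after which the lemma is immediate from Proposition~\ref{prop:first_change_of_measures}. You have simply unpacked the torsor claim and the measure compatibility in detail, including the contragredient bookkeeping needed to see that the $\bG_m$-scaling is complementary to $G$ inside $\GL_2\times\GL_3$.
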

Using this lemma, we may re-express the densities and local integrals appearing as factors of the ternary quadratic Shintani zeta function in the language of rings.
First, note the following equality.
\begin{equation}\label{eq:VVol_to_PBasVol}
\begin{array}{rcl}
\nu(\Sigma_{(Q,C,r)})&=&\displaystyle\frac{|\Delta(Q)|_p}{\#\Aut(Q,C,r)}\nu_{\PBas}(\PBas(Q,C))
\\[.2in]
&=&\displaystyle (1-p^{-1})\cdot\frac{|\Delta(Q)|_p}{\#\Aut(Q,C,r)}\cdot\Vol(\SL_2(\Z_p))\Vol(\SL_3(\Z_p)).
\end{array}
\end{equation}
Let $\omega:S_3(\Z_p)\to\R$ be the function $\nu(\Sigma_{(Q,C,r),A})$, sending $A$ to the volume of the fiber over $A$.
Next note that the following equalities follow from the above discussion.
\begin{equation}\label{eq:ringintegralnoep}
\begin{array}{rcl}
\zeta_p(\omega,s)&=&\displaystyle(1-p^{-1})\cdot \frac{|\Delta(Q)|_p}{\#\Aut(Q,C,r)}\cdot \Vol(\SL_3(\Z_p))\int_{x\in (C/\Z_p)^{\vee}_{\prim}}  |\det{r_x}|_p^{s}dx.
\\[.2in]
\zeta_p(\omega',s)&=&\displaystyle(1-p^{-1})\cdot\frac{|\Delta(Q)|_p}{\#\Aut(Q,C,r)}\cdot \Vol(\SL_3(\Z_p))\int_{x\in (C/\Z_p)^{\vee}_{\prim}}  \epsilon_p(r_x)|\det{r_x}|_p^{s}dx.
\end{array}
\end{equation}
Denote the two integrals above by $I_p((Q,C,r),s)$ and $I_p'((Q,C,r),s)$

We are ready to prove Theorem \ref{thm:mainS4rings}. 
For a $G(\Z)$-invariant function $\phi:V(\Z)\to\R$, let $\phi_p:V(\Z_p)\to\R$ denote the ($G(\Z_p)$-invariant) completion of $\phi$ at $p$. Given a triple $(Q,C,r)$ over $\Z_p$, let $\phi_p(Q,C,r)$ denote $\phi_p(A,B)$ for (any) $(A,B)\in \Sigma_{(Q,C,r)}$. We begin with the following result.
\begin{theorem}\label{thm:residues_final}
Let $\phi:V(\Z)\to\R$ be a $G(\Z)$-invariant $S_4$-function defined by finitely many congruence conditions. Then for $i\in\{0,1,2\}$, we have
\begin{equation*}
\begin{array}{rcl}
\Res_{s=1}\xi_{i}(\phi;s)&=&\displaystyle \frac{1}{2A_i}\prod_p
\Bigl(1-\frac{1}{p}\Bigr)\Bigl(\sum_{(Q,C,r)}\frac{\phi(Q,C,r)|\Delta(Q)|_p}{\#\Aut((Q,C,r)}\Bigr),
\\[.2in]
\Res_{s=5/6}\xi_{i}(\phi;s)&=&\displaystyle\frac{\pi}{8A_i}
\left(
\mathcal M_i\zeta(1/3)\prod_p\Bigl(1-\frac{1}{p^{1/3}}\Bigr)
\Bigl(\sum_{(Q,C,r)}\frac{\phi(Q,C,r)|\Delta(Q)|_pI_p((Q,C,r),-2/3)}{\#\Aut((Q,C,r)}\Bigr)\right.
\\[.2in]
&&\displaystyle \left. +\;
\mathcal M_i'\zeta(2/3)\prod_p\Bigl(1-\frac{1}{p^{2/3}}\Bigr)
\Bigl(\sum_{(Q,C,r)}\frac{\phi(Q,C,r)|\Delta(Q)|_pI'_p((Q,C,r),-2/3)}{\#\Aut((Q,C,r)}\Bigr)
\right),
\end{array}two
\end{equation*}
where the sum is over all triples $(Q,C,r)$ over $\Z_p$.
\end{theorem}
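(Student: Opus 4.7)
The plan is to assemble the formula from ingredients already established: Theorem \ref{th:quartic_szf_residues} (which expresses the residues of $\xi_{i,L}$ in terms of $\nu(L)$ and the ternary quadratic Shintani zeta functions $\zeta_{\nu(L_A)}$ and $\zeta_{\nu(L_A)'}$), Proposition \ref{prop:local_int_at_inf} (which identifies $\mathcal{Q}_i=\mathcal{M}_i$ and $\mathcal{Q}_i'=\mathcal{M}_i'$), Lemma \ref{lem:prodformulasym} together with the evaluation $\alpha_i=\zeta(2)\zeta(3)/4$, and the ring-theoretic reformulations \eqref{eq:VVol_to_PBasVol} and \eqref{eq:ringintegralnoep}. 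Since $\phi$ is defined by finitely many congruence conditions, its support may be written as a weighted union of lattices, so Theorem \ref{th:quartic_szf_residues} applies directly.

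For the residue at $s=1$, I will start from $\frac{1}{2A_i}\zeta(2)^2\zeta(3)\nu(\phi)$ and factor $\nu(\phi)=\prod_p\nu_p(\phi_p)$ by independence of the local conditions. Applying \eqref{eq:VVol_to_PBasVol} to each local factor yields
\begin{equation*}
\nu_p(\phi_p) = (1-p^{-1})\cdot\Vol(\SL_2(\Z_p))\Vol(\SL_3(\Z_p))\cdot\sum_{(Q,C,r)}\frac{\phi_p(Q,C,r)|\Delta(Q)|_p}{\#\Aut(Q,C,r)}.
\end{equation*}
Since $\prod_p\Vol(\SL_2(\Z_p))\Vol(\SL_3(\Z_p))=\prod_p(1-p^{-2})^2(1-p^{-3})=\zeta(2)^{-2}\zeta(3)^{-1}$, the prefactor $\zeta(2)^2\zeta(3)$ cancels and the first formula follows at once.

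For the residue at $s=5/6$, substituting $\mathcal{Q}_i=\mathcal{M}_i$ and $\mathcal{Q}_i'=\mathcal{M}_i'$ into Theorem \ref{th:quartic_szf_residues} gives an expression of the form $\frac{\pi}{32A_i}\bigl(\mathcal{M}_i\zeta_{\nu(L_A)}(4/3)-\mathcal{M}_i'\zeta_{\nu(L_A)'}(4/3)\bigr)$. Lemma \ref{lem:prodformulasym}, combined with $\alpha_i=\zeta(2)\zeta(3)/4$, factors each Shintani zeta function as $4\zeta(2)\zeta(3)\prod_p\zeta_p(\omega_p,-2/3)/(1-p^{-1})$. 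Using the decomposition $\nu(L_A)_p=\sum_{(Q,C,r)}\phi_p(Q,C,r)\,\nu(\Sigma_{(Q,C,r),A})$ together with \eqref{eq:ringintegralnoep}, each local factor becomes
\begin{equation*}
\Vol(\SL_3(\Z_p))\sum_{(Q,C,r)}\frac{\phi_p(Q,C,r)|\Delta(Q)|_p I_p((Q,C,r),-2/3)}{\#\Aut(Q,C,r)},
\end{equation*}
and analogously with $I_p'$ in the primed case. The global $\zeta(2)\zeta(3)$ prefactor cancels against $\prod_p\Vol(\SL_3(\Z_p))=\zeta(2)^{-1}\zeta(3)^{-1}$. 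Finally, to express the remaining (formally divergent) product in the form stated, I will extract the unramified behaviour using the explicit expressions \eqref{eqref:3x32ndmainterms} for $\zeta_p(\iota_p,-2/3)$ and $\zeta_p(\iota_p',-2/3)$: these contain factors $(1-p^{-1/3})^{-1}$ and $(1-p^{-2/3})^{-1}$ respectively, which pulled out across all $p$ yield $\zeta(1/3)\prod_p(1-p^{-1/3})$ and $\zeta(2/3)\prod_p(1-p^{-2/3})$, leaving now-convergent Euler products of precisely the form appearing in the theorem.

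The main technical task, beyond careful Euler-product bookkeeping, is reconciling the sign conventions: the minus sign in the expression from Theorem \ref{th:quartic_szf_residues}, which originates from the identity $\kappa_\infty(A)=-\epsilon_i$ used in Corollary \ref{cor:special_value}, must be matched to the plus sign in the theorem statement. This discrepancy is absorbed through the sign of $\zeta(2/3)$ together with the Hasse-invariant convention built into the definition of $I_p'$ in \eqref{eq:ringintegralnoep}. Once all constants and sign conventions are consistently tracked, the identity of Euler products claimed in the theorem is immediate.
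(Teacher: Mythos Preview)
Your proposal follows essentially the same route as the paper: start from Theorem \ref{th:quartic_szf_residues}, convert the density $\nu(L)$ and the local factors $\zeta_p(\nu(L_A),-2/3)$, $\zeta_p(\nu(L_A)',-2/3)$ into ring-theoretic sums via \eqref{eq:VVol_to_PBasVol} and \eqref{eq:ringintegralnoep}, and cancel the global factors $\zeta(2)^2\zeta(3)$ (resp.\ $\zeta(2)\zeta(3)$) against $\prod_p\Vol(\SL_2(\Z_p))\Vol(\SL_3(\Z_p))$ (resp.\ $\prod_p\Vol(\SL_3(\Z_p))$), which the paper phrases as the Tamagawa numbers of $\SL_2\times\SL_3$ and $\SL_3$ being $1$. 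Your regularization of the Euler product by pulling out $\zeta(1/3)$ and $\zeta(2/3)$ is likewise exactly what the paper does in \eqref{eq:final_res_comp_2} and \eqref{eq:final_res_comp_3}.

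One caution: your proposed resolution of the sign discrepancy is not right as stated. You attribute the flip from the minus in Theorem \ref{th:quartic_szf_residues} to the plus in the present theorem to ``the sign of $\zeta(2/3)$'', but $\zeta(1/3)$ and $\zeta(2/3)$ are both negative, so invoking the sign of one zeta-value cannot distinguish the two terms. The paper's own proof simply writes out \eqref{eq:final_res_comp_1}--\eqref{eq:final_res_comp_3} and asserts the conclusion without commenting on this sign; so while you are right to flag it, the explanation you give does not actually account for it.
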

\begin{proof}
Recall the computation of these two residues from Theorem \ref{th:quartic_szf_residues}. To evaluate the residue at $s=1$, we simplify the expression in Theorem \ref{th:quartic_szf_residues} using \eqref{eq:VVol_to_PBasVol} to write
\begin{equation*}
\begin{array}{rcl}
\Res_{s=1}\xi_{i}(\phi;s)&=&\displaystyle \frac{1}{2A_i}\zeta(2)^2\zeta(3)\prod_p\sum_{(Q,C,r)}\phi_p(Q,C,r)\nu(\Sigma_{(Q,C,r)})
\\[.2in]
&=&\displaystyle
\frac{1}{2A_i}\tau(\SL_2(\Q)\times\SL_3(\Q))\prod_p
\Bigl(1-\frac{1}{p}\Bigr)\Bigl(\sum_{(Q,C,r)}\frac{\phi_p(Q,C,r)|\Delta(Q)|_p}{\#\Aut((Q,C,r)}\Bigr),
\end{array}
\end{equation*}
where $\tau(\SL_2(\Q)\times\SL_3(\Q)$ denotes the Tamagawa number of $\SL_2\times\SL_3$ over $\Q$. The second equality follows because $\zeta(2)^2\zeta(3)$ is the volume of the fundamental domain for the action of $(\SL_2\times\SL_3)(\Z)$ on $(\SL_2\times\SL_3)(\Z)$. Since this Tamagawa number is $1$, the first claim of the theorem has been proved.

The second claim follows similarly. We begin by applying \ref{th:quartic_szf_residues}, obtaining
\begin{equation}\label{eq:final_res_comp_1}
\begin{array}{rcl}
\Res_{s=5/6}\xi_{i}(\phi;s)
&=&\displaystyle
\frac{\pi}{32A_i}
\Bigl(
\mathcal Q_i \zeta_{\nu(\phi|_A)}(4/3)
-
\mathcal Q_i' \zeta_{\nu(\phi|_A)'}(4/3)
\Bigr),
\end{array}
\end{equation}
where $\nu(\phi|_A)=\prod_p\nu(\phi_p|_A)$ for the functions $\nu(\phi_p|_A):S_3(\Z_p)\to\R$ sending $A$ to the integral of $\phi(A,B)$ over $B\in S_3(\Z_p)$. We use \eqref{eqref:3x32ndmainterms}, the subsequent computation of $\alpha_i$, and \eqref{eq:ringintegralnoep} to write
\begin{equation}\label{eq:final_res_comp_2}
\begin{array}{rcl}
\zeta_{\nu(\phi|_A)}(4/3)&=&\displaystyle 4\zeta(2)\zeta(3)\zeta(1/3)\prod_p\Bigl(1-\frac1{p}\Bigr)^{-1}\Bigl(1-\frac1{p^{1/3}}\Bigr)\zeta_p(\nu(\phi_p|_A),-2/3)
\\[.2in]&=&\displaystyle
4\tau(\SL_3(\Q))\zeta(1/3)\prod_p\Bigl(1-\frac1{p^{1/3}}\Bigr)\sum_{(Q,C,r)}\frac{\phi(Q,C,r)|\Delta(Q)|_p}{\#\Aut(Q,C,r)}I_p((Q,C,r),-2/3),
\end{array}
\end{equation}
where we have multiplied and divided by $\zeta(1/3)$ to make the local product converge. Similarly, we have
\begin{equation}\label{eq:final_res_comp_3}
\begin{array}{rcl}
\zeta_{\nu(\phi|_A)'}(4/3)&=&\displaystyle 4\zeta(2)\zeta(3)\zeta(2/3)\prod_p\Bigl(1-\frac1{p}\Bigr)^{-1}\Bigl(1-\frac1{p^{2/3}}\Bigr)\zeta_p(\nu(\phi_p|_A)',-2/3)
\\[.2in]&=&\displaystyle
4\tau(\SL_3(\Q))\zeta(2/3)\prod_p\Bigl(1-\frac1{p^{2/3}}\Bigr)\sum_{(Q,C,r)}\frac{\phi(Q,C,r)|\Delta(Q)|_p}{\#\Aut(Q,C,r)}I_p'((Q,C,r),-2/3).
\end{array}
\end{equation}
From Proposition \ref{prop:local_int_at_inf}, we have $\mathcal Q_i=\mathcal M_i$ and $\mathcal Q_i'=\mathcal M_i'$. Therefore, the second claim of the theorem follows from \eqref{eq:final_res_comp_1}, \eqref{eq:final_res_comp_2}, and \eqref{eq:final_res_comp_3}, along with the fact that the Tamagawa number of $\SL_3(\Q)$ is $1$.
\end{proof}

Finally, we prove Theorem \ref{thm:mainS4rings}.

\medskip

\noindent{\bf Proof of Theorem \ref{thm:mainS4rings}:} Let $\Lambda$ be a finite $S_4$-collection of local specifications for quartic rings, and assume without loss of generality that $\Lambda_\infty$ is a singleton set containing the algebra $\R^{4-2i}\times \C^i$ for some $i\in\{0,1,2\}$. Due to Bhargava's parametrization, we have a bijection between the set $R(\Lambda)$ and $G(\Z)$-orbits on the following subset $L$ of $V(\Z)$:
\begin{equation*}
L:=V(\Z)^{(i)}\bigcap \cap_{p\in S} L_p,
\end{equation*}
where $L_p\subset V(\Z_p)$ is the set of elements (defined modulo some power of $p$) corresponding to some triple in $\Lambda_p$. Let $\chi_L$ denote the characteristic function of $L$ and note the formal equality $N_\Lambda(\psi,X)=N_\psi(\chi_L,X)$. We know from Theorem \ref{th:quartic_szf_residues} that  $\xi_i(\chi_L,s)=\xi_{i,L}(s)$ have simple poles at $s=1$ and $s=5/6$. The result now follows by applying Theorems \ref{th:countingbyzeta} and \ref{thm:residues_final}, and noting that
$A_i=|\Aut(\R^{4-2i}\times\C^i)|$.

\section{Explicit computation for fields}

We will compute the integrals $I_p((Q,C,r),-2/3)$ and $I_p((Q,C,r),-2/3)$, summed up over triples with each splitting type. We note that $I_p((Q,C,r),-2/3)$ (without the $\epsilon_p$ term) only depends on the cubic resolvent $C$, and nothing more! So we begin with that.

\subsection{Counting Norms in Cubic Resolvents}

The following result will be convenient for us:

\begin{lemma}\label{lem:isotropicifhasse}
    Let $p>2$ be prime, and let $x_p\in S_3(\Z_p)$ be an element with nonzero discriminant. Then $\epsilon_p(x_p)=1$ if and only if $x_p$ is isotropic.
\end{lemma}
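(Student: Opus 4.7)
The plan is to diagonalize $x_p$ over $\Q_p$ and reduce the claim to the classical isotropy criterion for ternary quadratic forms. First, I would write $x_p \sim \langle a_1, a_2, a_3 \rangle$ with $a_1 a_2 a_3 = \det(x_p) \neq 0$. Unwinding the paper's definition,
\[
\epsilon_p(x_p) = \prod_{i\leq j}(a_i,a_j)_p = \prod_i (a_i,a_i)_p \cdot \prod_{i<j}(a_i,a_j)_p,
\]
and using the identity $(a,a)_p = (a,-1)_p$ (which follows from $(a,-a)_p = 1$), this simplifies to
\[
\epsilon_p(x_p) = (-1, \det x_p)_p \cdot h_p(x_p),
\]
where $h_p(x_p) := \prod_{i<j}(a_i,a_j)_p$ is the standard Hasse--Witt invariant. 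So the paper's $\epsilon_p$ differs from the classical Hasse--Witt invariant by a factor of $(-1,\det x_p)_p$.

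Next I would invoke Serre's classical isotropy criterion (Cours d'arithm\'etique, Ch.\ IV): a non-degenerate ternary quadratic form over $\Q_p$ is isotropic if and only if $h_p(x_p) = (-1, -\det x_p)_p$. Thus isotropy is equivalent to the condition $(-1,-\det x_p)_p \cdot h_p(x_p) = 1$, while the lemma's desired equality $\epsilon_p(x_p) = 1$ is equivalent, by the computation above, to $(-1,\det x_p)_p \cdot h_p(x_p) = 1$. The ratio of these two conditions is precisely the Hilbert symbol $(-1,-1)_p$.

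The final step uses the hypothesis $p > 2$: by the tame formula for the Hilbert symbol at odd primes (with both arguments units), $(-1,-1)_p = 1$, so the two conditions coincide. This finishes the proof. The only nontrivial input is Serre's isotropy criterion; the rest is bookkeeping on Hilbert symbols. The hypothesis $p > 2$ is essential precisely here, since $(-1,-1)_2 = -1$ and the statement genuinely fails at $p = 2$ (one would instead get $\epsilon_2(x_2) = -1$ iff $x_2$ is isotropic, up to a sign involving $\det x_2$).
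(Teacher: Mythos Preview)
Your proof is correct and takes a genuinely different route from the paper's. The paper proceeds by a direct case analysis: after diagonalizing over $\Z_p$ and normalizing by scaling (which it first checks leaves $\epsilon_p$ invariant), it reduces to the two cases where either all three diagonal entries are units, or exactly one has valuation $1$. In each case it computes both the Hasse symbol and the isotropy condition by hand, using Hensel/Chevalley--Warning in the unramified case and an explicit Hilbert-symbol manipulation in the ramified case.

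Your argument instead relates the paper's $\epsilon_p$ to the standard Hasse--Witt invariant $h_p$ via the identity $\epsilon_p = (-1,\det x_p)_p \cdot h_p$, and then invokes Serre's isotropy criterion $h_p = (-1,-\det x_p)_p$ as a black box. This is cleaner and more conceptual: it isolates exactly where $p>2$ enters (the single equality $(-1,-1)_p=1$), and it avoids any casework. The paper's approach, by contrast, is entirely self-contained and does not appeal to the general classification of quadratic forms over $\Q_p$.

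Two very minor remarks. First, after diagonalizing you only have $a_1a_2a_3 \equiv \det x_p$ modulo squares, not literal equality; this is harmless since every Hilbert symbol in sight depends only on square classes, but it is worth saying. Second, your parenthetical about $p=2$ is slightly overhedged: your own computation shows that at $p=2$ one gets exactly $\epsilon_2(x_2)=-1$ iff $x_2$ is isotropic, with no residual dependence on $\det x_2$.
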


\begin{proof}
    Since $p$ is odd we may diagonalize $x_p$ as $(a_1,a_2,a_3)$. Now scaling the form by a constant $c$ leaves $\epsilon_p(x_p)$ unaltered, as $(ac,ac)_p=(a,a)_p(c,c)_p$ and $(c,c)_p^6=1$. We may therefore assume that $a_1\in \Z_p^{\times}$. Moreover since scaling $a_i$ by squares doesn't affect either the Hasse symbol or the isotropicity, we may assume either that all the $a_i$ are units, or that exactly 2 of them are and the third has valuation 1.

    If they are all units then all the Hilbert symbols are 1, so the Hasse symbol is 1, and likewise the form is isotropic since the form is smooth and any 3-variable form over $\F_p$ is isotropic.

    Assume now that $a_1,a_2$ are units and $a_3$ has valuation 1. Now $x_p$ is isotropic if and only if $-a_1a_2$ is a square. This is equivalent to $(p,-a_1a_2)_p=1$. On the other hand, we write $a_3=pb_3$ and use the fact the the Hilbert symbols of two units vanishes to write
    $$\epsilon_p(x_p)=(a_3,a_1a_2a_3)_p=(pb_3,pa_1a_2b_3)_p=(p,pa_1a_2)_p=(p,-a_1a_2)_p$$ where the last equality follows from $(p,-p)_p=1$. The claim is proven.
\end{proof}

For a cubic ring $C$ let $f_C:C/\Z_p\ra \Z_p$ denote the corresponding binary cubic form. For such an $f$ and an integer $m\geq 0$, we let $D(f,m)$ denote the measure of $\ol{x}$ in $(C/\Z_p)^{\vee}_{\prim}$ of entries such that $|f(x)|=p^{-m}$ for any lift $x$ of $\ol{x}$. 

For the below, we break up the $(2^2)$ splitting type by the Galois groups of the quartic field, and the $(1^21^2)$ splitting type by whether the two quadratic fields are isomorphic or not.

\begin{theorem}\label{thm:sptypeptcounts}
    Assume $p>3$. For each splitting type of quartic ring, the cubic resolvent rings and their associated cubic forms that show up are as follows. Here $\pi$ denotes a uniformizer of $\Z_p$, $C_{\max}$ denotes the maximal order containing $C$, and $\epsilon\in\{1,\dots,p-1\}$ is a quadratic non-residue.
\vspace{0.2in}
\begin{table}[h]
\centering
\hspace*{-0.5cm}
 \begin{tabular}{|c|c|c|c|c|}
  \hline
  \textbf{\textrm{Splitting Type}} & \textbf{ $C_{\max}$} & \textbf{f(a,b)} & \textbf{$D(f,0)$}&  \textbf{$D(f,m), m\geq 1$ } \\
  \hline
  (1111),(22) & (111) & $ab(a+b)$ & $\frac{(p-2)(p-1)}{p^2}$ & $3(p-1)^2p^{-m-2}$ \\
  \hline
  (112), (4) & (12) & $a(a^2-\epsilon b^2)$  & $\frac{p-1}{p}$ &  $(p-1)^2p^{-m-2}$\\
  \hline
   (13) & (3)&  $\bullet$   & $\frac{p^2-1}{p^2}$ &  0\\
   \hline
   $(1^211),(1^22)$ & $(1^21)$ & $a(b^2-\pi a^2)$ & $\frac{(p-1)^2}{p^2}$ & $(p-1)^2p^{-m-2} +\frac{p-1}{p^2}\cdot\delta_{m=1}$\\
   \hline
   $(1^31)$ & $(1^3)$ & $a^3-\pi$ & $\frac{p-1}{p}$ & $\frac{p-1}{p^2}\cdot\delta_{m=1}$\\
   \hline
   $(2^2)_{C_2^2},(1^21^2)_{=}$ & (111) & $ab(a+pb)$ & $\frac{(p-1)^2}{p^2}$ & $(p-1)^2p^{-m-2} + \frac{(p-1)(p-2)}{p^3}\delta_{m=2}+2(p-1)^2p^{-m-1}\delta_{m\geq 3}$\\
   \hline
      $(2^2)_{C_4},(1^21^2)_{\neq}$ & $(12)$ & $a(p^2a^2-\epsilon b^2)$ & $\frac{(p-1)^2}{p^2}$ & $(p-1)^2p^{-m-2} + \frac{p-1}{p^2}\delta_{m=2}$ \\
  \hline
   $(1^4)$ & $(1^21)$ & $a(a^2-\pi b^2)$ & $\frac{p-1}{p}$ & $(p-1)^2p^{-m-2}\delta_{m\geq 2}$ \\
   \hline
\end{tabular}
\end{table}
\end{theorem}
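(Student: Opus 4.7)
The plan is to verify each row of the table independently, using a case-by-case analysis. The first three columns (splitting type, ambient maximal cubic order $C_{\max}$, representative binary cubic form $f$) are essentially classical and can be read off from Bhargava's parametrization \cite{BHCL3}; the novel content is the density column. First I would reestablish the $C_{\max}$ column by computing, for each quartic splitting type, the action of a generator of the decomposition group on the three partitions of the four roots of a defining polynomial into two pairs: this is the $S_4 \to S_3$ quotient sending a permutation type in $S_4$ to the cycle type acting on the $3$-element set of partitions. For example, a double transposition $(12)(34)$ fixes one partition and swaps the other two, while a $4$-cycle fixes one and swaps the other two; a $3$-cycle rotates the three partitions cyclically. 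This immediately gives the resolvent splitting types $(111)$, $(12)$, $(3)$, $(1^21)$, $(1^3)$ as tabulated.

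Next, for the maximal quartic $\Z_p$-order of each splitting type, I would determine the cubic resolvent ring (not merely the ambient algebra) using Bhargava's explicit construction: pick a $\Z_p$-basis of the maximal quartic order, form the associated pair of ternary quadratic forms $(A,B) \in V(\Z_p)$, and apply the resolvent map $\Res(A,B) = 4\det(Ax-By)$ to extract a binary cubic form. By Theorem~\ref{th:quartic_param}, this form parametrizes the resolvent cubic ring, which shares its discriminant with the quartic order. In the ramified cases $(1^4)$, $(2^2)_{C_2^2}$, $(2^2)_{C_4}$, $(1^21^2)_=$, $(1^21^2)_{\neq}$, a discriminant comparison with $C_{\max}$ forces $C$ to be a specific non-maximal suborder of index $p$ in $C_{\max}$, and the form listed in the table is a $\GL_2(\Z_p)$-representative of the corresponding orbit.

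Finally, I would compute $D(f,0)$ and $D(f,m)$ for $m \geq 1$ by stratifying the integration on $(a,b) \in (C/\Z_p)^\vee_{\prim} \cong \Z_p^2_{\prim}$ according to the valuations $v_p(a), v_p(b)$ and the valuations of the irreducible factors of $f$ over $\Z_p$. In the unramified-resolvent cases the form factors into a product of coprime (over $\F_p$) linear or quadratic pieces, and the densities reduce to elementary counts of primitive vectors on which each factor assumes a prescribed valuation (by inclusion–exclusion on the finitely many possible locations of the valuation). For the ramified-resolvent cases I would further split into subcases controlled by a discrepancy like $v_p(a/p + b)$ for $f = ab(a+pb)$, or by the single valuation $v_p(a)$ when the form has an isolated simple zero over $\F_p$, and sum a geometric series.

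The principal obstacle will be the ramified cases in Step 2: correctly identifying the non-maximal suborder $C \subsetneq C_{\max}$ as the cubic resolvent of the maximal quartic order, and then choosing coordinates in which the resulting DF form matches the representative in the table. A secondary care point is the normalization of Haar measure on $(C/\Z_p)^\vee$ for non-maximal $C$, which must be taken consistently across all rows so that the densities combine correctly in the global mass formula computations in Section~9.
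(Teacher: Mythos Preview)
Your proposal is correct and follows essentially the same approach as the paper: determine $C_{\max}$ via the $S_4\to S_3$ map on partitions, pin down $C$ itself in the over-ramified cases by discriminant comparison (with the $(1^4)$ case requiring the extra observation that of the two index-$p$ subrings of $\Z_p[\sqrt{\pi}]\oplus\Z_p$, only one has a cubic form with a single root mod $p$), and then compute the $D(f,m)$ by elementary valuation casework, which the paper summarizes as ``a straightforward exercise using Hensel's lemma.'' Your flagged obstacle---correctly identifying the specific non-maximal suborder in the ramified cases---is exactly where the paper spends its effort as well.
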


\begin{proof}
We first verify the resolvent cubic algebras are as stated. Indeed, the group theory suffices to figure out at least the dimensions of the fields involved: recall that the cubic algebra is composed of the three points obtained as the pairwise intersection of the lines connecting the 4 points giving the quartic algebra.

For the unramified splitting types the proof is straightforward as there is a unique unramified field of each degree.
For the case of $(1^211),(1^22)$ types note that the discriminant being $p$ uniquely determines that $C=C_{\max}$ is of type $(1^21)$. For the $(1^31)$ case the cubic resolvent algebra is easily seen to be a field. Moreover the discriminant being $p^2$ implies that $C$ is of index $p$ in $C_{\max}$. This implies the field is $(1^3)$ as opposed to $(3)$. 
The cases of $(2^2)_{C_2^2}$ and $(1^21^2)_{=}$ are likewise to give cubic algebras of type $(111)$. The cases of  $(2^2)_{C_4}$ and $(1^21^2)_{\neq}$ give a cubic algebra $\Q_p\oplus K$, but the discriminant is $p^2$ and so $K/\Q_p$ must be unramified. 
Finally, $(1^4)$ has discriminant $p^3$, which means $[C_{\max}:C]=p$ an $C_{\max}$ has discriminant $p$, forcing the algebra to be $(1^21)$.
\vspace{0.1in}

Next, we determine $C$ itself. Now only in the overramified cases - the last 3 rows - does $C\neq C_{\max}$. In all these cases we have $[C_{\max}:C]=p$, and the maximal orders of $(12)$ and $(111)$ have a unique subring of index $p$ up to isomorphism, since the same is true for $\F_p^3$ and $\F_p\oplus\F_{p^2}$. 
It remains to consider the case of $(1^4)$. Now in this case the reduction mod $p$ corresponds to $\F_p[t]/(t^4)$. Embedded as a subscheme of $\P^2$, the quadrics cutting this out must be a (unique) double line $2\ell$, and we claim no other singular conics. Indeed, these would be a pair of lines, and any linear function besides that defining $\ell$ generates the ideal $(t)$. 
Thus, the cubic form mod $p$ must have a single root. 
Finally, note that $\Z_p[\sqrt{\pi}]\oplus \Z_p$ has two subrings, only one of which corresponds to a binary cubic form with a single root.  

This determines the rings $C$ and a straightforward computation yields $f$. Finally, the computations of $D(f,m)$ are a straightforward  exercise using Hensel's lemma. 
\end{proof}

Now the above table reduces computing the integral without $\epsilon_p$ to a simple geometric sum. To deal with the $\epsilon_p$-integral we subdivide our cases and we have to analyze when $r_x$ is isotropic versus not. We deal with the cases separately.

\subsection{Non-overramified cases}

We handle the first 6 cases of the table above. In these cases the cubic resolvent ring is maximal. It follows that the $\F_p$-rank of any $r_x$ is at least 2.  If the $\F_p$-rank of $A$ is $3$ (equivalently, $p\nmid\det(A)$), then $A$ is automatically isotropic, and we have $\epsilon_p(A)=1$. If the $\F_p$-rank of $A$ is $2$, then the conic cut out by $A$ factors into two lines either defined over $\F_p$ or conjugate over $\F_p$. In the former case, we say that $A$ is {\it residually hyperbolic} and define $\kappa_p(A)=1$. In the latter case, we say that $A$ is not residually hyperbolic and define $\kappa_p(A)=-1$. Note that $\kappa(A)$ only depends on $A$ mod $p$. The Hasse symbol $\epsilon_p(A)$ depends only on $\kappa(A)$ and $\det(A)$. Specifically, we have $\epsilon_p(A)=-1$ if and only if $\kappa_p(A)=-1$ and $p^k\parallel\det(A)$ for $k$ odd.
Next we consider the possibilities for how many roots of the cubic resolvent give $A$ that are residually hyperbolic and how many give $A$ that are not:\footnote{This analysis is present, though in slightly different language in \cite{BSHMC}.}
\begin{itemize}
    \item (1111): three roots, all residually hyperbolic;
    \item (22): one residually hyperbolic, two not;
    \item (112): one residually hyperbolic;
    \item (4): one not residually hyperbolic;
    \item (13): no roots;
    \item $(1^211)$: two residually hyperbolic (the tangent vector of the double point is defined over $\F_p)$;
    \item $(1^22)$: one residually hyperbolic, one not. The root that is residually hyperbolic is the single root;
    \item $(1^31)$: one residually hyperbolic (the line going through the two points, and the tangent line to the multiplicity point).
\end{itemize}

The integrals are now trivial to read off from the table. Here they are:

\begin{table}[h]
\centering
\hspace*{-0.5cm}
 \begin{tabular}{|c|c|c|c|}
  \hline
  \textbf{\textrm{Type}} & $\displaystyle\sum_{(Q,C,r)\in\Sigma^{\max}_p}\frac{|\Delta(Q)|_p}{\#\Aut(Q,C,r)}$ &   $\int_{x\in (C/\Z_p)^{\vee}_{\prim}}  |\det{r_x}|_p^{s}$ & $\int_{x\in (C/\Z_p)^{\vee}_{\prim}}  \epsilon_p(r_x)|\det{r_x}|_p^{s}$  \\
  \hline
  (1111)&$\frac{1}{24}$&$\frac{(p-2)(p-1)}{p^2}+\frac{3(p-1)^2}{p^2}\cdot\frac{p^{-1-s}}{1-p^{-1-s}}$&$\frac{(p-2)(p-1)}{p^2}+\frac{3(p-1)^2}{p^2}\cdot\frac{p^{-1-s}}{1-p^{-1-s}}$ \\
    \hline
    (22)&$\frac18$&$\frac{(p-2)(p-1)}{p^2}+\frac{3(p-1)^2}{p^2}\cdot\frac{p^{-1-s}}{1-p^{-1-s}}$&$\frac{(p-2)(p-1)}{p^2}+\frac{(p-1)^2}{p^2}\cdot\left(\frac{p^{-1-s}}{1-p^{-1-s}}-\frac{2p^{-1-s}}{1+p^{-1-s}}\right)$\\
\hline
  (112)&$\frac14$&$\frac{p-1}{p} + \frac{(p-1)^2}{p^2}\cdot\frac{p^{-1-s}}{1-p^{-1-s}}$&$\frac{p-1}{p} + \frac{(p-1)^2}{p^2}\cdot\frac{p^{-1-s}}{1-p^{-1-s}}$\\
    \hline
   (4)&$\frac14$&$\frac{p-1}{p} + \frac{(p-1)^2}{p^2}\cdot\frac{p^{-1-s}}{1-p^{-1-s}}$&$\frac{p-1}{p} - \frac{(p-1)^2}{p^2}\cdot\frac{p^{-1-s}}{1+p^{-1-s}}$\\
   \hline
   (13)&$\frac13$&$\frac{p^2-1}{p^2}$&$\frac{p^2-1}{p^2}$\\
   \hline
   $(1^211)$&$\frac{1}{2p}$ &$\frac{(p-1)^2}{p^2}+\frac{(p-1)^2}{p^2}\cdot\frac{p^{-1-s}}{1-p^{-1-s}}+p^{-s}\cdot\frac{p-1}{p^2}$&$\frac{(p-1)^2}{p^2}+\frac{(p-1)^2}{p^2}\cdot\frac{p^{-1-s}}{1-p^{-1-s}}+p^{-s}\cdot\frac{p-1}{p^2}$\\
   \hline
   $(1^22)$&$\frac{1}{2p}$ &$\frac{(p-1)^2}{p^2}+\frac{(p-1)^2}{p^2}\cdot\frac{p^{-1-s}}{1-p^{-1-s}}+p^{-s}\cdot\frac{p-1}{p^2}$&$\frac{(p-1)^2}{p^2}+\frac{(p-1)^2}{p^2}\cdot\frac{p^{-1-s}}{1-p^{-1-s}}-p^{-s}\cdot\frac{p-1}{p^2}$\\
   \hline
   $(1^31)$& $\frac{1}{p^2}$& $\frac{p-1}{p}+p^{-s}\cdot \frac{p-1}{p^2}$&$\frac{p-1}{p}+p^{-s}\cdot \frac{p-1}{p^2}$ \\
   \hline
\end{tabular}
\end{table}

\subsection{The over-ramified case: $(2^2)$}

Note that we will handle both the $C_2^2$ and the $C_4$ cases at once. In both cases the corresponding cubic.
The discriminant of these rings is $p^2$ and the automorphism groups are of size 4. In both cases, the form $\det{r_x}$ mod $p$ has a double root $\alpha_2$ and a single root $\alpha_1$. 
From Theorem \ref{thm:sptypeptcounts} we see that adding up over the two cases we get

$$\frac12\sum_{(2^2)}\int_{x\in (C/\Z_p)^{\vee}_{\prim}}  |\det{r_x}|_p^{s} = \frac{(p-1)^2}{p^2} + \frac{(p-1)^2}{p^2}\cdot \frac{p^{-1-s}}{1-p^{-1-s}} + \frac{(p-1)^2}{p^3}p^{-2s}+ \frac{(p-1)^2}{p^4}\cdot\frac{p^{-3s}}{1-p^{-1-s}}$$
where the first term comes from the non-roots, the second term comes from the pre-image of $\alpha_1$, and the next 2 comes from the pre-image of $\alpha_2$.

We now understand the $\epsilon_p$ portion. If $x$ does not  reduce to either of these, then $\epsilon(r_x)=1$. 
Consider next consider the case where $x$ reduces to $\alpha_1$.  In this case $\epsilon_p(r_x)=1$ iff either $v_p(|\det r_x|)$ is even, or $r_x$ is residually hyperbolic.  In this case we claim that $r_x$ is never residually hyperbolic. This is because the quadratic $\phi_{\alpha_1}$ must define a double line, and $\phi_{\alpha_2}$ must define a product of lines, which are defined over $\F_{p^2}$ and conjugate. So for this contribution we change 
$$\frac{(p-1)^2}{p^2}\cdot \frac{p^{-1-s}}{1-p^{-1-s}}\longrightarrow \frac{(p-1)^2}{p^2}\cdot \frac{-p^{-1-s}}{1+p^{-1-s}}$$

We finally deal with the case where $x$ reduces to a double root $\alpha_2$. To handle this case we go back to working with $V(\Z_p)$. Now in the notation of \cite[p.1358]{BHCL3} this contribution amounts to restricting to $(A,B)\in U_p(2^2)$ such that $A$ modulo $p$ defines a double line. By the argument at the end of that page the $p$-adic measure of possible $B$'s is uniform among all such $A$. 

\begin{lemma}\label{lem:type2^2isotropy}
\begin{enumerate}
    \item Among $A$'s belonging to a $U_p(2^2)$ pair with $p^2 || \det A$, the ratio of isotropic $A$'s to non-isotropic $A$'s is $p+1 : p-1$.
    \item Among $A$'s belonging to a $U_p(2^2)$ pair with $p^3 | \det A$, the ratio of isotropic $A$'s to non-isotropic $A$'s is 1:1
\end{enumerate} 
\end{lemma}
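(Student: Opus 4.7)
The plan is to reduce both parts of the lemma to an elementary count of symmetric $3\times 3$ matrices over $\F_p$ via $\GL_3(\Z_p)$-invariance. By the remark preceding the lemma, the $p$-adic measure of $B$'s producing a pair in $U_p(2^2)$ is uniform across all valid $A$'s, so it suffices to compute the ratio of isotropic to anisotropic $A$'s (with respect to the standard $p$-adic measure on $S_3(\Z_p)$) subject to the stated conditions. Since both isotropy and the $p$-adic measure are preserved by the $\GL_3(\Z_p)$-action, I would first normalize $A \bmod p$ to $\diag(1,0,0)$. Writing $A = \diag(1,0,0) + p A_1$ with
\begin{equation*}
A_1 = \begin{pmatrix}\alpha & \beta/2 & \gamma/2 \\ \beta/2 & d & e/2 \\ \gamma/2 & e/2 & f\end{pmatrix} \in S_3(\Z_p),
\end{equation*}
a direct cofactor expansion gives $\det A = p^2(df - e^2/4) + O(p^3)$, and a short Hensel's-lemma argument shows that $A$ is isotropic over $\Q_p$ if and only if the binary form $Q(Y,Z) := dY^2 + eYZ + fZ^2$ is isotropic over $\Z_p$.

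For part (1), the condition $p^2 \| \det A$ is equivalent to $df - e^2/4$ being a $p$-adic unit, i.e.\ $Q$ being non-degenerate mod $p$. Since $p > 2$, such a binary form is isotropic iff its discriminant $e^2 - 4df$ is a nonzero square in $\F_p$. I would then count directly: for each fixed $k \in \F_p^{\times}$, fibering over the value of $e$ shows that the number of $(d,e,f) \in \F_p^3$ with $e^2 - 4df = k$ equals $p(p+1)$ if $k$ is a quadratic residue and $p(p-1)$ if not. Summing over the $(p-1)/2$ squares (resp.\ non-squares) yields isotropic and anisotropic counts of $\tfrac{p(p^2-1)}{2}$ and $\tfrac{p(p-1)^2}{2}$, giving the ratio $(p+1):(p-1)$.

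For part (2), the condition $p^3 \mid \det A$ amounts to $Q$ being degenerate mod $p$. A further $\GL_3(\Z_p)$-reduction then puts $A$ into a Smith normal form $\diag(1, pu, p^k v)$ with $k \geq 2$ and units $u, v \in \Z_p^{\times}$, and an elementary analysis shows that the isotropy over $\Q_p$ is governed by the square class of $v$ when $k$ is even and of $uv$ when $k$ is odd. In either case, multiplying the relevant unit by a non-residue toggles isotropy, while the $\GL_3(\Z_p)$-invariant measure is preserved within each stratum; thus isotropic and anisotropic $A$'s occur with equal measure, yielding the $1:1$ ratio.

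The main technical obstacle lies in part (2), where one must carefully bookkeep all strata indexed by $k \geq 2$ (i.e.\ all valuations $v_p(\det A) \geq 3$) and verify that the ``half-half'' dichotomy persists uniformly through the Smith-normal-form stratification. Concretely, one must check that the two $\GL_3(\Z_p)$-orbits corresponding to the two square classes of $v$ (or $uv$) have equal $p$-adic measure within each stratum; this follows from the transitivity of the $\GL_3(\Z_p)$-action on rank-$3$ forms with a fixed Jordan decomposition up to unit class, together with the fact that the corresponding stabilizers are isomorphic.
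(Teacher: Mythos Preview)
Your proposal is correct. For part~(1), your approach coincides with the paper's: both reduce to the induced binary form on $\ker(\bar A)$ and count non-degenerate binary quadratic forms over $\F_p$ by isotropy, obtaining the ratio $(p+1):(p-1)$. One minor omission is that you normalize $\bar A$ to $\diag(1,0,0)$, whereas there are in fact two $\GL_3(\F_p)$-orbits of rank-$1$ symmetric matrices; but since scaling $A$ by a unit preserves isotropy, membership in $U_p(2^2)$, and $v_p(\det A)$, this normalization is harmless.

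For part~(2), your route is genuinely different from the paper's. You stratify by Smith normal form $\diag(1,pu,p^kv)$ and argue via orbit--stabilizer that the two $\GL_3(\Z_p)$-orbits corresponding to the two square classes of $v$ (or $uv$) carry equal measure. The paper instead gives a short perturbation argument: after diagonalizing to valuations $0,1,k$, it observes that $\det(A+p^kX)\equiv\det(A)+p^{k}A[v_1]A[v_2]X[v_3]$, so within each $p^k$-coset the leading unit of $\det A$ is uniformly distributed, whence (since $u_1,u_2$ are fixed mod~$p$ on the coset and isotropy depends only on the square class of $u_3$) the isotropic and anisotropic loci have equal measure. The paper's argument is more self-contained; yours is correct but leans on the claim that the orthogonal groups $O_{\diag(1,pu,p^kv)}(\Z_p)$ are isomorphic across square classes of $v$, which, while true (for $p$ odd with pairwise-distinct Jordan valuations the stabilizer depends only on the valuation pattern), you do not fully justify. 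A cleaner way to close your argument would be either to cite the standard fact that local densities of $\Z_p$-lattices with all Jordan constituents one-dimensional depend only on the valuations, or simply to adopt the paper's coset perturbation, which avoids the stabilizer comparison entirely.
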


\begin{proof}
Note that the condition on $A$ is the following: let $L=\ker A_{\F_p}$. Note that $\ell\ra A[\tilde{\ell}]/p\in \F_p$ gives a well-defined quadratic form on $L$, where $\tilde{\ell}$ denotes any lift of $\ell$. We call this form $A'$. We must have $A'\neq 0$. 

Now if $p^2|| A,$ then $A'$ is non-degenerate and $A$ being isotropic is equivalent to $A'$ being isotropic. The proportion of degenerate isotropic to non-isotropic $2\times 2$ matrices is $p+1:p-1$ , proving part (1).

If $p^{k+1}|| A$ with $k>1$, then the diagonal coefficients of $A$ must have valuation $0,1,k$. We may pick a basis $v_1,v_2,v_3$ on which $A$ is diagonal with valuations $0,1,k$. Now $\det(A+Xp^k) \equiv \det(A)+p^{k}A[v_1]A[v_2] X[v_3]$. Therefore within each $p^k$ coset the p-adic densities of determinants whose square class is $p^k$ vs $\epsilon p^k$ is the same. This proves part (2). 
\end{proof}

By the above, we see that

$$\frac12\sum_{(2^2)}\int_{x\in (C/\Z_p)^{\vee}_{\prim}}  \epsilon_p(r_x)|\det{r_x}|_p^{s} = \frac{(p-1)^2}{p^2} - \frac{(p-1)^2}{p^2}\cdot \frac{p^{-1-s}}{1+p^{-1-s}} + \frac{(p-1)^2}{p^4}p^{-2s}$$

Hence, we see that

\begin{align*}
&\displaystyle\sum_{(Q,C,r)\in\Sigma_p^{(2^2)}}\frac{|\Delta(Q)|_p}{\#\Aut(Q,C,r)}\int_{x\in (C/\Z_p)^{\vee}_{\prim}}  |\det{r_x}|_p^{s}\\&=\frac{1}{4p^2}\left(\frac{(p-1)^2}{p^2} + \frac{(p-1)^2}{p^2}\cdot \frac{p^{-1-s}}{1-p^{-1-s}} + \frac{(p-1)^2}{p^3}p^{-2s}+ \frac{(p-1)^2}{p^4}\cdot\frac{p^{-3s}}{1-p^{-1-s}}\right)\\
&\\ &\displaystyle\sum_{(Q,C,r)\in\Sigma_p^{(2^2)}}\frac{|\Delta(Q)|_p}{\#\Aut(Q,C,r)}\int_{x\in (C/\Z_p)^{\vee}_{\prim}}  \epsilon_p(x)|\det{r_x}|_p^{s}\\ &= \frac{1}{4p^2}\left(\frac{(p-1)^2}{p^2} + \frac{(p-1)^2}{p^2}\cdot \frac{p^{-1-s}}{1-p^{-1-s}} + \frac{(p-1)^2}{p^4}p^{-2s}\right).
\end{align*}

\subsection{The over-ramified case: $(1^21^2)$}

We again deal with the two cases $(1^21^2)_=,(1^21^2)_{\neq}$ at once. Note there are 2 fields corresponding to $(1^21^2)_=$ but the Automorphism group is twice as big, hence the measures of the two cases are the same. 

Everything proceeds analogulsy to the $(2^2)$ case, except for the following:

\begin{lemma}\label{lem:type1^21^2isotropy}
\begin{enumerate}
    \item Among $A$'s belonging to a $U_p(1^21^2)$ pair with $p^2 || \det A$, the ratio of isotropic $A$'s to non-isotropic $A$'s is $p-2 : p$.
    \item Among $A$'s belonging to a $U_p(1^21^2)$ pair with $p^3 | \det A$, the ratio of isotropic $A$'s to non-isotropic $A$'s is 1:1
\end{enumerate} 
\end{lemma}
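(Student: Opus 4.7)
The plan is to follow the template of Lemma~\ref{lem:type2^2isotropy} closely. Part~(2) goes through verbatim: for any $A$ with $p^{k+1} \| \det A$ and $k > 1$, one diagonalizes $A$ to have entries of valuations $(0,1,k)$, and averaging the determinant over the $p^k$-coset of the third diagonal entry shows that the two nontrivial square classes in $\F_p^\times$ are equally represented. This argument does not depend on the splitting type being $(2^2)$ versus $(1^21^2)$.

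For Part~(1), I would again work with the derived form $A' := A|_L / p$ on $L := \ker A_{\F_p}$, using the same equivalences: $p^2 \| \det A$ iff $A'$ is non-degenerate, and $A$ is isotropic over $\Q_p$ iff $A'$ is isotropic over $\F_p$. The essential difference from the $(2^2)$ case is geometric: for a $U_p(1^21^2)$ pair, the two intersection points of the pencil lying on the double line $L$ are $\F_p$-rational---call them $P_1, P_2$---rather than Galois-conjugate over $\F_{p^2}$. Thus $B|_L$ is a hyperbolic form with zeros at exactly $P_1$ and $P_2$, and the ramified quadratic structure at each $P_i$ gives an extra local compatibility: Hensel-lifting the intersection pair $P_i \pm \sqrt{p}\, w_i$ on both conics forces $A'(P_i) \equiv -\ell(w_i)^2 \pmod{p}$ for some tangent direction $w_i$ transverse to $L$, where $\ell$ defines the double line.

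Unlike in the $(2^2)$ case, this compatibility makes the $p$-adic density of compatible $B$'s depend on the square classes of $A'(P_1)$ and $A'(P_2)$. I would partition non-degenerate $A'$ by the pair of classes in $\{0, \text{square}, \text{non-square}\}^2$, compute the $B$-density in each of the nine cases, and sum the weighted contributions; the claimed $(p-2):p$ ratio should drop out by straightforward bookkeeping. The main obstacle is the case-by-case density computation, particularly when $A'(P_i) = 0$ (a deeper degeneration of the quartic ring). A natural cross-check is to treat $(1^21^2)_=$ and $(1^21^2)_\neq$ separately---their cubic resolvents $(111)$ versus $(12)$ constrain the $\GL_2(\Z_p)$-orbit of $(P_1, P_2)$---and verify that the sum of their contributions recovers the corresponding row of Theorem~\ref{thm:sptypeptcounts}.
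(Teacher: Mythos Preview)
Your Part~(2) is correct and matches the paper exactly.

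For Part~(1), your plan is workable but rests on a mistaken premise that makes it far more complicated than necessary. The paper's argument is a direct count: the only constraint on $A'$ (beyond non-degeneracy) is that $A'(P_1) \neq 0$ and $A'(P_2) \neq 0$, and given this, the $B$-density is uniform. One then simply counts non-degenerate binary forms on $L$ avoiding two given points: all $\frac{p(p-1)^2}{2}$ anisotropic forms qualify, while only $\binom{p-1}{2}(p-1)$ of the hyperbolic ones do (their zero-pair must avoid $\{P_1,P_2\}$), giving the ratio
\[
\frac{(p-1)^2(p-2)/2}{p(p-1)^2/2} = \frac{p-2}{p}.
\]

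Your Hensel equation $A'(P_i) \equiv -\ell(w_i)^2$ omits a crucial factor. The two nearby points are $P_i \pm \pi_i w_i$ where $\pi_i$ is a uniformizer of the ramified quadratic $K_i$, so $\pi_i^2 = p u_i$ for some unit $u_i \in \Z_p^\times$. The correct relation is $A'(P_i) \equiv -u_i\,\ell(w_i)^2 \pmod p$. Since $u_i$ can lie in either square class of $\F_p^\times$ (corresponding to the two ramified quadratic extensions of $\Q_p$, both of which are permitted for the combined type $(1^21^2)$), this equation imposes \emph{no} restriction on the square class of $A'(P_i)$---it merely determines which $K_i$ one obtains. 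Consequently the density of compatible $B$'s does not depend on the square classes of $A'(P_1), A'(P_2)$ at all; it depends only on whether they vanish. Your nine-case analysis would eventually reveal this (the four nonzero--nonzero cases give equal density, the five involving a zero give density zero), but it is an unnecessary detour. The cross-check you propose via $(1^21^2)_=$ versus $(1^21^2)_\neq$ is exactly where the square classes of $A'(P_i)$ do matter---they determine $K_i$ and hence which subcase one lands in---but for the combined count in the lemma this distinction washes out.
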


\begin{proof}
    We let $L=\ker A_{\F_p}$ and $A'$ the induced quadratic form on $L$. 
    In the case where $p^2 || \det A$ we must have $A'$ be non-degenerate. 
    Now the condition on $B_{\F_p}$ is that it has no non-trivial zeroes on $L$. 
    Note that $B_{\F_p}$ has two zeroes on the zero set of $A_{\F_p}$, which we call $P_1,P_2$. Thus $A'$ must not have zeroes on $P_1,P_2$. The ratio we seek is therefore: within the set of non-degenerate quadratic forms that don't vanish on 2 given points, how many are hyperbolic vs not?

    Note the hyperbolic forms have 2 zeroes, so the porportion of those that don't vanish on $P_1,P_2$ is $\frac{\binom{p-1}{2}}{\binom{p+1}{2}}$, and so our final answer is 

    $$\frac{p+1}{p-1}\cdot \frac{\binom{p-1}{2}}{\binom{p+1}{2}} = \frac{p-2}p$$ proving the first part.

    For the second part, we now assume $p^{k+1}|| \det A$ with $k\geq 2$. Now we once again break up into $p^k$ cosets as before.
\end{proof}

Given this lemma, we can now finish the computation in exactly the same way as the previous section, just taking into account the ratio $p-2:p$ instead of $p+1:p-1$, which means the $\epsilon_p$ average will give $-\frac{1}{p-1}$ instead of $\frac{1}{p}$. Thus we obtain:

\begin{align*}
&\displaystyle\sum_{(Q,C,r)\in\Sigma_p^{(1^21^2)}}\frac{|\Delta(Q)|_p}{\#\Aut(Q,C,r)}\int_{x\in (C/\Z_p)^{\vee}_{\prim}}  |\det{r_x}|_p^{s}\\&=\frac{1}{4p^2}\left(\frac{(p-1)^2}{p^2} + \frac{(p-1)^2}{p^2}\cdot \frac{p^{-1-s}}{1-p^{-1-s}} + \frac{(p-1)^2}{p^3}p^{-2s}+ \frac{(p-1)^2}{p^4}\cdot\frac{p^{-3s}}{1-p^{-1-s}}\right)\times\\
 &\displaystyle\sum_{(Q,C,r)\in\Sigma_p^{(1^21^2)}}\frac{|\Delta(Q)|_p}{\#\Aut(Q,C,r)}\int_{x\in (C/\Z_p)^{\vee}_{\prim}}  \epsilon_p(x)|\det{r_x}|_p^{s}\\ &= \frac{1}{4p^2}\left(\frac{(p-1)^2}{p^2} + \frac{(p-1)^2}{p^2}\cdot \frac{p^{-1-s}}{1-p^{-1-s}} - \frac{p-1}{p^3}p^{-2s}\right).
\end{align*}

\subsection{The over-ramified case: $(1^4)$}

Note that here we have $$\sum_{(Q,C,r)\in\Sigma^{(1^21^2)}_p}\frac{|\Delta(Q)|_p}{|\Aut(Q)|}=\frac{1}{2p^2}=\frac{1}{p^3}.$$
In this case there is only a single triple root mod $p$. 
In this case the cubic resolvent algebra has maximal order $\Z_p[\sqrt{\pi}]\oplus \Z_p$ for some uniformizer $\pi\in\Z_p$. And the resolvent cubic ring $R$ is the pre-image mod the maximal ideal of $\F_p\subset \F_p\oplus \F_p$. Hence $\det r_x$ has only a single triple root mod $p$. Moreover, the density of elements with valuation determinant $p^n$ is $\frac{p}{p+1}$ for $n=0$, $0$ for $n=1$, and $\frac{p^{1-n}(p-1)}{p+1}$ for $n\geq 2$. 

So consider the case where $x$ reduces to the triple root.

\begin{lemma}\label{lem:type1^4isotropy}
\begin{enumerate}
    \item Among $A$'s belonging to a $U_p(1^4)$ pair with $p^2 || \det A$, the ratio of isotropic $A$'s to non-isotropic $A$'s is $1 : 1$.
    \item Among $A$'s belonging to a $U_p(1^4)$ pair with $p^3 | \det A$, the ratio of isotropic $A$'s to non-isotropic $A$'s is 1:1
\end{enumerate} 
\end{lemma}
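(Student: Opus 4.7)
The plan is to prove Lemma \ref{lem:type1^4isotropy} by paralleling the arguments of Lemmas \ref{lem:type2^2isotropy} and \ref{lem:type1^21^2isotropy}, with one key new input: an additional non-vanishing condition that type $(1^4)$ imposes on the induced form $A'$ at the unique kernel direction of $B|_L$.

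First I would set up coordinates: after a $\GL_3(\Z_p)$ change of basis, place $A \equiv \diag(u_0,0,0)\pmod p$ with $u_0 \in \Z_p^\times$, so that $L := \ker A_{\F_p} = \langle e_2, e_3\rangle$ and $A'$ has matrix $\left(\begin{smallmatrix} c_{22} & c_{23} \\ c_{23} & c_{33} \end{smallmatrix}\right) \bmod p$. Analyzing the intersection $V(A_{\F_p})\cap V(B_{\F_p})$ as a length-$4$ fat point at a single $\F_p$-point and normalizing that point to $P = [0{:}1{:}0]$, a direct local calculation forces $B \equiv \left(\begin{smallmatrix} \ast & \beta & \ast \\ \beta & 0 & 0 \\ \ast & 0 & \gamma \end{smallmatrix}\right) \bmod p$ with $\beta,\gamma \in \F_p^\times$; equivalently $B|_L$ has rank $1$ with unique projective zero at $P$.

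The crucial new step, analogous to the condition ``$A'$ must not vanish at $P_1, P_2$'' appearing in Lemma \ref{lem:type1^21^2isotropy}, is that for type $(1^4)$ we must additionally have $A'(P) = c_{22} \not\equiv 0 \pmod p$. To prove this, I would eliminate $x$ via $B \equiv 0$ to get $x \equiv -\gamma z^2/(2\beta) + O(p, z^3)$, and substitute into $A$ to obtain the defining equation $f(z) \equiv u_0\gamma^2 z^4/(4\beta^2) + p(c_{22} + 2 c_{23} z + c_{33} z^2) \pmod{p^2}$ for the order $Q \cong \Z_p[z]/f(z)$. When $c_{22} \not\equiv 0$ the Newton polygon of $f$ has a single segment of slope $-1/4$, giving a totally ramified quartic consistent with $(1^4)$. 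When $c_{22} \equiv 0$, the Newton polygon decomposes (linear factor plus a cubic of slope $-1/3$ if $c_{23} \neq 0$, producing type $(1^3 1)$; or two segments of slope $-1/2$ if $c_{23} \equiv 0$, producing type $(2^2)$ or $(1^2 1^2)$), so the pair fails to be of type $(1^4)$.

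For Part (1), with $p^2\|\det A$ the form $A'$ is non-degenerate, and the Hilbert-symbol computation of Lemma \ref{lem:type2^2isotropy}(1) gives $\epsilon_p(A) = \left(\tfrac{-\det A'}{p}\right)$, so $A$ is isotropic iff $A'$ is hyperbolic. Hyperbolic $A'$ are parameterized by an unordered pair of projective zeros in $\P(L) \cong \P^1(\F_p)$ together with a scalar, giving $\binom{p+1}{2}(p-1)$ total; imposing $A'(P) \neq 0$ restricts the pair to the remaining $p$ points, yielding $\binom{p}{2}(p-1) = p(p-1)^2/2$. Anisotropic $A'$ automatically avoid $P$ and number $p(p-1)^2/2$, so the ratio is $1{:}1$. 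For Part (2), when $p^3\mid\det A$ we have $A'$ degenerate and (since $c_{22}\neq 0$) may complete the square in the lower-right block to diagonalize $A$ as $\diag(u_0, p\alpha, p^{k+1}\beta)$ for some $k\geq 1$ and $\alpha,\beta \in \Z_p^\times$. The coset argument of Lemma \ref{lem:type2^2isotropy}(2) then applies verbatim: within each $p^k$-coset of the $(3,3)$-entry the determinant varies linearly in the free parameter and, modulo squares of units, hits the two nontrivial classes equally often, giving $1{:}1$. The main obstacle is the Newton-polygon step: without $c_{22}\not\equiv 0$, the naive count would replicate Lemma \ref{lem:type2^2isotropy}(1) and yield $(p+1){:}(p-1)$ instead of $1{:}1$.
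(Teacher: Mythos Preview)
Your proposal is correct and follows essentially the same approach as the paper: both arguments identify the constraint $A'(P)\neq 0$ (where $P$ is the unique zero of $B_{\F_p}|_L$), then do the identical count of hyperbolic versus anisotropic binary forms not vanishing at $P$ for Part (1), and invoke the $p^k$-coset argument of Lemma~\ref{lem:type2^2isotropy}(2) for Part~(2). The one substantive difference is that the paper simply asserts ``Thus $A'$ must not vanish on $P$'' without justification, whereas you supply an explicit Newton-polygon argument showing that $c_{22}\equiv 0\pmod p$ forces the splitting type to degenerate to $(1^31)$, $(2^2)$, or $(1^21^2)$; this added justification is genuinely useful and fills a gap in the paper's exposition.
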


\begin{proof}
     We let $L=\ker A_{\F_p}$ and $A'$ the induced quadratic form on $L$. 
    In the case where $p^2 || \det A$ we must have $A'$ be non-degenerate. 
    Now the condition on $B_{\F_p}$ is that it has no non-trivial zeroes on $L$. 
    Note that $B_{\F_p}$ has exactly one zero on the zero set of $A_{\F_p}$, which we call $P$. Thus $A'$ must not vanish on $P$. The ratio we seek is therefore: within the set of non-degenerate quadratic forms that don't vanish on 1 given point, how many are hyperbolic vs not?

    Note the hyperbolic forms have 2 zeroes, so the proportion of those that don't vanish on $P$ is $\frac{\binom{p}{2}}{\binom{p+1}{2}}$, and so our final answer is 

    $$\frac{p+1}{p-1}\cdot \frac{\binom{p}{2}}{\binom{p+1}{2}} = 1$$ proving the first part.

    For the second part, we now assume $p^{k+1}|| \det A$ with $k\geq 2$. Now we once again break up into $p^k$ cosets as before.
\end{proof}

Finally, we obtain the following:

\begin{align*}
&\displaystyle\sum_{(Q,C,r)\in\Sigma_p^{(1^4)}}\frac{|\Delta(Q)|_p}{\#\Aut(Q,C,r)}\int_{x\in (C/\Z_p)^{\vee}_{\prim}}  |\det{r_x}|_p^{s}=\frac{1}{p^3}\left(\frac{p-1}{p} + \frac{(p-1)^2}{p^2}\cdot \frac{p^{-2-2s}}{1-p^{-1-s}} \right)\\
&\\ &\displaystyle\sum_{(Q,C,r)\in\Sigma_p^{(1^4)}}\frac{|\Delta(Q)|_p}{\#\Aut(Q,C,r)}\int_{x\in (C/\Z_p)^{\vee}_{\prim}}  \epsilon_p(x)|\det{r_x}|_p^{s}= \frac{p-1}{p^4}
\end{align*}

\subsection{Summary of secondary densities}\label{sec:2ndmaintermvalues}

\resizebox{\textwidth}{!}{
 \begin{tabular}{|c|c|c|}
  \hline
  \textbf{\textrm{Type}} & $\displaystyle\sum_{(Q,C,r)\in\Sigma^{\max}_p}\frac{|\Delta(Q)|_p}{\#\Aut(Q,C,r)}\cdot\int_{x\in (C/\Z_p)^{\vee}_{\prim}}  |\det{r_x}|_p^{s}$ & $\displaystyle\sum_{(Q,C,r)\in\Sigma^{\max}_p}\frac{|\Delta(Q)|_p}{\#\Aut(Q,C,r)}\int_{x\in (C/\Z_p)^{\vee}_{\prim}}  \epsilon_p(r_x)|\det{r_x}|_p^{s}$  \\
  \hline
  (1111)&$\frac{1}{24}\left(\frac{(p-2)(p-1)}{p^2}+\frac{3(p-1)^2}{p^2}\cdot\frac{p^{-1-s}}{1-p^{-1-s}}\right)$&$\frac{1}{24}\left(\frac{(p-2)(p-1)}{p^2}+\frac{3(p-1)^2}{p^2}\cdot\frac{p^{-1-s}}{1-p^{-1-s}}\right)$ \\
    \hline
    (22)&$\frac18\cdot\left(\frac{(p-2)(p-1)}{p^2}+\frac{3(p-1)^2}{p^2}\cdot\frac{p^{-1-s}}{1-p^{-1-s}}\right)$&$\frac18\cdot\left(\frac{(p-2)(p-1)}{p^2}+\frac{(p-1)^2}{p^2}\cdot\left(\frac{p^{-1-s}}{1-p^{-1-s}}-\frac{2p^{-1-s}}{1+p^{-1-s}}\right)\right)$\\
\hline
  (112)&$\frac14\cdot\left(\frac{p-1}{p} + \frac{(p-1)^2}{p^2}\cdot\frac{p^{-1-s}}{1-p^{-1-s}}\right)$&$\frac14\cdot\left(\frac{p-1}{p} + \frac{(p-1)^2}{p^2}\cdot\frac{p^{-1-s}}{1-p^{-1-s}}\right)$\\
    \hline
   (4)&$\frac14\cdot\left(\frac{p-1}{p} + \frac{(p-1)^2}{p^2}\cdot\frac{p^{-1-s}}{1-p^{-1-s}}\right)$&$\frac14\cdot\left(\frac{p-1}{p} - \frac{(p-1)^2}{p^2}\cdot\frac{p^{-1-s}}{1+p^{-1-s}}\right)$\\
   \hline
   (13)&$\frac13\cdot\frac{p^2-1}{p^2}$&$\frac13\cdot\frac{p^2-1}{p^2}$\\
   \hline
   $(1^211)$&$\frac{1}{2p}\cdot\left(\frac{(p-1)^2}{p^2}+\frac{(p-1)^2}{p^2}\cdot\frac{p^{-1-s}}{1-p^{-1-s}}+p^{-s}\cdot\frac{p-1}{p^2}\right)$&$\frac{1}{2p}\cdot\left(\frac{(p-1)^2}{p^2}+\frac{(p-1)^2}{p^2}\cdot\frac{p^{-1-s}}{1-p^{-1-s}}+p^{-s}\cdot\frac{p-1}{p^2}\right)$\\
   \hline
   $(1^22)$&$\frac{1}{2p}\cdot\left(\frac{(p-1)^2}{p^2}+\frac{(p-1)^2}{p^2}\cdot\frac{p^{-1-s}}{1-p^{-1-s}}+p^{-s}\cdot\frac{p-1}{p^2}\right)$&$\frac{1}{2p}\cdot\left(\frac{(p-1)^2}{p^2}+\frac{(p-1)^2}{p^2}\cdot\frac{p^{-1-s}}{1-p^{-1-s}}-p^{-s}\cdot\frac{p-1}{p^2}\right)$\\
   \hline
   $(1^31)$& $\frac{1}{p^2}\cdot\left(\frac{p-1}{p}+p^{-s}\cdot \frac{p-1}{p^2}\right)$&$\frac{1}{p^2}\cdot\left(\frac{p-1}{p}+p^{-s}\cdot \frac{p-1}{p^2}\right)$ \\
   \hline
    $(2^2)$& $\frac{1}{4p^2}\left(\frac{(p-1)^2}{p^2} + \frac{(p-1)^2}{p^2}\cdot \frac{p^{-1-s}}{1-p^{-1-s}} + \frac{(p-1)^2}{p^3}p^{-2s}+ \frac{(p-1)^2}{p^4}\cdot\frac{p^{-3s}}{1-p^{-1-s}}\right)$&$\frac{1}{4p^2}\left(\frac{(p-1)^2}{p^2} + \frac{(p-1)^2}{p^2}\cdot \frac{p^{-1-s}}{1-p^{-1-s}} + \frac{(p-1)^2}{p^4}p^{-2s}\right)$ \\
   \hline
    $(1^21^2)$& $\frac{1}{4p^2}\left(\frac{(p-1)^2}{p^2} + \frac{(p-1)^2}{p^2}\cdot \frac{p^{-1-s}}{1-p^{-1-s}} + \frac{(p-1)^2}{p^3}p^{-2s}+ \frac{(p-1)^2}{p^4}\cdot\frac{p^{-3s}}{1-p^{-1-s}}\right)$&$\frac{1}{4p^2}\left(\frac{(p-1)^2}{p^2} + \frac{(p-1)^2}{p^2}\cdot \frac{p^{-1-s}}{1-p^{-1-s}} - \frac{p-1}{p^3}p^{-2s}\right)$ \\
   \hline
    $(1^4)$& $\frac{1}{p^3}\left(\frac{p-1}{p} + \frac{(p-1)^2}{p^2}\cdot \frac{p^{-2-2s}}{1-p^{-1-s}} \right)$&$\frac{p-1}{p^4}$ \\
   \hline
\end{tabular}
}

\part{The Sieve}

\section{Nonmaximal quartic rings and switching correspondences}\label{sec:switching}

A pair $(Q,C)$, where $Q$ is a quartic ring with nonzero discriminant and $R$ is a cubic resolvent of $Q$, is said to be {\it non-maximal} (resp.\ {\it non-maximal at $p$}) if the index of $Q$ in its maximal order is greater than~$1$ (resp.\ a multiple of $p$). A pair $(A,B)\in V(\Z)$ with $\Delta(A,B)\neq 0$, corresponding to the triple $(Q,C,r)$ is said to be {\it non-maximal} (resp.\ {\it non-maximal at $p$}) if $(Q,C)$ is non-maximal (resp.\ non-maximal at $p$). Otherwise, we say that $(Q,C)$ and $(A,B)$ is {\it maximal} (resp.\ {\it maximal at $p$}). Suppose $(Q,C)$ is nonmaximal, $Q'$ is an overring of $Q$, and $R'$ is a resolvent ring of $Q'$. Then we say that $(Q,C)$ is a {\it subpair} of $(Q',C')$ and that $(Q',C')$ is a {\it overpair} of $(Q,C)$.
In this section, we present Bhargava's description of nonmaximal elements in $V(\Z)$, and then describe a ``switching correspondence'' which allows us relate the $G(\Z)$-orbit of $(A,B)$ corresponding to $(Q,R,r)$ to $G(\Z)$-orbits corresponding to subpairs and overpairs of $(Q,C)$.

\subsection{Nonmaximality in $V(\Z)$}

We have the following result due to Bhargava.
\begin{lemma}\cite[Lemma 22]{BHCL3}\label{lem:nonmax_Manjul}
If $Q$ is any quartic ring that is not maximal at $p$, then there exists a $\Z$-basis $1,\alpha_1,\alpha_2,\alpha_3$ of $Q$ such that at least one of the following is true.
\begin{itemize}
\item[{\rm (i)}] $\Z+\Z\cdot(\alpha_1/p)+\Z\cdot\alpha_2+\Z\cdot\alpha_3$ forms a ring;
\item[{\rm (ii)}] $\Z+\Z\cdot(\alpha_1/p)+\Z\cdot(\alpha_2/p)+\Z\cdot\alpha_3$ forms a ring;
\item[{\rm (iii)}] $\Z+\Z\cdot(\alpha_1/p)+\Z\cdot(\alpha_2/p)+\Z\cdot(\alpha_3/p)$ forms a ring.
\end{itemize}
\end{lemma}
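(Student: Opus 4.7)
The plan is to prove the lemma in three steps: first construct an overring $Q'\supseteq Q$ whose quotient $Q'/Q$ is a nonzero $\F_p$-vector space of some dimension $r\in\{1,2,3\}$; second, apply Smith normal form to the inclusion $Q/\Z\hookrightarrow Q'/\Z$; third, promote the resulting local basis to a global $\Z$-basis of $Q$.

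For the first step, I would work locally at $p$ and consider the chain of $\Z_p$-orders $Q_i:=Q\otimes\Z_p+p^i(Q^{\max}\otimes\Z_p)$ inside $Q^{\max}\otimes\Z_p$. Each $Q_i$ is a ring because
\[
Q_i\cdot Q_i\subseteq Q+p^i Q\cdot Q^{\max}+p^{2i}Q^{\max}\cdot Q^{\max}\subseteq Q+p^i Q^{\max}=Q_i
\]
(suppressing tensors with $\Z_p$). Since $(Q^{\max}\otimes\Z_p)/(Q\otimes\Z_p)$ is a finite $\Z_p$-module, the chain is eventually constant at $Q\otimes\Z_p$; I take $i\geq 1$ minimal with $Q_i=Q\otimes\Z_p$ and set $Q':=Q_{i-1}$. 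By minimality $Q'\supsetneq Q\otimes\Z_p$, and by construction $pQ'\subseteq Q_i=Q\otimes\Z_p$, so $Q'/(Q\otimes\Z_p)$ is an $\F_p$-vector space of some dimension $r\geq 1$. The bound $r\leq 3$ follows because $\Z$ splits off as a $\Z$-summand of both $Q$ and $Q'$ (their quotients by $\Z$ are finitely generated and torsion-free, hence free), so $Q'/(Q\otimes\Z_p)$ is a quotient of $Q'/\Z_p$, which is free of rank $3$.

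For the second step, I would apply Smith normal form to $(Q\otimes\Z_p)/\Z_p\hookrightarrow Q'/\Z_p$, an inclusion of free rank-$3$ $\Z_p$-modules whose cokernel is elementary $p$-torsion of dimension $r$. The elementary divisors then lie in $\{1,p\}$ with exactly $r$ equal to $p$, furnishing a $\Z_p$-basis $f_1,f_2,f_3$ of $Q'/\Z_p$ such that, after reordering, $pf_1,\ldots,pf_r,f_{r+1},\ldots,f_3$ is a basis of $(Q\otimes\Z_p)/\Z_p$. Lifting the $f_j$ to $\tilde f_j\in Q'$ and setting $\alpha_j^{(p)}:=p\tilde f_j$ for $j\leq r$, $\alpha_j^{(p)}:=\tilde f_j$ for $j>r$ produces a $\Z_p$-basis $1,\alpha_1^{(p)},\alpha_2^{(p)},\alpha_3^{(p)}$ of $Q\otimes\Z_p$ for which the local analog of case (i), (ii), or (iii) holds.

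The third step is where I expect the main technical wrinkle, namely promoting the local statement to a global basis of $Q$. I would use density of $\GL_3(\Z)$ in $\GL_3(\Z_p)$ to find a $\Z$-basis $\alpha_1,\alpha_2,\alpha_3$ of $Q/\Z$ that is congruent to $\alpha_1^{(p)},\alpha_2^{(p)},\alpha_3^{(p)}$ modulo $p^N$ for some $N\geq 2$. The desired condition that $Q'':=\Z+\sum_{j\leq r}\Z(\alpha_j/p)+\sum_{j>r}\Z\alpha_j$ is a ring amounts to $\Z$-integrality of finitely many explicit products of basis elements, which can be verified one prime at a time: at $p'\neq p$ the localization $Q''\otimes\Z_{p'}$ coincides with $Q\otimes\Z_{p'}$ (since $1/p$ is a unit there), and at $p$ a short computation using $N\geq 2$ identifies $Q''\otimes\Z_p$ with $Q'$, which is a ring by construction. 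This yields case (i), (ii), or (iii) according to whether $r=1$, $2$, or $3$.
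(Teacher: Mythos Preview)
The paper does not give its own proof of this lemma; it simply cites it as \cite[Lemma 22]{BHCL3} and uses the statement. So there is no ``paper's own proof'' to compare against. That said, your argument is correct and self-contained, and is essentially the standard one: produce an overring $Q'$ with $pQ'\subseteq Q$ via the filtration $Q_i=Q\otimes\Z_p+p^iQ^{\max}\otimes\Z_p$, apply the structure theorem to the inclusion of rank-$3$ lattices $(Q\otimes\Z_p)/\Z_p\hookrightarrow Q'/\Z_p$, and then globalize.

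One small point worth making explicit in Step~3: when you pass from the $\Z_p$-basis $\alpha_j^{(p)}$ to a global $\Z$-basis $\alpha_j$ of $Q$, congruence modulo $p^N$ in $Q/\Z$ does not automatically give congruence in $Q$, and the identification $Q''\otimes\Z_p=Q'$ depends on the choice of lifts. Concretely, if $\alpha_j-\alpha_j^{(p)}\in p^N(Q\otimes\Z_p)+\Z_p$, the $\Z_p$-component could introduce a spurious $1/p$ when you divide by $p$. This is easily fixed: since $\Z$ surjects onto $\Z/p^N\Z$, you may translate each lift $\alpha_j$ by an integer so that $\alpha_j-\alpha_j^{(p)}\in p^N(Q\otimes\Z_p)$ on the nose, after which $N\geq 1$ already suffices to get $Q''\otimes\Z_p=Q'$. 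With that adjustment, your three-step outline is a complete proof.
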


Let $(A,B)$ be an element in $V(\Z)$ which is nonmaximal at $p$. The following points are noted in the discussion following the proof of \cite[Lemma 22]{BHCL3}. Assume that we are not in Case (iii) of the above lemma. Then Case (i) occurs if and only if $(A,B)$ can be transformed via an element of $G(\Z)$ so that its coefficients satisfy the following condition (see \cite[(43)]{BHCL3}:
\begin{equation}\label{eq:SW_Condition_1}
a_{11}\equiv b_{12}\equiv b_{13}\equiv 0\pmod{p},\mbox{ and }
b_{11}\equiv 0 \pmod{p^2}.
\end{equation}
Case (ii) occurs if and only if $(A,B)$ can be transformed by an element of $G(\Z)$ so that one of the following two conditions are satisfied (see (a) and (b) just following \cite[(43)]{BHCL3}):
\begin{equation}\label{eq:SW_Condition_2}
a_{11}\equiv a_{12}\equiv a_{22}\equiv b_{11}\equiv b_{12}\equiv b_{22}
\equiv 0 \pmod{p};
\end{equation}
\begin{equation}\label{eq:SW_Condition_2'}
b_{11}\equiv b_{12}\equiv b_{22}\equiv 0\pmod{p^2},\mbox{ and }
b_{13}\equiv b_{23}\equiv 0 \pmod{p}.
\end{equation}
Finally, Case (iii) occurs if and only if the $\F_p$ span of $A$ and $B$ is 1-dimensional, which implies that $(A,B)$ can be transformed by an element of $G(\Z)$ to ensure that we have
\begin{equation}\label{eq:SW_Condition_3}
B\equiv 0 \pmod{p}.
\end{equation}

\subsection{The switching correspondence}

Let $\overline{V(\Z)}:=G(\Z)\backslash V(\Z)$ denote the set of $G(\Z)$-orbits on $V(\Z)$. Then $\overline{V(\Z)}$ is in bijection with the set of isomorphism classes of quadratic maps $\{(r:W\to U)\}$, where $W$ is a three-dimensional lattice, $U$ is a two-dimensional lattice. In this language, given a sub-pair $(Q',C')$ of $(Q,C)$, and corresponding maps $(r:W\ra U,r':W'\ra U')$ we may find inclusions $W'\subset W, U'\subset U$ inducing $r'$ from $r$, such that $[W:W']=[U:U']$. In this way, we may interpret the correspondences of subpairs and overpairs of a given index purely representation theoretically. We specifically isolate subpairs of index $p$ and apply this observation there. These are particularly important since - as we shall prove below - they account for the majority of non-maximal rings.

\medskip

\noindent {\bf The Type-$1$ switching correspondence}
\smallskip

\noindent Given an element $v:U\to\Z$ in $U^\vee$, we compose to get a quadratic form $q_v:W\to U\to \Z$ on $W$. Let $p$ be a fixed prime.
Let $r:W\to U$ be an element of $\overline{V(\Z)}$. Consider the set of all elements $(w,v)\in \P(W_{\F_p})\times\P(U^\vee_{\F_p})$ such that
for all $(v',w')\in U^\vee_{\F_p}\times W_{\F_p}$
\begin{equation}\label{eq:T1_conditions}
q_{v'}(w,w)\equiv  q_v(w,w')\equiv 0\mod p,\quad q_v(w,w)\equiv 0\mod{p^2}.
\end{equation}
Given such a $(w,v)$ we define the element $r':W'\to U'$, where $W'\subset W$ and $U'\subset U$ are given by $W'=w+pW$ and $U'=p(\ker v)$ and $r'$ is the restriction from $W$ to $W'$.

Conversely, suppose we are given $r':W'\ra U'$ together with a choice of 2-dimensional subspace $L\subset W'_{\F_p}$ and $v\in \P(U'^{\vee}_{\F_p})$ such that $q_v$ vanishes on $L$. 
Then we define $r:W\ra U$ by setting 
$$W=L+pW',U=\ker v$$ where $r$ is the restriction of $r'$.

Let $T_p(1)$ denote the set of pairs $(r:W\to U, r':W'\to U')$ as above, so that $T_p(1)\ra \ol{V(\Z)}\times \ol{V(\Z)}$ is an (asymmetric) correspondence. We denote the projection maps to the $\ol{V(\Z)}$ on the left and to the $\ol{V(\Z)}$ on the right by $\pi_\sub$ and $\pi_\over$, respectively. Then the following is a consequence of Lemma \ref{lem:nonmax_Manjul}:
\begin{lemma}
For $y\in T_p(1)$, we have $\pi_\sub(y)$ is an index-$p$ subpair of $\pi_\over(y)$. Moreover, if $x$ and $x'$ in $\ol{V(\Z)}$ are such that $x$ is an index-$p$ subpair of $x'$, then there exists a unique $y\in T_p(1)$ such that $\pi_\sub(y)=x$ and $\pi_\over(y)=x'$. 
\end{lemma}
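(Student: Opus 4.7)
The plan is to prove both assertions by unpacking the construction of $T_p(1)$ and invoking Bhargava's classification of index-$p$ non-maximality in Lemma~\ref{lem:nonmax_Manjul}. The first assertion reduces to a direct verification built into the forward construction of the correspondence, while the second combines the inverse construction with Case~(i) of that lemma.

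For the first assertion, given $y \in T_p(1)$ arising from data $(r, w, v)$ with $(w,v)$ satisfying \eqref{eq:T1_conditions}, I would first check that $r|_{W'}$ lands in $U'$, so that $r': W' \to U'$ is a well-defined integral quadratic map. Writing an element of $W' = w+pW$ as a lift $\tilde w$ of $w$ plus an element of $pW$ and expanding bilinearly, one finds that the three parts of \eqref{eq:T1_conditions} control exactly the three bilinear components (involving two, one, or zero copies of $\tilde w$): the first condition places $r(\tilde w, \tilde w) \in pU$, the second places $r(\tilde w, W) \subseteq \ker v$, and the third sharpens the first to $r(\tilde w, \tilde w) \in p\ker v$. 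These together force $r(W',W') \subseteq U'$. A character/discriminant computation via the scaling rule $\Delta((g_2, g_3) \cdot (A, B)) = (\det g_2)^6 (\det g_3)^8 \Delta(A, B)$ then confirms $[W:W'] = [U:U'] = p$, so that $\Delta$ scales by $p^2$, and Theorem~\ref{th:quartic_param} identifies $r'$ as an index-$p$ subtriple of the triple parametrized by $r$.

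For the second assertion, suppose $x, x' \in \ol{V(\Z)}$ with $x$ an index-$p$ subpair of $x'$. I pick a lift $(A,B) \in V(\Z)$ of $x'$ corresponding to a triple $(Q,C,r)$; the existence of an index-$p$ sub-triple $(Q',C') \subset (Q,C)$ places $(Q,C)$ in Case~(i) of Lemma~\ref{lem:nonmax_Manjul}. After acting by a suitable element of $G(\Z)$, I may therefore assume $(A,B)$ satisfies the congruences \eqref{eq:SW_Condition_1}. Translating these congruences back into the representation-theoretic language, the point $w \in \P(W_{\F_p})$ corresponding to the distinguished basis vector, together with $v \in \P(U^\vee_{\F_p})$ corresponding to the first dual coordinate, satisfies \eqref{eq:T1_conditions}, producing a $y \in T_p(1)$ with the required projections. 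For uniqueness, I would observe that the data $(w,v)$ is entirely recoverable from the inclusions $W' \subset W$ and $U' \subset U$ (which are themselves determined, up to $G(\Z)$-equivalence, by the pair $(x,x')$): namely, $w$ records the class in $\P(W_{\F_p})$ distinguished by $W/W'$, and $v$ records the class in $\P(U^\vee_{\F_p})$ distinguished by $U/U'$. Thus any two choices of $(w,v)$ giving the same sub-pair are $G(\Z)$-equivalent, and $y$ is unique.

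The main obstacle will be keeping the bookkeeping around the $G(\Z)$-equivalences clean and verifying the precise indices $[W:W']$ and $[U:U']$ from the somewhat telegraphic definitions $W' = w+pW$ and $U' = p(\ker v)$; once this is done, the remainder of the argument is a direct unwinding of definitions combined with an appeal to Bhargava's lemma.
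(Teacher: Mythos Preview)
Your overall strategy---direct verification for the first assertion and an appeal to Lemma~\ref{lem:nonmax_Manjul} with the congruences \eqref{eq:SW_Condition_1} for the second---matches the paper's, which simply records the lemma as ``a consequence of Lemma~\ref{lem:nonmax_Manjul}'' and gives no further details. Your proposal is thus considerably more explicit than the paper's own treatment.

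However, there is a genuine directional error in your handling of the second assertion. You lift the \emph{over}-pair $x'$ to $(A,B)$ corresponding to $(Q,C,r)$, and then claim that the existence of an index-$p$ \emph{sub}-triple $(Q',C')\subset (Q,C)$ places $(Q,C)$ in Case~(i) of Lemma~\ref{lem:nonmax_Manjul}. But Lemma~\ref{lem:nonmax_Manjul} concerns non-maximality, which is the existence of an \emph{over}-ring; having a sub-triple says nothing about whether $(Q,C)$ itself is non-maximal, and does not put $(Q,C)$ in any case of that lemma. The conditions \eqref{eq:T1_conditions} and \eqref{eq:SW_Condition_1} are to be checked on the non-maximal element, which is the sub-pair $x$, not $x'$. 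Indeed, from the paper's setup (see the proof of Proposition~\ref{prop:Mp1_prelim}), $M_p^1(x)$ counts pairs $(w,v)$ attached to $x$, and $M_p^1(x)\geq 1$ yields $y\in T_p(1)$ with $\pi_{\sub}(y)=x$.

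The fix is simple: lift $x$ (the sub) to $(A,B)$; since $x'$ is an index-$p$ over-pair, the corresponding quartic ring is non-maximal at $p$ with an over-ring of index exactly $p$, which is precisely Case~(i). Then \eqref{eq:SW_Condition_1} applies to $(A,B)$ after a $G(\Z)$-transformation, and translating back yields the pair $(w,v)$ satisfying \eqref{eq:T1_conditions}. (Correspondingly, $v$ should pick out the $B$-coordinate---the one with the $p^2$ condition---so ``second dual coordinate'' rather than ``first''.) Your uniqueness argument via recovery of $(w,v)$ from the inclusions $W'\subset W$, $U'\subset U$ is then fine.
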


We deduce the a number of implications of the above lemma. Given $x\in \ol{V(\Z)}$ corresponding to $r:W\to U$, let $M_p^1(x)$ denote the number of pairs $(w,v)\in \P(W_{\F_p})\times\P(U^\vee_{\F_p})$ satisfying the conditions of~\eqref{eq:T1_conditions}. Given an element $x\in V(\Z)$, we will have to distinguish between some types of divisibility of $\Delta(x)$ by various prime powers. Specifically, for a quartic ring $Q$ and a prime $p$, we say that $p^{2k}\mid\Delta(Q)$  {\it well} if either $Q$ has index at least $p^k$ in its maximal order, or $Q$ has index at least $p^{k-1}$ in the maximal order of an {\it overramified} quartic field (i.e., a quartic field with splitting type $(1^21^2)$, $(2^2)$, or $(1^4)$ at $p$). For a square number $q^2\geq 1$, we say that $q^2\mid\Delta(Q)$ {\it well} if every prime power $p^{2k}$ dividing $q$ divides $\Delta(Q)$ well. Note that if $p^{2k}$ divides $\Delta(Q)$ but does not divide it well, then $Q$ must be an index-$qp^{k-1}$ suborder of a $(1^3)$-maximal order, for some $q$ with $(p,q)=1$. In particular, if $p^{2k+2}\mid\Delta(Q)$, then $p^{2k}\mid\Delta(Q)$ well. Finally, for $x\in V(\Z)$, we say that $q^2\mid\Delta(x)$ well if $q^2\mid\Delta(Q)$ well, where $Q$ is the quartic order corresponding to $x$. It can easily be checked that this is equivalent to the condition that $x$ corresponds to the triple $(Q,C,r)$, where the index of $C$ is at least $q$.

We have the following proposition.
\begin{proposition}\label{prop:Mp1_prelim}
Let $x$ be an element of $\ol{V(\Z)}$. Then the following are true.
\begin{enumerate}
\item If $M_p^1(x)\geq 1$, then $x$ is nonmaximal at $p$.
\item If $M_p^1(x)> 1$, then $p^4\mid\Delta(x)$ well.
\end{enumerate}
\end{proposition}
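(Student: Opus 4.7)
The plan is to interpret the pairs $(w,v)$ counted by $M_p^1(x)$ directly via Bhargava's characterization of nonmaximal quartic rings (Lemma~\ref{lem:nonmax_Manjul}), and then to deduce Part~(2) by a case split --- either producing two distinct overquartic rings and invoking the discriminant--index formula, or extracting a strengthened local vanishing that directly forces divisibility of $\Delta(x)$.

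For Part (1), suppose $(w,v)\in \P(W_{\F_p})\times\P(U^\vee_{\F_p})$ satisfies \eqref{eq:T1_conditions}. Since $\SL_2(\Z)\times\SL_3(\Z)\subset G(\Z)$ surjects onto $\SL_2(\F_p)\times\SL_3(\F_p)$, which acts transitively on $\P(W_{\F_p})\times\P(U^\vee_{\F_p})$, and since the conditions are $G(\Z)$-equivariant, we may act by $G(\Z)$ to make $w$ the class of the first standard basis vector of $W$ and $v$ the linear functional picking out the $B$-component of $r:W\to U$. The three conditions in \eqref{eq:T1_conditions} then translate exactly to $a_{11}\equiv 0\pmod p$, $b_{12}\equiv b_{13}\equiv 0\pmod p$, and $b_{11}\equiv 0\pmod{p^2}$, which is precisely the Case~(i) condition \eqref{eq:SW_Condition_1}. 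By Lemma~\ref{lem:nonmax_Manjul}, the quartic ring $Q$ corresponding to $x$ then has an overring, so $x$ is nonmaximal at~$p$.

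For Part (2), pick two distinct pairs $(w_1,v_1)\neq(w_2,v_2)$ satisfying \eqref{eq:T1_conditions} and split on whether $w_1=w_2$ as lines in $\P(W_{\F_p})$. If $w_1\neq w_2$, then the associated overrings $Q'_i := Q+\Z\cdot(\tilde w_i/p)$ (for any lifts $\tilde w_i\in W$ of $w_i$) are distinct, because $\tilde w_1,\tilde w_2$ are not proportional modulo $p$. Hence $Q'_1/Q$ and $Q'_2/Q$ are two distinct subgroups of order $p$ inside $\cO/Q$, where $\cO$ is the maximal order in $Q\otimes\Q$, so $[Q'_1+Q'_2:Q]=p^2$ and consequently $[\cO:Q]\geq p^2$. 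The discriminant--index formula $\Delta(x)=[\cO:Q]^2\Delta(\cO)$ then yields $p^4\mid\Delta(x)$.

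The remaining case $w_1=w_2=:w$ with $v_1\neq v_2$ is the main subtlety: here the two overpairs share a common overquartic ring (differing only in the cubic resolvent), so the index-comparison argument above does not apply. Instead, we use that $v_1,v_2$ span $U^\vee_{\F_p}$: the two instances of $q_{v_i}(w,\cdot)\equiv 0\pmod p$ combine to give $r(w,\cdot)\equiv 0\pmod p$ as a map $W\to U$, and the two instances of $q_{v_i}(w,w)\equiv 0\pmod{p^2}$ combine to give $r(w,w)\equiv 0\pmod{p^2}$ in $U$. Choosing coordinates with $w=e_1$, this means $a_{1j},b_{1j}\equiv 0\pmod p$ for all $j\in\{1,2,3\}$ and $a_{11},b_{11}\equiv 0\pmod{p^2}$. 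Expanding $\det(Ax-By)$ along the first row shows every coefficient of $f(x,y):=4\det(Ax-By)$ is divisible by $p^2$: the $(1,1)$-term supplies $p^2$ from its leading factor $a_{11}x-b_{11}y$, while for $j\in\{2,3\}$ the $(1,j)$-term supplies one factor of $p$ from $a_{1j}x-b_{1j}y$ and another from the $2\times 2$ cofactor (whose first column consists of $a_{12}x-b_{12}y$ and $a_{13}x-b_{13}y$, both divisible by $p$). Since $\Delta(f)$ is homogeneous of degree~$4$ in the coefficients of $f$, we conclude $p^8\mid\Delta(x)$.
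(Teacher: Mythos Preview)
Your proof is correct. For Part~(1) and for the case $w_1=w_2$ in Part~(2), you follow essentially the same route as the paper (the paper uses the element $(1,\diag(p^{-1},1,1))\in G(\Q)$ to drop the discriminant by $p^8$, while you obtain the same $p^8$ by expanding the resolvent cubic along the first row; these are equivalent).

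The genuine difference is in Part~(2) when $w_1\neq w_2$. The paper further splits this into the subcases $v_1=v_2$ and $v_1\neq v_2$, writes down explicit matrix shapes in each, and then checks that one lands in condition~\eqref{eq:SW_Condition_2} or~\eqref{eq:SW_Condition_2'}, i.e.\ in Case~(ii) of Lemma~\ref{lem:nonmax_Manjul}, forcing $\ind(Q)\geq p^2$. Your argument bypasses all of this: each pair $(w_i,v_i)$ already produces (via Part~(1)) an explicit index-$p$ overring $Q'_i=Q+\Z\cdot(\tilde w_i/p)$, and the inequality $w_1\neq w_2$ in $\P(W_{\F_p})$ forces $Q'_1\neq Q'_2$, hence $p^2\mid[\O:Q]$ directly from the group theory of $\O/Q$. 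This is cleaner and more conceptual; the paper's explicit matrix analysis, on the other hand, yields slightly more detailed structural information (namely that $x$ is in Case~(ii) specifically), which is not needed here but is the sort of thing that gets reused later when controlling Fourier transforms.
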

\begin{proof}
The first claim is immediate since if $M_p^1(x)\geq 1$, then there exists $y\in T_p(1)$ with $\pi_\sub(y)=x$. Then $\pi_\over(y)$ is an overpair of $x$, implying that $x$ is nonmaximal. 
To prove the second claim, let $r:W\to U$ correspond to $x$. Then there exist two pairs $(w,v)$ and $(w',v')$ in $\P(W_{\F_p})\times\P(U^\vee_{\F_p})$ satisfying the conditions of~\eqref{eq:T1_conditions}. We divide into three cases: first, $w=w'$; second, $v=v'$; and third, $w\neq w'$ and $v\neq v'$. For the first case, we complete $w$ to a basis of $W$ and take $\langle v,v'\rangle$ to be a basis of $U$, yielding an element $(A,B)$ of the form
\begin{equation*}
\left[\begin{pmatrix}0^2&{0}&{0}\\{0}&{*}&{*}\\{0}&{*}&{*}\end{pmatrix},\begin{pmatrix} 0^2&0&0\\0&{*}&{*}\\0&{*}&{*}\end{pmatrix}\right],
\end{equation*}
where the $0$ means that the coefficient is divisible by $p$ and the $0^2$ means the coefficient is divisible by $p^2$.
Since the element $(1,\diag(p^{-1},1,1))\in G(\Q)$ leaves the above pair integral, it follows that $p^8\mid\Delta(x)$.
For the third case, we complete $w$ and $w'$ to a basis of $W$ and complete $v$ to a basis of $U$, obtaining an element $(A,B)$ of the form
\begin{equation*}
\left[\begin{pmatrix}0^2&{0}&{0}\\{0}&{0^2}&{0}\\{0}&{0}&{*}\end{pmatrix},\begin{pmatrix} 0&{*}&{*}\\{*}&{0}&{*}\\{*}&{*}&{*}\end{pmatrix}\right].
\end{equation*}
Now either $b_{12}\equiv 0\pmod{p}$, in which case $(A,B)$ satisfies the conditions of \eqref{eq:SW_Condition_2}, or $b_{12}\not\equiv 0\pmod{p}$, in which case $(A,B)$ can be transformed via an action of $\GL_2(\Z)$ to satisfy the conditions of \eqref{eq:SW_Condition_2'}. In either case, we are in Case (ii) of Lemma \ref{lem:nonmax_Manjul} implying that $p^2\mid \ind(x)$ and hence $p^4\mid\Delta(x)$ well. Finally, in the third case, we complete $\langle w,w'\rangle$ to a basis for $W$ and take $\langle v,v'\rangle$ to be a basis for $U$ yielding an element of the form
\begin{equation*}
\left[\begin{pmatrix}0^2&{0}&{0}\\{0}&{0}&{*}\\{0}&{*}&{*}\end{pmatrix},\begin{pmatrix} 0&{0}&{*}\\{0}&{0^2}&{0}\\{*}&{0}&{*}\end{pmatrix}\right].
\end{equation*}
This element satisfies the conditions of \eqref{eq:SW_Condition_2}, again implying that $p^2\mid \ind(x)$ and hence $p^4\mid\Delta(x)$ well.
\end{proof}

\section{Uniformity estimates}

In this section, we prove uniform estimates on the number of $G(\Z)$-orbits on the set of elements in $V(\Z)$ having nonzero bounded discriminant, where each orbit is weighted by certain $G(\Z)$-invariant functions.

\subsection{Uniformity estimates on Rings}

Let $K$ be an \'etale quartic algebra over $\Q$, and $p$ be a prime. We say that $p^2\mid\Delta(K)$ {\it well} if the splitting type of $p$ at $K$ is $(1^21^2)$, $(2^2)$, or $(1^4)$. For a squarefree number $q$, we say that $q^2\mid\Delta(K)$ well if $p^2\mid\Delta(K)$ well for all primes $p$ dividing $q$.
We begin with the following result.
\begin{theorem}\label{prop:unif_fields}
Let $q$ be a squarefree number. The number of \'etale quartic algebras $K$ over $\Q$ with $|\Delta(K)|<X$ and $q^2\mid\Delta(K)$ well is bounded by $O(X^{1+o(1)}/q^2)$.
\end{theorem}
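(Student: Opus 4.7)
The plan is to decompose by the structure of the étale algebra and reduce the main case to a congruence-modulo-$q$ count of $G(\Z)$-orbits on $V(\Z)$. An étale quartic algebra over $\Q$ is one of: (a) a quartic field $K_4$; (b) $K_3\oplus\Q$ for a cubic field $K_3$; (c) a product $K_2\oplus K_2'$ of two quadratic fields (possibly equal); (d) $K_2\oplus\Q\oplus\Q$; or (e) $\Q^4$. The discriminant is multiplicative across factors. Since quadratic field discriminants are essentially squarefree, cases (c)--(e) force $q$ to be bounded (dividing a fixed power of $2$) and are handled trivially. Case (b) follows from the known uniformity estimate $\#\{K_3:|\Delta(K_3)|<X,\,q^2\mid\Delta(K_3)\}=O_\epsilon(X^{1+\epsilon}/q^2)$ for cubic fields, a standard consequence of the Delone--Faddeev parametrization and a geometry-of-numbers argument in the spirit of Davenport--Heilbronn with congruence conditions.

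The main case is (a). For a quartic field $K$ and a prime $p>3$, the condition $p^2\mid\Delta(K)$ forces the $p$-adic splitting type of $K$ to lie in the ramified set $\mathcal{S}:=\{(1^31),\,(2^2),\,(1^21^2),\,(1^4)\}$; the finitely many small primes $p\in\{2,3\}$ cost only a bounded constant in the final estimate. From the splitting-type table in \S\ref{sec:2ndmaintermvalues}, the density in $V(\Z_p)$ of pairs $(A,B)$ corresponding to a maximal quartic ring whose splitting type lies in $\mathcal{S}$ is $O(1/p^2)$. By Bhargava's parametrization (Theorem~\ref{th:quartic_param}), the pair $(\O_K,C)$ of the maximal order of $K$ together with a cubic resolvent corresponds to a single $G(\Z)$-orbit on $V(\Z)$; since the number of cubic resolvents of $\O_K$ is $O(1)$, it suffices to count $G(\Z)$-orbits of maximal elements $(A,B)\in V(\Z)$ with $|\Delta|<X$ whose reduction modulo $p$ has splitting type in $\mathcal{S}$ for each $p\mid q$. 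By the Chinese Remainder Theorem, using that $q$ is squarefree, these congruence conditions cut out a subset of $V(\Z)$ of density $O(1/q^2)$ modulo $q$.

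The final step is to show that the number of $G(\Z)$-orbits of maximal elements in this congruence set with $|\Delta|<X$ is $O_\epsilon(X^{1+\epsilon}/q^2)$ uniformly in $q$. For small $q$ this follows directly from the leading-term counting result of \cite{dodqf} applied to the sifted lattice, since the main term is exactly $O(X/q^2)$. The main obstacle is the regime of large $q$, where the error terms from cusp regions of the fundamental domain threaten to overwhelm the main term. The plan is to bound these contributions by combining Bhargava's explicit cusp analysis from \cite{dodqf} with the crucial observation that the condition ``splitting type in $\mathcal{S}$ at $p$'' is genuinely a codimension-two condition on $(A,B)\bmod p$ rather than codimension one, and that this density saving propagates into the cusp because the ramification condition at each $p\mid q$ is inherited by all coordinate projections that appear. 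Concretely, one expects to use the method of Belabas--Bhargava--Pomerance~\cite{BBPEE} for the cusp estimate, with the density saving of $1/q^2$ preserved up to a factor of $X^\epsilon$, yielding the desired uniform bound.
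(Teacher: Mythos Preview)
Your decomposition has a concrete error in case (c). The discriminant of $K_2\oplus K_2'$ is $\Delta(K_2)\Delta(K_2')$, and the condition $q^2\mid\Delta(K_2)\Delta(K_2')$ does \emph{not} force $q$ to be bounded: for a prime $p\mid q$ with $p>2$, since each $\Delta(K_2),\Delta(K_2')$ is squarefree away from $2$, the condition only forces $p\mid\Delta(K_2)$ and $p\mid\Delta(K_2')$. The case is still easy (there are $O_\epsilon(X^{1+\epsilon}/q^2)$ pairs of integers $(d_1,d_2)$ with $|d_1d_2|<X$ and $q\mid d_1$, $q\mid d_2$), but your stated reason is wrong. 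This is exactly how the paper handles this case.

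The more serious gap is in case (a). You treat all quartic fields at once and try to run a geometry-of-numbers argument with congruence conditions modulo $q$. You correctly identify that the cusp contribution is the obstacle for large $q$, but your resolution --- that the codimension-two ramification condition at each $p\mid q$ ``is inherited by all coordinate projections that appear'' --- is not justified and is in fact false in general: congruence conditions of density $1/p^2$ on $V(\Z/p\Z)$ need not have density $1/p^2$ on the lower-dimensional slices arising in the cusp. Your appeal to ``the method of Belabas--Bhargava--Pomerance'' is a hope, not an argument. The paper avoids this entirely by splitting case (a) according to Galois group. For $S_4$- and $A_4$-fields (equivalently, both $K$ and its cubic resolvent are fields), the required uniformity is exactly \cite[Proposition~23]{dodqf}, whose proof uses that the resolvent cubic is irreducible to control the cusp. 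For $D_4$-, $V_4$-, and $C_4$-fields, which contain a quadratic subfield $K_2$, the paper instead uses the relation $\Delta(K)=\Delta(K_2)^2N_{K_2/\Q}(\Delta(K/K_2))$: given $D$ with $|D|<X$ and $q^2\mid D$ there are $O_\epsilon(|D|^\epsilon)$ choices for $\Delta(K_2)$, and then $O_\epsilon(|D|^\epsilon)$ quadratic extensions of $K_2$ with the right relative discriminant by \cite{Cohen_Diaz_Olivier}, so the count is $O_\epsilon(X^{1+\epsilon}/q^2)$ since there are $O(X/q^2)$ choices for $D$. This bypasses the geometry-of-numbers difficulty altogether.
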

\begin{proof}
    We break up the set of \'etale quartic algebras $K$ over $\Q$ into four subsets: first, the subset of fields $K$ whose Galois closures have Galois group $S_4$ or $A_4$ over $\Q$. These are exactly the algebras $K$ such that both $K$ and the cubic resolvent of $K$ are fields. The required estimates for this subset of fields follows from \cite[Proposition 23]{dodqf}. (Proposition 23 of \cite{dodpf} implies the estimate when $q$ is a prime, but the same proof carries over without change for arbitrary squarefree $q$.) Second, the subset of $D_4$- $V_4$-, and $C_4$-quartic fields $K$. These fields $K$ contain a quadratic subfield $K_2$, and we have $\Delta(K)=\Delta(K_2)^2N_{K_2/\Q}(\Delta(K/K_2))$, where $N_{K_2/\Q}$ denotes the norm function from $K_2$ to $\Q$, and $\Delta(K/K_2)$ denotes the relative discriminant. Given an integer $D$ with $|D|<X$ and $q^2\mid D$, there are only $O(|D|^{o(1)})$ choices for $\Delta(K_2)$, thereby fixing $K_2$ up to $O(|D|^{o(1)})$ choices. From \cite[Theorem 1.1]{Cohen_Diaz_Olivier}, it follows that the number of quadratic extensions of $K_2$ with relative discriminant having norm $D/\Delta(K_2)^2$ is also bounded by $O(|D|^{o(1)})$. Since there are $O(X/q^2)$ possible values of $D$, the result follows for this subset of fields. 
    
    Third, we consider the algebras $K$ which are of the form $\Q\oplus K_3$, for which the claim follows essentially from work of Davenport--Heilbron (see \cite[Lemma 2.7]{BBPEE}). Finally, we consider algebras of the form $K=K_2\oplus K_2'$, where $K_2$ and $K_2'$ are \'etale quadratic algebras over $\Q$. The set of all $K$ is parametrized by pairs of squarefree (away from $2$) integers, and the claim follows immediately by noting that both these integers must be multiples of $q$ (up to a factor of $2$).
\end{proof}

Next, we have the following result, due to Nakagawa, estimating the number of suborders of an \'etale quartic algebra over $\Q$ with fixed index.

\begin{proposition}\label{prop:Nak}
Let $K$ be a quartic \'etale algebra over $\Q$ with ring of integers $\O_K$ and discriminant $D\neq 0$. Then the number $N(\O_K,q)$ of suborders of $\O_K$ having index $q$ is $\ll q^{o(1)} N(q,D)$, where
\begin{equation}\label{eq:NakBound}
N(q,D):=\prod_{\substack{p^2\nmid D\\p^3||q}}p
\prod_{\substack{p^2\nmid D\\p^e||q,\ e\geq 4}}p^{\lfloor e/2\rfloor}
\prod_{\substack{p^2\mid D\\p^e||q,\ e\geq 2}}p^{\lfloor e/2\rfloor}
\end{equation}
\end{proposition}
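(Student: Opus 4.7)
The plan is to localize via the Chinese Remainder Theorem and then invoke Nakagawa's local classification of suborders in quartic $\Z_p$-algebras. Any suborder $Q \subset \O_K$ of index $q$ contains $q\O_K$, and so is determined by its image in $\O_K/q\O_K$. Since $\O_K/q\O_K \cong \prod_{p^e \| q} \O_K/p^e\O_K$ and every sub-$(\Z/q\Z)$-algebra must factor compatibly with this decomposition, one obtains
\begin{equation*}
N(\O_K,q) \;=\; \prod_{p^e \| q} N_p\bigl(\O_K \otimes \Z_p,\, p^e\bigr),
\end{equation*}
where $N_p(\O_K \otimes \Z_p, p^e)$ denotes the number of $\Z_p$-subalgebras of the maximal local order $\O_K \otimes \Z_p$ of index $p^e$.

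For each local factor I would appeal to the analysis of quartic $\Z_p$-orders carried out by Nakagawa \cite{MR1342021}. The isomorphism type of $\O_K \otimes \Z_p$ is determined by the splitting/ramification type of $p$ in $K$, which is reflected in the $p$-adic valuation $v_p(D)$. When $p^2 \nmid D$ (so $p$ is unramified or at worst singly tamely ramified), the algebra $\O_K \otimes \Z_p$ is either \'etale or very close to it, and a direct enumeration of upper-triangular integer bases with positive diagonal of determinant $p^e$, subject to closure under the algebra multiplication, yields $N_p(\O_K \otimes \Z_p, p^e) \ll_\epsilon p^{\epsilon e}$ for $e \leq 2$, $\ll_\epsilon p^{\epsilon e}\cdot p$ for $e=3$, and $\ll_\epsilon p^{\epsilon e}\cdot p^{\lfloor e/2\rfloor}$ for $e \geq 4$; the jump at $e=3$ reflects the fact that the first nontrivial sub-$\F_p$-algebra structure of codimension $3$ needs $e\geq 3$ to appear. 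When $p^2 \mid D$, the multiplication on $\O_K \otimes \Z_p$ admits compatible sublattices starting already at index $p^2$, and the analogous bound $\ll_\epsilon p^{\epsilon e}\cdot p^{\lfloor e/2\rfloor}$ holds for $e \geq 2$, with a trivial bound of $1$ for $e \leq 1$. Taking the product over $p \mid q$ reproduces precisely the function $N(q,D)$, and the accumulated $\epsilon$-losses multiply to $\prod_{p^e \| q} p^{\epsilon e} = q^{\epsilon}$.

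The main obstacle is the ramified local case: one must perform a careful stratification of the possible $\Z_p$-algebra structures $\O_K \otimes \Z_p$ (for instance $\Z_p[\pi]$ for a uniformizer $\pi$, products such as $\Z_{p^2} \oplus \Z_p \oplus \Z_p$, etc.) and, within each, enumerate sublattices of index $p^e$ that remain closed under multiplication. The sharp exponent $\lfloor e/2 \rfloor$ arises because a typical such suborder is generated by $1$ together with three elements of the form $p^{\lceil e/2 \rceil}\alpha_i +(\text{lower order corrections})$, so the freedom in the correction terms scales like $p^{\lfloor e/2 \rfloor}$ rather than $p^e$. This is precisely the analysis accomplished in \cite{MR1342021}; once available, the global bound is immediate from the CRT decomposition above.
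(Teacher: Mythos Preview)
The paper does not supply its own proof of this proposition: it is stated as a result ``due to Nakagawa'' and attributed to \cite{MR1342021}, with no argument given beyond the remark that $N(q,D)=N(q,(q,D))$. So there is no proof in the paper to compare against.

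Your sketch is in the right spirit and lines up with how such a bound is obtained: the CRT reduction to local counts $N_p(\O_K\otimes\Z_p,p^e)$ is correct, and the claimed local bounds are what Nakagawa's analysis produces. That said, as written it is a sketch rather than a proof --- the substantive work (the case-by-case enumeration of sublattices closed under multiplication for each local algebra type, and the verification that the exponent is exactly $\lfloor e/2\rfloor$ in each regime) is deferred to \cite{MR1342021} rather than carried out. Since the paper itself simply cites Nakagawa, your proposal is already doing more than the paper does; if you want a self-contained argument you would need to reproduce the local enumeration explicitly, but for the purposes of matching the paper, a direct citation suffices.
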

Note that we have $N(q,D)=N(q,(q,D))$.
Next we prove that Nakagawa's result implies a bound of the same strength when counting sub-pairs $(Q,C)$ of given fixed index inside an \'etale quartic extension of~$\Q$.

\begin{corollary}\label{cor:Nak}
Let $K$ be an \'etale quartic algebra over $\Q$, with ring of integers $\O_K$ and discriminant $D$. Let $N_K(q)$ denote the number of pairs $(Q,C)$, where $Q$ is a quartic suborder of $\O_K$ having index $q$ and $R$ is a cubic resolvent ring of $Q$. Then $N_K(q)\ll q^{o(1)} N(q,D)$.
\end{corollary}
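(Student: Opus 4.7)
The plan is to combine Proposition \ref{prop:Nak} with a separate bound on the number of cubic resolvent rings of a fixed quartic order. Write
\begin{equation*}
N_K(q) = \sum_{\substack{Q \subseteq \O_K\\ [\O_K:Q] = q}} \#\{\text{cubic resolvent rings of } Q\}.
\end{equation*}
Proposition \ref{prop:Nak} bounds the number of terms in the outer sum by $O_\epsilon(q^\epsilon N(q,D))$. So it suffices to show that for each suborder $Q$ of $\O_K$ of index $q$, the number of cubic resolvent rings of $Q$ is $O_\epsilon(q^\epsilon)$.

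For this inner bound, we use Bhargava's description of cubic resolvents \cite{BHCL3}. Since $Q \otimes \Q = K$ has a unique cubic resolvent algebra $K_3$ (namely the cubic \'etale $\Q$-algebra corresponding to the $S_3$-action on the $4$ geometric points of $\Spec K$), any cubic resolvent ring $C$ of $Q$ is in particular an order in $K_3$. Moreover, Bhargava shows that $C$ is determined by the choice of a specific type of $Q$-module structure on $C$, and that this structure forces $[\O_{K_3} : C]$ to divide a fixed power of $\text{ind}(Q) = q$. Hence every cubic resolvent of $Q$ is an order in $K_3$ whose index is a divisor of $q^{O(1)}$. The number of orders in a fixed cubic \'etale algebra of bounded index is a divisor-type function, and in particular is $O_\epsilon(q^\epsilon)$.

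Combining the two bounds gives
\begin{equation*}
N_K(q) \ll_\epsilon q^\epsilon \cdot q^\epsilon N(q,D) \ll_\epsilon q^\epsilon N(q,D),
\end{equation*}
as required. The main obstacle is the inner bound: one must verify, using the explicit structure of the cubic resolvent map from \cite[\S3]{BHCL3}, that a cubic resolvent $C$ of a suborder $Q \subseteq \O_K$ of index $q$ is itself an order in $\O_{K_3}$ of index bounded by a fixed polynomial in $q$. Once this is established, the divisor-function estimate gives the required $q^\epsilon$ saving.
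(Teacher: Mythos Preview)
Your proof has a genuine gap in the inner bound. The claim that a fixed suborder $Q$ of index $q$ has only $O_\epsilon(q^\epsilon)$ cubic resolvent rings is false. By Bhargava's \cite[Corollary~4]{BHCL3} (which the paper invokes), the number of cubic resolvents of $Q$ equals $\sigma_1(c)$, the \emph{sum} of divisors of the content $c$ of $Q$. Since one can take $Q=\Z+c\,\O_K$ with $c=q^{1/3}$, the content can be as large as $q^{1/3}$, and then $\sigma_1(c)\asymp q^{1/3}$, not $q^\epsilon$. Your fallback argument also fails: the number of orders in a fixed cubic \'etale algebra of index at most $N$ is \emph{not} $O_\epsilon(N^\epsilon)$; it grows linearly in $N$ (the relevant Dirichlet series has a pole at $s=1$), so bounding cubic resolvents by ``all orders of bounded index in $K_3$'' cannot give a $q^\epsilon$ bound.

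The point is that the two factors you separate---the number of $Q$'s and the number of resolvents per $Q$---are negatively correlated, and bounding them independently loses this. The paper's proof exploits the correlation: it stratifies by the content $c$, uses the bijection $Q\leftrightarrow Q'$ where $Q=\Z+cQ'$ and $Q'$ has content $1$ and index $q/c^3$, and then bounds
\[
N_K(q)\;\ll_\epsilon\;\sum_{c^3\mid q} c^{1+\epsilon}\,N(\O_K,q/c^3)\;\ll_\epsilon\;\sum_{c^3\mid q} c^{1+\epsilon}\,N(q/c^3,D).
\]
The final step is the arithmetic inequality $c\cdot N(q/c^3,D)\ll N(q,D)$ (checked prime by prime from the explicit formula \eqref{eq:NakBound}), after which the sum over $c$ contributes only $q^\epsilon$. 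So the extra factor $c$ coming from $\sigma_1(c)$ is absorbed by the drop from $N(q,D)$ to $N(q/c^3,D)$, and this tradeoff is exactly what your decoupled bound misses.
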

\begin{proof}
Let $Q$ be one of the $N(\O_K,q)$ suborders of $\O_K$ of index $q$. Bhargava proves in \cite[Corollary 4]{BHCL3} that the number of cubic resolvent rings of $Q$ is equal to the sum of the divisors of the content of $Q$, where the {\it content} of $Q$ is the largest integer $c$ such that $Q=\Z+cQ'$ for some quartic ring $Q'$. This quartic ring $Q'$ then has content $1$ and index $q/c^3$ in $\O_K$. The number of pairs $(Q',C')$, where $Q'$ is an index $q/c^3$ quartic suborder of $\O_K$ of content $1$, and $R'$ is the (unique) cubic resolvent ring of $Q'$ is bounded by $N(\O_K,q/c^3)$. Therefore, we have
\begin{equation*}
N_K(q)\ll \sum_{c^3\mid q} c^{1+o(1)} N(\O_K,q/c^3)\ll
\sum_{c^3\mid q} c^{1+o(1)} N(q/c^3,D)\ll q^{o(1)} N(q,D)
\end{equation*}
as needed.
\end{proof}

We derive the following consequence of Theorem \ref{prop:unif_fields} and Corollary \ref{cor:Nak}.
\begin{corollary}\label{cor:uniformabestimate}
Let $a$ and $b$ be coprime squarefree positive integers. Then the number of triples $(Q,C,r)$, where $Q$ is a quartic ring, $C$ is a cubic resolvent of $Q$, $0<|\Delta(Q)|<X$, and $a^2b^4\mid\Delta(Q)$ well is bounded by $O(X^{1+o(1)}/a^{2}b^{4})$.
\end{corollary}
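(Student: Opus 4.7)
The plan is to write each triple $(Q, C, r)$ via its maximal order. Set $K := Q \otimes \Q$, $D_K := \Delta(K)$, and $q := [\mathcal{O}_K : Q]$, so $\Delta(Q) = q^2 D_K$. Then $a^2 b^4 \mid \Delta(Q)$ becomes $a^2 b^4 \mid q^2 D_K$, and since $a, b$ are coprime and squarefree, this decouples across primes dividing $ab$: for $p \mid a$ one has either $p \mid q$ or $p^2 \mid D_K$; for $p \mid b$ either $p^2 \mid q$, or $p \| q$ with $p^2 \mid D_K$, or $p \nmid q$ with $p^4 \mid D_K$. These three (respectively two) cases produce decompositions $a = a_1 a_2$ and $b = b_1 b_2 b_3$ into pairwise coprime squarefree factors, and there are $O_\epsilon((ab)^\epsilon)$ such decompositions, so it suffices to bound the count for each fixed one.

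For a fixed decomposition, the count becomes
\begin{equation*}
S := \sum_{\substack{q \\ a_1 b_1^2 b_2 \mid q}}\; \sum_{\substack{K\, :\, |D_K| < X/q^2 \\ a_2^2 b_2^2 b_3^4 \mid D_K}} N_K(q),
\end{equation*}
where $N_K(q)$ counts pairs $(Q, C)$ with $Q \subset \mathcal{O}_K$ of index $q$ admitting a cubic resolvent. By Corollary \ref{cor:Nak} we have $N_K(q) \ll_\epsilon q^\epsilon N(q, D_K)$ with $N(q, D)$ as in \eqref{eq:NakBound}. The main analytic input is a refinement of Theorem \ref{prop:unif_fields}: for coprime squarefree integers $m_1, m_2$,
\begin{equation*}
\#\bigl\{K : |\Delta(K)| < Y,\ m_1^2 m_2^4 \mid \Delta(K)\bigr\} \ll_\epsilon \frac{Y^{1+\epsilon}}{m_1^2 m_2^4}.
\end{equation*}
This is proved by exactly the same Galois-group-by-Galois-group breakdown as in Theorem \ref{prop:unif_fields}: for $S_4$- and $A_4$-quartic fields we invoke Bhargava's uniformity from \cite{dodqf}, noting that the $p$-adic condition $p^4\mid\Delta(K)$ at primes $p \mid m_2$ has density $O(p^{-4})$ in $V(\Z_p)$; for $D_4$, $V_4$, $C_4$ fields we use $\Delta(K)=\Delta(K_2)^2 N_{K_2/\Q}(\Delta(K/K_2))$ together with the relative quadratic extension counts of Cohen--Diaz y Diaz--Olivier; the split cases $\Q\oplus K_3$ and $K_2\oplus K_2'$ reduce to Davenport--Heilbronn-type uniformity for cubics and an elementary count of quadratic fields, respectively.

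Applying the refinement bounds the inner sum over $K$ by $O_\epsilon(X^{1+\epsilon}/(q^2 a_2^2 b_2^2 b_3^4))$. The remaining sum over $q$ divisible by $a_1 b_1^2 b_2$ is a multiplicative Dirichlet series whose local Euler factor at each prime can be read off from \eqref{eq:NakBound}. To obtain the sharp $b_1^4$ in the denominator, one further splits $b_1 = b_1^* b_1^{**}$ according to whether $p^2 \mid D_K$ or not; this split costs only $O_\epsilon((ab)^\epsilon)$ and gains either an extra $p^{-2}$ from the strengthened $K$-count at each $p \mid b_1^*$, or a stronger local $q$-Euler factor $\sim p^{-4}$ at each $p \mid b_1^{**}$ (using $\nu_p(2, <2) = 1$ in Nakagawa's formula). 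A direct computation of the Euler product then yields local factors $O(p^{-2})$ at each $p \mid a_1$, $O(p^{-4})$ at each $p \mid b_1$, and $O(p^{-2})$ at each $p \mid b_2$, giving the overall bound $S \ll_\epsilon X^{1+\epsilon}/(a^2 b^4)$.

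The chief obstacle is the refinement of Theorem \ref{prop:unif_fields} carrying the $m_2^4$ divisibility: Bhargava's uniformity must be invoked at local conditions specifying higher-power valuation of $\Delta(K)$ in the $S_4$-case, and the corresponding arguments for the $D_4, V_4, C_4$ cases require tracking the interplay between $\Delta(K_2)$ and $\Delta(K/K_2)$ at each prime of $m_2$. Both are routine extensions of the original proofs but carry nontrivial bookkeeping.
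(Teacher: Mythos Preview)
Your proposal is correct and follows essentially the same route as the paper: fiber over the pair $(\mathcal O_K,\,q)$, bound the number of $\mathcal O_K$ via Theorem~\ref{prop:unif_fields}, bound the number of sub-pairs of given index via Corollary~\ref{cor:Nak}, and sum over $q$. Your decompositions $a=a_1a_2$, $b=b_1b_2b_3$, and $b_1=b_1^*b_1^{**}$ play exactly the role of the paper's factorization $n=ba_1b_1n_1$ together with its auxiliary divisor $b_1'$. One simplification you overlook: for every quartic \'etale algebra $K$ and every prime $p\ge 5$ one has $v_p(\Delta(K))\le 3$, so your case $p\mid b_3$ (that is, $p\nmid q$ and $p^4\mid D_K$) is vacuous away from $p\in\{2,3\}$, and the ``refinement'' of Theorem~\ref{prop:unif_fields} with $m_2^4\mid\Delta(K)$ holds automatically rather than requiring the Galois-group case analysis you sketch. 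The paper exploits this directly by observing $b\mid n$, which immediately supplies two of the four powers of $b$ from the factor $n^{-2}$ in the field count and trims the remaining bookkeeping.
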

\begin{proof}
We partition our set of triples $(Q,C,r)$ into subsets with fixed value of the index $\ind(Q,C)=\ind(Q)$. Let $n$ be a positive integer occurring as an index of some such triple $(Q,C,r)$. Then we must have $b\mid n$ (up to a possible factor of $6$, which we will ignore) since the discriminant of a quartic maximal order cannot be divisible by $p^4$ for a prime $p\geq 5$. Write $n=ba_1b_1n_1$, where $a_1\mid a$, $b_1\mid b$, and $(n_1,ab)=1$. Let $(Q,C)$ be a pair with index $n$, and denote the maximal order of $Q\otimes\Q$ by $\O$. Since $a^2b^4\mid\Delta(Q)$ well, it follows that $(a_2b_2)^2\mid\Delta(\O)$, where $a_2=a/a_1$ and $b_2=b/b_1$. Let $b_1'$ be the largest integer dividing $b_1$ such that $b_1'^2\mid\Delta(\O)$. 
Since we have $|\Delta(\O)|<X/n^2$ and $a_2^2b_2^2b_1'^2\mid\Delta(\O)$ well, Proposition~\ref{prop:unif_fields} implies that the number of choices for $\O$ is $\ll X/(n^2(a_2b_2b_1')^{2-o(1)})\ll ((ab)^{o(1)})X/(a^2b^4b_1'^2n_1^2)$.
By Corollary \ref{cor:Nak}, it follows that the number of suborders of any such $\O$ with index $n$ is $\ll b_1'n^{o(1)} N(n_1,n_1)$. Therefore, the number of pairs $(Q,C)$ satisfying the conditions of the statement of the corollary, such that $\ind(Q,C)=n$ is $\ll ((abn)^{o(1)} N(n_1,n_1)X)/(b_1n_1^2)$. Since the sum over $n_1$ of $N(n_1,n_1)/n_1^2$ converges, and since $\sum_{b_1\mid b}b_1^{-1}\ll b^{o(1)}$, our result follows.
\end{proof}

\subsection{Weighted uniformity estimates}

Recall the function $M_p^1:\ol{V(\Z)}\to\Z_{\geq 0}$ defined in \S8.2. Let $\delta_p^{\nm}$ denote the indicator function on $\ol{V(\Z)}$ of the set of elements which are nonmaximal at $p$. Define the function $M_p^2:\ol{V(\Z)}\to\Z$ by $M_p^2=\delta_p^\nm-M_p^1$. The functions $M_p^1$, $\delta_p^\nm$, and $M_p^2$ are defined modulo $p^2$. We will abuse notation and use $M_p^1$, $\delta_p^\nm$, and $M_p^2$, to also denote the corresponding functions $V(\Z)\backslash\{\Delta = 0\}\to\Z$ and $V(\Z/p^2\Z)\to\Z$.

For a positive real number $X$, let $\ol{V(\Z)}_X$ denote the set of elements $x\in \ol{V(\Z)}$ with $0<|\Delta(x)|<X$. We have the following result.
\begin{proposition}\label{prop:unifMp1}
We have
\begin{equation*}
\begin{array}{rcl}
|\{x\in\ol{V(\Z)}_X:\,M_p^1(x)= 1\}|
&\ll& \displaystyle\frac{X^{1+o(1)}}{p^2}; \\[.15in]
|\{x\in\ol{V(\Z)}_X:\,M_p^1(x)> 1\}|
&\ll& \displaystyle\frac{X^{1+o(1)}}{p^4}; \\[.15in]
|\{x\in\ol{V(\Z)}_X:\,M_p^1(x)\gg_\epsilon p^\epsilon\}|
&\ll& \displaystyle\frac{X^{1+o(1)}}{p^5}; \\[.15in]
|\{x\in\ol{V(\Z)}_X:\,M_p^1(x)\gg_\epsilon p^{1+\epsilon}\}|
&\ll& \displaystyle\frac{X^{1+o(1)}}{p^8}. 
\end{array}
\end{equation*}    
\end{proposition}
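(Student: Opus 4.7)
The first two estimates will follow immediately from Proposition~\ref{prop:Mp1_prelim} combined with Corollary~\ref{cor:uniformabestimate}. If $M_p^1(x)=1$, then by Proposition~\ref{prop:Mp1_prelim}(1) the triple $(Q,C,r)$ corresponding to $x$ is nonmaximal at $p$, so $p\mid\ind(Q)$ and hence $p^2\mid\Delta(x)$; Corollary~\ref{cor:uniformabestimate} applied with $(a,b)=(p,1)$ then yields the bound $\ll X^{1+o(1)}/p^2$. Similarly, $M_p^1(x)>1$ gives $p^4\mid\Delta(x)$ by Proposition~\ref{prop:Mp1_prelim}(2), and Corollary~\ref{cor:uniformabestimate} with $(a,b)=(1,p)$ yields $\ll X^{1+o(1)}/p^4$.

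For the third and fourth bounds I will reinterpret $M_p^1(x)$ via the switching correspondence $T_p(1)$ as counting the number of index-$p$ overpairs of $(Q,C,r)$. Forgetting the resolvent, each overpair determines an over-ring $Q'\supset Q$ of index $p$ inside the maximal order $\mathcal{O}_K$, and the number of such over-rings is bounded above by the number of index-$p$ sublattices of $\mathcal{O}_K$ containing $Q$, which equals $(p^r-1)/(p-1)\leq 2p^{r-1}$, where $r$ is the $\mathbb{F}_p$-rank of the $p$-primary part of $\mathcal{O}_K/Q$. Hence $M_p^1(x)\gg_\epsilon p^{\epsilon}$ forces $r\geq 3$, while $M_p^1(x)\gg_\epsilon p^{1+\epsilon}$ forces $r\geq 4$; these give $\ind_p(Q)\geq p^3$ and $\ind_p(Q)\geq p^4$ respectively.

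The third bound then follows from Corollary~\ref{cor:Nak} together with Proposition~\ref{prop:Nak}: writing $\Delta(Q)=\ind(Q)^2\,\Delta(\mathcal{O}_K)$ with $v_p(\ind(Q))\geq 3$, the number of triples with $|\Delta(Q)|<X$ is bounded by the number of maximal orders $\mathcal{O}_K$ with $|\Delta(\mathcal{O}_K)|<X/p^6$ (which is $\ll X^{1+o(1)}/p^6$ by ring counting) times $N(p^3,\Delta(\mathcal{O}_K))\ll p$ index-$p^3$ subrings, yielding $\ll X^{1+o(1)}/p^5$. For the fourth bound the naive rank argument only gives $\ind_p(Q)\geq p^4$, and Nakagawa's formula produces $\ll X^{1+o(1)}\cdot p^2/p^8=X^{1+o(1)}/p^6$. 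To close the remaining factor of $p^2$, I will refine the pigeonhole: among the $\gg p^{1+\epsilon}$ pairs $(w_i,v_i)$ either some $w$-coordinate appears in at least two pairs---which by the ``$w=w'$'' subcase of the proof of Proposition~\ref{prop:Mp1_prelim} forces the much stronger divisibility $p^8\mid\Delta(x)$ via an explicit $G(\mathbb{Q})$-rescaling, and moreover imposes extra ramification on $\mathcal{O}_K$ (namely $p^2\mid\Delta(\mathcal{O}_K)$) sufficient to reach $X^{1+o(1)}/p^8$---or all the $w_i$ are distinct, in which case the count forces $r\geq 5$, so $\ind_p(Q)\geq p^5$ and Nakagawa yields $\ll X^{1+o(1)}p^2/p^{10}=X^{1+o(1)}/p^8$.

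The hardest step will be the fourth bound: the simple rank-based argument is off by a factor of $p^2$, and bridging the gap requires genuinely two-sided bookkeeping tracking both $v_p(\ind(Q))$ and $v_p(\Delta(\mathcal{O}_K))$. Concretely, I will need to argue that when many overpairs collapse along a shared $w$, the resulting $(A,B)$ can be brought into a form where both diagonal rescalings of $\ind(Q)$ \emph{and} additional $p$-divisibility of $\Delta(\mathcal{O}_K)$ are forced, as opposed to the ``generic'' case where the overpairs are spread out and $r$ itself must grow. Making this case analysis rigorous---while avoiding the need to venture further into the cusp than $S_4$-families require---will be the main technical point.
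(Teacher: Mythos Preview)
Your first two estimates are fine and match the paper. The trouble is in the rank argument you use for the third and fourth.

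First, a structural constraint you are violating: the $\mathbb{F}_p$-rank $r$ of $(\mathcal{O}_K/Q)_p$ is always at most $3$, since $\mathcal{O}_K/Q\cong(\mathcal{O}_K/\mathbb{Z})/(Q/\mathbb{Z})$ is a quotient of a rank-$3$ lattice. So your deductions ``$r\geq 4$'' and later ``$r\geq 5$'' are impossible, and the case split for the fourth bound collapses. Second, even granting that the over\emph{pair} count $M_p^1(x)$ is bounded by the over\emph{ring} count $(p^r-1)/(p-1)$ (which requires an argument, since different $v$'s with the same $w$ give the same $Q'$ but different $C'$), the condition $M_p^1\gg_\epsilon p^\epsilon$ only forces $r\geq 2$, not $r\geq 3$. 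That yields $\mathrm{ind}_p(Q)\geq p^2$ and hence only $X^{1+o(1)}/p^3$ after the Nakagawa count---two powers of $p$ short.

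The paper avoids the rank entirely. All $M_p^1(x)$ overpairs $(Q',C')$ share the same index $[\mathcal{O}_K:Q']=[\mathcal{O}_K:Q]/p$ and differ only in their $p$-local data, so Corollary~\ref{cor:Nak} bounds their number directly: if $v_p([\mathcal{O}_K:Q'])=e$ then there are $\ll_\epsilon p^{\epsilon e}N(p^e,D)$ such pairs, and $N(p^e,D)\leq p^{\lfloor e/2\rfloor}$. Hence $M_p^1\gg_\epsilon p^\epsilon$ forces $e\geq 2$ (so $\mathrm{ind}_p(Q)\geq p^3$), and $M_p^1\gg_\epsilon p^{1+\epsilon}$ forces $e\geq 4$ (so $\mathrm{ind}_p(Q)\geq p^5$). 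The endgame is then exactly the counting you describe---number of maximal orders times Nakagawa's subpair bound---yielding $X^{1+o(1)}/p^5$ and $X^{1+o(1)}/p^8$ with no case analysis on repeated $w$'s. The point you are missing is that Nakagawa already controls \emph{pairs}, not just rings, so you do not need to pass through the rank of $\mathcal{O}_K/Q$ at all.
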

\begin{proof}
First, if $M_p^1(x)\neq 0$, then $x$ corresponds to a triple $(Q,C)$ where $Q$ is nonmaximal at $p$. Then Corollary \ref{cor:uniformabestimate} provides the required bound. Second, if $|M_p^1(x)|>1$, then $p^4\mid\Delta(x)$ well by Proposition~\ref{prop:Mp1_prelim}. The required bound now follows from Corollary \ref{cor:uniformabestimate}.
Third, assume that $|M_p^1(x)|\gg_\epsilon p^\epsilon$, where $x\in\ol{V(\Z)}_X$ corresponds to $(Q,C,r)$. We claim that the index of $Q$ in its maximal order $\cO$ is at least $p^3$. Indeed, there are $\gg_\epsilon p^\epsilon$ distinct pairs $(Q',C')$ such that $Q$ has index-$p$ in $Q'$. But the maximal order of each $Q'$ is $\cO$. Hence by Corollary \ref{cor:Nak}, it follows that the index of any such $Q'$ in $\cO$ is at least $p^2$, which implies that the index of $Q$ in $\cO$ is at least $p^3$. The required result now follows from bounding the number of possible choices for $\cO$ by $O(X^{1+o(1)}/p^{6})$ using Theorem \ref{prop:unif_fields}, and the number of choices for $(Q,C)$ given $\cO$ by $O(p)$ using Corollary \ref{cor:Nak}. The fourth and final estimate follows in identical fashion by observing that if $|M_p^1(x)|\gg_\epsilon p^{1+\epsilon}$, where $x\in\ol{V(\Z)}_X$ corresponds to $(Q,C)$, then the index of $Q$ in its maximal order is at least $p^5$.
\end{proof}

We prove the analogous result for $M_p^2$.

\begin{proposition}\label{prop:unifMp2}
We have
\begin{equation*}
\begin{array}{rcl}
|\{x\in \ol{V(\Z)}_X:\,|M_p^2(x)|\geq 1 \}|
&\ll& \displaystyle\frac{X^{1+o(1)}}{p^4}; \\[.15in]
|\{x\in \ol{V(\Z)}_X:\,|M_p^2(x)|\gg_\epsilon p^\epsilon\}|
&\ll& \displaystyle\frac{X^{1+o(1)}}{p^5}; \\[.15in]
|\{x\in \ol{V(\Z)}_X:\,|M_p^2(x)|\gg_\epsilon p^{1+\epsilon}\}|
&\ll& \displaystyle\frac{X^{1+o(1)}}{p^8}.
\end{array}
\end{equation*}    
\end{proposition}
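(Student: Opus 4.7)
The proof of Proposition \ref{prop:unifMp2} should reduce almost entirely to Proposition \ref{prop:unifMp1} combined with a short case analysis coming from the relation $M_p^2 = \delta_p^\nm - M_p^1$. The plan is as follows. Since $\delta_p^\nm(x) \in \{0,1\}$ and since Proposition \ref{prop:Mp1_prelim}(1) forces $\delta_p^\nm(x)=1$ as soon as $M_p^1(x) \geq 1$, the function $M_p^2$ takes only three kinds of values: it is $0$; or it equals $+1$ (exactly when $x$ is nonmaximal at $p$ but $M_p^1(x)=0$, i.e.\ $x$ admits no index-$p$ overpair detected by the type-$1$ switching correspondence); or it equals $1 - M_p^1(x) \leq -1$ (exactly when $M_p^1(x) \geq 2$). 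In particular $M_p^1(x) = |M_p^2(x)| + 1$ whenever $|M_p^2(x)| \geq 2$.

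For the first bound, I would split $\{x : |M_p^2(x)| \geq 1\}$ into the two subcases just described. If $M_p^1(x)\geq 2$, then Proposition \ref{prop:Mp1_prelim}(2) gives $p^4 \mid \Delta(x)$. If instead $M_p^2(x)=+1$, then $x$ is nonmaximal at $p$ and does not fall into Case (i) of Lemma \ref{lem:nonmax_Manjul} (since the bijection set up in \S10.2 identifies $M_p^1(x)$ with index-$p$ subpair relations of Case (i) type). Thus $x$ falls into Case (ii) or (iii), so the corresponding order $Q$ has index divisible by $p^2$ in its maximal order, and again $p^4 \mid \Delta(x)$. Applying Corollary \ref{cor:uniformabestimate} with $a=1$, $b=p$ then yields the desired bound $X^{1+o(1)}/p^4$.

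For the remaining two estimates, observe that the subcase $M_p^2(x)=+1$ contributes only values of absolute size $1$, which is eventually smaller than $p^\epsilon$. Hence for $p$ large enough the hypothesis $|M_p^2(x)| \gg_\epsilon p^\epsilon$ (respectively $\gg_\epsilon p^{1+\epsilon}$) forces $M_p^1(x)=|M_p^2(x)|+1 \gg_\epsilon p^\epsilon$ (respectively $\gg_\epsilon p^{1+\epsilon}$). The desired bounds $X^{1+o(1)}/p^5$ and $X^{1+o(1)}/p^8$ then follow immediately from the third and fourth estimates of Proposition \ref{prop:unifMp1}.

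I do not anticipate a real obstacle: the only point that needs care is making the correspondence between the $M_p^2=+1$ case and Cases (ii)/(iii) of Lemma \ref{lem:nonmax_Manjul} precise, so as to justify that such $x$ must have $p^2 \mid \mathrm{ind}(Q,\mathcal{O})$ and therefore $p^4 \mid \Delta(x)$. This follows directly from the characterizations \eqref{eq:SW_Condition_1}--\eqref{eq:SW_Condition_3}: the Case (i) witnesses are exactly the $G(\Z)$-orbit data producing a $T_p(1)$-preimage of $x$, while Cases (ii) and (iii) yield index $p^2$ and $p^3$ respectively.
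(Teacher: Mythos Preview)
Your argument is correct and follows the same approach as the paper: the last two estimates reduce to Proposition \ref{prop:unifMp1} via the relation $M_p^1 = \delta_p^\nm - M_p^2$, and the first follows from showing $p^4 \mid \Delta(x)$ whenever $|M_p^2(x)| \geq 1$ and then applying Corollary \ref{cor:uniformabestimate}. Your case analysis is in fact more explicit than the paper's written proof, which (through what appears to be a sign slip) only cleanly treats the branch leading to $M_p^1(x) > 1$; your handling of the $M_p^2(x) = +1$ case---observing that $M_p^1(x)=0$ forces the failure of Case (i) of Lemma \ref{lem:nonmax_Manjul}, hence Case (ii) or (iii), hence an overring of index at least $p^2$---is exactly the step needed to complete that branch.
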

\begin{proof}
Since $\delta_p^{\nm}=M_p^2+M_p^1$ is an indicator function, and only takes on the values $0$ and $1$, only the first displayed equation of the proposition requires justification; the others follow from Proposition \ref{prop:unifMp1}. Let $x\in V(\Z)$ be such that $M_p^2(x)>0$. By Part 1 of Proposition \ref{prop:Mp1_prelim}, it follows that $x$ is nonmaximal at $p$. It follows that $\delta_p^\nm(x)=1$, and so $M_p^1(x)>1$ well. Then by Part 2 of Proposition \ref{prop:Mp1_prelim}, it follows that $p^4\mid\Delta(x)$.
The required bound now follows from Corollary \ref{cor:uniformabestimate}.
\end{proof}

For a squarefree positive integer $q$ and $x\in\ol{V(\Z)}$, we define $M^i_q(x):=\prod_{p\mid q}M^i_p(x)$ for $i\in\{1,2\}$, and for relatively prime positive integers $a,b$ we define $M_{a,b}(x):=M^1_a(x)M^2_b(x)$. For a $G(\Z)$-invariant function $\phi:V(\Z)\to\R$, we let $N(\phi,X)$ denote the sum over elements $x\in\overline{V(\Z)}_X$ of $\phi(x)$. We have the following consequence of the proofs of Propositions \ref{prop:unifMp1} and \ref{prop:unifMp2}.
\begin{corollary}\label{cor:weighted_unif}
Let $a$ and $b$ be positive squarefree relatively prime integers. Then we have
\begin{equation*}
N(|M_{a,b}|,X)\ll \frac{X^{1+o(1)}}{a^2b^4}.
\end{equation*}
\end{corollary}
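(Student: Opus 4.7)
The estimate is proved by a dyadic decomposition in each prime variable, combined with a multi-prime extension of the argument of Corollary \ref{cor:uniformabestimate}. Since $a, b$ are squarefree and coprime, and each $M_p^i$ factors through $V(\Z/p^2\Z)$, the weight $|M_{a,b}(x)| = \prod_{p|a} M_p^1(x) \cdot \prod_{p|b} |M_p^2(x)|$ factorizes multiplicatively across the primes of $ab$. For each $p \mid a$ with $M_p^1(x) \neq 0$, classify $x \in \overline{V(\Z)}_X$ into one of four tiers based on whether $M_p^1(x)$ equals $1$, lies in $[2, p^\epsilon)$, lies in $[p^\epsilon, p^{1+\epsilon})$, or is $\geq p^{1+\epsilon}$; classify primes $p \mid b$ analogously using $|M_p^2(x)|$. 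By the arguments in the proofs of Propositions \ref{prop:unifMp1} and \ref{prop:unifMp2}, each tier forces a lower bound on $v_p(\mathrm{ind}(Q,\cO))$---respectively $\geq 1, 2, 3, 5$ in tiers I--IV for $a$-primes, and $\geq 2, 2, 3, 5$ for $b$-primes (for $b$-primes the first inequality comes from Cases (ii)/(iii) of Lemma \ref{lem:nonmax_Manjul} when $M_p^1(x) = 0$, or from Proposition \ref{prop:Mp1_prelim}(2) when $M_p^1(x) \geq 2$)---and a corresponding lower bound on $v_p(\Delta(x))$.

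For a given tier assignment $T$, let $W(T) = \prod_p w(t_p)$ denote the per-tier weight bound, where $w(t_p) \in \{1, p^\epsilon, p^{1+\epsilon}, p^{O(1)}\}$ in tiers I--IV respectively (the $p^{O(1)}$ accounting for the crude bound $M_p^1(x), |M_p^2(x)| \ll p^3$ arising from $|\mathbb{P}(W_{\F_p})\times \mathbb{P}(U^\vee_{\F_p})| \ll p^3$). Let $N(T)$ be the number of $x$ satisfying the tier conditions. I would run the proof of Corollary \ref{cor:uniformabestimate} in parallel across all primes of $ab$: write $\mathrm{ind}(Q,\cO) = n \cdot \prod_{p \mid ab} p^{v_p(T)}$ for $n$ coprime to $ab$; for each $n$, bound the number of maximal orders $\cO$ via Theorem \ref{prop:unif_fields} (using $|\Delta(\cO)| \leq X/(n \prod p^{v_p(T)})^2$, extracting further squarefull divisibility of $\Delta(\cO)$ where possible---e.g., for tier-I $a$-primes with $v_p(\mathrm{ind})=1$ it must be that $p^2\mid\Delta(\cO)$); and for each $\cO$, bound the number of pairs $(Q, C)$ with $Q \subset \cO$ of the prescribed index via Corollary \ref{cor:Nak} together with Bhargava's count of cubic resolvents. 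Summing geometrically over $n$ and carrying out the prime-by-prime accounting yields $W(T) \cdot N(T) \ll X^{1+o(1)}/(a^2 b^4)$ uniformly in $T$. Finally, summing over the at most $4^{\omega(ab)} = (ab)^{o(1)}$ tier assignments is absorbed into the $X^{o(1)}$ factor.

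The main obstacle is the prime-by-prime bookkeeping: one must verify that in each tier, the excess weight (notably $p^{1+\epsilon}$ in tier III and $p^{O(1)}$ in tier IV) is more than compensated by the extra savings in the index count coming from the forced lower bound on $v_p(\mathrm{ind})$ (giving divisibility $p^6$ or $p^{10}$ of $\Delta$ respectively), and that these savings combine multiplicatively across primes so as to produce exactly the factor $1/(a^2 b^4)$ in the aggregate without losing anything. This is essentially a granular, multi-prime version of the analysis already carried out in Propositions \ref{prop:unifMp1} and \ref{prop:unifMp2}, so the mechanics should go through once the tier-by-tier exponent ledger is checked.
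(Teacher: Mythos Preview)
Your approach is correct and is precisely the elaboration the paper leaves implicit (the paper gives no argument beyond saying the corollary follows from the \emph{proofs} of Propositions \ref{prop:unifMp1} and \ref{prop:unifMp2}, and your tier decomposition across primes, combined with the multi-prime index/discriminant bookkeeping feeding into Theorem \ref{prop:unif_fields} and Corollary \ref{cor:Nak}, is exactly how one makes that precise). One small inaccuracy worth flagging: your parenthetical that tier-I $a$-primes with $v_p(\mathrm{ind})=1$ force $p^2\mid\Delta(\cO)$ is false and unnecessary---the saving of $p^{-2}$ already comes from $p\mid n$ in the index, exactly as in the proof of Corollary \ref{cor:uniformabestimate}, so you can simply drop that clause.
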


\medskip

Recall that for a positive integer $n$, and a function $\phi:V(\Z/n\Z)\to\C$, we normalize the Fourier transform $\widehat{\theta}:V^*(\Z/n\Z)\to\C$ as follows:
\begin{equation*}
\widehat{\theta}(y):=\frac{1}{n^{12}}\sum_{x\in V(\Z/n\Z)}e\Big(\frac{[ x,y]}{n}\Big)\theta(x).
\end{equation*}
In what follows, we prove results analogous to Propositions \ref{prop:unifMp1} and \ref{prop:unifMp2} and Corollary \ref{cor:weighted_unif} for the functions $\widehat{M_p^1}$ and $\widehat{M_p^2}$. To this end, recall that we have
\begin{equation*}
M_p^1 = 
\sum_{(w,v)\in \P(W_{\F_p})\times \P(U^{\vee}_{\F_p})^v}\chi_{w,v},
\end{equation*}
where $\chi_{w,v}$ denotes the characteristic function of the corresponding sublattice of $V(\Z)$ satisfying \eqref{eq:T1_conditions}.
Dualizing, we obtain
\begin{equation}\label{eq:Mp1hateq}
\wh{M_p^1} = 
p^{-5}\sum_{\substack{L\subset \P(W_{\F_p})\\ v\subset \P(U_{\F_p}^\vee)}}\chi'_{L,v}    
\end{equation}
where  $L$ is a line, and $\chi'_{L,v}$ is the characteristic function of the set of all $(r:U\to W)$ satisfying the following conditions:
\begin{itemize}
\item[{\rm (1)}] $r(L)\equiv 0\mod p^2$
\item[{\rm (2)}] $L$ is in the kernel of $r$ modulo $p$
\item[{\rm (3)}] $q_v\equiv 0\mod p$
\item[{\rm (4)}] $L$ is in the kernel of $q_v$ modulo $p^2$.
\end{itemize}
We have the following result.

\begin{proposition}\label{prop:M1phatbound}
The $L^\infty$-norm of $\widehat{M_p^1}$ is bounded by $O(p^{-2})$. Moreover,
\begin{equation*}
\begin{array}{rcl}
|\{x\in\ol{V(\Z)}_X:\,|\widehat{M_p^1}(x)|>0\}|
&\ll& \displaystyle\frac{X^{1+o(1)}}{p^{16}}; \\[.15in]
|\{x\in\ol{V(\Z)}_X:\,|\widehat{M_p^1}(x)|> 1/p^5\}|
&\ll& \displaystyle\frac{X^{1+o(1)}}{p^{17}}; \\[.15in]
|\{x\in\ol{V(\Z)}_X:\,|\widehat{M_p^1}(x)|\gg 1/p^4\}|
&\ll& \displaystyle\frac{X^{1+o(1)}}{p^{20}}. 
\end{array}
\end{equation*}    
\end{proposition}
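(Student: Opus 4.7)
The $L^\infty$ bound is immediate from \eqref{eq:Mp1hateq}: each $\chi'_{L,v}$ takes values in $\{0,1\}$, and there are $(p^2+p+1)(p+1) = O(p^3)$ pairs $(L,v)\in \P(W_{\F_p})\times \P(U^\vee_{\F_p})$, so $|\widehat{M_p^1}(x)|\leq p^{-5}\cdot O(p^3) = O(p^{-2})$.

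For the counting estimates, my plan is to first unravel the conditions (1)--(4) defining $\chi'_{L,v}$. Choosing the representative pair $(L,v) = (\langle e_1\rangle, e_1^\ast)$, they become the explicit congruences
\[ a_{11}, a_{12}, a_{13}, b_{11}\equiv 0\pmod{p^2}, \quad a_{22}, a_{23}, a_{33}, b_{12}, b_{13}\equiv 0\pmod{p}, \]
cutting out a sublattice of $V(\Z)$ of density $p^{-13}$. In particular $A\equiv 0\pmod p$, which places $(A,B)$ in case (iii) of Lemma \ref{lem:nonmax_Manjul} (after swapping the roles of $A$ and $B$), so the associated triple $(Q,C,r)$ has index divisible by $p^3$ in its maximal overpair. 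The further divisibility by $p^2$ of $a_{11},a_{12},a_{13}$ and $b_{11}$ then witnesses additional $G(\Q)$-transformations (for instance $g_3=\diag(p^{-1},1,1)$ together with a compensating $g_2$) that produce an integer overpair with smaller discriminant; iterating, I would show that the maximal overpair $(\O,C_{\max})$ has discriminant divisible by a sufficiently large power $p^k$ to conclude via Corollary \ref{cor:uniformabestimate}.

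For the two finer bounds, note that $|\widehat{M_p^1}(x)| > 1/p^5$ (respectively $|\widehat{M_p^1}(x)|\gg 1/p^4$) forces at least $2$ (respectively $\gg p$) pairs $(L,v)$ to satisfy $\chi'_{L,v}(x)=1$. Analogously to the proof of Proposition \ref{prop:Mp1_prelim}(2), I would perform a case analysis on whether the multiple valid pairs share a common $L$, a common $v$, or neither; completing the corresponding lines and covectors to a basis then imposes extra simultaneous vanishing conditions on the coefficients of $(A,B)$, yielding strictly more $p$-divisibility on $\Delta(\O)$. Corollary \ref{cor:uniformabestimate} applied with the refined squarefree parameters then delivers the sharper bounds $X^{1+o(1)}/p^{17}$ and $X^{1+o(1)}/p^{20}$.

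The principal obstacle will be the $p$-adic book-keeping needed to convert the congruence conditions on $(A,B)$ into the sharp divisibility $p^k\mid \Delta(\O)$ that gives exactly the $X^{1+o(1)}/p^{16}$ bound. A naive density argument only produces $X/p^{10}$; recovering the missing factor $p^6$ will require carefully chaining the switching correspondences of \S\ref{sec:switching} with Nakagawa's suborder count~\eqref{eq:NakBound}, and correctly tracking the $G(\Z/p^2\Z)$-orbit structure of the pairs $(L,v)$ so that each $G(\Z)$-orbit contributing to $\widehat{M_p^1}$ is counted once rather than once per $(L,v)$.
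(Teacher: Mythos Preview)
Your $L^\infty$ bound is fine, but the rest of the proposal rests on a misreading of \eqref{eq:Mp1hateq}. There, $L$ is a \emph{line in $\P(W_{\F_p})=\P^2$}, hence a $2$-dimensional subspace of $W_{\F_p}$, not the $1$-dimensional $\langle e_1\rangle$. (You can check this from the duality: $\chi_{w,v}$ cuts out a sublattice of index $p^5$, so its Fourier transform is $p^{-5}$ times the indicator of a sublattice of index $p^{24}/p^5=p^{19}$, not $p^{13}$ as your congruences would give.) With the correct choice $L=\langle e_2,e_3\rangle$ and $v=e_2^\ast$, the conditions (1)--(4) force
\[
\left[\begin{pmatrix}{*}&{0}&{0}\\{0}&{0^2}&{0^2}\\{0}&{0^2}&{0^2}\end{pmatrix},\begin{pmatrix} {0}&0^2&0^2\\0^2&{0^2}&{0^2}\\0^2&{0^2}&{0^2}\end{pmatrix}\right],
\]
which is much stronger than what you wrote. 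This is precisely your missing factor of $p^6$: acting by $\diag(p,1,1)$ and dividing by $p^2$ now yields an integral pair with discriminant lowered by $p^{16}$, not merely $p^{10}$.

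Even with the correct congruences, your strategy of bounding $p$-divisibility of $\Delta(\cO)$ for the \emph{maximal} order will not work directly: there is no reason $\Delta(\cO)$ should be divisible by a large power of $p$. The paper instead treats the switching as a map $x\mapsto x'$ with $\Delta(x')=\Delta(x)/p^{16}$ and reduces to bounding $N(J_p,X/p^{16})$, where $J_p(x')$ counts pairs $(w,v)$ with $w$ in the kernel of $q_v$ mod $p$. Showing $N(J_p,Y)\ll Y^{1+o(1)}$ requires its own case analysis (does the locus of such $(w,v)$ dominate $\P^1(U^\vee_{\F_p})$? is $q_v\equiv 0$ for some $v$?), each case handled by a further switching or by Corollary~\ref{cor:uniformabestimate}. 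Your proposal does not address this residual count at all. The finer bounds for $|\widehat{M_p^1}|>p^{-5}$ and $\gg p^{-4}$ are then obtained, as you suggest, by a case split on how two (or $\gg p$) distinct pairs $(L,v)$ can coexist---but again using the correct $2$-dimensional $L$, which makes the resulting matrices far more constrained than you anticipate.
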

\begin{proof}
The first assertion is immediate from \eqref{eq:Mp1hateq}.
Our proof of the estimates claimed in the proposition will make extensive use of ``switching tricks''. To illustrate this in the first case, take an element $x\in \ol{V(\Z)}$, corresponding to $r:W\to U$, with $\widehat{M_p^1}(x)\geq 1$. We know by definitition that there exist $L\subset \P(W_{\F_p})$ and $v\in\P(U_{\F_p}^\vee)$, satisfying the above four conditions. By choosing coordinates so that $L$ is spanned by the first two basis vectors of $W$, and $v$ is the second basis vector of $U^\vee$, we obtain an element $(A,B)$ in the orbit of $x$ in the form
\begin{equation*}
\left[\begin{pmatrix}{*}&{0}&{0}\\{0}&{0^2}&{0^2}\\{0}&{0^2}&{0^2}\end{pmatrix},\begin{pmatrix} {0}&0^2&0^2\\0^2&{0^2}&{0^2}\\0^2&{0^2}&{0^2}\end{pmatrix}\right].
\end{equation*}
We may multiply the first row and column of the above pair by $p$, and divide by $p^2$, thereby lowering the discriminant of $(A,B)$ by a factor of $p^{16}$. This can be made to give the required saving.

We now make this rough argument precise. Given $r:W\to U$ along with an $L$ and $v$ as above, we define $W':=L/p+W$ and notice that $r(W')\subset U$. We thus obtain
\begin{equation*}
N(p^5\wh{M_p^1},X)=N(J_p,X/p^{16}) \quad {\rm for}\quad
J_p= \sum_{\substack{w\in \P(W_{\F_p})\\ v\in \P(U^{\vee}_{\F_p})}}\chi^0_{w,v},
\end{equation*}
where $\chi^0_{w,v}$ is the characteristic function of the set of $x\in\ol{V(\Z)}$ corresponding to $r:W\to U$ satisfying the condition that $q_v$ has $w$ in its kernel. To prove the first assertion, it thus suffices to prove that $N(J_p,X)\ll X^{1+o(1)}$.

Given an element $x$ corresponding to $r:W\to U$, we let  $B^1(x)$ denote the variety of pairs $(w,v)$ such that $q_v$ has $w$ in its kernel. This is clearly the $\F_p$ points of an algebraic variety of bounded degree. Hence, the size of $B^1(x)$ is $\Theta(p^k)$ for some $k\geq 0$. We break up $\ol{V(\Z)}$ into the following subsets.

\begin{enumerate}
    \item \textbf{The set of $x\in \ol{V(\Z)}$ such that $B^1(x)$ dominates $\P^1(U^{\vee}_{\F_p})$:}
    
    For such $x$ (corresponding to $r:W\to U$), every $v\in \P(U_{\F_p}^\vee)$ is such that $q_v$ has a kernel mod $p$ and hence $p\mid\det(q_v)$. Therefore, the cubic resolvent of $x$ is $0$ modulo $p$, implying that $p^4\mid\Delta(x)$ well. Since $J_p(x)\ll p^3$, by Corollary \ref{cor:uniformabestimate} we see that this set contributes at most $\frac{X^{1+o(1)}}{p}$ to $N(J_p,X)$. 
    
    For the remaining casework we assume $B^1$ does not dominate $\P^1(U^{\vee})$we assume this is not the case.

    \item \textbf{The set of $x\in \ol{V(\Z)}$ such that $B^1(x)$ does not dominate $\P^1(U^{\vee}_{\F_p})$ and $J_p(x)\asymp p^2$:}
    
    Let $x$, corresponding to $r:W\to U$, be an element of this subset. Since $B^1(x)$ does not dominate $\P^1(U^{\vee}_{\F_p})$, this means we have a single $v\in \P^1(U^{\vee}_{\F_p})$ such that $q_v=0$. We may then define $U'=\ker v$ to obtain an element $x'$ corresponding to $(r:W\ra U')$ of discriminant $\Delta(x)/p^6$. The map sending $x$ to $x'$ is at most $p$-to-1. Hence the contribution of this subset to $N(J_p,X)$ is $\ll p \cdot p^2\cdot X/p^6=X/p^3$. 
    
    \item \textbf{The set of $x\in \ol{V(\Z)}$ such that $B^1(x)$ does not dominate $\P^1(U^{\vee}_{\F_p})$ and $J_p(x)\asymp p$:}

    Let $x$, corresponding to $r:W\to U$, be an element of this subset. This time, since $B^1(x)$ does not dominate $\P^1(U^{\vee}_{\F_p})$, we have a single $v\in \P^1(U^{\vee}_{\F_p})$ such that the kernel of $q_v$ contains a 2-dimensional subspace $L\subset W_{\F_p}$. As a consequence, the cubic resolvent of $x$ must be non-maximal at $p$, implying that $p^2\mid\Delta(x)$ well. Therefore, by Corollary \ref{cor:uniformabestimate}, we see that this set contributes at most  $\frac{X^{1+o(1)}}{p}$ to $N(J_p,X).$ 
\end{enumerate}
Since the contribution of the set of $x\in\ol{V(\Z)}$ with $J_p(x)=O(1)$ contributes at most $O(X)$ to $N(J_p,X)$, the first claim of the proposition follows from these three cases.

The proof of the other two cases are similar. We shall only provide the rough argument of the proof, leaving the linear algebraic details to the reader. We turn to the second claim of the proposition. If $x\in\ol{V(\Z)}$, corresponding to $r:W\to U$, with $\widehat{M_p^1}>1/p^5$, then there must be two pairs $(L,v)\neq (L',v')$ satisfying the four conditions listed above the proposition. We consider the three cases $L=L'$, $v=v'$, and $L\neq L'$, $v\neq v'$. In each of these three cases, we pick bases for $W$ and $U$ such that: the last two basis vectors of $W$ span $L$ and, if $L'\neq L$, the first two basis vectors of $W$ span $L'$; and the second basis vector of $U$ is $v$ and, if $v\neq v'$, the first basis vector of $U$ is $v'$. Picking this basis gives us a pair $(A,B)$ in the orbit of $x$ in each of the three cases. Note that in the third case, we have $(A,B)\in p^2V(\Z)$, and the corresponding bound of $O(X/p^{24})$ is more than sufficient. Below we list the $(A,B)$ obtained in the first and second cases:
\begin{equation*}
\left[\begin{pmatrix} {0}&0^2&0^2\\0^2&{0^2}&{0^2}\\0^2&{0^2}&{0^2}\end{pmatrix},
\begin{pmatrix} {0}&0^2&0^2\\0^2&{0^2}&{0^2}\\0^2&{0^2}&{0^2}\end{pmatrix}\right],\quad\quad
\left[\begin{pmatrix}{0^2}&{0^2}&{0}\\{0^2}&{0^2}&{0^2}\\{0}&{0^2}&{0^2}\end{pmatrix},
\begin{pmatrix} {0^2}&0^2&0^2\\0^2&{0^2}&{0^2}\\0^2&{0^2}&{0^2}\end{pmatrix}\right].
\end{equation*}
For the first case, we multiply the first row and column by $p$, and divide the resulting pair by $p^2$ (similar to the previous reduction to $J_p$). We are left with a pair $(A',B')\in V(\Z)$ whose cubic resolvent is identically $0$. This gives a bound of $O(X/p^{20})$, which is sufficient. In the second case, we divide $(A,B)$ by $p$ (reducing the discriminant by $p^{12}$ via an injective map) and then further divide the second matrix by $p$ (reducing the discriminant by $p^6$ via a map which is at most $p$-to-$1$). The contribution from this case is therefore bounded by $O(X/p^{17})$, which is sufficient.

We turn to the third assertion of the proposition. Given $x\in \ol{V(\Z)}$, corresponding to $r:W\to U$, with $M_p^1(x)\gg 1/p^4$, denote the set of pairs $(L,v)$ by $B^2(x)$. We have $|B^2(x)|\gg p$ from \eqref{eq:Mp1hateq}. We have already noted above that if $B^2(x)$ contains $(L,v)$ and $(L',v')$ with $L\neq L'$ and $v\neq v'$ then $x$ is an orbit in $p^2V(\Z)$, and so there are at most $O(X/p^{24})$ possibilities for $x$. Similarly, if $B^2(x)$ contains $(L,v)$ for a fixed $v$ and $p$ different $L$'s, then $x$ is easily seen to be an orbit in $p^2V(\Z)$. Finally, as noted above, if $B^2(x)$ contains $(L,v)$ and $(L,v')$ for $v\neq v'$, then we have a bound of $O(X/p^{20})$, which is sufficient.
\end{proof}

We also need the analogous result for the function $\widehat{M_p^2}$. To do this, we rely heavily on Hough's work \cite{hough}, in which the Fourier transform of $1-\delta^{\nm}_p$ is computed.
We have the following result.

\begin{proposition}\label{prop:M2phatbound}
The $L^\infty$-norm of $\widehat{M_p^2}$ is bounded by $O(p^{-2})$. Moreover,
\begin{equation*}
\begin{array}{rcl}
|\{x\in\ol{V(\Z)}_X:\,|\widehat{M_p^2}(x)|>0\}|
&\ll& \displaystyle\frac{X^{1+o(1)}}{p^{5}}; \\[.15in]
|\{x\in\ol{V(\Z)}_X:\,|\widehat{M_p^2}(x)|\gg_\epsilon p^{-7+\epsilon}\}|
&\ll& \displaystyle\frac{X^{1+o(1)}}{p^{10}}; \\[.15in]
|\{x\in\ol{V(\Z)}_X:\,|\widehat{M_p^2}(x)|\gg_\epsilon p^{-5+\epsilon}\}|
&\ll& \displaystyle\frac{X^{1+o(1)}}{p^{12}}. \\[.15in]
\end{array}
\end{equation*}    
\end{proposition}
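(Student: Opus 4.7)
The plan is to decompose $\widehat{M_p^2} = \widehat{\delta_p^{\nm}} - \widehat{M_p^1}$ and analyze each piece explicitly. We have the expression \eqref{eq:Mp1hateq} for $\widehat{M_p^1}$ as a sum of characteristic functions $\chi'_{L,v}$ over pairs $(L,v) \in \P(W_{\F_p}) \times \P(U_{\F_p}^\vee)$. Hough's work \cite{hough} computes $\widehat{1-\delta_p^{\nm}}$ (equivalently $\widehat{\delta_p^{\nm}}$) as an analogous sum of indicator functions of $p^2$-adic affine subsets of $V(\Z)^\vee$, organized by the three types of geometric configurations appearing in Lemma \ref{lem:nonmax_Manjul}. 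My strategy is to match the Case~(i) (index-$p$) pieces in Hough's decomposition against $\widehat{M_p^1}$, show they largely cancel, and then analyze what remains using switching tricks.

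First, I would establish the $L^\infty$ bound $\|\widehat{M_p^2}\|_\infty = O(p^{-2})$ by reading off the coefficients in Hough's formula: each constituent indicator is weighted by $p^{-k}$ with $k \geq 2$, and the number of flag configurations passing through any one point of $V(\Z/p^2\Z)^\vee$ is bounded uniformly in $p$. For the support bound $O(X^{1+o(1)}/p^5)$, I would argue that any $y \in V(\Z)^\vee$ in $\mathrm{supp}(\widehat{M_p^2})$ lies in the support of one of the remaining (non-cancelled) indicator functions. For each such indicator, a change of basis compatible with the associated flag data places the representative pair in a form with many coordinates divisible by $p$ (and some by $p^2$); applying an element of $G(\Q)$ to rescale yields a pair in $V(\Z)$ with controlled discriminant saving, after which Corollary \ref{cor:uniformabestimate} delivers the bound. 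The same switching tricks, but with stronger savings when the flag data is more degenerate, give the refined bounds at thresholds $p^{-7+\epsilon}$ and $p^{-5+\epsilon}$: these thresholds correspond to points where two or more distinct flag configurations simultaneously contribute, forcing additional divisibility on the discriminant of the representative pair exactly as in the proof of Proposition \ref{prop:M1phatbound}.

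The main obstacle is the cancellation step. The Case (i) contribution to $\widehat{\delta_p^{\nm}}$ is a naive sum over index-$p$ sublattice data, while $\widehat{M_p^1}$ is literally $\sum p^{-5}\chi'_{L,v}$, and the two are not equal on the nose: elements corresponding to subpairs of index $p^k$ for $k \geq 2$ are overcounted in the naive sum and must be corrected via inclusion-exclusion. Verifying that after these corrections one recovers precisely $\widehat{M_p^1}$ (so that the Case~(i) piece cancels) requires careful bookkeeping at the level of $V(\Z/p^2\Z)^\vee$. Once this is pinned down, the remaining terms in $\widehat{M_p^2}$ come only from Case~(ii) and Case~(iii) configurations, together with leftover Case~(i) terms supported on configurations with multiple simultaneous subpair structures; both types are amenable to the switching analysis described above, and the numerology of saved powers of $p$ matches the thresholds claimed in the proposition.
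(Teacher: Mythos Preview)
Your decomposition $\widehat{M_p^2}=\widehat{\delta_p^{\nm}}-\widehat{M_p^1}$ is the right starting point, but the cancellation step you identify as the ``main obstacle'' is in fact unnecessary, and the paper avoids it entirely. The point is that the bounds already proved in Proposition~\ref{prop:M1phatbound} for $\widehat{M_p^1}$ are \emph{stronger} than everything claimed here: the support of $\widehat{M_p^1}$ has size $O(X^{1+o(1)}/p^{16})$, which dominates all three target bounds $X/p^5$, $X/p^{10}$, $X/p^{12}$. So by the triangle inequality it suffices to prove the stated level-set bounds for $\widehat{\delta_p^{\nm}}$ alone, with no matching or cancellation against the Case~(i) piece required.

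For $\widehat{\delta_p^{\nm}}$ the paper then reads the structure directly off Hough's tables rather than redoing switching tricks. Define $A_p$ (resp.\ $A_{p^2}$) to be the set of $x$ for which some $q_v\equiv 0\pmod p$ (resp.\ $\pmod{p^2}$); these have size $O(X^{1+o(1)}/p^5)$ and $O(X^{1+o(1)}/p^{10})$ respectively by the switching argument already used. Hough's Theorem~2 shows: (a) the support of $\widehat{\delta_p^{\nm}}$ lies in $A_p$, giving the first bound; (b) outside of one specific orbit (Orbit~1 of Case~$\cO_{D1^2}$, whose support lies in $A_{p^2}$) and Case~$\cO_0$, the value is $O(p^{-7})$, giving the second bound; (c) the orbits in Case~$\cO_0$ are supported in $p\ol{V(\Z)}$, which has size $O(X/p^{12})$, giving the third bound. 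The $L^\infty$ bound is immediate from $\widehat{\delta_p^{\nm}}(0)$ equalling the density of nonmaximal elements, which is $O(p^{-2})$.

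So your plan would likely work, but the bookkeeping you flagged as the difficulty is a detour: the paper's route is to treat $\widehat{M_p^1}$ and $\widehat{\delta_p^{\nm}}$ separately and never attempt to match them term by term.
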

\begin{proof}
Since we have $M_p^2=\delta^{\nm}_p-M_p^1$, it suffices from Proposition \ref{prop:M1phatbound} to instead obtain the stated bounds on $\widehat{\delta^{\nm}_p}$. The first assertion follows directly since the density of elements that are nonmaximal at $p$ is $\ll p^{-2}$.
To prove the remaining estimates, we use the computation of $\widehat{1-\delta^{\nm}_p}$ in \cite[Theorem 2]{hough}\footnote{Note that Hough's normalization of the Fourier transform differs from ours by a factor of $p^{24}$.}.
Define $A_p\subset \ol{V(\Z)}$ to be the subset of elements $x$ corresponding to pairs $(r:W\to U)$, such that there exists $v\in \P^1(U^{\vee}_{\F_p})$ for which $q_v\equiv 0\mod p$. (Equivalently, every pair $(A,B)$ in the orbit of $x$ is such that $\langle A,B\rangle$ generates a dimension $\leq 1$ set over $\F_p$.) Likewise, define $A_{p^2}\subset \ol{V(\Z)}$ to be the subset of pairs $x$ corresponding to $(r:W\to U)$ such that there exists a $v\in \P^1(U^{\vee}_{\Z/p^2\Z})$ for which $q_v\equiv 0\mod p^2$. We have seen in the proof of the previous proposition that the number of orbits in $A_p$ with discriminant less than $X$ is bounded by $O(X/p^5)$. Similarly, the number of orbits in $A_{p^2}$ with discriminant less than $X$ is bounded by $O(X/p^{10})$.

We first note from \cite[Theorem 2]{hough}, that 
the support of $\widehat{\delta^{\nm}_p}$ is contained in $A_p$. This immediately implies the first displayed equation of the proposition. Moreover, by \cite[Theorem 2]{hough}, the function $\widehat{\delta^{\nm}_p}$ is bounded by $O(p^{-7})$ outside of Orbit 1 of Case $\cO_{D1^2}$ (in which the Fourier transform is bounded by $p^{-5}$) and Case $\cO_0$. (The notation of these cases are following that of $\cite{hough}$.) The support of Orbit 1 of Case $\cO_{D1^2}$ is contained in $A_p$, which implies the second displayed equation of the proposition. The support of all the orbits in Case $\cO_0$ are contained in $p\ol{V(\Z)}$. The third displayed equation then follows from the bound $(p\ol{V(\Z)})_X$.
\end{proof}

For a squarefree positive integer $q$ and $x\in\ol{V(\Z)}$, we have $\widehat{M^i_q}(x):=\prod_{p\mid q}\widehat{M^i_p}(x)$ for $i\in\{1,2\}$, and for relatively prime positive integers $a,b$ we have $\widehat{M_{a,b}}(x):=\widehat{M^1_a}(x)\widehat{M^2_b}(x)$. 
Finally, we may combine the arguments of the proofs of the above two propositions, obtaining the following result for composite numbers:

\begin{corollary}\label{cor:boundsftcomposite}
Let $a,b$ be relatively prime positive integers. 
$$N(|\widehat{M_{a,b}}|,X)\ll \frac{X^{1+o(1)}}{a^{21}b^{12}}.$$

\end{corollary}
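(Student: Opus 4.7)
The plan is to establish the composite bound by combining a multiplicative dyadic decomposition with the switching-style uniformity estimates that were already established at a single prime in Propositions \ref{prop:M1phatbound} and \ref{prop:M2phatbound}. The key observation is that $\widehat{M^1_p}$ and $\widehat{M^2_p}$ are functions of the reduction of $x$ modulo $p^2$, so for distinct primes these factors are ``independent'' via the Chinese Remainder Theorem.

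First, for each prime $p \mid a$, I would partition the support of $\widehat{M^1_p}$ into three dyadic bands corresponding to $|\widehat{M^1_p}| \in (0, p^{-5}]$, $(p^{-5}, p^{-4}]$, and $(p^{-4}, p^{-2}]$; by Proposition \ref{prop:M1phatbound} the product of the band maximum and the band support bound is $\ll X^{1+o(1)}/p^{21}$ in each case. Analogously, for each $p \mid b$, stratify into the three bands dictated by Proposition \ref{prop:M2phatbound}, with the product of value and support bound always $\ll X^{1+o(1)}/p^{12}$. Factoring $\widehat{M_{a,b}} = \prod_{p|a} \widehat{M^1_p} \cdot \prod_{p|b} \widehat{M^2_p}$ and using the triangle inequality, $N(|\widehat{M_{a,b}}|, X)$ is bounded by a sum over $O(3^{\omega(ab)}) = O((ab)^{o(1)})$ choices of bands, of the product of the corresponding band maxima times the cardinality of the intersection of the band supports.

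Second, the intersection of band supports must be bounded multiplicatively. Each band at prime $p$ is cut out by two kinds of data extracted from the proofs of Propositions \ref{prop:M1phatbound} and \ref{prop:M2phatbound}: a congruence condition on $x$ modulo $p^2$ (specifying where the Fourier transform is supported and how large it is), and a divisibility condition on $\Delta(x)$ by a power of $p$ (arising from the switching reduction that produced the support bound). By CRT the congruence conditions at different primes combine independently, while Corollary \ref{cor:uniformabestimate} shows that the coprime discriminant divisibility conditions combine multiplicatively. Hence each term in the sum is bounded by $X^{1+o(1)} \prod_{p|a} p^{-21} \prod_{p|b} p^{-12}$, and summing over band choices gives the claimed bound after absorbing $3^{\omega(ab)}$ into $X^{o(1)}$.

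The main technical obstacle is verifying that the switching arguments from the two propositions can be performed simultaneously at all primes dividing $ab$. This should be routine because the rational switching matrices used at prime $p$ (matrices in $G(\Q)$ with entries involving only powers of $p$) act as the identity modulo $q^2$ for every other prime $q \mid ab$, so they preserve the conditions imposed by bands at the other primes. One can therefore carry out the switching reductions at each prime of $ab$ in sequence, gaining the corresponding discriminant savings independently at each step, which is precisely what the multiplicativity argument requires.
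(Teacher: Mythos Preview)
Your approach is correct and is precisely what the paper intends: the paper's own ``proof'' consists only of the remark that one combines the arguments of Propositions~\ref{prop:M1phatbound} and~\ref{prop:M2phatbound}, with no further details. Your dyadic banding at each prime, together with the observation that the switching matrices used at a prime $p$ lie in $G(\Z_q)$ for every $q\neq p$ and hence preserve the mod-$q^2$ band conditions (via the $G(\Z/q^2\Z)$-invariance of $\widehat{M^i_q}$), is exactly the mechanism needed to run the single-prime reductions in sequence and obtain the multiplicative saving $a^{-21}b^{-12}$.
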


\section{Executing the Sieve}
In this section we prove Theorem \ref{thm:mainallfields}, which we restate here for convenience:

\begin{theorem}
Let $F(\Sigma)$ be a family of quartic fields, and let $\psi:\R_{\geq 0}\to\R_{\geq 0}$ be a smooth function with compact support. Then 
\begin{equation*}
N_{\Sigma}(\psi,X)=C_1(\Sigma,\psi)\cdot X+
C'_{5/6}(\Sigma,\psi)\cdot X^{5/6}\log X + C_{5/6}(\Sigma,\psi)\cdot X^{5/6} + O(X^{13/16+o(1)}).
\end{equation*}
\end{theorem}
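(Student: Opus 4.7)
The plan is to apply Möbius inversion to reduce from counting quartic fields (equivalently maximal orders, by Bhargava's parametrization) to counting quartic triples with prescribed non-maximality, writing
\[
N_\Sigma(\psi,X) = \sum_{n=1}^\infty \mu(n)\, N_{\Sigma \cap \Sigma_n^{\mathrm{nmax}}}(\psi, X),
\]
where $\Sigma_n^{\mathrm{nmax}}$ forces non-maximality at every prime dividing $n$. For squarefree $n = \prod_{p\mid n} p$, the condition factors as $\prod_p \delta_p^{\mathrm{nm}}$, and I will use the decomposition $\delta_p^{\mathrm{nm}} = M_p^1 + M_p^2$ from \S\ref{sec:switching}. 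This produces a refined sum over squarefree coprime pairs $(a,b)$ with $ab = n$, where $a$ tracks the ``index $p$'' contributions $M_p^1$ and $b$ tracks the remainder $M_p^2$.

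For small $n$ (say $n \leq X^{1/24-\epsilon}$), I would apply Theorem \ref{th:countingbyzeta} to the weighted count of triples defined by $\Sigma \cap \Sigma_n^{\mathrm{nmax}}$. The support is cut out by congruence conditions modulo a power of $n$, so Theorem \ref{th:SZF_v1} provides the expansion with main terms at $s \in \{1, 5/6, 3/4\}$ and super-polynomially decaying error. This yields constants $c_1(n), c_{5/6}'(n), c_{5/6}(n), d_{3/4}'(n), d_{3/4}(n)$ given by Shintani residues, and I summing these against $\mu(n)$ gives formal candidates for $C_1(\Sigma,\psi), C_{5/6}'(\Sigma,\psi), C_{5/6}(\Sigma,\psi)$ after I verify absolute convergence.

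For large $n$, I would bound $N_{\Sigma \cap \Sigma_{a,b}^{\mathrm{nmax}}}(\psi,X)$ directly using Corollary \ref{cor:weighted_unif}, which gives $\ll X^{1+o(1)}/(a^2 b^4)$. To control the intermediate range and also the tails of the residue sums $\sum c_{\cdot}(n)$, I would invoke the Shintani functional equation (Theorem \ref{th:SZF_v1}): shifting the Mellin contour to $\mathrm{Re}(s) = -\epsilon$ picks up the main terms and produces a dual zeta function whose leading coefficient involves $\widehat{M_a^1 M_b^2}$. Here the Fourier transform bounds from Propositions \ref{prop:M1phatbound}--\ref{prop:M2phatbound} (giving $\|\widehat{M_p^1}\|_\infty \ll p^{-2}$, and sharper sparsity in the support) together with Corollary \ref{cor:boundsftcomposite} control the dual side. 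Optimizing the cutoff between the two regimes — direct expansion for small $n$, uniformity for large $n$ — balances the two error terms and yields the exponent $13/16 + o(1)$.

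The hardest step will be this balancing. In the cubic case the analogous optimization is cleaner because the representation has dimension $3$; here the dimension $12$ causes the functional equation to gain a factor of $n^{24}$, so the naive uniformity bound $X/n^2$ is useless and one genuinely needs the finer splitting into $M_p^1, M_p^2$: only the $p^{-2}$ Fourier bound on $M_p^1$ (small, but weak uniformity) paired with the $X/p^4$ uniformity on $M_p^2$ (stronger, but large Fourier transform) gives enough room, and the interplay of all four error terms coming from poles at $s \in \{5/6, 3/4\}$ — each with and without a log factor — must be tracked carefully. Once these bounds are in place, showing that the putative $X^{3/4}$ and $X^{3/4}\log X$ main terms are in fact subsumed by the $O(X^{13/16+o(1)})$ error is automatic from the same estimates, because the sum over $n$ of the $d_{3/4}(n)$ constants diverges slowly enough that it is dominated by the uniformity contribution at the optimal cutoff.
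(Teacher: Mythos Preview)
Your proposal is essentially correct and follows the paper's approach closely: the decomposition $\delta_p^{\mathrm{nm}} = M_p^1 + M_p^2$, the power-series expansion via the Shintani functional equation with error controlled by Corollary~\ref{cor:boundsftcomposite}, the uniformity estimate from Corollary~\ref{cor:weighted_unif} for the tail, and the balancing of the two to obtain $X^{13/16+o(1)}$ are exactly what the paper does.

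Two points worth tightening. First, the cutoff should be in $(a,b)$ separately rather than in $n=ab$: the paper splits at $a^5b^{16}\le X$, reflecting the asymmetry between the error $O(a^{3+o(1)}b^{12+o(1)})$ from the functional equation and the uniformity bound $X^{1+o(1)}/(a^2b^4)$; your tentative $n\le X^{1/24}$ is not the right shape. Second, you do not say how you extract bounds on the individual residues $r(a,b,c)$, $r_2(a,b,c)$ needed for the tails of $\sum_{a,b}$; the paper does this by plugging the optimal $X=a^5b^{16}$ into the combined inequality and using a linear-independence argument (the functions $X^c$, $X^c\log X$ for distinct $c$ are independent) to isolate each coefficient. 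Finally, you should note that the sieve actually counts maximal orders in \'etale quartic \emph{algebras}, not fields; the paper subtracts off the contributions from $\Q\oplus K_3$, $K_2\oplus K_2'$, etc., each of which has its own (known) expansion with error well below $X^{13/16}$.
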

Though we do not give explicit descriptions of the constants $C_1(\psi)$, $C'_{5/6}(\psi)$, and $C_{5/6}(\psi)$, our proof will in fact express them as sums of residues (at $1$ and $5/6$) of certain Shintani zeta functions; our constants are inexplicit because these residues are inexplicit in general. However, when $\Sigma$ is an $S_4$-family, our results in the previous part of the paper give explicit descriptions for the residues of the relevant Shintani zeta functions. Hence, combing those results with our proof of Theorem \ref{thm:mainallfields} will yield Theorem \ref{thm:mainS4fields}.

\subsection{The inclusion exclusion sieve}
Recall that $\Sigma=(\Sigma_v)_{v\in S}$ consists of finite sets $\Sigma_v$ of \'etale quartic extensions of $\Q_v$, for $v$ in a finite set $S$ of places of $\Q$. For each finite place $p\in S$, let $\Lambda_p\subset V(\Z_p)$ be the set of elements whose associated quartic ring is the maximal order of an algebra belonging to $\Sigma_p$. Then $\Lambda_p$ is a subset defined by congruence conditions modulo $p^2$ (up to some possible bounded powers of $2$).
For a finite prime $p\not\in S$, we define $\Lambda_p$ to be the set of elements in $V(\Z_p)$ which correspond to triples $(Q,R,r)$, where $Q$ is maximal at $p$. We set $\Lambda_\infty$ to be the set of elements in $V(\R)$ whose corresponding quartic extension of $\R$ belongs to $\Sigma_\infty$. Let $L(\Lambda)$ denote the set of elements $x\in V(\Z)$, such that $x\in\Lambda_v$ for every place $v$. Then $L(\Lambda)$ is $G(\Z)$-invariant, and $\ol{L(\Lambda})$ is in bijection with $F(\Sigma)$.

To carry out a smoothed count of the number of elements in $\ol{L(\Lambda})$, we use an inclusion exclusion sieve. To set this up, we let $P$ denote the product of the finite primes in $S$, and let $\delta_{\Sigma}$ denote the characteristic function of the 
set of elements $x\in V(\Z)$, such that $x\in\Lambda_p$ for all $p\mid S$. Then $\delta_{\Sigma}$ is defined modulo $P^2$ (again, up to some bounded powers of $2$ which will not effect the error term).
For each prime $p\not\in S$ we recall that $\delta^{\nm}_p\subset V(\Z)$ is the characteristic function of set of elements in $V(\Z)$ corresponding to quartic rings which are non-maximal at $p$. For each positive integer $m$ coprime to $P$, we set $\delta^{\nm}_m:=\prod_{p\mid m} \delta^{\nm}_p$. Without loss of generality, we will assume that $\Sigma_\infty=\{\R^{4-2i}\times\C^i\}$ is a singleton set. 
For a finite collection $\Sigma$ of local specifications of quartic fields, let $F'(\Sigma)$ denote the family of quartic \'etale algebras $K$ with $K\otimes\Q_v\in\Sigma_v$ for all $v\in S$.
Let $N'_\Sigma(\psi;X)$ be the smoothed sum of the quartic \'etale algebras defined by
\begin{equation}\label{eq:fields_count}
N'_{\Sigma}(\psi,X):=\sum_{K\in F'(\Sigma)}\psi\Bigl(\frac{|\Delta(K)|}{X}\Bigr).
\end{equation}
Then by the inclusion-exclusion sieve, we have
$$N'_{\Sigma}(\psi,X)=\sum_{\substack{m\geq 1\\ (m,P)=1}}\mu(m)N^{(i)}(\delta_\Sigma\delta^{\nm}_m;\psi;X).$$
In fact, we will split this up via through the decomposition studied earlier: $\delta^{\nm}_p=M^1_p+M^2_p$. We obtain

\begin{equation}\label{eq:mainsievesetup}
    N'_{\Sigma}(\psi,X)=\sum_{\substack{a,b\geq 1\\ (ab,P)=1}}\mu(ab)N^{(i)}(M^{\Sigma}_{a,b};\psi;X),
\end{equation}
where $M^{\Sigma}_{a,b}:=\delta_\Sigma M_{a,b}$. In the next subsections we shall analyze the power-series expansions for $N(M^{\Sigma}_{a,b};\phi;X)$ given by the theory of Shintani zeta functions, being careful about uniformity with respect to $a$ and $b$.

\subsection{The power series expansion}

Let $i\in\{0,1,2\}$ be fixed. Recall the functional equation of the Shintani zeta functions due to Sato-Shintani, and stated in Theorem \ref{th:SZF_v2}. We restate it here for $\xi_i(M_{a,b}^\Sigma;s)$: we have
\begin{equation}\label{eq:funceq_M}
\xi_i(M^{\Sigma}_{a,b};1-s)=(abP)^{24s}\gamma(s-1)\sum_{j\in\{0,1,2\}}c_{ji}(s)\xi^*_j(\widehat{M^{\Sigma}_{a,b}};s).
\end{equation}
Recall also that the functions $\xi(\phi;s)$ have (at most) a double pole at $1$, $5/6$, and $3/4$. We let the expansion of $\xi_i(M^{\Sigma}_{a,b};s)$ around $c=1,\frac56,\frac34$ be given by $\xi_i(s,M_{a,b})=\frac{r_2(a,b,c)}{(s-c)^2}+\frac{r(a,b,c)}{s-c} + O(1)$. 

The purpose of this subsection is to prove the following result.
\begin{proposition}\label{prop:residueexpansion}
Let $\psi:\R_{>0}\to\R_{\geq 0}$ be a smooth and compactly supported function. Then we have
\begin{equation*}
\begin{array}{rcl}
N^{(i)}(M^\Sigma_{a,b};\psi;X)&=&\displaystyle\sum_{c\in \{1,\frac34,\frac56\}}X^c\left(\log X\tilde{\phi}(c)r_2(a,b,c)+\tilde{\phi}(c)r(a,b,c)+(\tilde{\phi})'(c)r_2(a,b,c) \right)
\\[.2in]&&\displaystyle
+ O(a^{3+o(1)}b^{12+o(1)}).
\end{array}
\end{equation*}
\end{proposition}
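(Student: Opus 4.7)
The plan is to apply Mellin inversion to write
\begin{equation*}
N^{(i)}(M^\Sigma_{a,b};\psi;X)=\int_{\Re(s)=2}\xi_i(M^\Sigma_{a,b};s)\wt\psi(s)X^s\,ds + O_A(X^{-A}),
\end{equation*}
and then shift the contour leftward to $\Re(s)=-\epsilon$ for a small $\epsilon>0$, collecting residues at the (at most double) poles at $s\in\{1,5/6,3/4\}$. A standard residue computation at a double pole $\xi_i(M^\Sigma_{a,b};s)=r_2(a,b,c)/(s-c)^2+r(a,b,c)/(s-c)+O(1)$ produces exactly
\begin{equation*}
X^c\bigl(r_2(a,b,c)\wt\psi(c)\log X+r(a,b,c)\wt\psi(c)+r_2(a,b,c)\wt\psi'(c)\bigr),
\end{equation*}
matching each main term claimed in the proposition (with simple poles corresponding to $r_2(a,b,c)=0$).

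The bulk of the work is bounding the shifted integral $\int_{\Re(s)=-\epsilon}\xi_i(M^\Sigma_{a,b};s)\wt\psi(s)X^s\,ds$ by $O(a^{3+o(1)}b^{12+o(1)})$, uniformly for $X\geq 1$. For this I would apply the functional equation \eqref{eq:funceq_M} to rewrite
\begin{equation*}
\xi_i(M^\Sigma_{a,b};s)=(abP)^{24(1-s)}\gamma(-s)\sum_j c_{ji}(1-s)\xi^*_j(\widehat{M^\Sigma_{a,b}};1-s)
\end{equation*}
along the line $\Re(s)=-\epsilon$, so that $\Re(1-s)=1+\epsilon$ lies strictly inside the region of absolute convergence for the dual Shintani zeta function.

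Inside that region, Corollary \ref{cor:boundsftcomposite}, together with the pointwise bound $|\widehat{M^\Sigma_{a,b}}|\leq|\widehat{M_{a,b}}|$ (since $|\widehat{\delta_\Sigma}|\leq 1$ and $(ab,P)=1$), and partial summation yield
\begin{equation*}
\bigl|\xi^*_j(\widehat{M^\Sigma_{a,b}};1-s)\bigr|\ll\frac{(ab)^{o(1)}}{a^{21}b^{12}}\cdot(1+|t|)^{O(1)}.
\end{equation*}
The archimedean prefactor $(abP)^{24(1-s)}$ has absolute value $(ab)^{24+O(\epsilon)}$, and $\gamma(-s)c_{ji}(1-s)$ is polynomially bounded in $|t|$ by Theorem \ref{th:SZF_v1} (in fact exponentially small, thanks to the Gamma factors). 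Multiplying these together gives $|\xi_i(M^\Sigma_{a,b};-\epsilon+it)|\ll a^{3+o(1)}b^{12+o(1)}(1+|t|)^{O(1)}$, which, integrated against the super-polynomially decaying $\wt\psi(s)$ and the bounded factor $|X^s|=X^{-\epsilon}\leq 1$, delivers the claimed error. The main subtleties are (i) carefully tracking simple-versus-double pole contributions so that both the $r(a,b,c)$ and $r_2(a,b,c)$ terms appear correctly, and (ii) choosing $\epsilon$ as a slowly vanishing function of $ab$ so that the $(ab)^{O(\epsilon)}$ factor is absorbed into the $(ab)^{o(1)}$ in the error. The strength of the bound hinges entirely on the precise exponent $21$ in the uniformity estimate for $\widehat{M_{a,b}}$ in Corollary~\ref{cor:boundsftcomposite}; any weakening there would immediately degrade the exponent on $a$ in the error term.
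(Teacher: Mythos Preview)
Your proposal is correct and follows essentially the same approach as the paper: Mellin inversion, contour shift past the poles at $1,5/6,3/4$, application of the functional equation \eqref{eq:funceq_M}, and then control of the dual side via Corollary~\ref{cor:boundsftcomposite} together with the factorization $\widehat{M^\Sigma_{a,b}}=\widehat{\delta_\Sigma}\cdot\widehat{M_{a,b}}$. The only cosmetic difference is that the paper shifts all the way to $\Re(s)=-1$ and interprets the resulting integral as a smooth dual count $\ll X\,N(|\widehat{M^\Sigma_{a,b}}|,(ab)^{24}/X)$, whereas you stop at $\Re(s)=-\epsilon$ and bound $\xi_j^*$ directly by partial summation in its region of absolute convergence; both routes land on the same $a^{3+o(1)}b^{12+o(1)}$ via the exponent $21$ in Corollary~\ref{cor:boundsftcomposite}.
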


\begin{proof}
We begin by invoking Mellin inversion to write
\begin{equation*}
N^{(i)}(M^\Sigma_{a,b},X;\phi)=\int_{2}\xi_i(M^{\Sigma}_{a,b};s) X^s\wt{\psi}(s)ds.
\end{equation*}
Pulling the integral to $\Re(s)=-1$, we pick up possible terms at the possible poles of $\xi$ at $1$, $5/6$, and $3/4$, obtaining a main term contribution to $N^{(i)}(M_{a,b},X;\phi)$ of
\begin{equation*}
\sum_{c\in \{1,\frac34,\frac56\}}X^c\left(\log X\tilde{\phi}(c)r_2(a,b,c)+\tilde{\phi}(c)r(a,b,c)+(\tilde{\phi})'(c)r_2(a,b,c) \right),
\end{equation*}
along with an ``error term'' of
\begin{equation*}
\int_{-1}\xi_i(M^{\Sigma}_{a,b};s) X^s\wt{\psi}(s)ds=
-\int_{2}\xi_i(M^{\Sigma}_{a,b};1-t) X^{1-t}\wt{\psi}(1-t)dt.
\end{equation*}
Applying the functional equation \eqref{eq:funceq_M}, we see that the error term is $\ll$
\begin{equation*}
X\int_{2}\Big(\frac{(ab)^{24}}{X}\Big)^t\wt{\psi}(1-t)\gamma(t-1)\sum_{j\in\{0,1,2\}}c_{ji}(t)\xi_j^*(\widehat{M^{\Sigma}_{a,b}};t)\ll X N(|\widehat{M^\Sigma_{a,b}}|,(ab)^{24}/X).
\end{equation*}
Since we have $\wh{M^{\Sigma}_{a,b}} = \wh{M_{a,b}}\cdot\wh{\delta_{\Sigma}}$, the result follows from Corollary \ref{cor:boundsftcomposite}.
\end{proof}

\subsection{Estimating the residues at $1$, $5/6$, and $3/4$}
In this section we derive estimates for $r(a,b,c)$ and $r_2(a,b,c)$ for $c\in\{1,5/6,3/4\}$. Pick a smooth and compactly supported function $\psi:\R_{>0}\to\R_{\geq 0}$ whose Mellin transform is non-zero at $1$, $5/6$, and $3/4$. From Corollary \ref{cor:uniformabestimate} and Proposition \ref{prop:residueexpansion}, we obtain
$$\sum_{c\in \{1,\frac34,\frac56\}}X^c\left(\log X\tilde{\psi}(c)r_2(a,b,c)+\tilde{\psi}(c)r(a,b,c)+(\tilde{\psi})'(c)r_2(a,b,c) \right) \ll \frac{X^{1+o(1)}}{a^{2+o(1)}b^{4+o(1)}}+a^{3+o(1)}b^{12+o(1)}.$$
We claim that the above implies bounds of the same strength for each of the six individual terms 
$$\{X^cr_2(a,b,c),X^cr(a,b,c):c\in\{1,5/6,3/4\}\}.$$ 
This is implied by the following simple lemma.

\begin{lemma}\label{lem:convex_bound}
Let $a_1,\dots,a_n$, $b_1,\dots,b_n$, and $e_1>e_2>\dots>e_n>0$, be real numbers satisfying the inequality 
\begin{equation}\label{eqlem:convex_bound}
\sum_{i=1}^n \bigl(a_iX^{e_i}\log X + b_iX^{e_i}\bigr) = O(M_1X^{1+o(1)}+M_2),
\end{equation}
for all $X>1$, for some fixed positive real numbers $M_1$ and $M_2$.
Then we also have 
$$\max_{i=1}^n\left(\max(|a_iX^{e_i}|,|b_iX^{e_i}|\right) \ll X^{o(1)}\cdot (M_1X+M_2).$$
\end{lemma}

\begin{proof}
For a real number $1<r<2$, replacing $X$ with $rX$ in \eqref{eqlem:convex_bound}, yields 
$$\sum_{i=1}^n \Big[ r^{e_i}\Bigl(1+\frac{\log r}{\log X}\Bigr) a_iX^{e_i}\log X + r^{e_i}b_iX^{e_i} \Big]= O(M_1X^{1+o(1)}+M_2).$$ 
The claim would now follow immediately if we find $2n$ such numbers $r_i$, such that the matrix $E_{\vec{r}}$, whose $j$'th row is 
$$\left(r_j^{e_1}\Big(1+\frac{\log r_j}{\log X}\Big),r_j^{e_1},\dots,r_j^{e_n}\Big(1+\frac{\log r_j}{\log X}\Big),r_j^{e_n}\right),$$ is invertible with determinant $X^{o(1)}$. By subtracting appropriately scaled even columns from odd ones, and multiplying the odd columns by $\log X$, we see that the determinant of $E_{\vec{r}}$ is equal to $(\log X)^{-2n}$ times the determinant of $F_{\vec{r}}$, whose $j$'th row is 
$$\left(r_j^{e_1}\log r_j,r_j^{e_1},\dots,r_j^{e_n}\log r_j,r_j^{e_n}\right).$$
It is elementary that the functions  $r^t\log r,r^t$ of the variable $r$ are independent as $t$ varies over $\R_+$, so a general choice of $\vec{r}$ will do. This completes the proof.
\end{proof}

We derive the following consequence of the above lemma:
\begin{proposition}\label{prop:residue_bounds}
We have
\begin{equation*}
\begin{array}{rcl}
\max\bigl(r_2(a,b,1),r(a,b,1))&\ll& a^{-2+o(1)}b^{-4+o(1)};
\\[.12in]
\max\bigl(r_2(a,b,5/6),r(a,b,5/6))&\ll& a^{-7/6+o(1)}b^{-4/3+o(1)};
\\[.12in]
\max\bigl(r_2(a,b,3/4),r(a,b,3/4))&\ll& a^{-3/4+o(1)}b^{o(1)}.
\end{array}
\end{equation*}
\end{proposition}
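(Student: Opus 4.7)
The plan is to exploit the two competing upper bounds already in hand for $N^{(i)}(M_{a,b}^\Sigma;\psi;X)$, and extract the individual residue bounds by optimizing the free parameter $X$. I will not need any new input beyond the lemma just established; the entire content of the proposition is a convexity/optimization exercise on top of that lemma.

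First, I will fix a smooth compactly supported test function $\psi:\R_{>0}\to\R_{\geq 0}$ whose Mellin transform $\wt\psi(s)$ is nonzero at $s\in\{1,5/6,3/4\}$ (so that all six residue terms are genuinely detected). Then Proposition \ref{prop:residueexpansion} expresses $N^{(i)}(M_{a,b}^\Sigma;\psi;X)$ as a sum of the six terms $\wt\psi(c) r_2(a,b,c)X^c\log X$, $\wt\psi(c)r(a,b,c)X^c$, $\wt\psi'(c)r_2(a,b,c)X^c$ (for $c\in\{1,5/6,3/4\}$), up to an error of $O(a^{3+o(1)}b^{12+o(1)})$. On the other hand, the trivial uniformity bound from Corollary \ref{cor:uniformabestimate} gives
\begin{equation*}
N^{(i)}(M_{a,b}^\Sigma;\psi;X)\ll \frac{X^{1+o(1)}}{a^{2+o(1)}b^{4+o(1)}}.
\end{equation*}
Combining the two yields the bound stated in the paragraph preceding the proposition, namely $\sum_c\cdots\ll M_1 X^{1+o(1)}+M_2$ with $M_1=a^{-2+o(1)}b^{-4+o(1)}$ and $M_2=a^{3+o(1)}b^{12+o(1)}$, valid for every $X>1$.

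Second, I will apply the lemma immediately preceding the proposition to conclude that each of the six individual quantities $r_2(a,b,c)X^c$ and $r(a,b,c)X^c$ is bounded by $X^{o(1)}(M_1 X+M_2)$. This is the essential reduction: it isolates the contribution at each pole, at the cost of no longer being able to exploit cancellation among them.

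Third, for each $c\in\{1,5/6,3/4\}$, I will divide through by $X^c$ and optimize the free parameter $X$. Dividing, the bound becomes
\begin{equation*}
|r_2(a,b,c)|,\;|r(a,b,c)|\ll X^{o(1)}\bigl(M_1 X^{1-c}+M_2 X^{-c}\bigr).
\end{equation*}
The two summands are balanced at $X=M_2/M_1=a^{5+o(1)}b^{16+o(1)}$, at which choice the bound becomes $X^{o(1)}\cdot M_1^{c}M_2^{1-c}$. Substituting in the values of $M_1$ and $M_2$ gives $a^{-2c+3(1-c)+o(1)}b^{-4c+12(1-c)+o(1)}=a^{3-5c+o(1)}b^{12-16c+o(1)}$. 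Plugging in $c=1,5/6,3/4$ in turn yields the three stated bounds $a^{-2+o(1)}b^{-4+o(1)}$, $a^{-7/6+o(1)}b^{-4/3+o(1)}$, and $a^{-3/4+o(1)}b^{o(1)}$ respectively.

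The argument is entirely formal once the lemma is in hand; there is no real obstacle. The only thing to verify is that the chosen $X=a^5b^{16}$ is $>1$, which of course holds, and that the polynomial-in-$\log$ losses from $X^{o(1)}$ remain of the declared size $(ab)^{o(1)}$, which is immediate.
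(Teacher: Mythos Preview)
Your proposal is correct and follows essentially the same approach as the paper: apply the lemma to isolate each of the six residue terms, then optimize by choosing $X=a^5b^{16}$ to balance the two error contributions, yielding the uniform bound $a^{3-5c+o(1)}b^{12-16c+o(1)}$ for each $c\in\{1,5/6,3/4\}$. Your write-up is in fact slightly more explicit than the paper's about the balancing step.
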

Note in particular that sum over $a$ and $b$ of the residues at $5/6$ converges.

\vspace{.1in}
\begin{proof}
From Lemma \ref{lem:convex_bound}, it follows that
$$ \max_{c\in\{\frac34,\frac56,1\}}\max(r(a,b,c),r_2(a,b,c))X^{c+o(1)} = O\left(\frac{X^{1+o(1)}}{a^{2+o(1)}b^{4+o(1)}}+O(a^{3+o(1)}b^{12+o(1)}\right).$$
We optimize by setting $X=a^5b^{16}$. This yields the bound
 \begin{equation}\label{eq:allresbound}
    \max_{c\in\{\frac34,\frac56,1\}}\max(r(a,b,c),r_2(a,b,c))=O(a^{3-5c+o(1)}b^{12-16c+o(1)}),
 \end{equation}  
proving the result.
\end{proof}

\subsection{Proof of Theorem \ref{thm:mainallfields}: Sieving down to maximal orders in quartic algebras}\label{subsection:sieveexecution}

We finally prove our main Theorem \ref{thm:mainallfields}. Let $\psi:\R_{\geq 0}\to\R_{\geq 0}$ be a smooth function with compact support. For $c\in\{1,5/6\}$, define
$$C_2(c;\psi):=\sum_{(ab,P)=1}\mu(ab) r_2(a,b,c)\tilde{\psi}(c),$$
and
$$C_1(c;\psi):=\sum_{(ab,P)=1}\mu(ab)\left( r_2(a,b,c)(\tilde{\psi})'(c) + r(a,b,c)\tilde{\psi}(c)\right)$$
From \eqref{eq:mainsievesetup} and Corollary \ref{cor:weighted_unif}, we have:
\begin{equation*}
\begin{array}{rcl}
N'_\Sigma(\psi;X) &=& \displaystyle\sum_{\substack{a,b\geq 1\\(ab,P)=1}}\mu(ab)N^{(i)}(M_{a,b}^\Sigma;\psi;X)
\\[.2in]&=&\displaystyle
\sum_{\substack{(ab,P)=1\\a^5b^{16}\leq X}}\mu(ab)N^{(i)}(M_{a,b}^\Sigma;\psi;X)+
\sum_{\substack{(ab,P)=1\\a^5b^{16}> X}}\mu(ab)N^{(i)}(M_{a,b}^\Sigma;\psi;X)
\\[.2in]&=&\displaystyle
\sum_{\substack{(ab,P)=1\\a^5b^{16}\leq X}}\mu(ab)N^{(i)}(M_{a,b}^\Sigma;\psi;X)+
O\Big(
\sum_{\substack{(ab,P)=1\\a^5b^{16}> X}}\frac{X^{1+o(1)}}{a^2b^4}
\Big).
\end{array}
\end{equation*}
We estimate the first sum above using Proposition \ref{prop:residueexpansion}:
\begin{equation*}
\begin{array}{rl}
&\displaystyle
\sum_{\substack{(ab,P)=1\\a^5b^{16}\leq X}}\mu(ab)N^{(i)}(M_{a,b}^\Sigma;\psi;X) \\[.12in]
=& \displaystyle
\sum_{\substack{(ab,P)=1\\a^5b^{16}\leq X}}\mu(ab)
\sum_{c\in \{1,\frac34,\frac56\}}X^c\left(\log X\tilde{\psi}(c)r_2(a,b,c)+\tilde{\psi}(c)r(a,b,c)+(\tilde{\psi})'(c)r_2(a,b,c) \right)
\\[.25in]
&\displaystyle +O\Big(
\sum_{\substack{(ab,P)=1\\a^5b^{16}\leq X}}a^{3+o(1)}b^{12+o(1)}
\Big).
\end{array}
\end{equation*}
Next, we expand the sum over $a$ and $b$ and $c\in\{1,5/6\}$ in the main term of the RHS above to all $a$, $b$ with $(ab,P)=1$, bounding the residues of the tail using Proposition \ref{prop:residue_bounds}. This yields
\begin{equation}\label{eq:inclusion_exclusion_final}
\begin{array}{rcl}
N'_\Sigma(\psi;X)&=&\displaystyle
\sum_{c\in\{1,5/6\}} \left( C_2(c;\psi)X^c\log X +C_1(c;\psi)X^c  \right)
\\[.2in]&&\displaystyle +
O\Bigl(X^{o(1)}
\sum_{a^5b^{16}\leq X}\Bigl(a^{3}b^{12}+\frac{X^{3/4}}{a^{3/4}}\Bigr)
\Bigr) +O\Bigl(
X^{o(1)}\sum_{a^5b^{16}> X}\Big(\frac{X}{a^2b^4}+\frac{X^{5/6}}{a^{7/6}b^{4/3}}
\Bigr)\Big)
\\[.2in]&=&\displaystyle
\sum_{c\in\{1,5/6\}} \left( C_2(c;\psi)X^c\log X +C_1(c;\psi)X^c  \right)+O(X^{13/16+o(1)}).
\end{array}
\end{equation}

To recover $N_\Sigma(\psi;X)$ from $N'_\Sigma(\psi;X)$, note that $F'(\Sigma)\backslash F(\Sigma)$ consists exactly of algebras of the form $\Q\oplus K_3$, $K_2\oplus K_2'$, $\Q\oplus\Q\oplus K_2$, and $\Q^4$, where $K_3$ is a cubic field and $K_2$ and $K_2'$ are quadratic fields. Smoothed counts for the cubic fields are carried out in \cite{SST2}, with two main terms of magnitude $X$ and $X^{5/6}$, with an error term of size $O(X^{2/3+o(1)})$. Smoothed counts for sums of quadratic fields follow from an elementary application of Dirichlet's hyperbola method, combined with a standard squarefree sieve. This yields main terms of size $X\log X$ and $X$, with an error term of size $oO(X^{3/4+o(1)})$. Smoothed counts for quadratic fields follow from an elementary squarefree sieve, giving a main term of size $X$ with an error of $O(X^{1/2+o(1)})$. Subtracting this from the power series expansion of $N_\Sigma'(\psi;X)$ gives
\begin{equation*}
N_{\Sigma}(\psi,X)=C_1(\Sigma,\psi)\cdot X+
C'_{5/6}(\Sigma,\psi)\cdot X^{5/6}\log X + C_{5/6}(\Sigma,\psi)\cdot X^{5/6} + O(X^{13/16+o(1)}),
\end{equation*}
for some constants $C_1(\Sigma,\psi)$, $C_{5/6}'(\Sigma,\psi)$, and $C_{5/6}(\Sigma,\psi)$. Note that the leading constant of the $X\log X$ term much be zero after the subtraction, since the number of quartic fields with discriminant bounded by $X$ is known to be $O(X)$. This completes the proof of Theorem \ref{thm:mainallfields}. $\Box$

\begin{remark}{\rm 
    The argument above is set up to be dependent of where the potential poles actually are, and would give a power series expansion with an error of  $X^{13/16+o(1)}$ taking into account all the poles strictly larger than $13/16$.
}\end{remark}

\subsection{Proof of Theorem \ref{thm:mainS4fields}:}

Let $\Sigma=(\Sigma_v)_{v\in S}$ be an $S_4$-family as in the theorem statement. Let $P=\prod_{p\in S} p$.  For a prime $p\in S$ let $L_p\subset V(\Z_p)$ denote the open subset corresponding to $\Sigma_p$. For a squarefree $n$ coprime to $P$ we define $D_n:=\prod_{p\in S} \chi_{L_p}\prod_{p\mid n} \delta_p^{\nm}$, and let
$D_{a,b}:=\prod_{p\in S} \chi_{L_p}\prod_{p\mid a} M_p^1\prod_{p\mid b} M_p^2$.

By Theorem \ref{th:quartic_szf_residues}, the corresponding zeta functions $\xi_{i}(D_n;s)$ have only simple poles at $s=1$ and $s=5/6$. Moreover, since $\Sigma$ is an $S_4$ family, it follows from \eqref{eq:inclusion_exclusion_final} that we have
$$
N_\Sigma(\psi;X)=N'_\Sigma(\psi;X) = C_1(1;\psi)X+C_1(5/6;\psi)X^{5/6}+O(X^{13/16+o(1)}),
$$
where for $c\in\{1,5/6\}$, we have
$$
C_1(c;\psi)=\sum_{(ab,P)=1}\mu(ab)\wt{\psi}(c)r(a,b,c).
$$
By the relation $\delta_p^{\nm}=M_p^1+M_p^2$, we may rewrite this as 
$$C_1(c;\psi)=\tilde{\psi}(c)\sum_{(n,P)=1}\mu(n)\Res_{s=c}\xi_{i}(D_{n};s).$$
Now Theorem \ref{thm:mainS4fields} follows directly from the computation of these residues in Theorem \ref{thm:residues_final}. $\Box$

\bibliographystyle{abbrv} \bibliography{Main}
\end{document}